\newtheorem{thm}{Theorem}[section] 
\newtheorem{lem}[thm]{Lemma} 
\newtheorem{prop}[thm]{Proposition}
\theoremstyle{definition}
\newtheorem*{ack}{Acknowledgments}
\newtheorem{defi}[thm]{Definition} 
\newtheorem{rem}[thm]{Remark}
\newtheorem{example}[thm]{Example}
\title[]{Classification of finite irreducible \\conformal modules for $K'_4$}
\author{Lucia Bagnoli}\author{Fabrizio Caselli}
\subjclass[2010]{08A05, 17B05 (primary), 17B65, 17B70 (secondary)}
\keywords{conformal superalgebras, linearly compact Lie superalgebras, finite Verma modules, singular vectors}
\address{Lucia Bagnoli and Fabrizio Caselli, Dipartimento di matematica, Universit\`a di Bologna, Piazza di Porta San Donato 5, 40126 Bologna, Italy}
\email{lucia.bagnoli4@unibo.it, luciabagnoli93@gmail.com}
\email{fabrizio.caselli@unibo.it}
\DeclareMathOperator{\Res}{Res}
\DeclareMathOperator{\Ind}{Ind}
\DeclareMathOperator{\Cur}{Cur}
\DeclareMathOperator{\Lie}{Lie}
\DeclareMathOperator{\Sing}{Sing}
\DeclareMathOperator{\Ker}{Ker}
\DeclareMathOperator{\End}{End}
\DeclareMathOperator{\I}{I}
\DeclareMathOperator{\Supp}{Supp}
\DeclareMathOperator{\Id}{Id}
\newcommand{\C}{\mathbb{C}}
\newcommand{\Z}{\mathbb{Z}}
\newcommand{\de}{\partial}
\newcommand{\inlinewedge}{\textrm{\raisebox{0.6mm}{\footnotesize $\bigwedge$}}}
\newcommand{\displaywedge}{\textrm{\raisebox{0.6mm}{\tiny $\bigwedge$}}}
\newcommand{\g}{\mathfrak {g}}
\newcommand*{\bigchi}{\mbox{\Large$\chi$}}
\begin{document}
	\maketitle
	\begin{abstract}
	We classify the finite irreducible modules over the conformal superalgebra $K'_{4}$ by their correspondence with finite conformal modules over the associated annihilation superalgebra $\mathcal A(K'_{4})$. This is achieved by a complete classification of singular vectors in generalized Verma modules for $\mathcal A(K'_{4})$. We also show that morphisms between generalized Verma modules can be arranged in infinitely many bilateral complexes.
		
	\end{abstract}

	\section{Introduction} 
	Finite simple conformal superalgebras were completely classified in \cite{fattorikac} and consist of the following list: $\Cur \mathfrak{g}$, where $\mathfrak{g}$ is a simple finite$-$dimensional Lie superalgebra, $W_{n} (n\geq 0)$, $S_{n,b}$, $\tilde{S}_{n}$ $(n\geq 2, \, b \in \mathbb{C})$, $K_{n} (n\geq 0, \, n \neq 4)$, $K'_{4}$, $CK_{6} $. The finite irreducible modules over the conformal superalgebras $\Cur \mathfrak{g}$, $K_{0}$, $K_{1}$ were studied in \cite{chengkac}. Boyallian, Kac, Liberati and Rudakov classified all finite irreducible modules over the conformal superalgebras of type $W$ and $S$ in \cite{bklr}; Boyallian, Kac and Liberati classified all finite irreducible modules over the conformal superalgebras of type $K_{n}$ in \cite{kac1}. The classification of all finite irreducible modules over the conformal superalgebras of type $K_{n}$, for $n  \leq 4$, had been previously studied also by Cheng and Lam in \cite{chenglam}. Finally, a classification of all finite irreducible modules over the conformal superalgebra $CK_{6}$ was obtained in \cite{ck6} and \cite{zm} with different approaches. For $n=4$ the conformal superalgebra $K_{4}$ is not simple and its the derived subalgebra $K'_{4}$ is instead a simple conformal superalgebra.\par
	A possible strategy for studying modules over conformal superalgebras is the following. If $R$ is a conformal superalgebra one considers the Lie superalgebra $\g=\mathcal A(R)$, called the annihilation superalgebra of $R$. The annihilation superalgebra has a fundamental role since the study of the finite modules over $R$ is equivalent to the study of \textit{finite conformal} modules over $\g$. Furthermore, if $R$ is $\Z$-graded then $\g$ is also $\Z$-graded and one can reduce the problem to finite Verma modules of $\g$, i.e. induced modules $\Ind(F)=U(\g) \otimes _{U(\g_{\geq 0})} F$, where $F$ is a finite dimensional $\g_{\geq 0}$-module \cite{kacrudakov,chenglam}. 
	 
	This is the case for the simple conformal superalgebra $K'_4$, and its annihilation superalgebra $\mathcal A(K'_{4})$. The main goal of this paper is therefore to classify irreducible conformal modules for $K'_4$ through the classification of all degenerate (i.e., non irreducible) finite Verma modules for $\mathcal A(K'_4)$; in turn, this is equivalent to the classification of (highest weight) singular vectors in these modules, i.e. vectors which are annihilated by $\mathcal A(K'_4)_{>0}$. The final result is much richer than in the "standard" conformal contact superalgebras $K_n$ where, up to duality, there is only one family of singular vectors, all of degree 1: we show that for $\mathcal A(K'_{4})$  there are four families of singular vectors of degree 1, four families of singular vectors of degree 2 and two "exceptional" singular vectors of degree 3.

Since the classification of singular vectors in finite Verma modules is equivalent to the classification of morphisms between such modules, we show that these morphisms can be arranged in an infinite number of bilateral complexes in a picture (see Figure \ref{figura}) which is similar to those obtained for the exceptional linearly compact Lie superalgebras $E(1,6)$, $E(3,6)$, $E(3,8)$ and $E(5,10)$ (see \cite{kacrudakovE36,kacrudakov,kacrudakovE38,E36III, cantacaselliE510, cantacasellikacE510}). In a subsequent publication we will compute the homology of these complexes and provide an explicit construction of all irreducible quotients.

The paper is organized as follows. In section 2 we collect all preliminaries on conformal superalgebras which are needed, in section 3 we describe the conformal superalgebra $K'_4$ and in section 4 its annihilation superalgebra $\mathcal A(K'_{4})$. In section 5 we show explicitly how the conformal superalgebra $K'_4$ acts on a finite Verma module. In section 6 we deduce the crucial conditions that must be satisfied by a singular vector and we show that singular vectors have degree at most 3. Finally, section 7, 8, 9 contain the classification of singular vectors of degree 2, 3, 1 respectively.

\section{Preliminaries on conformal superalgebras}
In this section we introduce some notions on conformal superalgebras. For further details see \cite[Chapter 2]{kac1vertex}, \cite{dandrea}, \cite{bklr}, \cite{kac1}.\\
Let $\g$ be a Lie superalgebra; a formal distribution with coefficients in $\g$, or equivalently a $\g-$valued formal distribution, in the indeterminate $z$ is an expression of the following form:
\begin{align*}
a(z)=\sum_{n \in \Z}a_{n}z^{n},  
\end{align*}
with $a_{n} \in \g$ for every $n \in \Z$. We denote the vector space of formal distributions with coefficients in $\g$ in the indeterminate $z$ by $\g[[z,z^{-1}]]$. We denote by $\Res(a(z))=a_{-1}$ the coefficient of $z^{-1}$ of $a(z)$. The vector space $\g[[z,z^{-1}]]$ has a natural structure of $\C[\partial_{z}]-$module. We define for all $a(z) \in \g[[z,z^{-1}]]$ its derivative:
\begin{align*}
\partial_{z}a(z)=\sum_{n \in \Z}na_{n}z^{n-1}.
\end{align*} 
A formal distribution with coefficients in $\g$ in the indeterminates $z$ and $w$ is an expression of the following form:
\begin{align*}
a(z,w)=\sum_{m,n \in \Z}a_{m,n}z^{m}w^{n},  
\end{align*}
with $a_{m,n} \in \g$ for every $m,n \in \Z$. We denote the vector space of formal distributions with coefficients in $\g$ in the indeterminates $z$ and $w$ by $\g[[z,z^{-1},w,w^{-1}]]$.
Given two formal distributions $a(z) \in \g[[z,z^{-1}]]$ and $b(w) \in \g[[w,w^{-1}]]$, we define the commutator $[a(z),b(w)]$:
\begin{align*}
[a(z),b(w)]=\bigg[\sum_{n \in \Z}a_{n}z^{n} ,\sum_{m \in \Z}b_{m}w^{m}  \bigg]=\sum_{m,n \in \Z}[a_{n},b_{m}] z^{n}  w^{m} .
\end{align*}
\begin{defi}
	Two formal distributions $a(z),b(z) \in \g[[z,z^{-1}]]$ are called \emph{local} if
	\begin{align*}
	(z-w)^{N}[a(z),b(w)]=0 \text{ for  } N \gg 0.
	\end{align*}
\end{defi}
We call $\delta-$function the following formal distribution in the indeterminates $z$ and $w$:
\begin{align*}
\delta(z-w)=\sum_{m,n:\, m+n=-1} z^m w^n.
\end{align*}
See Corollary $2.2$ in \cite{kac1vertex} for the following equivalent condition of locality.
\begin{prop}
	Two formal distributions $a(z),b(z) \in \g[[z,z^{-1}]]$ are local if and only if $[a(z),b(w)]$ can be expressed as a finite sum of the form:
	\begin{align*}
	[a(z),b(w)]=\sum_{j}(a(w)_{(j)}b(w))\frac{\partial_{w}^{j}}{j!} \delta(z-w),
	\end{align*}
	where the coefficients $(a(w)_{(j)}b(w))$  are formal distributions in the indeterminate $w$. Moreover, if $a(z)$ and $b(z)$ are local then necessarily $(a(w)_{(j)}b(w))=\Res_{z}(z-w)^{j}[a(z),b(w)]$.
\end{prop}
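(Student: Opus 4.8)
The plan is to reduce everything to two elementary identities for the $\delta$-function, which I would establish first by direct expansion from the definition $\delta(z-w)=\sum_{m+n=-1}z^mw^n$. The first is the lowering/annihilation property $(z-w)\frac{\partial_w^{j}}{j!}\delta(z-w)=\frac{\partial_w^{j-1}}{(j-1)!}\delta(z-w)$ for $j\geq 1$, together with $(z-w)\delta(z-w)=0$; iterating this yields $(z-w)^{N}\frac{\partial_w^{j}}{j!}\delta(z-w)=0$ whenever $N>j$. The second is the biorthogonality relation $\Res_{z}(z-w)^{k}\frac{\partial_w^{j}}{j!}\delta(z-w)=\delta_{j,k}$, which follows from the definition of $\Res_z$ as the coefficient of $z^{-1}$ together with the substitution rule $\Res_{z}a(z)\delta(z-w)=a(w)$. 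Both are short computations, but since they are used throughout I would record them as a preliminary lemma.

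For the implication from the expansion to locality, suppose $[a(z),b(w)]=\sum_{j=0}^{M}(a(w)_{(j)}b(w))\frac{\partial_w^{j}}{j!}\delta(z-w)$ is a finite sum. Multiplying by $(z-w)^{N}$ with $N>M$ and invoking the first identity, every summand on the right is annihilated, so $(z-w)^{N}[a(z),b(w)]=0$ and the two distributions are local.

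The substance lies in the converse together with the residue formula. Assuming locality, fix $N$ with $(z-w)^{N}[a(z),b(w)]=0$ and set $c^{j}(w):=\Res_{z}(z-w)^{j}[a(z),b(w)]$; note $c^{j}=0$ for $j\geq N$, since $(z-w)^{j}$ already annihilates the bracket there. Consider $D(z,w):=[a(z),b(w)]-\sum_{j=0}^{N-1}c^{j}(w)\frac{\partial_w^{j}}{j!}\delta(z-w)$. Applying $\Res_{z}(z-w)^{k}$ and using biorthogonality shows $\Res_{z}(z-w)^{k}D(z,w)=0$ for all $k\geq 0$, and $D$ is itself annihilated by $(z-w)^{N}$. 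The key remaining point is that a two-variable distribution killed by some power of $(z-w)$ and having all residue-coefficients equal to zero must vanish. I would prove this by induction on $N$: the top term $c^{N-1}(w)\frac{\partial_w^{N-1}}{(N-1)!}\delta(z-w)$ is the unique combination whose product with $(z-w)^{N-1}$ reproduces $\Res_z(z-w)^{N-1}[a(z),b(w)]$, so subtracting it leaves a distribution annihilated by $(z-w)^{N-1}$, and one recurses. Once the expansion is in place, a final application of $\Res_{z}(z-w)^{k}$ and biorthogonality returns exactly $c^{k}(w)=\Res_{z}(z-w)^{k}[a(z),b(w)]$, which is the claimed formula for the coefficients $(a(w)_{(k)}b(w))$.

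The main obstacle is precisely this reconstruction and uniqueness step in the converse: the $\delta$-function identities and the forward direction are immediate, whereas showing that the residue coefficients completely determine a distribution annihilated by a power of $(z-w)$ requires either the inductive peeling argument above or an explicit coefficient computation expressing all modes of $D$ in terms of finitely many boundary modes and checking that these are forced to be zero.
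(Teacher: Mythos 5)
The paper itself gives no proof of this proposition: it is quoted directly from the literature (Corollary 2.2 of \cite{kac1vertex}), so there is no internal argument to compare against. Your proposal is a correct reconstruction of the standard proof found in that reference: the two $\delta$-function identities (lowering and biorthogonality) are right, the forward implication is immediate as you say, and the strategy for the converse --- subtract $\sum_{j<N} c^j(w)\frac{\partial_w^j}{j!}\delta(z-w)$ with $c^j(w)=\Res_z(z-w)^j[a(z),b(w)]$, check that the difference $D$ is local with all residue moments zero, and conclude $D=0$ --- is exactly the standard route.

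One step in your sketch should be made honest, and you have correctly flagged it as the crux. Your peeling induction, as phrased, subtracts the top term and asserts that the result is annihilated by $(z-w)^{N-1}$; but $(z-w)^{N-1}\bigl([a(z),b(w)]-c^{N-1}(w)\tfrac{\partial_w^{N-1}}{(N-1)!}\delta(z-w)\bigr)=(z-w)^{N-1}[a(z),b(w)]-c^{N-1}(w)\delta(z-w)$, and for this to vanish you need the reconstruction lemma: if $(z-w)E(z,w)=0$ then $E(z,w)=\bigl(\Res_z E(z,w)\bigr)\delta(z-w)$. This is not a uniqueness statement but an existence one, and it is the true base case of the induction. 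It is proved by a direct coefficient computation: writing $E=\sum_{m,n}E_{m,n}z^mw^n$, the relation $(z-w)E=0$ gives $E_{m-1,n}=E_{m,n-1}$, so $E_{m,n}$ depends only on $m+n$, which is precisely the shape of $c(w)\delta(z-w)$ with $c=\Res_z E$; if moreover $\Res_z E=0$, then $E=0$. (Equivalently, run your induction on the cleaner statement ``$(z-w)^ND=0$ and $\Res_z(z-w)^kD=0$ for all $k\geq 0$ imply $D=0$'': apply the hypothesis to $(z-w)D$ to reduce to this same $N=1$ computation.) With that lemma recorded, your argument is complete and coincides with the proof in the cited source.
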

\begin{defi}[Formal Distribution Superalgebra]
	Let $\g$ be a Lie superalgebra and $\mathcal{F}$ a family of mutually local $\g-$valued formal distributions in the indeterminate $z$. The pair $(\g,\mathcal{F})$ is called a \textit{formal distribution superalgebra} if the coefficients of all formal distributions in $\mathcal{F}$ span $\g$.
\end{defi}
We define the $\lambda-$bracket between two local formal distributions $a(z),b(z) \in \g[[z,z^{-1}]] $ as the generating series of the $(a(z)_{(j)}b(z))$'s:
\begin{align}
\label{bracketformal}
[a(z)_{\lambda}b(z)]=\sum_{j \geq 0} \frac{\lambda^{j}}{j!}(a(z)_{(j)}b(z)).
\end{align}
The $\lambda$-bracket of formal distributions satisfies some algebraic properties which are the motivation of the following definition. If $V$ is any $\Z_2$-graded vector space we denote by $p$ its parity function. As customary, whenever we write $p(v)$ for some $v\in V$ we always implicitly assume that $v$ is a homogeneous element of $V$.
\begin{defi}[Conformal superalgebra]\label{coal}
	A \textit{conformal superalgebra} $R$ is a left $\Z_{2}-$graded $\C[\partial]-$module endowed with a $\C-$linear map, called $\lambda-$bracket, $R \otimes R \rightarrow \C[\lambda]\otimes R$, $a \otimes b \mapsto [a_{\lambda}b]$, that satisfies the following properties for all $a,b,c \in R$:
	\begin{itemize}
		\item[(i)] $p(\partial a)=p(a)$;
		\item[(ii)] $[\partial a_{\lambda}b]=-\lambda [a_{\lambda}b], \quad  [a_{\lambda} \partial b]=(\lambda+\partial)[a_{\lambda}b]$;
		\item[(iii)] $[a_{\lambda}b]=-(-1)^{p(a)p(b)}[b_{-\lambda-\partial}a] $;
		\item[(iv)] $[a_{\lambda}[b_{\mu}c]]=[[a_{\lambda}b]_{\lambda+\mu}c]+(-1)^{p(a)p(b)}[b_{\mu}[a_{\lambda}c]]$.
	\end{itemize}
	
\end{defi}
We refer to properties (ii), (iii), (iv) in Definition \ref{coal} as the conformal linearity, conformal symmetry and conformal Jacobi identity respectively.
We call $n-$products the coefficients $(a_{(n)}b)$ that appear in $[a_{\lambda}b]=\sum_{n\geq 0} \frac{\lambda^{n}}{n!}(a_{(n)}b)$  and give an equivalent definition of conformal superalgebra.
\begin{defi}[Conformal superalgebra]
	\label{definizionesuperalgebraconforme}
	A \textit{conformal superalgebra} $R$ is a left $\Z_{2}-$graded $\C[\partial]-$module endowed with a $\C-$bilinear product $(a_{(n)}b): R\otimes R\rightarrow R$, defined for every $n \geq 0$, that satisfies the following properties for all $a,b,c \in R$:
	\begin{itemize}
		\item[(i)] $p(\partial a)=p(a)$;
		\item[(ii)] $(a_{(n)}b)=0, \,\, for \,\, n \gg 0$;
		\item[(iii)] $(\partial a_{(0)}b)=0$ and $(\partial a_{(n)}b)=-n (a_{(n-1)}b)$ for all $n\geq 1$;
		\item[(iv)] $(a_{(n)}b)=-(-1)^{p(a)p(b)}\sum_{j \geq 0}(-1)^{j+n} \frac{\partial^{j}}{j!}(b_{(n+j)}a)$ for all $n\geq 0$;
		\item[(v)] $(a_{(m)}(b_{(n)}c))=\sum^{m}_{j=0}\binom{m}{j}((a_{(j)}b)_{(m+n-j)}c)+(-1)^{p(a)p(b)}(b_{(n)}(a_{(m)}c))$ for all $m,n\geq 0$.
	\end{itemize}
	
\end{defi}
Using conditions (iii) and (iv) in Definition \ref{definizionesuperalgebraconforme} it is easy to show that for all $a,b \in R$, $n \geq 0$:
\begin{equation}\label{deadestra}
(a_{(n)}\partial b)=\partial (a_{(n)} b)+n (a_{(n-1)}b).
\end{equation}
In particular, by the first part of (iii) in Definition \ref{definizionesuperalgebraconforme}, the map $\partial: R\rightarrow R$, $a \mapsto \partial a$ is a derivation with respect to the $0-$product.
\begin{rem}
	Let $\mathcal F$ be a formal distribution superalgebra in the indeterminate $z$ which is a vector subspace of $\C[[z]]$ and is invariant under the operator $\partial_z$. Then the formal distribution algebra $\mathcal F$, endowed with $\lambda-$bracket (\ref{bracketformal}) and operator $\partial=\partial_z$ is a conformal superalgebra (for a proof see \cite[Proposition 2.3]{kac1vertex}).
\end{rem}
We say that a conformal superalgebra $R$ is \textit{finite} if it is finitely generated as a $\C[\partial]-$module. 
An \textit{ideal} $I$ of $R$ is a $\C[\partial]-$submodule of $R$ such that $a_{(n)}b\in I$ for every $a \in R$, $b \in I$, $n \geq 0$. A conformal superalgebra $R$ is \textit{simple} if it has no non-trivial ideals and the $\lambda-$bracket is not identically zero. We denote by $R'$ the \textit{derived subalgebra} of $R$, i.e. the $\C-$span of all $n-$products.
\begin{defi}
	A module $M$ over a conformal superalgebra $R$ is a $\Z_{2}-$graded $\C[\partial]-$module endowed with $\C-$linear maps $R \rightarrow \End_{\C} M$, $a\mapsto a_{(n)}$, defined for every $n \geq 0$, that satisfy the following properties for all $a,b \in R$, $v \in M$:
	\begin{enumerate}
		\item[(i)] $a_{(n)}v=0 \,\, for \,\, n \gg 0$;
		\item[(ii)] $(\partial a)_{(n)}v=[\partial,a_{(n)}]v=-n a_{(n-1)}v $ for all $n\geq 0$;
		\item[(iii)] $[a_{(m)},b_{(n)}]v=\sum_{j} \binom{m}{j}(a_{(j)}b)_{(m+n-j)}v$ for all $m,n\geq 0$.
	\end{enumerate}
\end{defi}
A module $M$ is called \textit{finite} if it is a finitely generated $\C[\partial]-$module.\\
We can construct a conformal superalgebra starting from a formal distribution superalgebra $(\g,\mathcal{F})$. Let $\mathcal{\overline{F}}$  be the closure of $\mathcal{F}$ under all the $n-$products, $\partial_{z}$ and linear combinations. By  Dong's Lemma, $\mathcal{\overline{F}}$ is still a family of mutually local formal distributions (see \cite{kac1vertex}) and it turns out that $\mathcal{\overline{F}}$ is a conformal superalgebra. We will refer to $\overline {\mathcal F}$ as the conformal superalgebra \emph{associated} with $(\g,\mathcal{F})$.\\
Let us recall the construction of the annihilation superalgebra associated with a conformal superalgebra $R$.
Let $\widetilde{R}=R[y,y^{-1}]$, set $p(y)=0$ and $\widetilde{\partial}=\partial+\partial_{y}$. We define the following $k-$products on $\widetilde{R}$, for all $a,b \in R$, $f,g \in \C[y,y^{-1}]$, $k\geq 0$:
\begin{align*}
(af_{(k)}bg)=\sum_{j \in \Z_{+}}(a_{(k+j)}b) \Big(\frac{\partial_{y}^{j}}{j!}f \Big)g .
\end{align*}
In particular if $f=y^{m}$ and $g=y^{n}$ we have for all $k \geq 0$:
\begin{align}\label{1,5}
({ay^{m}}_{ (k)}by^{n})=\sum_{j \in \Z_{+}}\binom{m}{j}(a_{(k+j)}b)y^{m+n-j}.
\end{align}
We observe that $\widetilde{\partial}\widetilde{R}$ is a two sided ideal of $\widetilde{R}$ with respect to the $0-$product. The quotient $\Lie R:=\widetilde{R}/ \widetilde{\partial}\widetilde{R}$ has a structure of Lie superalgebra with the bracket induced by the $0-$product, i.e. for all $a,b \in R$, $f,g \in \C[y,y^{-1}]$,
\begin{align}
\label{bracketannihilation}
[af,bg]=\sum_{j \in \Z_{+}}(a_{( j)}b)\Big(\frac{\partial_{y}^{j}}{j!}f \Big)g .
\end{align}
\begin{defi}
	The \emph{annihilation superalgebra} $\mathcal{A}(R)$ of a conformal superalgebra $R$ is the subalgebra of $\Lie R$ spanned by all elements $ay^{n}$ with $n\geq 0$ and $a\in R$. \\
	The extended annihilation superalgebra $\mathcal{A}(R)^{e}$ of a conformal superalgebra $R$ is the Lie superalgebra $\C \partial \ltimes \mathcal{A}(R)$. The semidirect sum $\C \partial \ltimes \mathcal{A}(R)$ is the vector space $\C \partial \oplus \mathcal{A}(R)$ endowed with the structure of Lie superalgebra uniquely determined by the bracket
	\begin{align*}
	[\partial,ay^{m}]=-\partial_{y}(ay^{m})=-m a y^{m-1},
	\end{align*}
	for all $a \in R$, and the fact that $\mathcal A(R)$ and $\C \partial$ are Lie subalgebras.
\end{defi}
For all $a \in R$ we consider the following formal power series in $\mathcal{A}(R)[[\lambda]]$:
\begin{align*}
a_{\lambda}=\sum_{n \geq 0}\frac{\lambda^{n}}{n!}ay^{n}.
\end{align*}
For all $a,b \in R$, we have: $[a_{\lambda},b_{\mu}]=[a_{\lambda}b]_{\lambda+\mu}$ and $(\partial a)_{\lambda}=-\lambda a_{\lambda}$ (for a proof see \cite{cantacasellikac}). This notation is coherent with the definition of conformal modules in the following sense.  
\begin{prop}[\cite{chengkac}]Let $R$ be a conformal superalgebra.
	\label{propcorrispmoduli}
	If $M$ is a finite conformal $R$-module then $M$ has a natural structure of $\mathcal A(R)^e$-module, where the action of $ay^n$ on $M$ is uniquely determined by $a_\lambda v=\sum_{n \geq 0}\frac{\lambda^{n}}{n!}ay^{n}.v$ for all $v\in V$. Viceversa if $M$ is a $\mathcal A(R)^e$-module such that for all $a\in R$, $v\in M$ we have $ay^n.v=0$ for $n\gg0$ then $M$ is also a finite conformal module by letting $a_\lambda v=\sum_{n}\frac{\lambda^{n}}{n!}ay^{n}.v$.
	\end{prop}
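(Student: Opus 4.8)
The plan is to show that the two module structures encode exactly the same data, merely reorganized, so that the proof reduces to matching the defining axioms of a conformal $R$-module against the defining relations of $\mathcal A(R)^e$ together with the Lie-module axiom. The dictionary is forced by comparing the coefficient of $\lambda^n$ on the two sides of $a_\lambda v=\sum_{n\geq 0}\frac{\lambda^{n}}{n!}ay^{n}.v$: the operator $a_{(n)}$ of the conformal module must correspond to the action of $ay^{n}$, while the $\C[\partial]$-module structure corresponds to the action of $\partial\in\mathcal A(R)^e$. Since all the $\lambda$-series occurring are polynomials (by the finiteness, respectively the vanishing, condition), comparison of coefficients is legitimate, which both yields uniqueness and shows that the two prescriptions are mutually inverse.

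For the forward direction I would start from a finite conformal $R$-module $M$, keep the same underlying $\Z_{2}$-graded space, let $\partial$ act via the given $\C[\partial]$-structure, and set $(ay^{n}).v:=a_{(n)}v$. First I would check well-definedness: since $\mathcal A(R)$ is the quotient $\widetilde R/\widetilde\partial\widetilde R$, the only relations to be respected are those coming from $\widetilde\partial\widetilde R$, namely $(\partial a)y^{m}=-m\,ay^{m-1}$; applying module axiom (ii), $(\partial a)_{(m)}v=-m\,a_{(m-1)}v$, so the action indeed factors through the quotient. Then I would verify the Lie-module axiom $[x,y].v=x.(y.v)-(-1)^{p(x)p(y)}y.(x.v)$ on generators. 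For $x=\partial$ and $y=ay^{m}$ this is exactly axiom (ii) rewritten as $[\partial,a_{(m)}]v=-m\,a_{(m-1)}v$. For $x=ay^{m}$ and $y=by^{n}$ the left-hand side is the action of the Lie bracket \eqref{bracketannihilation}, which on $y^{m},y^{n}$ reads $\sum_{j}\binom{m}{j}(a_{(j)}b)y^{m+n-j}$ as in \eqref{1,5}, hence acts as $\sum_{j}\binom{m}{j}(a_{(j)}b)_{(m+n-j)}v$; the right-hand side is $[a_{(m)},b_{(n)}]v$, and these coincide by axiom (iii). This produces the asserted $\mathcal A(R)^e$-module structure.

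For the converse, given an $\mathcal A(R)^e$-module $M$ with $ay^{n}.v=0$ for $n\gg 0$, I would define $a_{(n)}v:=ay^{n}.v$, take the $\C[\partial]$-structure from the action of $\partial$, and read the above equivalences backwards: the vanishing hypothesis gives axiom (i); the semidirect relation $[\partial,ay^{n}]=-n\,ay^{n-1}$, together with $(\partial a)y^{n}=-n\,ay^{n-1}$ in $\mathcal A(R)$, gives axiom (ii); and the bracket \eqref{bracketannihilation}, specialized as in \eqref{1,5}, gives axiom (iii). Thus $M$ becomes a conformal $R$-module, which is finite once $M$ is finitely generated over $\C[\partial]$.

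The computations are entirely mechanical, and the only genuinely delicate point is bookkeeping. The main obstacle is to confirm that in the forward direction the action is well defined on the quotient $\widetilde R/\widetilde\partial\widetilde R$, equivalently that the relation $(\partial a)y^{m}=-m\,ay^{m-1}$ is honored, and to match the binomial coefficients of \eqref{1,5} with those of axiom (iii) exactly, including the sign $(-1)^{p(a)p(b)}$ coming from the Lie superbracket. Once the dictionary $ay^{n}\leftrightarrow a_{(n)}$ and $\partial\leftrightarrow\partial$ is fixed, every remaining verification is a direct translation.
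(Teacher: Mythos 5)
Your proposal is correct: the dictionary $ay^{n}\leftrightarrow a_{(n)}$, $\partial\leftrightarrow\partial$, the well-definedness check against the relations $(\partial a)y^{m}=-m\,ay^{m-1}$ of the quotient $\widetilde R/\widetilde\partial\widetilde R$, and the matching of the Lie-module axiom with module axioms (ii) and (iii) via \eqref{1,5} is exactly the standard argument. Note that the paper itself gives no proof of this proposition (it is quoted from \cite{chengkac}), so your write-up simply supplies the cited argument; the only small imprecision is that polynomiality of $a_\lambda v$ comes from module axiom (i) ($a_{(n)}v=0$ for $n\gg 0$), not from finiteness of $M$ as a $\C[\partial]$-module, which is needed only to conclude that the resulting conformal module is \emph{finite}.
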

One usually refers to Proposition \ref{propcorrispmoduli} by saying that a module over a conformal superalgebra $R$ is the \emph{same} as a \emph{continuous} module over the Lie superalgebra $\mathcal{A}(R)^{e}$. 
Proposition \ref{propcorrispmoduli} reduces the study of modules over a conformal superalgebra $R$ to the study of a class of modules over its (extended) annihilation superalgebra.

In some cases one can even avoid to use the extended annihilation algebra and simply consider the annihilation algebra. 
Recall that a Lie superalgebra $\g$ is $\Z$-graded if $\g=\bigoplus_{n\in \Z} \g_n$ with $[\g_n,\g_m]\subseteq \g_{n+m}$ for all $n,m\in \Z$. We say in this case that $\g$ has finite depth $d\geq 0$ if $\g_n=0$ for all $n<-d$ and $\g_{-d}\neq 0$. 

\begin{prop}[\cite{kac1}]
	\label{keythmannihi}
	Let $\g$ be the annihilation superalgebra of a conformal superalgebra $R$. Assume that $\g$ satisfies the following conditions:
	\begin{description}
		\item[L1] $\g$ is $\Z-$graded with finite depth $d$;
		\item[L2] there exists $t\in \g$ such that the centralizer of $t$ is contained in $\g_{0}$;
		\item[L3] there exists $\Theta \in \g_{-d}$ such that $\g_{i-d}=[\Theta,\g_{i}]$, for all $i\geq 0$.
	\end{description}
	Finite modules over $R$ are the same as modules $V$ over $\g$, called \textit{finite conformal}, that satisfy the following properties:
	\begin{enumerate}
		\item for every $v \in V$, we have $\g_n.v=0$ for $n\gg 0$;
		\item $V$ is finitely generated as a $\C[\Theta]-$module.
	\end{enumerate}
\end{prop}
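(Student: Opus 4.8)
The plan is to exhibit mutually inverse constructions between the two classes of modules and to check that the finiteness and continuity conditions match on each side. The starting point is Proposition \ref{propcorrispmoduli}, which already identifies finite conformal $R$-modules with \emph{continuous} $\mathcal A(R)^e$-modules, i.e. those on which $ay^n$ acts by $0$ for $n\gg 0$. Since $\mathcal A(R)^e=\C\partial\ltimes\g$, the whole content then reduces to comparing continuous $\mathcal A(R)^e$-modules with $\g$-modules satisfying (1) and (2); concretely, I must show that the extra datum of the $\partial$-action is recoverable from the $\g$-action and that the two finiteness requirements (finite generation over $\C[\partial]$ versus over $\C[\Theta]$) agree. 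A useful preliminary identity comes from the defining relation $\widetilde\partial\widetilde R=0$ of $\Lie R$, which gives $(\partial a)y^n=-n\,ay^{n-1}$ and hence $[\partial,ay^n]=(\partial a)y^n$ in $\mathcal A(R)^e$. Thus $\operatorname{ad}\partial$ is a \emph{prescribed} derivation of $\g$ which strictly lowers the degree; in particular $[\partial,\Theta]\in\bigoplus_{m<-d}\g_m=0$ by L1, so $\partial$ and $\Theta$ commute.

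For the first direction I would take a finite $R$-module, turn it into a continuous $\mathcal A(R)^e$-module by Proposition \ref{propcorrispmoduli}, and restrict the action to $\g$. Condition (1) follows from continuity together with L1: since $R$ is finite over $\C[\partial]$ there are only finitely many conformal weights, so for $n\gg 0$ the component $\g_n$ is spanned by elements $by^m$ with $m\gg 0$, on which the module already acts by $0$. For condition (2) I must deduce finite generation over $\C[\Theta]$ from finite generation over $\C[\partial]$; here I would use that $\partial$ and $\Theta$ commute and pass to the associated graded for the degree filtration, where the two operators have the same leading effect, so that the two finiteness properties coincide. This is the first place where L3 enters, governing how $\Theta$ moves between graded components.

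The reverse direction is the technical heart: starting from a $\g$-module $V$ satisfying (1) and (2), I must manufacture a compatible operator $\partial$, i.e. one obeying $[\partial,g]=(\operatorname{ad}\partial)(g)$ as operators on $V$ together with $ay^n.v=0$ for $n\gg 0$. Using L1 to split $\g=\g_{<0}\oplus\g_{\ge 0}$, and (2) together with L3, I would first reduce to the universal case of a generalized Verma module $\Ind(F)=U(\g)\otimes_{U(\g_{\ge 0})}F$ with $F$ finite-dimensional: every module satisfying (1) and (2) is a quotient of a finite direct sum of such $\Ind(F)$, and the surjectivity $[\Theta,\g_i]=\g_{i-d}$ in L3 is exactly what controls $U(\g_{<0})$ as a $\C[\Theta]$-module and makes (2) equivalent to the finite-dimensionality of $F$. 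On $\Ind(F)$ one defines $\partial$ by first extending the $\g_{\ge 0}$-action on $F$ to an action of $\C\partial\ltimes\g_{\ge 0}$ and then propagating it through the relations $[\partial,\Theta]=0$ and $[\partial,g]=(\operatorname{ad}\partial)(g)$; here L2 is used to control the reductive-type part $\g_0$, so that the operator $\partial$ on $F$ is forced up to an overall additive scalar (the conformal weight) and the extension is well defined. Finally one checks that $\partial$ descends to the quotient $V$ and that continuity holds, after which Proposition \ref{propcorrispmoduli} turns $V$ back into a finite $R$-module.

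I expect the reconstruction of $\partial$, together with the equivalence of the two finiteness conditions, to be the main obstacle, precisely because all three hypotheses are needed at once: L1 provides the grading and the reduction to Verma modules, L3 makes $U(\g_{<0})$ finite over $\C[\Theta]$ and hence governs condition (2), and L2 rigidifies the $\g_0$-action enough to pin down $\partial$. Once existence, well-definedness and uniqueness up to the additive scalar are established, the two constructions are visibly inverse to one another and the asserted identification of finite $R$-modules with finite conformal $\g$-modules follows.
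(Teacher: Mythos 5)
You should first be aware that the paper contains no proof of Proposition \ref{keythmannihi}: it is quoted verbatim from \cite{kac1}, so your attempt can only be compared with the argument in the cited literature (Cheng--Kac, Cheng--Lam, Boyallian--Kac--Liberati). Measured against that argument, your proposal has the right skeleton (start from Proposition \ref{propcorrispmoduli} and compare $\mathcal A(R)^e$-modules with $\g$-modules), but it misses the single structural fact on which the whole proof rests: not merely $[\partial,\Theta]=0$, but that $\partial-\Theta$ is \emph{central} in $\mathcal A(R)^e$, i.e.\ $\mathrm{ad}\,\partial$ and $\mathrm{ad}\,\Theta$ coincide as derivations of $\g$ (for $K'_4$ both act as $-\partial_t$; this is precisely why the element $\Theta=-\tfrac12$ of \textbf{L3} is the right one). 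Once this is known, the direction you call the ``technical heart'' is immediate: a $\g$-module $V$ satisfying (1) and (2) becomes a continuous $\mathcal A(R)^e$-module by letting $\partial$ act as the operator $\Theta$, and it is then finitely generated over $\C[\partial]=\C[\Theta]$; no reduction to generalized Verma modules is needed (your reduction is itself unproved, since finite-dimensionality of $U(\g_{\geq 0})v$ requires an argument), and \textbf{L2} plays no role here --- in the cited proofs \textbf{L2} is used to show that $\g_{>0}$ acts trivially on irreducible finite-dimensional $\g_{\geq 0}$-modules, cf.\ Theorem \ref{keythmsingular}(1), not to ``pin down $\partial$''.

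The more serious gap is in your other direction: the claim that the restriction to $\g$ of a finite $R$-module is finitely generated over $\C[\Theta]$, which you propose to prove by ``passing to the associated graded, where the two operators have the same leading effect''. This is not just vague; the claim it aims at is false for naive restriction. Take $M=\C[\partial]w$ free of rank one with identically zero $\lambda$-action of $R$: all module axioms hold trivially, $M$ is a finite conformal $R$-module and is continuous, but every element of $\g$ --- in particular $\Theta$ --- acts by zero, so $M|_{\g}$ is an infinite-dimensional trivial $\g$-module and is \emph{not} finitely generated over $\C[\Theta]$. Hence no filtration argument applied to the restriction can close this direction. The correct mechanism again goes through centrality of $P:=\partial-\Theta$: since $P$ commutes with the whole $\g$-action, on an \emph{irreducible} finite $R$-module it acts as a scalar $c$ by Schur's lemma, whence $\C[\partial]=\C[\Theta+c]$ and $\C[\Theta]$ coincide as algebras of operators on the module and conditions (1), (2) follow; the identification asserted in Proposition \ref{keythmannihi} is then a correspondence of irreducibles up to this scalar twist, which is exactly the form in which it is established in \cite{chengkac,kac1} and used in this paper through Theorem \ref{keythmsingular}. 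Your proposal never isolates the central element, so both the transfer of finiteness and the well-definedness of your ``propagated'' operator $\partial$ remain unjustified.
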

\begin{rem}
	\label{gradingelement}
	We point out that condition {\bf L2} is automatically satisfied when $\g$ contains a grading element, i.e. an element $t \in \g$ such that $[t,b]=\deg (b) b$ for all $b \in \g$.
\end{rem}
Let $\g=\bigoplus_{i \in \Z} \g_{i}$ be a $\Z-$graded Lie superalgebra. We will use the notation $\g_{>0}=\bigoplus_{i>0}\g_{i}$, $\g_{<0}=\bigoplus_{i<0}\g_{i}$ and $\g_{\geq 0}=\bigoplus_{i\geq 0}\g_{i}$. We denote by $U(\g) $ the universal enveloping algebra of $\g$.
\begin{defi}
	Let $F$ be a $\g_{\geq 0}-$module. The \textit{generalized Verma module} associated with $F$ is the $\g-$module $\Ind (F)$ defined by,
	\begin{equation*}
	\Ind (F):= \Ind ^{\g}_{\g_{\geq 0}} (F)=U(\g) \otimes _{U(\g_{\geq 0})} F.
	\end{equation*}
\end{defi}
If $F$ is a finite dimensional irreducible $\g_{\geq 0}-$module we will simply say that $\Ind(F)$ is a finite Verma module. We will identify $\Ind (F)$ with $U(\g_{<0}) \otimes  F$ as vector spaces via the Poincar\'e$-$Birkhoff$-$Witt Theorem. The $\Z-$grading of $\g$ induces a $\Z-$grading on $U(\g_{<0})$ and $\Ind (F)$. We will invert the sign of the degree, so that we have a $\Z_{\geq 0}-$grading on $U(\g_{<0})$ and $\Ind (F)$. We will say that an element $v \in U(\g_{<0})_{k}$ is homogeneous of degree $k$. Analogously an element $m \in U(\g_{<0})_{k}  \otimes  F$ is homogeneous of degree $k$.
\begin{prop}
	Let $\g=\bigoplus_{i \in \Z} \g_{i}$ be a $\Z-$graded Lie superalgebra. If $F$ is an irreducible finite$-$dimensional $\mathfrak{g}_{\geq 0}-$module, then $\Ind (F)$ has a unique maximal submodule. We denote by $\I (F)$ the quotient of $\Ind (F)$ by the unique maximal submodule.
\end{prop}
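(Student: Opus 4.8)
The plan is to identify the unique maximal submodule explicitly as the sum of all proper submodules, so that the entire content of the statement reduces to proving that this sum is itself proper. Throughout I regard $\Ind(F)=U(\g_{<0})\otimes F$ with its $\Z_{\geq 0}$-grading, whose degree-$0$ component is exactly $F$, and I first record two elementary facts. First, $F$ generates $\Ind(F)$ as a $\g$-module: by the PBW decomposition $U(\g)=U(\g_{<0})U(\g_{\geq 0})$ and $U(\g_{\geq 0})F=F$, so $\Ind(F)=U(\g_{<0})F$. Second, $F$ is stable under $\g_{\geq 0}$, because $\g_{0}$ preserves the degree while $\g_{>0}$ lowers it into the vanishing negative part.

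My first step would be to prove that a submodule $M\subseteq\Ind(F)$ is proper if and only if $M\cap F=0$. Since $M$ is $\g$-stable, it is in particular $\g_{\geq 0}$-stable, and $F$ is $\g_{\geq 0}$-stable, so $M\cap F$ is a $\g_{\geq 0}$-submodule of $F$. As $F$ is irreducible, either $M\cap F=0$ or $M\cap F=F$; in the latter case $F\subseteq M$ and hence $\Ind(F)=U(\g_{<0})F\subseteq M$, forcing $M=\Ind(F)$. Conversely, if $M\cap F=0$ then $M\neq\Ind(F)$ because $F\neq 0$. Thus properness is detected entirely on the degree-$0$ component.

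The main step, and the only delicate point, is to show that every submodule $M$ is $\Z$-graded. I would deduce this from the grading element $t\in\g_{0}$ available in our setting (cf.\ Remark \ref{gradingelement}). Since $[t,x]=\deg(x)\,x$, the element $t$ commutes with $\g_{0}$, so by Schur's lemma it acts on the finite-dimensional irreducible module $F$ by a scalar $\mu$; a direct computation then shows that $t$ acts on $\Ind(F)_{k}$ by the scalar $\mu-k$ (an element of $\g$-degree $-k$ contributes $-k$ via $[t,\cdot]$, while $F$ contributes $\mu$). As these scalars are pairwise distinct, the homogeneous components of $\Ind(F)$ are precisely the eigenspaces of $t$, and any submodule, being $t$-stable, is the direct sum of its intersections with them, i.e.\ it is graded. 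I expect this to be the crux: without a diagonalizable operator separating the degrees the conclusion can genuinely fail, so the argument relies essentially on the presence of the grading element.

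Granting gradedness, the conclusion is immediate. If $M$ is proper then, being graded, $M_{0}=M\cap F=0$, so $M\subseteq\bigoplus_{k\geq 1}\Ind(F)_{k}$. Consequently the sum $N$ of all proper submodules also lies in $\bigoplus_{k\geq 1}\Ind(F)_{k}$, whence $N\cap F=0$ and $N$ is proper by the first step. By construction $N$ contains every proper submodule, so it is the unique maximal submodule, and one sets $\I(F)=\Ind(F)/N$.
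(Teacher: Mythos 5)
Your proposal has one genuine gap, and apart from that it takes a route that is genuinely different from (and more careful than) the paper's, so I will address both.

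The gap is the Schur's lemma step. You argue that $t$ acts on $F$ by a scalar "since $t$ commutes with $\g_0$", but $F$ is assumed irreducible as a $\g_{\geq 0}$-module, not as a $\g_0$-module, and $t$ does not commute with the action of $\g_{>0}$: for homogeneous $x\in\g_k$ with $k>0$ one has $[t,x]=kx$, so on $F$ the operators satisfy $\rho(t)\rho(x)-\rho(x)\rho(t)=k\rho(x)$, which vanishes only if $\rho(x)=0$. Thus Schur's lemma cannot be invoked until you know that $\g_{>0}$ acts trivially on $F$. (The same tacit assumption hides in your claim that $F$ is $\g_{\geq 0}$-stable "because $\g_{>0}$ lowers the degree into the vanishing negative part": in $\Ind(F)$ one has $x\cdot(1\otimes v)=1\otimes xv$, which sits in degree $0$; the stability of $F$ is true, but simply by the definition of the induced module, not because $\g_{>0}$ lowers degree.) The fix stays inside your framework: each homogeneous $x\in\g_{>0}$ maps the generalized $\lambda$-eigenspace of $t$ on $F$ into the generalized $(\lambda+\deg x)$-eigenspace, hence acts nilpotently on the finite-dimensional $F$; by Engel's theorem (in its super version) the joint annihilator $F^{\g_{>0}}$ is nonzero, and it is a $\g_{\geq 0}$-submodule because $\g_{>0}$ is an ideal of $\g_{\geq 0}$, so $F^{\g_{>0}}=F$ by irreducibility. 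This is precisely part (1) of Theorem \ref{keythmsingular}. With that inserted, $F$ becomes an irreducible $\g_0$-module, $t$ is central in $\g_0$, Schur's lemma applies, and your computation $t|_{\Ind(F)_k}=(\mu-k)\mathrm{id}$, the gradedness of submodules, and the final step all go through.

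Once repaired, your proof differs from the paper's at exactly the decisive point. The paper proves the same degree-$0$ characterization of proper submodules and then concludes in one line that the "union" $S$ of all proper submodules is a proper submodule because it contains no nontrivial elements of degree $0$. Read literally, a union of submodules need not be a submodule; read as the sum, the absence of nonzero homogeneous degree-$0$ elements in each summand does not by itself persist in the sum, since elements of a proper submodule may have nonzero degree-$0$ components. Your gradedness-of-submodules step is exactly what closes this hole, and your suspicion that the conclusion "can genuinely fail" without a degree-separating operator is correct: for the one-dimensional abelian $\g$ concentrated in degree $-1$ and $F=\C$, one gets $\Ind(F)\cong\C[x]$ with $x$ acting by multiplication; the proper submodules $(x)$ and $(x-1)$ each avoid nonzero constants, their sum is everything, and there is no unique maximal submodule. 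The price of your argument is that it uses the grading element, i.e., it proves the proposition under condition L2 (cf. Remark \ref{gradingelement}) rather than for an arbitrary $\Z$-graded Lie superalgebra as literally stated; for the application to $\mathcal{A}(K'_4)$, where $t\in\g_0$ is a grading element, this is no restriction at all.
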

\begin{proof}
	First we point out that a submodule $V \neq \left\{ 0 \right\}$ of $\Ind (F)$ is proper if and only if it does not contain nontrivial elements of degree 0. Indeed, if $V$ contains an element $v_{0} \neq 0$ of degree 0, then it contains $1 \otimes F= \mathfrak{g}_{\geq 0}. v_{0} $, due to irreducibility of $F$. Therefore $\g_{<0}. F =\Ind (F) \subseteq V$. The union $S$ of all proper submodules is still a proper submodule of $\Ind (F)$, since $S$ does not contain nontrivial elements of degree 0, thus $S$ is the unique maximal  proper submodule.
\end{proof}
\begin{defi}
	Given a $\g-$module $V$, we call \textit{singular vectors} the elements of:
	\begin{align*}
	\Sing (V) =\left\{v \in V \,\, | \, \, \g_{>0}.v=0\right\}.
	\end{align*}
	Homogeneous components of singular vectors are still singular vectors so we often assume that singular vectors are homogeneous without loss of generality.
	In the case $V=\Ind (F)$ for a $\g_{\geq 0}-$module $F$, we will call \textit{trivial singular vectors} the elements of $\Sing (V) $ of degree 0 and \textit{nontrivial singular vectors} the nonzero elements of $\Sing (V) $ of positive degree.
\end{defi}
\begin{thm}[\cite{kacrudakov},\cite{chenglam}]
	\label{keythmsingular}
	Let $\g$ be a Lie superalgebra that satisfies L1, L2, L3 in Proposition \ref{keythmannihi}; then
	\begin{enumerate}
		\item if $F$ is an irreducible finite$-$dimensional $\mathfrak{g}_{\geq 0}-$module, then the action of  $\mathfrak{g}_{> 0}$ on $F$ is trivial;
		\item the map $F \mapsto \I (F)$ is a bijective map between irreducible finite$-$dimensional $\mathfrak{g}_{ 0}-$modules and irreducible finite conformal $\mathfrak{g}-$modules;
		\item the $\mathfrak{g}-$module $\Ind (F)$ is irreducible if and only if the $\mathfrak{g}_{0}-$module $F$ is irreducible and $\Ind (F)$ has no nontrivial singular vectors.
	\end{enumerate}
\end{thm}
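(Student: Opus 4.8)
**The plan is to prove the three statements of Theorem \ref{keythmsingular} in order, using the structural hypotheses L1, L2, L3 together with the two already-established facts about generalized Verma modules.**

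For part (1), the goal is to show that $\g_{>0}$ acts trivially on any irreducible finite-dimensional $\g_{\geq 0}$-module $F$. The key observation is that $\g_{>0}=\bigoplus_{i>0}\g_i$ is a $\Z$-graded ideal of $\g_{\geq 0}$, so $\g_{>0}.F$ is a $\g_{\geq 0}$-submodule of $F$. If I can show $\g_{>0}.F\neq F$, irreducibility forces $\g_{>0}.F=0$. First I would use finite-dimensionality: since $F$ is finite-dimensional and $\g_{>0}$ raises degree, iterated application of $\g_{>0}$ must eventually land in arbitrarily high degree components, which cannot all be nonzero inside a finite-dimensional space. More precisely, the ascending chain of subspaces $\g_{>0}.F\supseteq \g_{>0}^2.F\supseteq\cdots$ stabilizes at $0$ because $\g_{>0}^k.F$ lives in degree $\geq k$ while $F$ occupies only finitely many degrees; hence $\g_{>0}$ acts nilpotently, and a nonzero nilpotent ideal acting on an irreducible module must act as zero by a Jacobson-density/Engel-type argument. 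This reduces the $\g_{\geq 0}$-action to a $\g_0$-action.

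For part (2), I would use part (1) to identify irreducible finite-dimensional $\g_{\geq 0}$-modules with irreducible finite-dimensional $\g_0$-modules (any such $\g_0$-module becomes a $\g_{\geq 0}$-module by letting $\g_{>0}$ act trivially). Given such an $F$, the module $\I(F)$ is the unique irreducible quotient of $\Ind(F)$, which is well defined by the preceding proposition. The content is that $F\mapsto\I(F)$ is a bijection onto irreducible finite conformal $\g$-modules. Injectivity is immediate since $F$ is recovered as the degree-$0$ component of $\I(F)$. For surjectivity, I would take an arbitrary irreducible finite conformal module $V$; conditions (1) and (2) of Proposition \ref{keythmannihi} (finiteness over $\C[\Theta]$ and local nilpotence of $\g_n$ for $n\gg0$) let me extract a nonzero $\g_{\geq 0}$-stable finite-dimensional subspace on which $\g_{>0}$ acts nilpotently; its top-degree piece yields singular vectors generating an irreducible $\g_0$-submodule $F$, inducing a surjection $\Ind(F)\twoheadrightarrow V$ which factors through $\I(F)$ and is an isomorphism by irreducibility of both sides.

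For part (3), the statement is that $\Ind(F)$ is irreducible if and only if $F$ is irreducible as a $\g_0$-module and $\Ind(F)$ has no nontrivial singular vectors. The forward direction is clear: if $\Ind(F)$ is irreducible then $F$, being its degree-$0$ part, must be irreducible, and an irreducible module has no proper submodules, hence no nontrivial singular vectors (a nontrivial singular vector of positive degree would generate a proper submodule by the proposition above, since it lies in positive degree). For the converse, I would invoke the proposition that $\Ind(F)$ has a \emph{unique} maximal submodule $M$; the crux is to show $M=0$ under the no-singular-vector hypothesis. Here I expect \textbf{the main obstacle}: one must show that any nonzero proper submodule necessarily contains a nontrivial singular vector. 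The natural argument is to take a nonzero homogeneous element of minimal positive degree in the proper submodule and show it is killed by $\g_{>0}$ — this is where hypothesis \textbf{L3} ($\g_{i-d}=[\Theta,\g_i]$) and the depth-$d$ grading \textbf{L1} become essential, since they guarantee that $\g_{>0}$ cannot lower degree and that the minimal-degree element, having nowhere to map within the submodule except to strictly higher degree (impossible, as $\g_{>0}$ raises degree and minimality is violated the other way), must indeed be annihilated. Making this minimal-degree extraction rigorous, and ruling out the degenerate case via the characterization of proper submodules from the earlier proposition, is the technical heart of the proof.
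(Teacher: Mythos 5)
The paper itself does not prove this theorem: it is quoted from \cite{kacrudakov} and \cite{chenglam}, so your proposal can only be judged against the standard arguments in those references. Against that standard, your outline has the right overall shape (nilpotent action of $\g_{>0}$, the unique maximal submodule, extremal homogeneous vectors in proper submodules), but there is a genuine gap running through parts (1) and (3): you repeatedly argue with ``degrees'' of elements of $F$ and of submodules as if these spaces were graded, and they are not. In part (1), $F$ is an abstract finite-dimensional $\g_{\geq 0}$-module; the phrase ``$\g_{>0}^k.F$ lives in degree $\geq k$ while $F$ occupies only finitely many degrees'' has no meaning, because $F$ carries no grading compatible with that of $\g$. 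This is exactly what hypothesis \textbf{L2} is for, and you never invoke it: one takes the element $t$ (in the present paper a grading element, cf.\ Remark \ref{gradingelement}), decomposes $F$ into generalized eigenspaces of $t$, observes that $x\in\g_j$ shifts generalized eigenvalues by $j$, and concludes that each $x \in \g_{>0}$ acts nilpotently because there are only finitely many eigenvalues; only then does your Engel-plus-ideal argument (the subspace $\{v \in F : \g_{>0}.v=0\}$ is a nonzero $\g_{\geq 0}$-submodule) finish part (1).

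The same omission resurfaces in part (3): to extract ``a nonzero homogeneous element of minimal positive degree in the proper submodule'' you must first know that submodules of $\Ind(F)$ are graded, i.e.\ closed under taking homogeneous components. This again comes from \textbf{L2} together with Schur's lemma: $t$ is central in $\g_0$, hence acts on the irreducible $F$ as a scalar $\mu_t$, hence acts on the degree-$k$ component of $\Ind(F)$ as the scalar $\mu_t - k$, distinct for distinct $k$, so every submodule, being $t$-stable, is the direct sum of its homogeneous components. You instead attribute the needed structure to \textbf{L1} and \textbf{L3}, which is a misdiagnosis: \textbf{L3} (surjectivity of $[\Theta,\cdot]$) plays no role in part (3); it is needed, together with \textbf{L1} and the finiteness conditions of Proposition \ref{keythmannihi}, in the surjectivity half of part (2) --- the step you compress into one sentence (``extract \dots its top-degree piece yields singular vectors''), which is in fact the technical heart of the whole theorem and remains unproved in your sketch. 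Your degree bookkeeping in part (3) is also internally inconsistent: you take a minimal-degree element while asserting that $\g_{>0}$ raises degree; with the paper's convention of nonnegative degrees on $\Ind(F)$, the action of $\g_{>0}$ lowers degree, and it is precisely minimality that forces the extremal element to be singular. The intended extremal argument is correct, but only once the gradedness of submodules is in place.
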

We recall the notion of duality for conformal modules (see for further details \cite{bklr}, \cite{cantacasellikac}). Let $R$ be a conformal superalgebra and $M$ a conformal module over $R$.
\begin{defi}
	The conformal dual $M^{*}$ of $M$ is defined by
	\begin{align*}
	M^{*}=\left\{f_{\lambda}:M\rightarrow \C[\lambda] \,\, | \,\, f_{\lambda}(\partial m)=\lambda f_{\lambda}(m), \,\, \forall m \in M\right\}.
	\end{align*}
	The structure of $\C[\partial]-$module is given by $(\partial f)_{\lambda}(m)=-\lambda  f_{\lambda}(m)$, for all $f \in M^{*}$, $ m \in M$. The $\lambda-$action of $R$ is given, for all $a \in R$, $m \in M$, $f \in M^{*}$, by:
	\begin{align*}
	(a_{\lambda}f)_{\mu}(m)=-(-1)^{p(a)p(f)}f_{\mu-\lambda}(a_{\lambda}m).
	\end{align*}
\end{defi}
\begin{defi}
	Let $T:M\rightarrow N$ be a morphism of $R-$modules, i.e. a linear map such that for all $a\in R$ and $m\in M$:
	\begin{itemize}
		\item [i:] $T(\partial m)=\partial T(m)$,
		\item [ii:] $T(a_{\lambda} m)=a_{\lambda} T(m)$.
	\end{itemize}
	The dual morphism $T^{*}:N^{*} \rightarrow M^{*}$ is defined, for all $f \in N^{*}$ and $m \in M$, by:
	\begin{align*}
	\left[T^{*}(f)\right]_{\lambda}(m)=-f_{\lambda}\left(T(m)\right).
	\end{align*}
\end{defi}

\section{The conformal superalgebra $K'_{4}$}
\label{section$K'_{4}$}
In this section we introduce and study the contact Lie superalgebras and related conformal superalgebras. Let $\inlinewedge(N)$ be the Grassmann superalgebra in the $N$ odd indeterminates $\xi_{1},...,\xi_{N}$. Let $t$ be an even indeterminate and $\inlinewedge (1,N)=\C[t,t^{-1}] \otimes \inlinewedge(N)$ which we consider as an associative algebra in the natural way omitting the symbol $\wedge$ between the indeterminates $\xi_i$'s. We also consider the Lie superalgebra of derivations of $\inlinewedge (1,N)$:
\begin{equation*}
W(1,N)=\bigg\{ D=a \partial_{t}+\sum ^{N}_{i=1} a_{i} \partial_{i} \,\, | \,\, a,a_{i} \in \displaywedge (1,N)\bigg\},
\end{equation*}
where $\partial_{t}=\frac{\partial}{\partial{t}}$ and $\partial_{i} =\frac{\partial}{\partial{\xi_{i}}}$ for every $i \in \left\{1,...,N \right\}$.\\
Let us consider the contact form $\omega = dt-\sum_{i=1}^{N}\xi_{i} d\xi_{i} $. The contact Lie superalgebra $K(1,N)$ is defined by:
\begin{equation*}
K(1,N)=\left\{D \in W(1,N) \,\, | \,\, D\omega=f_{D}\omega \text{ for some } f_{D} \in \displaywedge (1,N)\right\}.
\end{equation*}
We denote by $K'(1,N)$ the derived algebra $[K(1,N),K(1,N)]$ of $K(1,N)$.
Analogously, let $\inlinewedge  (1,N)_{+}=\C[t] \otimes \inlinewedge (N)$. We consider the Lie superalgebra of derivations of $\inlinewedge  (1,N)_{+}$:
\begin{equation*}
W(1,N)_{+}=\bigg\{ D=a \partial_{t}+\sum ^{N}_{i=1} a_{i} \partial_{i} \,\, | \,\, a,a_{i} \in \displaywedge (1,N)_{+}\bigg\}.
\end{equation*}
The Lie superalgebra $K(1,N)_{+}$ is defined by:
\begin{equation*}
K(1,N)_{+}=\left\{D \in W(1,N)_{+} \,\, | \,\, D\omega=f_{D}\omega \,\, for \,\, some \,\, f_{D} \in \displaywedge (1,N) _{+}\right\}.
\end{equation*}
One can define on $\inlinewedge (1,N)$ a Lie superalgebra structure as follows: for all $f,g \in \inlinewedge(1,N)$ we let:
\begin{equation}
\label{bracketlie}
[f,g]=\Big(2f-\sum_{i=1}^{N} \xi_{i}  \partial_{i} f \Big)\partial_{t}{g}-\partial_{t}{f}\Big(2g-\sum_{i=1}^{N} \xi_{i} \partial_{i} g\Big)+(-1)^{p(f)}\Big(\sum_{i=1}^{N} \partial_{i}  f \partial_{i}  g \Big).
\end{equation}
It is useful to restate \eqref{bracketlie} in a more explicit way. We adopt the following notation: we let $\mathcal I$ be the set of (finite) sequences of elements in $\{1,\ldots,N\}$; for notational convenience  we usually write $I=i_1\cdots i_r$ instead of $I=(i_1,\ldots,i_r)$ and we think of $\mathcal I$ as a monoid by juxtaposition (i.e. if $I=i_1\cdots i_r$ and $J=j_1\cdots j_s$ we let $IJ=i_1\cdots i_rj_1\cdots j_s$); if $I=i_1 \cdots i_r\in \mathcal I$  we let $\xi_{I}=\xi_{i_1}\cdots \xi_{i_r}$ and $|I|=r$. For $m,n\in \Z$ and $I,J\in \mathcal I$ we have
\begin{equation}\label{bracketliebis}
	[t^m\xi_{I},t^n\xi_J]=(2n-2m-n|I|+m|J|)t^{m+n-1}\xi_{IJ}+(-1)^{|I|}t^{m+n}\sum_i \de_i\xi_I\,\de_i \xi_J.
\end{equation}
We recall that $K(1,N) \cong \inlinewedge(1,N)$ as Lie superalgebras via the following map (see \cite{{chengkac2}}):
\begin{gather*}
\displaywedge(1,N) \longrightarrow  K(1,N) \\
f \longmapsto 2f \partial_{t}+(-1)^{p(f)} \sum_{i=1}^{N} (\xi_{i} \partial_{t} f+ \partial_{i}f )(\xi_{i} \partial_{t} + \partial_{i}).
\end{gather*}
From now on we will always identify elements of $K(1,N) $ with elements of $\inlinewedge(1,N)$. We consider on $K(1,N)$ the standard grading, i.e. for every $t^{m} \xi_{i_{1}} \cdots \xi_{i_{s}} \in K(1,N)$ we have $\deg(t^{m} \xi_{i_{1}} \cdots \xi_{i_{s}})=2m+s-2$.\\
Next target is to realize $K(1,N)_{+}$ as the annihilation superalgebra of a conformal superalgebra.
In order to do this, we construct a formal distribution superalgebra using the following family of formal distributions:
\begin{equation*}
\mathcal{F}=\bigg\{A(z):= \sum_{m \in \Z}(t^{m}A)z^{-m-1}=A \delta(t-z), \,\, \forall    A \in \displaywedge(N)\bigg\}. 
\end{equation*}
Note that the set of all the coefficients of formal distributions in $\mathcal{F}$ spans $\inlinewedge(1,N)$.
\begin{prop}
	\label{propnproducts}
	The pair $(\inlinewedge(1,N),\mathcal F)$ is a formal distribution superalgebra. More precisely, for all $I,J\in \mathcal I$ we have
	\begin{align}
	\label{nproducts}
	(\xi_I(z)_{(0)}\xi_J(z))&=(|I|-2)\partial_{z}\xi_{IJ}(z)+(-1)^{|I|} \sum _{i=1}^N (\partial _{i}\xi_I \: \partial _{i}\xi_J)(z); \\ \nonumber
	(\xi_I(z)_{(1)}\xi_J(z))&=(|I|+|J|-4)\xi_{IJ}(z); \\ \nonumber
	(\xi_I(z)_{(n)}\xi_J(z))&=0 \, \, for \, \, n > 1.
	\end{align}
	In particular the conformal superalgebra associated with $(\inlinewedge(1,N),\mathcal F)$  is $\mathcal{\bar{F}}=\mathbb{C}[\partial_{z}]\mathcal{F}$. 
\end{prop}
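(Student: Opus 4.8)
The plan is to compute the commutator $[\xi_I(z),\xi_J(w)]$ explicitly and then extract the $n$-products from the equivalent characterization of locality recalled above: two distributions are local iff their commutator is a finite sum $\sum_j(\xi_I(w)_{(j)}\xi_J(w))\frac{\partial_w^j}{j!}\delta(z-w)$, in which case $(\xi_I(w)_{(j)}\xi_J(w))=\Res_z(z-w)^j[\xi_I(z),\xi_J(w)]$. Thus it suffices to compute the residues $\Res_z(z-w)^j[\xi_I(z),\xi_J(w)]$ for every $j\geq0$, check that they vanish for $j\gg0$ (which gives mutual locality, hence that $(\inlinewedge(1,N),\mathcal F)$ is a formal distribution superalgebra), and verify that they coincide with \eqref{nproducts}.

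First I would expand, using $\xi_I(z)=\sum_m(t^m\xi_I)z^{-m-1}$ together with the explicit bracket \eqref{bracketliebis} and the rewriting $2n-2m-n|I|+m|J|=(2-|I|)n+(|J|-2)m$,
\begin{align*}
[\xi_I(z),\xi_J(w)]&=\sum_{m,n}\big((2-|I|)n+(|J|-2)m\big)(t^{m+n-1}\xi_{IJ})z^{-m-1}w^{-n-1}\\
&\quad+(-1)^{|I|}\sum_{m,n}\Big(t^{m+n}\sum_i\partial_i\xi_I\,\partial_i\xi_J\Big)z^{-m-1}w^{-n-1},
\end{align*}
and treat the two summands separately. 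For each I would extract $\Res_z(z-w)^j$ by noting that the coefficient of $z^{-1}$ in $(z-w)^jz^{-m-1}$ is $\binom{j}{m}(-w)^{j-m}$ for $0\leq m\leq j$ and vanishes otherwise, then reindex the surviving sum by the total $t$-degree ($p=m+n-1$ for the first summand, $p=m+n$ for the second).

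The computation then reduces to the alternating binomial identities $\sum_{m=0}^j\binom{j}{m}(-1)^{j-m}=\delta_{j,0}$ and $\sum_{m=0}^j m\binom{j}{m}(-1)^{j-m}=\delta_{j,1}$. The first kills the second summand for all $j\geq1$, leaving only its $j=0$ contribution $(-1)^{|I|}\sum_i(\partial_i\xi_I\,\partial_i\xi_J)(w)$; for the first summand these two identities leave nonzero contributions only at $j=0$ and $j=1$. Recognizing $\sum_p(t^p\xi_{IJ})(p+1)w^{-p-2}=-\partial_w\xi_{IJ}(w)$ then produces $(|I|-2)\partial_w\xi_{IJ}(w)$ at $j=0$ and $(|I|+|J|-4)\xi_{IJ}(w)$ at $j=1$, with all $j\geq2$ vanishing. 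This is exactly \eqref{nproducts}, and the vanishing for $j\geq2$ establishes locality, so $(\inlinewedge(1,N),\mathcal F)$ is a formal distribution superalgebra. For the last assertion I would read off from \eqref{nproducts} that every $n$-product of the generators $\xi_I(z)$ lies in $\C[\partial_z]\mathcal F$ (a combination of $\partial_z\xi_{IJ}(z)$, $\xi_{IJ}(z)$ and the $(\partial_i\xi_I\,\partial_i\xi_J)(z)$, each a generator or its $\partial_z$-derivative); since $\C[\partial_z]\mathcal F$ is manifestly stable under $\partial_z$ and linear combinations and contains $\mathcal F$, it is closed under all operations defining $\overline{\mathcal F}$ and is contained in any such closure, whence $\overline{\mathcal F}=\C[\partial_z]\mathcal F$.

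I expect the main obstacle to be purely bookkeeping: performing the two reindexings correctly and tracking the signs and degree shifts so that the $w$-powers reassemble into $\xi_{IJ}(w)$ and its derivative, rather than any conceptual difficulty. The combinatorial identities are elementary, but locality itself hinges on the exact vanishing $\sum_{m=0}^j m^a\binom{j}{m}(-1)^{j-m}=0$ for $a<j$ (the $j$-th finite difference of a polynomial of degree $<j$), which is precisely what forces all products of order $\geq2$ to be zero.
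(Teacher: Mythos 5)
Your residue computations are correct, and the final identification of $\overline{\mathcal F}=\C[\partial_z]\mathcal F$ is fine, but there is a genuine logical gap in how you deduce locality. The implication ``$\Res_z(z-w)^j[\xi_I(z),\xi_J(w)]=0$ for all $j\gg 0$, hence the distributions are mutually local'' is false in general: writing $a(z,w)=\sum_{m,n}a_{m,n}z^{-m-1}w^{-n-1}$, one has $\Res_z(z-w)^ja(z,w)=\sum_n\bigl(\sum_{m=0}^j\binom{j}{m}(-w)^{j-m}a_{m,n}\bigr)w^{-n-1}$, so the residues only see the coefficients $a_{m,n}$ with $m\geq 0$. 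For instance the distribution $\sum_{m,n\geq 0}z^mw^n$ has \emph{all} residues equal to zero but is not local (no power of $(z-w)$ annihilates it). The characterization you cite runs in the other direction: only \emph{after} one knows that $[a(z),b(w)]$ is a finite sum $\sum_j c^j(w)\frac{\partial_w^j}{j!}\delta(z-w)$ may one identify $c^j(w)=\Res_z(z-w)^j[a(z),b(w)]$. Since both the locality claim (needed for ``formal distribution superalgebra'') and the identification of the $n$-products rest on this step, the proof as structured is incomplete.

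The gap is easy to close, and your bookkeeping is essentially all that is needed. One fix (the paper's route) is to perform your reindexing on the full double sum rather than on the residues: substituting $h=m+n-1$ in the first summand of \eqref{bracketliebis} and $l=m+n$ in the second, the commutator assembles directly into $(|I|-2)\partial_w\xi_{IJ}(w)\,\delta(z-w)+(|I|+|J|-4)\xi_{IJ}(w)\,\partial_w\delta(z-w)+(-1)^{|I|}\sum_i(\partial_i\xi_I\,\partial_i\xi_J)(w)\,\delta(z-w)$, using $\delta(z-w)=\sum_m z^{-m-1}w^m$ and $\partial_w\delta(z-w)=\sum_m mz^{-m-1}w^{m-1}$; exhibiting this finite delta-decomposition proves locality and gives the products at once. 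Alternatively, your finite-difference observation can be deployed where it actually proves locality: the coefficient of $z^{-m-1}w^{-n-1}$ in $(z-w)^2[\xi_I(z),\xi_J(w)]$ is $a_{m+2,n}-2a_{m+1,n+1}+a_{m,n+2}$, and since by \eqref{bracketliebis} each $a_{m,n}$ is an affine-linear polynomial in $(m,n)$ times an element depending only on $m+n$, this second difference vanishes identically, i.e. $(z-w)^2[\xi_I(z),\xi_J(w)]=0$, which is locality by definition; the residue formula then becomes legitimate and your computation of the $j=0,1$ products goes through verbatim.
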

\begin{proof}
Let's show that $\xi_I(z)$ and $\xi_J(z)$ are local. We have:
	\begin{align*}
	[\xi_I(z)&,\xi_J(w)]=\sum_{m,n \in \Z} [t^{m}\xi_I,t^{n}\xi_J]z^{-m-1} w^{-n-1}\\
	&=\sum_{m,n \in \Z}\Big(  \left(n\left(2-|I| \right) -m \left(2-|J|\right) \right) t^{m+n-1}\xi_{IJ} \, +(-1)^{|I|}t^{m+n}\sum_{i=1}^{N} \partial_{i}\xi_I \: \partial_{i}\xi_J \Big)  z^{-m-1} w^{-n-1} \\
	\end{align*}
	We let $h=m+n-1$ in the former sum and $l=m+n$ in the latter and we obtain
	\begin{align*}
	&[\xi_I(z),\xi_J(w)]\\
	&=	\sum_{h,m \in \Z}  \left(  \left(h-m+1\right) \left(2-|I|\right) -m \left(2-|J|\right) \right) t^h \xi_{IJ} \frac{z^{-m-1} }{w^{-(m-h-2)}}\\
	&\hspace{5mm}+ \sum_{l,m \in \Z}(-1)^{r}\sum_{i=1}^{N} t^l\partial_{i}\xi_I \partial_{i}\xi_J \frac{z^{-m-1} }{w^{-(m-l-1)}} \\
	&=\sum_{h,m \in \Z} (h+1)(2-|I|)t^h\xi_{IJ} w^{-h-2} z^{-m-1} w^{m}+\sum_{h,m \in \Z}m(|I|+|J|-4)t^h\xi_{IJ}w^{-h-1} z^{-m-1} w^{m-1} \\
	&\hspace{5mm}+\sum_{l,m \in \Z}(-1)^{|I|}\sum_{i=1}^{N} t^l\partial_{i}\xi_I \partial_{i}\xi_J w^{-l-1} z^{-m-1} w^{m}\\
	&=(|I|-2) \partial_{w} (\xi_{IJ}(w)) \delta(z-w) +(|I|+|J|-4) \xi_{IJ}(w) \partial_{w} \delta(z-w)\\
	&\hspace{5mm}+(-1)^{|I|}\sum_{i=1}^{N} (\partial_{i}\xi_I \: \partial_{i}\xi_J) (w) \delta(z-w).
	\end{align*}
	All results follow.
\end{proof}
We can say something more about the conformal superalgebra $\mathcal{\bar{F}}$ associated with the formal distribution superalgebra ($K(1,N)$,$\mathcal{F}$).
\begin{prop}
	\label{linearindep}
	The conformal superalgebra $\mathcal{\bar{F}}=\mathbb{C}[\partial_{z}]\mathcal{F}$ is a free $\mathbb{C}[\partial_{z}]-$module.
\end{prop}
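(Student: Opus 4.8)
The plan is to exhibit an explicit free basis. By Proposition \ref{propnproducts} we already know $\mathcal{\bar F}=\C[\partial_{z}]\mathcal F$, and since the assignment $A\mapsto A(z)$ is $\C$-linear while $\{\xi_I : I \text{ strictly increasing}\}$ (including the empty sequence) is a basis of $\inlinewedge(N)$, the family $\{\xi_I(z): I \text{ strictly increasing}\}$ generates $\mathcal{\bar F}$ as a $\C[\partial_{z}]$-module. It therefore suffices to prove that these generators are \emph{free}, i.e.\ that the elements $\{\partial_{z}^{k}\xi_I(z): k\geq 0,\ I \text{ strictly increasing}\}$ are linearly independent over $\C$; equivalently, that the canonical surjection $\C[\partial_{z}]\otimes_{\C}\inlinewedge(N)\to \mathcal{\bar F}$ is injective.

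First I would record the action of $\partial_{z}$ on a generator. Starting from $\xi_I(z)=\sum_{m\in\Z}(t^{m}\xi_I)z^{-m-1}$ and differentiating termwise, for every $k\geq 0$ one gets
\[
\partial_{z}^{k}\xi_I(z)=\sum_{m\in\Z}p_{k}(m)\,(t^{m}\xi_I)\,z^{-m-1-k},\qquad p_{k}(m):=\prod_{j=1}^{k}(-m-j),
\]
where $p_{k}$ is a polynomial in $m$ of degree $k$ whose zeros are exactly $m\in\{-1,\dots,-k\}$.

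Next, suppose a finite combination $\sum_{k\geq 0}\partial_{z}^{k}\big(A_{k}(z)\big)=0$ with $A_{k}\in\inlinewedge(N)$, where I have collected, for each $k$, the generators appearing together with $\partial_{z}^{k}$ into a single element $A_{k}$. Expanding as above and extracting, for each $n\in\Z$, the coefficient of $z^{-n-1}$ (so $m=n-k$) gives
\[
\sum_{k\geq 0}p_{k}(n-k)\,\big(t^{\,n-k}A_{k}\big)=0\quad\text{in }\inlinewedge(1,N).
\]
Here the key structural observation is the $t$-grading $\inlinewedge(1,N)=\bigoplus_{l\in\Z}t^{l}\inlinewedge(N)$: the summand indexed by $k$ lies in the homogeneous component $t^{\,n-k}\inlinewedge(N)$, and for distinct $k$ these components are distinct. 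Hence each summand vanishes separately, yielding $p_{k}(n-k)\,A_{k}=0$ in $\inlinewedge(N)$ for all $k$ and all $n$. Finally, for a fixed $k$ I would choose $n$ large enough that $n-k\notin\{-1,\dots,-k\}$, so that $p_{k}(n-k)\neq 0$ and therefore $A_{k}=0$. As $k$ is arbitrary, all $A_{k}$ vanish, which proves injectivity and exhibits $\{\xi_I(z)\}_{I \text{ strictly increasing}}$ as a free $\C[\partial_{z}]$-basis of $\mathcal{\bar F}$.

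The argument is essentially bookkeeping, and I do not expect a serious obstacle; the only point requiring care is disentangling the two summation indices $m$ and $k$. This is exactly what the $t$-grading of the coefficient algebra $\inlinewedge(1,N)$ accomplishes: applying $\partial_{z}$ shifts the exponent of $z$ by $-k$, which must be matched by the exponent $n-k$ of $t$, so that the power of $t$ records $k$ unambiguously and decouples the contributions of different derivatives.
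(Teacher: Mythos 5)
Your proof is correct and follows essentially the same route as the paper's: both expand the alleged $\C[\partial_z]$-linear relation in powers of $z$ and use the linear independence of the monomials $t^{m}\xi_I$ in $\inlinewedge(1,N)$ to force every coefficient to vanish. The only difference is organizational — the paper groups the relation by basis elements of $\inlinewedge(N)$ and leaves the final step (that $P(\partial_z)\,\delta(t-z)=0$ forces $P=0$) implicit, whereas you group by derivative order and make exactly that step explicit via the polynomials $p_k$ and the $t$-grading.
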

\begin{proof}
	If $A_1,A_2,\ldots,A_s$ is a basis of $\inlinewedge(N)$ then $A_1\delta(t-z),A_2\delta(t-z),\ldots,A_s \delta(t-z)$ is a basis of  $\mathcal{F}$.
	Let us consider a finite linear combination, with coefficients in $\mathbb{C}[\partial_{z}]$, of elements of this basis:
	\begin{align*}
	\sum^{s}_{i=1} P_{i}(\partial_{z}) A_{i} \delta(t-z)=0, \\
	\end{align*}
	where  $P_{i}(\partial_{z}) \in \mathbb{C}[\partial_{z}]$ for every $ 1 \leq i \leq s$. From linear independence of the $A_{i}$'s, we obtain for every $1\leq i \leq s$:
	\begin{align*}
	P_{i}(\partial_{z}) \delta(t-z)=0 .
	\end{align*}
	Therefore every coefficient $P_{i}$ must be 0.
\end{proof}
We will identify $\mathcal{\bar{F}}=\mathbb{C} [\partial_{z}]\otimes\mathcal{F}$ with $K_{N}:=\C[\partial] \otimes \inlinewedge(N)$. We also identify $\partial_{z}$ with $\partial $ and every $A(z) \in \mathcal{F}$ with $A \in \inlinewedge(N)$.
We will refer to $K_{N}$ as the conformal superalgebra associated with $K(1,N)$. For all $I,J\in \mathcal I$ the $\lambda-$bracket is given by
\begin{equation}
\label{lambdabracket}
[{\xi_I}_{\lambda}\xi_J]=(|I|-2)\partial \xi_{IJ}+(-1)^{|I|}\sum^{N}_{i=1}\partial_{i}\xi_I\partial_{i}\xi_J+\lambda(|I|+|J|-4)\xi_{IJ},
\end{equation}
by Proposition \ref{propnproducts}.
In \cite{kac1} it is shown that the annihilation superalgebra of $K_{N}$ is $\mathcal{A}(K_{N})=K(1,N)_{+}$ and that it satisfies conditions L1, L2, L3. Thus, the study of finite irreducible modules over the conformal superalgebra $K_{N}$ is reduced to the study of singular vectors of Verma modules on $K(1,N)_{+}$.\\
Now we concentrate in the special case $N=4$, because the conformal superalgebra $K_{4}$ is not simple. The derived superalgebra $K'_{4}$ is one of the exceptional cases appearing in the classification of finite simple conformal superalgebras  in \cite{fattorikac}. Our main target is to study all finite irreducible modules over the conformal superalgebra $K'_{4}$.

In order to describe $K_4'$ explicitly we need to introduce the following terminology.
Let $V$ be a vector space and $B=\left\{b_{i}\right\}_{i \in \I}$ be a basis of $V$. An element $v \in V$ can be uniquely expressed as $v= \sum_{i} c_{i} b_{i}$. The support of $v$ with respect to $B$ is $\Supp_B v=\left\{b_{i} \, : \, c_{i} \neq 0\right\}$. We will usually drop the index $B$ if there is no risk of confusion.

Recall that we denote by $\mathcal I$ the set of all sequences with entries in $\{1,2,3,4\}$. We also denote by $\mathcal I_{\neq}$ the set of sequences in $\mathcal I$ with distinct entries and by $\mathcal I_<$ the set of sequences in $\mathcal I$ with strictly increasing entries. For typographical reasons we simply denote by $i_1\cdots i_r$ the sequence $(i_1,\ldots,i_r)$.
\begin{prop}\label{xi1234}
	The element $\xi_{1234} \notin K'_{4}$. More precisely:
	\begin{align*}
	K'_{4}=\langle \{\partial^{k}\xi_{I}, \, \partial^{l}\xi_{1234}:\,   I\in \mathcal I_<,\,I\neq 1234, \, k\geq 0,\,l>0 \}\rangle .
	\end{align*}
\end{prop}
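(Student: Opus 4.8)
The plan is to compute $R':=K_4'$ explicitly in the $\C$-basis $\{\partial^k\xi_I:k\ge 0,\ I\in\mathcal I_<\}$ of $K_4=\C[\partial]\otimes\inlinewedge(4)$, by reducing every $n$-product to the elementary ones of Proposition~\ref{propnproducts}. Since the $n$-products are bilinear it suffices to treat $a=\partial^p\xi_I$ and $b=\partial^q\xi_J$. Write $(n)_j:=n(n-1)\cdots(n-j+1)$ for the falling factorial. Iterating Definition~\ref{definizionesuperalgebraconforme}(iii) gives $(\partial^p\xi_I{}_{(n)}b)=(-1)^p(n)_p\,(\xi_I{}_{(n-p)}b)$, which vanishes when $n<p$; and iterating \eqref{deadestra} gives $(\xi_I{}_{(n)}\partial^q\xi_J)=\sum_{j=0}^q\binom{q}{j}(n)_j\,\partial^{q-j}(\xi_I{}_{(n-j)}\xi_J)$. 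In particular the choice $n=0$ annihilates every term with $j\ge 1$ and yields the key identity $(\xi_I{}_{(0)}\partial^q\xi_J)=\partial^q(\xi_I{}_{(0)}\xi_J)$.

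For the inclusion ``$\supseteq$'' I would produce each required basis element as an explicit $n$-product. The formula $(\xi_I{}_{(1)}1)=(|I|-4)\xi_I$ shows $\xi_I\in R'$ for every $I$ with $|I|\le 3$, i.e. for every $I\neq 1234$. For the elements carrying a positive power of $\partial$ I would apply the key identity: taking $J=\emptyset$ gives $(\xi_I{}_{(0)}\partial^{k-1}1)=(|I|-2)\partial^{k}\xi_I$, so $\partial^{k}\xi_I\in R'$ for all $k\ge 1$ whenever $|I|\neq 2$ — this includes $\partial^{l}\xi_{1234}$ for $l>0$, as then $|I|=4$ and $|I|-2=2\neq 0$. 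For $|I|=2$, say $I=ab$, the Grassmann term $\sum_i\partial_i\xi_a\,\partial_i\xi_b$ vanishes for $a\neq b$, so $(\xi_a{}_{(0)}\partial^{k-1}\xi_b)=-\partial^{k}\xi_{ab}$, covering the remaining cases. Hence every basis element except $\xi_{1234}$ lies in $R'$.

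The heart of the argument is the reverse inclusion, namely $\xi_{1234}\notin R'$; I would prove this by showing that the coefficient of $\xi_{1234}$ (i.e. of $\partial^0\xi_{1234}$) is zero in every $n$-product. By the reductions above this coefficient in $(\xi_I{}_{(n)}\partial^q\xi_J)$ can only come from the summand $j=q$ (all others carry $\partial^{q-j}$ with $q-j\ge 1$), so it equals $(n)_q$ times the $\partial^0\xi_{1234}$-coefficient of $(\xi_I{}_{(m)}\xi_J)$ with $m=n-q$. I would then inspect this last product via Proposition~\ref{propnproducts}: it is $0$ for $m>1$; for $m=1$ it is $(|I|+|J|-4)\xi_{IJ}$, whose $\xi_{1234}$-component is nonzero only if $\xi_{IJ}=\pm\xi_{1234}$, forcing $|I|+|J|=4$ and hence killing the scalar $|I|+|J|-4$; and for $m=0$ its $\partial$-free part $(-1)^{|I|}\sum_i\partial_i\xi_I\,\partial_i\xi_J$ is a combination of monomials $\pm\xi_{(I\setminus i)(J\setminus i)}$, each omitting the index $i$, so none can equal $\xi_{1234}$. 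Thus no $n$-product contributes $\xi_{1234}$, and since $R'$ is $\partial$-stable (by \eqref{deadestra}) and is the span of these products, the two inclusions give exactly $R'=\langle\{\partial^k\xi_I,\partial^l\xi_{1234}:I\in\mathcal I_<,\,I\neq 1234,\,k\ge 0,\,l>0\}\rangle$.

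The main obstacle is this last vanishing statement, because a priori one must rule out contributions to $\xi_{1234}$ from all $n$-products simultaneously, including the complicated ones with powers of $\partial$ on both arguments. The two reduction identities are exactly what defuse this difficulty: they confine the $\partial^0\xi_{1234}$-coefficient to the single elementary product $(\xi_I{}_{(m)}\xi_J)$, after which the numerical coincidence that $|I|+|J|-4$ vanishes precisely on the degree-$4$ targets finishes the proof.
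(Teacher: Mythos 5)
Your proposal is correct and takes essentially the same route as the paper: both directions are read off from the explicit $\lambda$-bracket \eqref{lambdabracket}, with $\xi_{1234}\notin K_4'$ following because the coefficient $|I|+|J|-4$ vanishes exactly when $\xi_{IJ}=\pm\xi_{1234}$ while the Grassmann term $\sum_i\partial_i\xi_I\,\partial_i\xi_J$ always omits an index, and the remaining basis elements exhibited as explicit products. The only differences are cosmetic: you make explicit the reduction of general products $(\partial^p\xi_I{}_{(n)}\partial^q\xi_J)$ to the elementary ones (a step the paper compresses into ``by bilinearity \dots\ immediate consequence of \eqref{lambdabracket}''), and you use slightly different generating products (e.g.\ $(\xi_I{}_{(1)}1)$ and $(\xi_I{}_{(0)}\partial^{k-1}1)$ in place of the paper's $-({\xi_{i}\,}_{(0)}\xi_{iI})$ and $(-\tfrac12{}_{(0)}\partial^{k-1}\xi_I)$).
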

\begin{proof}
	By Proposition \ref{linearindep}, we know that $\left\{ \partial^{k}\xi_{I}:\,k \geq 0,\,I\in \mathcal I_< \right\} $ is a basis for $K_{4}$.
	We first show that $ \xi_{1234} \notin K'_{4}$. Since the $j-$products are bilinear maps, it is sufficient to show that $ \xi_{1234}$ does not belong to $\Supp ({\xi_I}_{(j)}\xi_J)$, for all $I,J\in \mathcal I_<$.  This is an immediate consequence of \eqref{lambdabracket}. 
	
	Now we show that every element $\partial^{k}\xi_I$ with $k>0$ or $I\neq 1234$ lies in  $K'_{4}$: \begin{enumerate}\item if $k>0$, then $\partial^{k}\xi_I=\left(-\frac{1}{2} _{(0)} \partial^{k-1}\xi_I \right)$ by \eqref{lambdabracket};
	\item if $k=0$ and $I\neq 1234$ let $i\in \{1234\}$ be such that $\xi_{iI}\neq 0$. Then we have $\xi_I=-({\xi_{i}\,}_{(0)}\xi_{iI})$ by \eqref{lambdabracket}.
	\end{enumerate}
	
\end{proof}
\begin{prop}
	\label{3.4}
	The element $t^{-1}\xi_{1234} \notin K'(1,4)$. More precisely:
	\begin{align*}
	K'(1,4)=\langle\{ t^{k}\xi_{I}, \, t^{l}\xi_{1234}:\, I\in \mathcal I_<,\,I\neq 1234, \, k,l \in \Z, \, l \neq -1\}\rangle .
	\end{align*}
\end{prop}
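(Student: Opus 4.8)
The plan is to work in the basis $\{t^m\xi_I : m\in\Z,\ I\in\mathcal I_<\}$ of $K(1,4)$ and to read off, from \eqref{bracketliebis}, the coordinates of an arbitrary bracket. Since $K'(1,4)=[K(1,4),K(1,4)]$ is by definition the $\C$-span of all such brackets, the statement splits into two tasks: (a) show that the coordinate of $t^{-1}\xi_{1234}$ vanishes in every bracket, so that $t^{-1}\xi_{1234}\notin K'(1,4)$; and (b) exhibit, for every remaining basis vector, an explicit bracket equal to a nonzero multiple of it. This mirrors the argument for the conformal superalgebra in Proposition \ref{xi1234}.

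For (a) I would analyze the two summands of \eqref{bracketliebis} separately. A monomial $\xi_{1234}$ can only arise from the first summand $(2n-2m-n|I|+m|J|)\,t^{m+n-1}\xi_{IJ}$, and only when $I,J$ are disjoint with $I\cup J=\{1,2,3,4\}$, so that $|I|+|J|=4$; the contraction term $(-1)^{|I|}t^{m+n}\sum_i\partial_i\xi_I\,\partial_i\xi_J$ cannot contribute, since each summand $\partial_i\xi_I\,\partial_i\xi_J$ is free of the index $i$ and hence cannot be proportional to $\xi_{1234}$. Substituting $|J|=4-|I|$ simplifies the scalar coefficient of the first summand to $(m+n)(2-|I|)$, while the attached power of $t$ is $t^{m+n-1}$. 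To land on $t^{-1}$ one needs $m+n=0$, which forces this coefficient to be $0$. Hence the coordinate of $t^{-1}\xi_{1234}$ is zero in every bracket, therefore in every element of $K'(1,4)$, and since it equals $1$ in $t^{-1}\xi_{1234}$ itself we conclude $t^{-1}\xi_{1234}\notin K'(1,4)$; this also gives the inclusion $\subseteq$.

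For (b) I would use the brackets $[t^m,t^n\xi_I]$, i.e. take the constant $1=\xi_\emptyset$ on the left. Here the contraction term vanishes because $\partial_i 1=0$, and \eqref{bracketliebis} gives $[t^m,t^n\xi_I]=(2n+m(|I|-2))\,t^{m+n-1}\xi_I$. To produce $t^k\xi_I$ I impose $m+n=k+1$ and view the scalar $2n+m(|I|-2)$ as an affine function of $m$ along this line; substituting $n=k+1-m$ shows its slope is $|I|-4$. When $I\neq 1234$ we have $|I|\le 3$, so the slope is nonzero and the scalar is nonzero for some integer choice of $m$; thus $t^k\xi_I\in K'(1,4)$ for all $k\in\Z$. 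When $I=1234$ the scalar reduces to $2(m+n)=2(k+1)$, which is nonzero exactly when $k\neq -1$, giving $t^l\xi_{1234}\in K'(1,4)$ for all $l\neq -1$. Together with (a) this establishes both inclusions, hence the claimed equality.

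The only genuinely delicate point is the completeness of step (a): one must be sure that \emph{no} bracket whatsoever, and in particular none coming from the contraction term, can contribute to the $t^{-1}\xi_{1234}$ coordinate. The observation that $\partial_i\xi_I\,\partial_i\xi_J$ never involves $\xi_i$ disposes of this, after which everything reduces to the two short coefficient computations above.
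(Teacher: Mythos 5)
Your proof is correct. The exclusion argument (part (a)) is essentially the paper's: both reduce to the observation that a contribution to $t^{-1}\xi_{1234}$ would force $m+n=0$ and $|I|+|J|=4$, which kills the coefficient $2n-2m-n|I|+m|J|=(m+n)(2-|I|)$; you are slightly more explicit than the paper in ruling out the contraction term $\sum_i\partial_i\xi_I\,\partial_i\xi_J$ (which indeed can never produce $\xi_{1234}$, since each summand omits the index $i$), a point the paper leaves implicit. Where you genuinely diverge is the spanning argument (part (b)). The paper brackets with the grading element $t$, getting $[t,t^n\xi_I]=\deg(t^n\xi_I)\,t^n\xi_I$, which handles all monomials of nonzero degree in one stroke but then requires two ad hoc brackets, $\xi_{ij}=-[\xi_{kij},\xi_{ij}]$ and $t=-[t\xi_1,\xi_1]$, for the degree-zero cases. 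You instead use the full family $[t^m,t^n\xi_I]=(2(k+1)+m(|I|-4))\,t^{k}\xi_I$ with $m+n=k+1$, and the slope-in-$m$ observation lets you choose $m$ so the coefficient is nonzero whenever $|I|\leq 3$, and exactly when $k\neq-1$ for $I=1234$. This buys uniformity: no case analysis on the degree, and the threshold $l\neq -1$ for $\xi_{1234}$ falls out of the same formula rather than from the grading argument. The paper's version, in exchange, makes the structural reason visible (degree-zero elements are precisely those the grading element cannot reach). Both are complete proofs.
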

\begin{proof}
	We know that $\left\{ t^{k}\xi_{I}:\,k \in \Z , I\in\mathcal I_< \right\} $ is a basis for $K(1,4)$.
	Let us first show that $t^{-1}\xi_{1234} \notin K'(1,4)$.
	Since the bracket (\ref{bracketlie}) is bilinear, it is sufficient to prove that $t^{-1} \xi_{1234}$ does not belong to $\Supp[t^m\xi_I,t^n\xi_J]$ for all $m,n\in \Z$ and $I,J\in \mathcal I_<$. Indeed, if $t^{-1} \xi_{1234}\in \Supp[t^m\xi_I,t^n\xi_J]$ then necessarily $m+n=0$ and $|I|+|J|=4$, but these conditions imply that the coefficient $2n-2m-n|I|+m|J|$ in \eqref{bracketliebis} vanishes, leading to a contradiction.
	 
	Next we show that every monomial $t^n\xi_I$ with $n\neq -1$ or $I\neq 1234$ belongs to $K'(1,4)$:
	\begin{enumerate}
		\item recall that $[t,t^n\xi_I]=\deg(t^n\xi_I)t^n \xi_I$. In particular, if $\deg(t^n\xi_I)\neq 0$ the result follows.
		\item if $\deg(t^n\xi_I) = 0$, then either $n=0$ and $I=ij$, or $n=1$ and $I=\emptyset$. The result follows since $\xi_{ij}=-[\xi_{kij},\xi_{ij}]$ (for any $k\neq i,j$) and  $ t=-[t \xi_{1}, \xi_{1}]$.
	\end{enumerate}
\end{proof}
\section{The annihilation superalgebra of $K'_{4}$}
Motivated by Proposition \ref{keythmannihi} and Theorem \ref{keythmsingular}, we want to understand the structure of $\mathcal{A}(K'_{4})$.\\
Let us recall some notions on central extensions of Lie superalgebras.
\begin{defi}
	Let $\g$ be a Lie superalgebra. A \textit{$2-$cocycle} on $\g$ is a bilinear map $\psi: \g \times \g \rightarrow \C$ that satisfies the following conditions: 
	\begin{enumerate}
		\item $\psi(a,b)=-(-1)^{p(a)p(b)}\psi(b,a)$,
		\item $(-1)^{p(a)p(c)}\psi(a,[b,c])+(-1)^{p(a)p(b)}\psi(b,[c,a])+(-1)^{p(a)p(c)}\psi(c,[a,b])=0$,
	\end{enumerate}
	for all $a,b,c \in \g$.
	The set of all $2-$cocycles on $\g$ is denoted by $Z^{2}(\g, \C)$.
\end{defi}
\begin{rem}
	We denote the set of linear maps $\g \rightarrow \C$ by $C^{1}(\g, \C)$ and  we call its elements \textit{$1-$cochains}. For every $1-$cochain $f \in C^{1}(\g, \C)$, it is possible to construct a $2-$cocycle $\delta f$ on $\g$. For all $a,b \in \g$ we define:
	\begin{equation*}
	\delta f (a,b)=f([a,b]).
	\end{equation*}
	It is a straightforward verification that $\delta f$ is a $2-$cocycle on $\g$.
	The map $\delta: C^{1}(\g, \C) \rightarrow Z^{2}(\g, \C)$, $f \rightarrow \delta f$, is called \textit{coboundary operator}.
\end{rem}
\begin{defi}
	We denote by $B^{2}(\g, \C)$ the image of $\delta: C^{1}(\g, \C) \rightarrow Z^{2}(\g, \C)$. Two $2-$cocycles $\psi_{1},\psi_{2} \in Z^{2}(\g, \C) $ are \textit{cohomologous} when $\psi_{1}-\psi_{2} \in B^{2}(\g, \C)$. We denote by $H^{2}(\g, \C)$ the quotient $\frac{Z^{2}(\g, \C)}{B^{2}(\g, \C)}$.
\end{defi}
\begin{defi}
	A Lie superalgebra $\hat{\g }$ is a \textit{central extension} of $\g$ by a one$-$dimensional center $\C C$ if there exist two (Lie superalgebras) homomorphisms $i: \C C \rightarrow  \hat{\g }$  and $s:  \hat{\g } \rightarrow  \g$ such that the following sequence is exact:
	\begin{equation*}
	0 \rightarrow \C C \xrightarrow{i}  \hat{\g } \xrightarrow{s}  \g \rightarrow 0,
	\end{equation*}
	and $\Ker (s)$ lies in the center of $\hat{\g }$.
\end{defi}
\begin{defi}
	Two central extentions $\hat{\g }_{1}$ and $\hat{\g }_{2}$ of $\g$ by a one$-$dimensional center $\C C$ are isomorphic if there exists an isomorphism of Lie superalgebras $\Phi: \hat{\g }_{1}\rightarrow \hat{\g }_{2}$ such that the following diagram is commutative:
	\[
	\xymatrix{
		0 \ar[r] &\C C \ar[d]^{\Id} \ar[r]^{i_{1}} &\hat{\g }_{1} \ar[d]^{\Phi} \ar[r]^{s_{1}}  &\g \ar[d]^{\Id} \ar[r] & 0 \\
		0 \ar[r] &\C C \ar[r]^{i_{2}}             &\hat{\g }_{2} \ar[r]^{s_{2}}                &\g \ar[r]^d          & 0. }
\]
\end{defi}
Next result is certainly well-known but we include a sketch of the proof for completeness and for the reader's convenience.
\begin{prop}
	\label{centralex}
	There is a bijection between (isomorphism classes of) central extensions of $\g$ by a one$-$dimensional center and elements of $H^{2}(\g, \C)$. If $\psi \in Z^{2}(\g, \C)$ the corresponding central extension is, up to isomorphism, $\hat{\g } = \g \oplus \C C$ where:
	\begin{align*}
	[C,a]=0 \quad  \text{and} \quad [a,b]_{\hat{\g } }=[a,b]_{\g}+\psi (a,b) C,
	\end{align*}
	for all $a,b \in \g$.
\end{prop}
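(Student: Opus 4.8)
The plan is to set up the standard dictionary between central extensions and second cohomology, adapted to the super setting, and to run the argument in two directions: from cocycles to extensions and back. Throughout, the key structural identities are that super-antisymmetry of a bracket corresponds to condition (1) in the definition of a $2$-cocycle, while the super-Jacobi identity corresponds to condition (2).

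First I would check that the construction displayed in the statement genuinely produces a Lie superalgebra. Given $\psi \in Z^{2}(\g,\C)$, equip $\hat{\g}=\g \oplus \C C$ with the stated bracket. Super-antisymmetry of $[\cdot,\cdot]_{\hat{\g}}$ reduces to super-antisymmetry of $[\cdot,\cdot]_{\g}$ together with condition (1) on $\psi$; the super-Jacobi identity for $\hat{\g}$ splits into the super-Jacobi identity for $\g$ (which holds) plus the vanishing of the $C$-component, which is precisely condition (2). Since $C$ is central by construction, this exhibits $\hat{\g}$ as a central extension of $\g$. Conversely, given any central extension $0 \to \C C \xrightarrow{i} \hat{\g} \xrightarrow{s} \g \to 0$, I would choose an even linear section $\sigma\colon \g \to \hat{\g}$ of $s$ and define $\psi(a,b)\in\C$ by $[\sigma(a),\sigma(b)]_{\hat{\g}}=\sigma([a,b]_{\g})+\psi(a,b)C$; this is well-defined because applying $s$ to the left-hand side yields $[a,b]_{\g}$, so the discrepancy lies in $\Ker s=\C C$. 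The super-Jacobi identity in $\hat{\g}$ then forces condition (2) on $\psi$, and antisymmetry of the bracket gives condition (1), so $\psi \in Z^{2}(\g,\C)$.

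Next I would verify that this correspondence descends to a bijection on equivalence classes. Replacing $\sigma$ by another even section $\sigma'$ amounts to adding a $1$-cochain, i.e.\ $\sigma'(a)=\sigma(a)+f(a)C$ with $f\in C^{1}(\g,\C)$, and a direct computation using that $C$ is central shows the cocycle changes by the coboundary $\delta f$; hence the class $[\psi]\in H^{2}(\g,\C)$ is independent of the section, and the assignment (extension) $\mapsto [\psi]$ is well-defined. Conversely, if $\psi'=\psi+\delta f$ then the map $\Phi\colon \hat{\g}_{\psi}\to\hat{\g}_{\psi'}$ given by $\Phi(a+zC)=a+(z+f(a))C$ is an isomorphism of Lie superalgebras making the defining diagram commute, so cohomologous cocycles yield isomorphic extensions. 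Finally, an isomorphism of central extensions inducing the identity on both $\C C$ and $\g$ carries a section $\sigma$ to another section $\Phi\circ\sigma$, whence the associated cocycles differ by a coboundary; this gives injectivity and completes the bijection.

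The main obstacle is the bookkeeping of super-signs. One must insist that $\sigma$ be parity-preserving, so that $p(\sigma(a))=p(a)$, and then confirm that the three-term super-Jacobi identity in $\hat{\g}$, with its signs $(-1)^{p(a)p(c)}$ and cyclic permutations, reproduces \emph{exactly} condition (2) in the definition of a $2$-cocycle rather than a sign-twisted variant; the same care is needed when checking that $\Phi$ respects the graded bracket. Once the signs are pinned down, the remaining content is routine linear algebra over $\C$.
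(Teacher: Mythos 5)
Your proposal is correct and follows essentially the same route as the paper: construct the extension from a cocycle, recover a cocycle from an extension via a splitting of $\hat{\g}\cong\g\oplus\C C$, and match isomorphisms of extensions with coboundaries in both directions. If anything, you are slightly more explicit than the paper in introducing the even section $\sigma$ and checking that the cohomology class is independent of its choice, a point the paper handles implicitly through the vector space identification $\hat{\g}\cong\g\oplus\C\, i(C)$.
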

\begin{proof}Let 	\begin{equation*}
	0 \rightarrow \C C \xrightarrow{i}  \hat{\g } \xrightarrow{s}  \g \rightarrow 0,
	\end{equation*} be a central extension of $\g$. In particular, $\hat{\g } \cong \g \oplus \C i(C)$ as vector spaces and we have the following relation between the bracket $[\cdot,\cdot]_{\hat{\g }}$ in $\hat{\g } $ and the bracket $[\cdot,\cdot]_{\g}$ in $\g$ for all $a,b \in   \g  $, $\alpha,\beta \in \C$:
	\begin{equation*}
	[a+\alpha i(C),b+\beta i(C)]_{\hat{\g } }=[a,b]_{\g}+\psi (a,b) i(C),
	\end{equation*}
	where $\psi: \g \times \g \rightarrow \C$ is a $2-$cocycle.\\
	Conversely, given $\psi \in C^{2}(\g, \C)$, we can construct a central extension $\hat{\g }$ of $\g$. We define $\hat{\g } := \g \oplus \C C$. For all $a,b \in \g$, $\alpha, \beta \in \C$, we set $i(\alpha C):=\alpha C$, $s(a+\alpha C):=a$  and $[a+\alpha C,b+\beta C]_{\hat{\g } }:=[a,b]_{\g}+\psi (a,b) C$. It follows directly from the definition of $2-$cocycles that it is a central extension.\\
	Finally we show that two isomorphic central extensions $\hat{\g }_{1}\cong \g \oplus \C C$ and $\hat{\g }_{2} \cong \g \oplus \C C$ correspond to cohomologous $2-$cocycles. Since $\hat{\g }_{1}$ and $\hat{\g }_{2}$ are isomorphic, we have an isomorphism $\Phi:\hat{\g }_{1} \rightarrow \hat{\g }_{2} $ such that the following diagram is commutative:
	\[
	\xymatrix{
		0 \ar[r] &\C C \ar[d]^{\Id} \ar[r]^{i_{1}} &\hat{\g }_{1} \ar[d]^{\Phi} \ar[r]^{s_{1}}  &\g \ar[d]^{\Id} \ar[r] & 0 \\
		0 \ar[r] &\C C \ar[r]^{i_{2}}             &\hat{\g }_{2} \ar[r]^{s_{2}}                &\g \ar[r]^d          & 0. }
	\]
	Thus for all $a \in \g$, $\alpha \in \C$:
	\begin{gather}
	\label{cocicli}
	\Phi(a+\alpha C )= a+\rho(a)C+\alpha  C  ,
	\end{gather}
	where $\rho \in C^{1}(\g, \C)$.\\
	We call $\psi_{1}$(resp. $\psi_{2}$) the $2-$cocycle that corresponds to $\hat{\g }_{1}$(resp. $\hat{\g }_{2}$). We have for all $a,b \in \g$:
	\begin{align*}
	\Phi ([a,b]_{\hat{\g }_{1}})&=  \Phi ([a,b]_{\g}+\psi_{1} (a,b) C)\\
	&=[a,b]_{\g}+ \left( \rho([a,b]_{\g})+\psi_{1} (a,b) \right)C.
	\end{align*}
	But from the fact that $\Phi$ is an isomorphism we also have:
	\begin{align*}
	\Phi ([a,b]_{\hat{\g }_{1}})&=  [\Phi(a),\Phi(b)]_{\hat{\g }_{2}}\\
	&=[a+\rho(a)C,b+\rho(b)C]_{\hat{\g }_{2}}\\
	&=[a,b]_{\g}+\psi_{2}(a,b)C.
	\end{align*}
	Therefore, $\delta \rho+\psi_{1}=\psi_{2}$.\\
	Analogously, if $\psi_{1},\psi_{2} \in Z^{2}(\g, \C) $ are cohomologous, i.e. $\psi_{1}-\psi_{2}=\delta \eta \in B^{2}(\g, \C)$, then we can construct an isomorphism between the central extensions defined by $\psi_{1}$ and $\psi_{2}$ as in \eqref{cocicli} letting $\rho:=\eta$.
\end{proof}
The following proposition is the main result of this section.
\begin{prop}
	\label{anniliK4}
	There exists a (unique) surjective morphism of Lie superalgebras $\phi:\Lie K'_4\rightarrow K'(1,4)$ such that for all $m\in \Z$
	
	\begin{align*}
	&\phi(\xi_I y^m)=t^m\xi_I ,\, \textrm{for all }I\in \mathcal I_<,\,I\neq 1234\\
	&\phi(\de \xi_{1234}y^n)=-m\xi_{1234}t^{m-1}
	\end{align*}
	
	and $\Ker(\phi)=\C\, \de \xi_{1234}$.
	The annihilation superalgebra of $K'_{4}$ is a central extension of $K(1,4)_{+}$ by a one$-$dimensional center $\C C$:
	\begin{align*}
	\mathcal{A}(K'_{4})=K(1,4)_{+} \oplus \C C.
	\end{align*}
	The extension is given by the $2-$cocycle $\psi \in  Z^{2}(K(1,4)_{+}, \C)$ which computed on basis elements returns non$-$zero values in the following cases only (up to skew-symmetry of $\psi$):
	\begin{align*}
	&\psi(1  ,\xi_{1234} )=-2,\\
	&\psi(\xi_i,\partial_{i} \xi_{1234}  )=-1.
	\end{align*} 
\end{prop}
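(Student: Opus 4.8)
The plan is to realize $\phi$ as the map induced on the $\Lie$-construction by the inclusion of conformal superalgebras $\iota\colon K'_4\hookrightarrow K_4$, and then to read off both the kernel and the central extension data from explicit $n$-product computations. Since $\iota$ is $\C[\partial]$-linear and commutes with $\partial_y$, it induces $\widetilde\iota\colon\widetilde{K'_4}\to\widetilde{K_4}$ compatible with $\widetilde\partial$, hence a homomorphism $\phi:=\Lie\iota\colon\Lie K'_4\to\Lie K_4$. Using the identification $\Lie K_4\cong K(1,4)$, $\xi_Iy^m\leftrightarrow t^m\xi_I$ (which I would justify by matching \eqref{bracketannihilation} against \eqref{bracketliebis}, as for general $K_N$), together with the relation $(\partial a)y^m\equiv -m\,ay^{m-1}$ that holds in any $\Lie R$, I get $\phi(\xi_Iy^m)=t^m\xi_I$ for $I\neq1234$ and $\phi((\partial\xi_{1234})y^m)=-m\,t^{m-1}\xi_{1234}$. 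As the elements $\xi_Iy^m$ ($I\neq1234$) and $(\partial\xi_{1234})y^m$ span $\Lie K'_4$ (every $\partial^k\xi_Iy^m$ reduces to one of these via the previous relation), these formulas determine $\phi$ uniquely, and its image is spanned by $\{t^m\xi_I:I\neq1234\}\cup\{t^r\xi_{1234}:r\neq-1\}$, which is exactly $K'(1,4)$ by Proposition \ref{3.4}; thus $\phi$ is surjective.

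To compute $\ker\phi$ I would exploit that $\widetilde\partial$ and $\phi$ respect the decomposition of $\widetilde{K_4}$ into $\xi_I$-isotypic sectors, and that in each sector with $I\neq1234$ the spaces $\widetilde{K'_4}$ and $\widetilde{K_4}$ coincide, so that $\phi$ is injective there. Everything localizes to the sector $V=\mathrm{span}\{\partial^k\xi_{1234}y^m\}$ with subspace $V'=V\cap\widetilde{K'_4}=\mathrm{span}\{\partial^l\xi_{1234}y^m:l\geq1\}$, where $\ker\phi=(V'\cap\widetilde\partial V)/\widetilde\partial V'$. The element $\partial\xi_{1234}=\widetilde\partial(\xi_{1234}y^0)$ lies in $V'\cap\widetilde\partial V$, hence is killed by $\phi$; the point is to check it is nonzero in $\Lie K'_4$, i.e. $\partial\xi_{1234}\notin\widetilde\partial V'$. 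For this I would grade by $d=k-m$ (so that $\widetilde\partial$ is homogeneous of degree $+1$) and observe that in degree $1$ the generators $\widetilde\partial(\partial^l\xi_{1234}y^l)=\partial^{l+1}\xi_{1234}y^l+l\,\partial^l\xi_{1234}y^{l-1}$ ($l\geq1$) cannot combine, with finite support, to give $\partial\xi_{1234}y^0$, since matching coefficients forces a non-terminating recursion. A parallel bookkeeping shows that among the $\widetilde\partial(\xi_{1234}y^m)$ only $m=0$ produces an element of $V'$, giving $\ker\phi=\C\,\partial\xi_{1234}$.

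Restricting to $\mathcal A(K'_4)$ (spanned by the $ay^n$ with $n\geq0$) and applying the same reductions, the image becomes $\{t^m\xi_I:m\geq0,\ I\neq1234\}\cup\{t^r\xi_{1234}:r\geq0\}=K(1,4)_+$ while the kernel is still $\C\,\partial\xi_{1234}$, so $0\to\C C\to\mathcal A(K'_4)\xrightarrow{\phi}K(1,4)_+\to0$ with $C=\partial\xi_{1234}$. Centrality of $C$ is immediate from \eqref{bracketannihilation}: for any $ag$ only the $j=0$ term survives in $[\,(\partial\xi_{1234})y^0,ag\,]=({\partial\xi_{1234}}_{(0)}a)g$, and $({\partial\xi_{1234}}_{(0)}a)=0$ by Definition \ref{definizionesuperalgebraconforme}(iii). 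By Proposition \ref{centralex} the extension is classified by a $2$-cocycle $\psi$.

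To pin down $\psi$ I would fix the section $s(t^m\xi_I)=\xi_Iy^m$ for $I\neq1234$ and $s(t^m\xi_{1234})=-\tfrac1{m+1}(\partial\xi_{1234})y^{m+1}$; since $s$ never hits $C$, the value $\psi(u,v)C$ equals the $C$-component of $[s(u),s(v)]$. Tracing through \eqref{bracketannihilation}, \eqref{deadestra} and the reduction $(\partial^k\xi_{1234})y^{k-1}\equiv(-1)^{k-1}(k-1)!\,C$, one finds that a $C$-component can occur only when both arguments have $t$-exponent $0$, i.e. are of the form $t^0\xi_I$ (consistently, $C$ has degree $0$ and $\psi$ is a degree-$0$ cocycle for the standard grading). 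The only survivors are then $\psi(1,\xi_{1234})$, arising from $({\xi_\emptyset}_{(0)}\xi_{1234})=-2\partial\xi_{1234}$, and $\psi(\xi_i,\partial_i\xi_{1234})$, arising from $({\xi_i}_{(0)}\xi_{1234\setminus i})$; carrying out the reductions yields $-2$ and $-1$ respectively, whereas every other bracket—in particular both arguments equal to $\xi_{1234}$, where $({\xi_{1234}}_{(j)}\xi_{1234})=0$—contributes no $C$-component. I expect the genuine work to be concentrated in the two ``exactness'' points, namely proving $\partial\xi_{1234}\notin\widetilde\partial\widetilde{K'_4}$ (so that the kernel is one- rather than zero-dimensional) and the exhaustive verification that $\psi$ vanishes off the two stated families; both reduce to careful coefficient bookkeeping in the $\xi_{1234}$-sector rather than to any conceptual difficulty.
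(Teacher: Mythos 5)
Your proposal is correct, and it takes a genuinely different route from the paper on two of the three main points. For the construction of $\phi$, the paper defines it on the basis of $\Lie K'_4$ recorded in Remark \ref{bendef} and then verifies the morphism property by an exhaustive four-case computation of brackets; you instead get the morphism property for free from functoriality of $R\mapsto \Lie R$ applied to the inclusion $K'_4\subset K_4$, combined with the identification $\Lie K_4\cong K(1,4)$. Note that this identification is exactly where the paper's case-(1)-type computation is hiding: the paper only quotes $\mathcal A(K_N)=K(1,N)_+$ from the literature, so you would still need to record the (routine) extension of that statement to the full $\Lie K_N$; your approach relocates rather than eliminates that computation. For the kernel, the paper reads $\Ker\phi=\C\,\de \xi_{1234}$ off from its explicit formulas, which implicitly relies on the linear-independence half of Remark \ref{bendef} (asserted there without a detailed proof); your coset analysis of $(V'\cap\widetilde{\partial} V)/\widetilde{\partial} V'$ in the $\xi_{1234}$-sector, with the grading $d=k-m$ and the non-terminating recursion, actually \emph{proves} that $\de\xi_{1234}\neq 0$ in $\Lie K'_4$, so on this point your treatment is more self-contained — just make sure the ``parallel bookkeeping'' handles arbitrary finite combinations $\sum a_{k,m}\partial^{k}\xi_{1234}y^{m}$ and not only the pure elements $\widetilde{\partial}(\xi_{1234}y^{m})$; since $\widetilde{\partial}V'\subseteq V'$, only the $k=0$ coefficients obstruct membership in $V'$, and the argument closes in one line. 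The centrality of $C$ and the cocycle values are obtained in the same substance as the paper (Lemma \ref{lemcentral} and the $n$-product formulas \eqref{nproducts}, \eqref{deadestra}), but your organization via an explicit section $s$ is cleaner and, in one respect, more accurate: it correctly locates $\psi(1,\xi_{1234})=-2$ in the bracket $[\de\xi_{1234}y^{1},\xi_{\emptyset}y^{0}]=-2C$, i.e.\ in what the paper calls case (4), whereas the paper's closing sentence attributes all $C$-contributions to its case (2) (which, involving only $1\leq|I|,|J|\leq 3$, can only produce the family $\psi(\xi_i,\partial_i\xi_{1234})=-1$ and its skew-symmetric counterparts); the resulting cocycle is of course the one stated.
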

We need a lemma in order to prove Proposition \ref{anniliK4}.
\begin{lem}
	\label{lemcentral}
	The element $\partial \xi_{1234} y^{0} \in \Lie K'_{4}$ is central.
\end{lem}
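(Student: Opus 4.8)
The plan is to show directly that the Lie superbracket of $\partial\xi_{1234}y^0$ with an arbitrary element of $\Lie K'_4$ vanishes; by skew-symmetry of the bracket this is exactly the assertion that $\partial\xi_{1234}y^0$ is central. First I would note that $\partial\xi_{1234}\in K'_4$ by Proposition \ref{xi1234} (it is $\xi_{1234}$ itself that fails to lie in $K'_4$), so that $\partial\xi_{1234}y^0$ is a genuine element of $\Lie K'_4$ and the bracket formula \eqref{bracketannihilation} applies to it. A general element of $\Lie K'_4$ is a combination of elements $bg$ with $b\in K'_4$ and $g\in\C[y,y^{-1}]$, so it suffices to compute $[\partial\xi_{1234}y^0,bg]$.

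The key observation is that choosing $f=y^0=1$ in \eqref{bracketannihilation} collapses the entire sum to its $j=0$ term: since $\partial_y^j(1)=0$ for every $j\geq 1$, we get
\begin{align*}
[\partial\xi_{1234}y^0,bg]=\sum_{j\geq 0}\big((\partial\xi_{1234})_{(j)}b\big)\Big(\tfrac{\partial_y^j}{j!}1\Big)g=\big((\partial\xi_{1234})_{(0)}b\big)g.
\end{align*}
Thus everything reduces to the single $0$-product $(\partial\xi_{1234})_{(0)}b$.

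Finally I would invoke axiom (iii) of Definition \ref{definizionesuperalgebraconforme}, which states $(\partial a_{(0)}b)=0$ for all $a,b$ (equivalently, conformal linearity (ii) of Definition \ref{coal} gives $[\partial a_{\lambda} b]=-\lambda[a_{\lambda} b]$, whose $\lambda^0$-coefficient vanishes). Applying this with $a=\xi_{1234}$ inside $K_4$ yields $(\partial\xi_{1234})_{(0)}b=0$, and hence $[\partial\xi_{1234}y^0,bg]=0$ for every $b\in K'_4$ and $g\in\C[y,y^{-1}]$. This proves centrality.

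I do not expect any real obstacle here: the only point requiring a moment's care is that $\xi_{1234}\notin K'_4$, so the vanishing of the relevant $0$-product should be read off from the conformal structure of the ambient superalgebra $K_4$ (where axiom (iii) applies to $a=\xi_{1234}$) rather than from $K'_4$ alone; since the $n$-products of $K'_4$ are the restrictions of those of $K_4$, this is harmless. The whole mechanism is that the factor $y^0$ kills every higher $\partial_y$-derivative, leaving only the $0$-product, which is automatically zero for a $\partial$-exact first argument.
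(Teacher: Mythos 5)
Your proof is correct and follows essentially the same route as the paper: compute the bracket via \eqref{bracketannihilation}, observe that the factor $y^0$ kills all terms with $j\geq 1$ so only the $0$-product $(\partial\xi_{1234})_{(0)}b$ survives, and conclude it vanishes by axiom (iii) of Definition \ref{definizionesuperalgebraconforme}, read as the restriction of the $0$-product of $K_4$. The remark about $\xi_{1234}\notin K'_4$ necessitating passage to the ambient $K_4$ is exactly the point the paper also makes.
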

\begin{proof}
	By \eqref{1,5} and \eqref{bracketannihilation} we have, for all $ay^{l} \in \Lie K'_{4}$, with $a \in K'_{4}$:
	\begin{align*}
	\big[\partial \xi_{1234}y^{0}, ay^{l}\big]=\left(\partial {\xi_{1234}}_{(0)} a\right)y^{l}=0.
	\end{align*}
	In the last equality we used the fact that $\left(\partial {\xi_{1234}}_{(0)} a\right)$ is computed as the restriction of the $0-$product in $K_{4}$ and (iii) in Definition \ref{definizionesuperalgebraconforme}.
\end{proof}
We adopt the following notation. Given a proposition $P$, we let
\begin{equation*}
\bigchi_{P}=
\begin{cases}
1 \quad \text{if $P$ is true,}\\
0 \quad \text{if $P$ is false.}
\end{cases}
\end{equation*}
\begin{rem}
	\label{bendef}
	Recall that by the definition of $\Lie K'_{4}$, for all $a \in K'_{4}$ and $m \in \Z$, we have  $\partial a y^{m}=-m a y^{m-1}$ and that $\xi_{1234} \notin  K'_{4}$ by Proposition \ref{xi1234}. Hence, every  monomial $\partial^{k} P(\xi)y^{n}$ can be represented  in $\Lie K'_{4}$ as a scalar multiple of a monomial $\xi_I y^n$ for some $I\neq 1234$ or of a monomial $\de \xi_{1234} y^n$.
	
	More precisely we have that the set $\{\xi_Iy^{m}, \, \partial \xi_{1234} y^{m}:\, m \in \Z, \, I\in \mathcal I_<,\, I\neq 1234\}$ is a basis of $\Lie K'_{4}$.
\end{rem}

\begin{proof}[Proof of Proposition \ref{anniliK4}]
	By Remark \ref{bendef} we know that there exists a unique linear map $\phi$ satisfying the prescribed conditions.
	It is clear from its definition that $\phi$ is surjective and so we only need to prove that $\phi$ is a morphism of Lie superalgebras. 
	We have to distinguish four cases:
\begin{enumerate}
\item Let $I,J\in \mathcal I_<$ with $I,J\neq 1234$ and $\xi_{IJ}\neq \pm \xi_{1234}$, and $m,n\in \Z$. In $\Lie K'_{4}$ we have, by \eqref{1,5} and (\ref{bracketannihilation}) and $j-$products (\ref{nproducts}):
		\begin{align*}
		[\xi_Iy^m,\xi_J y^n]&=\sum_{j \in \Z_{+}}\binom{m}{j}({\xi_I}_{(j)}\xi_J) y^{m+n-j}\\
		&=({\xi_I}_{(0)}\xi_J) y^{m+n}+m({\xi_I}_{(1)}\xi_J)y^{m+n-1}\\
		&=(|I|-2) \partial \xi_{IJ} y^{m+n}+(-1)^{|I|} \sum_{i=1}^{4}\partial_{i} \xi_I  \, \partial_{i} \xi_J \, y^{m+n}+m(|I|+|J|-4) \xi_{IJ} y^{m+n-1}\\
		&	=(2n-2m-n|I|+m|J|)  \xi_{IJ} y^{m+n-1} +(-1)^{|I|} \sum_{i=1}^{4}\partial_{i} \xi_I \, \partial_{i}  \xi_J \, y^{m+n}.
		\end{align*}
		Therefore, by (\ref{bracketliebis}), we have:
		\begin{align*}
		[\phi(\xi_Iy^m),\phi(\xi_J y^n)]&=\big[t^{m}\xi_I,t^{n}\xi_J\big]\\
		&=(2n-2m-n|I|+m|J|)t^{m+n-1}\xi_{IJ}+(-1)^{|I|}t^{m+n}\sum_{i=1}^4\de_i \xi_I\, \de_i \xi_J\\
		&=\phi([\xi_I y^m,\xi_J y^n]).
		\end{align*}
\item Let $I,J\in \mathcal I_<$ with $|I|,|J|\neq 4$ and $\xi_{IJ}= \xi_{1234}$, and $m,n \in \Z$. We proceed like in the previous case and we have
		\begin{align*}
		[\xi_Iy^m,\xi_J y^n]&=({\xi_I}_{(0)}{\xi_J}) y^{m+n}+m({\xi_I}_{(1)}\xi_J)y^{m+n-1}\\
		&=(|I|-2) \partial \xi_{1234} y^{m+n}.
		\end{align*}
		and so in $K'(1,4)$ we have
		\begin{align*}
		\left[\phi(\xi_Iy^m),\phi(\xi_J y^n)\right]&=\big[t^{m}\xi_I,t^{n}\xi_J\big]\\
		&=(2-|I|)(m+n)t^{m+n-1}\xi_{1234} \\
		&= \phi([\xi_Iy^m,\xi_J y^n]).
		\end{align*}
		
\item Let $m,n\in \Z$. We have $f=\partial \xi_{1}\xi_{2}\xi_{3}\xi_{4} y^{m}$ and $g=\partial \xi_{1}\xi_{2}\xi_{3}\xi_{4} y^{n}$ in $\Lie K'_{4}$, with $m,l \in \Z$. In $\Lie K'_{4}$ we have, using bracket (\ref{bracketannihilation}) and $n-$products (\ref{nproducts}):
		\begin{align*}
		[\de \xi_{1234}y^m,\de \xi_{1234}y^n]=\sum_{j \in \Z_{+}}\binom{h}{j}({\partial \xi_{1234}\,}_{(j)}\partial \xi_{1234}) y^{m+n-j}=0.
		\end{align*}
		by (iii) of Definition \ref{definizionesuperalgebraconforme}, \eqref{deadestra} and  (\ref{bracketliebis}). On the other hand
		\begin{align*}
		\left[\phi(\de \xi_{1234}y^m), \phi(\de \xi_{1234} y^n)\right]=\big[-m\xi_{1234} t^{m-1},-n\xi_{1234} t^{n-1}\big]=0,
		\end{align*}
		by \eqref{bracketliebis}.
		\item Finally, let $J\in \mathcal I _<$, $J\neq 1234$ and $m,n\in \Z$. 
		First, we point out that $({\partial \xi_{1234}\,}_{(j)} \xi_J)=-j({\xi_{1234}\,}_{(j-1)} \xi_J)=0$ for all $j\geq 2$ by \eqref{nproducts}. Therefore in $\Lie K'_{4}$ we have
		\begin{align*}
		\big[\partial \xi_{1234} y^{m},\xi_J y^{n}\big]&=({\partial \xi_{1234}}_{(0)} \xi_J)y^{m+n}+m({\partial \xi_{1234}\,}_{(1)} \xi_J)y^{m+n-1}\\
		&=-m ({ \xi_{1234}\,}_{(0)} \xi_J) y^{m+n-1}\\
		&=-2m \bigchi_{J=\emptyset} \, \partial \xi_{1234} \,  \, y^{m+n-1}-m\sum_{i=1}^{4} \partial_{i}\xi_{1234} \partial_{i} \xi_J \, y^{m+n-1}.
		\end{align*}
		In $K'(1,4)$ we have, using bracket (\ref{bracketliebis}):
		\begin{align*}
		\left[\phi(\de \xi_{1234}y^m),\phi(\xi_J y^n)\right]&=\big[- m\xi_{1234} t^{m-1},\xi_J t^{n}\big]\\
		&=-\bigchi_{J=\emptyset}m (-2n-2m+2) \,  t^{m+n-2}\xi_{1234}  \, -m \sum_{i=1}^{4} \partial_{i}\xi_{1234} \, \partial_{i}\xi_J \, t^{m+n-1}\\
		&=\phi([\de \xi_{1234}y^m,\xi_J y^n]).
		\end{align*}
	\end{enumerate}
	The previous computations imply that the kernel of the map $\phi: \Lie K'_{4}  \longrightarrow  K'(1,4)$ is $\Ker \phi= \langle \partial \xi_{1}\xi_{2}\xi_{3}\xi_{4} \rangle $ and so the following sequence is exact:
	\begin{align*}
	0 \rightarrow \langle \partial \xi_{1}\xi_{2}\xi_{3}\xi_{4} \rangle \xrightarrow{i} \Lie K'_{4} \xrightarrow{\phi} K'(1,4) \rightarrow 0.
	\end{align*}
	By Lemma \ref{lemcentral} the Lie superalgebra $\Lie K'_{4}$ is therefore a central extension of $K'(1,4)$ by the one$-$dimensional center $\langle \partial \xi_{1}\xi_{2}\xi_{3}\xi_{4} \rangle$.\\
	In particular, we point out that $\phi: \Lie K'_{4} / \C \partial \xi_{1}\xi_{2}\xi_{3}\xi_{4} \rightarrow K'(1,4) $ is an isomorphism.
	In the previous computations we computed all the possible brackets between monomials in $\Lie K'_{4}$, therefore in particular all the possible brackets between monomials in $\mathcal{A}(K'_{4})$ and we can observe that the central element $\partial \xi_{1234}$ lies in the support of the bracket of two basis elements  only in the case $(2)$ of the previous computations.
	The other parts of the statement follow.
	
\end{proof}

\section{Verma modules}
\label{Verma modules}
In this section we study the action of $\mathfrak{g}:=\mathcal{A}(K'_{4})=K(1,4)_{+} \oplus \C C$ on a finite Verma module $\Ind (F)$, where $F$ is a finite$-$dimensional irreducible $\mathfrak{g}_{\geq 0}-$module, on which $\mathfrak{g}_{> 0}$  acts trivially.
The grading on $\mathfrak{g}$ is the standard grading of $K(1,4)_{+} $ and $C$ has degree $0$. We have:
\begin{align*}
&\g_{-2} =\left< 1 \right>, \\
&\g_{-1} =\left< \xi_{1},\xi_{2},\xi_{3},\xi_{4} \right> ,\\
&\g_{0} =\left< \left\{C,t, \xi_{ij}:\, 1 \leq i<j \leq 4 \right\}\right>.
\end{align*}
\begin{rem}
	The annihilation superalgebra $\mathfrak{g}$ satisfies conditions L1, L2, L3 of Proposition \ref{keythmannihi}. Indeed:
	\begin{enumerate}
		\item [L1.] This is obvious.
		\item [L2.] The element $t$ is a grading element, i.e. $[t,a]=\deg (a) a $ for all $a \in \g$. Hence, by Remark \ref{gradingelement}, $t$ satisfies condition L2.
		\item [L3.] The element $\Theta$ is chosen as $-\frac{1}{2}\xi_{\emptyset}=-\frac{1}{2} \in \mathfrak{g}_{-2}$. Indeed for all $m\geq 0$ and $I\in \mathcal I_<$ we have $t^{m} \xi_{I} \in \mathfrak{g}_{2m+|I|-2}$ and 
		\[t^{m} \xi_{I}=-\frac{1}{m+1}[\Theta, t^{m+1} \xi_{I}]\] and $C=[\Theta, \xi_{1234}]$.
	\end{enumerate}
\end{rem}
\begin{rem}
	Since $\Ind (F) \cong U(\mathfrak{g}_{<0}) \otimes F$, it follows that $\Ind (F) \cong \C[\Theta] \otimes \inlinewedge(4) \otimes F$. Indeed, let us denote by $\eta_{i}$ the image in $U(\g)$ of $\xi_{i} \in  \inlinewedge(4) $, for all $i \in \left\{1,2,3,4\right\}$.
	In $U(\g)$ we have that $\eta_{i}^{2}=\Theta$, for all $i \in \left\{1,2,3,4\right\}$: since $[\xi_{i},\xi_{i}]=-1$ in $\g$, we have $\eta_{i}\eta_{i}=-\eta_{i}\eta_{i}-1$ in $U(\g)$.
\end{rem}
From now on it is always assumed that $F$ is a finite$-$dimensional irreducible $\mathfrak{g}_{\geq 0}-$module. 

We will study the action of $\g$ on $\Ind (F)$ using the $\lambda-$action notation by Proposition \ref{propcorrispmoduli}:
\begin{align*}
{\xi_I\,}_{\lambda}(g \otimes v)= \sum_{j \geq 0} \frac{\lambda^{j}}{j!}t^{j}\xi_I .(g \otimes v),
\end{align*}
for $I\in \mathcal I$, $g \in U(\g_{<0})$ and $v \in F$.
In order to find an explicit formula for ${\xi_I\,}_{\lambda}(g \otimes v)$ we need some preliminary lemmas.

We will make the following slight abuse of notation: if $I,J\in \mathcal I_{\neq}$ we will denote by $I \cap J$ (resp. $I \setminus J$) the increasingly ordered sequence whose elements are the elements of the intersection of the underlying sets of $I$ and $J$ (resp. the elements of the difference of the underlying sets of $I$ and $J$). We will say $I \subseteq J$ when the underlying set of $I$ is contained in the underlying set of $J$. Analogously we will denote by $I ^{c}$ the increasingly ordered sequence whose elements are the elements of the complement of the underlying set of $I$. Given $I=(i_{1}, \,i_{2},\cdots i_{k})\in \mathcal I_{\neq}$, we will use the notation  $\eta_{I}$ to denote the element  $\eta_{i_{1}} \eta_{i_{2}} \cdots \eta_{i_{k}} \in  U(\g_{<0})$ and we will denote  $|\eta_{I}|=|I|=k$. We will denote $\xi_{*}=\xi_{1234}$ (resp. $\eta_{*}=\eta_{1234}$). Given $I=(i_{1}, \, i_{2},  \cdots i_{k})$ and $I^{c}=(j_{k+1}, \, j_{k+2} ,\cdots j_{4})$, we will denote by $\varepsilon _{I}$ the sign of the permutation
\[
\bigl(\begin{smallmatrix}
1 & 2 & \cdots & k & k+1 & \cdots & 4 \\
i_{1} & i_{2} & \cdots & i_{k} & j_{k+1} &  \cdots &  j_{4}
\end{smallmatrix}\bigr).
\]
We will also use the following notation: if $(i_1,\ldots,i_k)\in \mathcal I_{\neq}$ and $i\in \{1,2,3,4\}$ we let 

\[
\de_i \eta_{i_1,\ldots,i_k}=\begin{cases}
(-1)^{j+1} \eta_{i_1,\ldots,\hat{i_j},\ldots,i_k}& \textrm{if $i=i_j$ for some (necessarily unique) $j$}\\
0& \textrm{otherwise.}
\end{cases}
\]
and for $a \in \C$, $I=(i_{1} ,\, i_{2}, \cdots i_{k}),J\in \mathcal I_{\neq}$: 
\begin{align*}
\partial_{I}\eta_{J}&=\partial_{i_{1}}\partial_{i_{2}}\dots \partial_{i_{k}}\eta_{J} \ \ &\partial_{I}\xi_{J}&=\partial_{i_{1}}\partial_{i_{2}}\dots \partial_{i_{k}}\xi_{J} ;\\
\partial_{a \xi_{I}}\eta_{J}&=a \partial_{I}\eta_{J} \ \ &\partial_{a \xi_{I}}\xi_{J}&=a \partial_{I}\xi_{J};\\
\partial_{\emptyset}\eta_{S}&=\eta_{S} \ \ &\partial_{\emptyset}\xi_{S}&=\xi_{S}.
\end{align*}
Given $I,J\in \mathcal I_{\neq}$ we let
\begin{align*}
\xi_{I} \star \eta_{J}= \bigchi_{I \cap J= \emptyset}\eta_{IJ},\\
\eta_{J} \star \xi_{I}= \bigchi_{I \cap J= \emptyset}\eta_{JI}.
\end{align*}
and we extend this notation by linearity in both arguments.

We observe that in $\g$, by \eqref{bracketliebis} and Proposition \ref{anniliK4} we have
\begin{align*}
[t^{m}\xi_{I},\xi_{r}]=-mt^{m-1}\xi_{Ir}+(-1)^{|I|}t^{m}\partial_{r}\xi_{I}+ \psi(t^{m}\xi_{I},\xi_{r})C.
\end{align*}
and in particular
\begin{align}
\label{bracket}
[t^{m}\xi_{I},\xi_{r}]=-mt^{m-1}\xi_{Ir}+(-1)^{|I|}t^{m}\partial_{r}\xi_{I}+ \bigchi_{m=0}\bigchi_{r=I^{c}}\varepsilon_{I} C
\end{align}
for all $I\in \mathcal I_{\neq}$, $m\geq 0$ and $r\in \{1,2,3,4\}$.
\begin{lem}\label{deg3} Let $I,L\in \mathcal I_{\neq}$, $v \in F$ and $m \geq 3$. We have:
	\[
	t^{m} \xi_{I}(\eta_{L} \otimes v)=\begin{cases}
	-6\varepsilon_L\otimes Cv &\textrm{if $m=3$, $|I|=0$ and $|L|=4$, }\\
	0&\textrm{otherwise.}
	\end{cases}
	\]
	
\end{lem}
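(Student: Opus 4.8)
The plan is to exploit the $\Z$-grading. The element $\eta_L\otimes v$ sits in degree $|L|$ of $\Ind(F)$, since each $\eta_i\in U(\g_{<0})$ contributes degree $1$ and $v\in F$ has degree $0$. The operator $t^m\xi_I$ lies in $\g_{2m+|I|-2}$, so applying it raises the degree by $2m+|I|-2$; hence $t^m\xi_I(\eta_L\otimes v)$ lands in degree $|L|-(2m+|I|-2)$. For this to be nonzero we need $|L|-(2m+|I|-2)\geq 0$, i.e. $2m+|I|\leq |L|+2\leq 6$. With $m\geq 3$ this forces $2m+|I|=6$, so $m=3$, $|I|=0$, and $|L|=4$. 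This immediately disposes of every case except the one listed, reducing the lemma to a single explicit computation.

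It remains to compute $t^3\xi_\emptyset(\eta_L\otimes v)=t^3(\eta_L\otimes v)$ when $|L|=4$, i.e. $\eta_L=\pm\eta_*$ (the sign being $\varepsilon_L$ relative to $\eta_{1234}$). First I would reduce to $L=1234$ by pulling out the sign $\varepsilon_L$, and then compute $t^3(\eta_{1234}\otimes v)$. The strategy is to move $t^3$ to the right through the $\eta_i$'s one at a time, using the commutation relation \eqref{bracket} together with the facts that $\g_{>0}$ annihilates $F$ (so $t^m\xi_I\otimes v=0$ in positive degree once everything is pushed onto $v$) and that $\Theta=-\tfrac12$ acts as multiplication. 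Concretely, I would repeatedly apply $[t^m\xi_I,\xi_r]=-mt^{m-1}\xi_{Ir}+(-1)^{|I|}t^m\de_r\xi_I+\bigchi_{m=0}\bigchi_{r=I^c}\varepsilon_I C$: starting from $t^3$ and commuting past $\eta_1,\eta_2,\eta_3,\eta_4$ in turn, each commutator either lowers the power of $t$ by one (the $-m$ term) or differentiates (the $\de_r$ term, which vanishes here since $I$ grows but never contains the incoming index until the last step).

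The key mechanism is that the surviving contribution must end on the central term $\bigchi_{m=0}\bigchi_{r=I^c}\varepsilon_I C$, which only fires when the power of $t$ has been brought down to $0$ and the accumulated sequence $I$ has grown to the complement of the final index $r$. Tracking the combinatorial bookkeeping of signs and binomial-type factors arising from the three successive $-m$ reductions ($3$, then $2$, then $1$) will produce the numerical coefficient $-6=-3!$, and the central element $C$ then acts on $v$ by the scalar giving $\otimes Cv$; restoring the sign yields $-6\varepsilon_L\otimes Cv$.

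The main obstacle is precisely this final bookkeeping: keeping careful track of the signs $(-1)^{|I|}$ and $\varepsilon_I$ and the factor $-m$ at each of the three commutation steps, and verifying that the only term reaching degree $0$ is the one that activates the central cocycle exactly once (all intermediate $\de_r$ terms and all terms where $C$ appears prematurely either vanish by degree or by $\g_{>0}.F=0$). I expect the degree argument to be immediate, so the real work — and the only place an error could hide — is confirming that the coefficient assembles to exactly $-6$ and that the cocycle $\psi(1,\xi_{1234})=-2$ from Proposition \ref{anniliK4} enters with the correct multiplicity.
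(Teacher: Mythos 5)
Your proposal is correct and follows essentially the same route as the paper: a degree count (the operator $t^m\xi_I\in\g_{2m+|I|-2}$ must not lower the degree of $\eta_L\otimes v$ below zero, forcing $m=3$, $|I|=0$, $|L|=4$), followed by reduction to $L=1234$ and an explicit commutation of $t^3$ past $\eta_1,\eta_2,\eta_3,\eta_4$ via \eqref{bracket}, with the chain of factors $(-3)(-2)(-1)=-6$ and the central term firing exactly once at the last step. One small correction to your closing remark: the cocycle value that enters is not $\psi(1,\xi_{1234})=-2$ but the one encoded in the term $\bigchi_{m=0}\bigchi_{r=I^{c}}\varepsilon_{I}C$ of \eqref{bracket}, namely $\psi(\xi_{123},\xi_{4})=\varepsilon_{123}=1$ (equivalently the family $\psi(\xi_{i},\partial_{i}\xi_{1234})=-1$ of Proposition \ref{anniliK4}), since $\xi_{1234}$ itself never appears as an argument when commuting past single $\eta_r$'s.
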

\begin{proof}
	We can always assume, without loss of generality, that $\eta_{L}=\eta_{J}\eta_{K}$ with $I \cap J= \emptyset$, $K \subseteq I$.\\
	We first point out that $t^{m} \xi_{I}(\eta_{L} \otimes v)=0$ when $m >3$ because $\deg (t^{m} \xi_{I})=2m+|I|-2>4 \geq \deg(\eta_{L})$.
	
If $  m=3$, $|I|>0$, $t^{3} \xi_{I} (\eta_{L} \otimes v)=0$ because $\deg (t^{3} \xi_{I})=2m+|I|-2>4 \geq \deg(\eta_{L})$.

If $m=3$, $|I|=0$  and $|L| \neq 4 $,  $t^{3} (\eta_{L} \otimes v)=0$ because $\deg (t^{3} )=4>\deg(\eta_{L})$.

If $m=3$, $|I|=0$ and $|L|=4$ we can assume $L=1234$ without loss of generality and by \eqref{bracket}, we have
	\begin{align*}
	t^{3}(\eta_{1234}\otimes v)&=-3(t^{2}\xi_{1})\eta_{234}\otimes v-3\eta_{1}(t^{2}\xi_{2})\eta_{34}\otimes v-3\eta_{12}(t^{2}\xi_{3})\eta_{4}\otimes v-3\eta_{123}(t^{2}\xi_{4})\otimes v\\
	&=6(t\xi_{12})\eta_{34}\otimes v\\
	&=-6(\xi_{123})\eta_{4}\otimes v\\
	&=-6  \otimes Cv.
	\end{align*}
\end{proof}
Lemma \ref{deg3} describes the terms of degree at least 3 in the variable $\lambda$ in the $\lambda$-action of $K'_4$ on a Verma module. Next target is to study the terms of degree 0, 1 and 2: this will be accomplished in Lemmas \ref{lambdadeg0}, \ref{lambdadeg1} and \ref{lambdadeg2} respectively. But we first state a technical lemma.

\begin{lem}
	\label{lemma 0 action}
	Let $I,J,K \in \mathcal I_{\neq}$ with $I \cap J =\emptyset$, $K \subseteq I$. We have:
	\begin{align*}
	\xi_{I}(\eta_{JK} \otimes v)
	=& \sum_{L \subseteq K} (-1)^{|I|(|J|+|K|)+|L|(|L|-1)/2-|L|(|K|-|L|)} \eta_{J} \partial_{L}\eta_{K}\otimes\partial_{L}\xi_{I}.   v\\
	&+ \bigchi_{|I|=3} \bigchi_{J=I^c}\, \varepsilon_{I} \, \eta_{K} \otimes C v. 
	\end{align*}
\end{lem}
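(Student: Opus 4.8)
The plan is to compute $\xi_I\cdot(\eta_{JK}\otimes v)$ directly inside $U(\g)$, where the action of $t^0\xi_I$ (the coefficient of $\lambda^0$ in the $\lambda$-action) is left multiplication followed by straightening into the basis $U(\g_{<0})\otimes F$ of $\Ind(F)$. Concretely I would move $t^0\xi_I$ rightwards past the generators forming $\eta_J\eta_K$ one factor at a time, each commutation being governed by \eqref{bracket} with $m=0$, until $\xi_I$ reaches $v$, where it acts through the $\g_{\ge 0}$-module structure of $F$ (in particular as $0$ when it lands in $\g_{>0}$, since $\g_{>0}$ acts trivially on $F$). Because $I\cap J=\emptyset$ while $K\subseteq I$, the blocks $\eta_J$ and $\eta_K$ behave very differently, so I would split the computation into two stages.

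First stage: push $\xi_I$ past $\eta_J$. For $j\in J$ we have $j\notin I$, so $\partial_j\xi_I=0$ and \eqref{bracket} gives $[\xi_I,\xi_j]=\bigchi_{j=I^c}\varepsilon_I C$, a purely central contribution. Since $J\subseteq I^c$, such a term can occur only when $I^c$ is a single index lying in $J$, that is, exactly when $|I|=3$ and $J=I^c$; in every other case the only surviving term is the super-straightened one. Using the parities $p(\xi_I)=|I|$, $p(\eta_J)=|J|$ I would obtain
\[
\xi_I\cdot(\eta_J\eta_K\otimes v)=(-1)^{|I||J|}\,\eta_J\cdot\bigl(\xi_I\cdot(\eta_K\otimes v)\bigr)+\bigchi_{|I|=3}\bigchi_{J=I^c}\,\varepsilon_I\,\eta_K\otimes Cv,
\]
the final term coming from the centrality of $C$, which gives $C\cdot(\eta_K\otimes v)=\eta_K\otimes Cv$. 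This already produces the summand $\bigchi_{|I|=3}\bigchi_{J=I^c}\varepsilon_I\eta_K\otimes Cv$ and the factor $(-1)^{|I||J|}$ inside the asserted sign, and reduces everything to the case $J=\emptyset$.

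Second stage: write $\sigma(I,K,L)=|I||K|+|L|(|L|-1)/2-|L|(|K|-|L|)$ and establish, for $K\subseteq I$, the contraction formula $\xi_I\cdot(\eta_K\otimes v)=\sum_{L\subseteq K}(-1)^{\sigma(I,K,L)}\,\partial_L\eta_K\otimes\partial_L\xi_I.v$ by induction on $|K|$. The case $|K|=0$ is just the definition of the action on $1\otimes v$. For the inductive step I would write $\eta_K=\eta_{k_1}\eta_{K'}$ with $k_1=\min K$ and commute $\xi_I$ past $\eta_{k_1}$; since $k_1\in I$, \eqref{bracket} yields $\xi_I\eta_{k_1}=(-1)^{|I|}\eta_{k_1}\xi_I+(-1)^{|I|}\partial_{k_1}\xi_I$ with no central term. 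Applying the inductive hypothesis to $\xi_I\cdot(\eta_{K'}\otimes v)$ in the straightening summand and to $(\partial_{k_1}\xi_I)\cdot(\eta_{K'}\otimes v)=(-1)^{j+1}\xi_{I\setminus k_1}\cdot(\eta_{K'}\otimes v)$ in the contraction summand (with $j$ the position of $k_1$ in $I$), the two resulting sums reassemble into a single sum over $L\subseteq K$: subsets avoiding $k_1$ arise from straightening, subsets containing $k_1$ from contraction.

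The hard part will be the sign bookkeeping in this inductive step. Here I would carefully combine the factor $(-1)^{|I|}$ from the commutation; the sign $(-1)^{|L'|}$ incurred when the derivatives $\partial_{L'}$ skip past $\eta_{k_1}$ in rewriting $\eta_{k_1}\partial_{L'}\eta_{K'}$ as $\pm\partial_L\eta_K$; the sign $(-1)^{j+1}$ from $\partial_{k_1}\xi_I$ together with a further $(-1)^{|L'|}$ from anticommuting $\partial_{k_1}$ past $\partial_{L'}$; and the inductive exponents $\sigma(I,K',L')$ and $\sigma(I\setminus k_1,K',L')$. A direct check, using $|K|=|K'|+1$ and (in the contraction case) $|L|=|L'|+1$, shows these combine to exactly $\sigma(I,K,L)$ modulo $2$; multiplying back by $(-1)^{|I||J|}$ and adding the central term from the first stage then gives the statement. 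Finally I would observe the consistency check that every surviving summand has degree $|J|+|K|-|I|+2$, matching the left-hand side, and that the terms with $|L|<|I|-2$ vanish because $\partial_L\xi_I\in\g_{>0}$ annihilates $v$.
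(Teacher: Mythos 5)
Your proposal is correct and follows essentially the same route as the paper: your first stage is exactly the paper's formula \eqref{condizionegrado0} (super-commuting $\xi_I$ past $\eta_J$, which produces the factor $(-1)^{|I||J|}$ and the central correction precisely when $|I|=3$ and $J=I^c$), and your second stage is the induction on $|K|$ that the paper delegates to Lemma A.2 of \cite{kac1}. The only difference is one of presentation: the paper omits the sign bookkeeping of the induction by citation, while you sketch it explicitly (and your exponent $\sigma(I,K,L)$ checks out against direct computation in low-degree cases).
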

\begin{proof}
	From repeated applications of \eqref{bracket} we have
	\begin{align}
	\label{condizionegrado0}
	\xi_{I}(\eta_{J} \eta_{K}) \otimes v=(-1)^{|I||J|} \eta_{J} \xi_{I} \eta_{K} \otimes v+ \bigchi_{|I|=3}\bigchi_{J=I^c} \, \varepsilon_{I} \eta_{K} \otimes C v. 
	\end{align}
	Indeed, from \eqref{bracket}, if $|I|=1, 2$, or $|I|=3$ with $J\neq I^c$ , then $[\xi_{I},\xi_{r}]=0$ for all $r \in J$ and formula \eqref{condizionegrado0} is straightforward. In the case $|I|=3$ and $J=I^{c}$, using \eqref{bracket}, we have:
	\begin{align*}
	\xi_{I}(\eta_{I^{c}} \eta_{K}) \otimes v=-\eta_{I^{c}} \xi_{I} \eta_{K} \otimes v+ \bigchi_{|I|=3}\bigchi_{J=I^c} \varepsilon_{I}  \eta_{K} \otimes C v. 
	\end{align*}
	
	The rest of the proof is the same as the proof of Lemma A.2 in \cite{kac1} and it is done  by induction on $|K|$ using formula \eqref{condizionegrado0} and is therefore omitted.
\end{proof}
\begin{lem}\label{lambdadeg0}Let $I,L\in \mathcal I_{\neq}$. We have:
	\begin{align*}
	\xi_I(\eta_L \otimes v)=& (-1)^{|I|}(|I|-2) \Theta \partial_{I} \eta_L \otimes v +\sum_{i=1}^{4} \partial_{(\partial_{i}\xi_I)}(\xi_{i} \star \eta_L)\otimes v+(-1)^{|I|} \sum_{i<j}\partial_{(\partial_{ij}\xi_I)}\eta_L \otimes \xi_{j,i}. v \\
	&+ \bigchi_{|I|=3} \, \varepsilon_{I} \, \partial_{I^{c}}\eta_L\otimes C v.
	\end{align*}
\end{lem}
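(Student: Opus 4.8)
The plan is to compute $\xi_I(\eta_L\otimes v)$ directly from the definition ${\xi_I}_\lambda(g\otimes v)=\sum_{j\geq0}\frac{\lambda^j}{j!}t^j\xi_I.(g\otimes v)$, extracting the constant term in $\lambda$, i.e. the action of $t^0\xi_I=\xi_I$ itself. The strategy is to bootstrap from the previously established Lemma \ref{lemma 0 action}, which already handles $\xi_I(\eta_{JK}\otimes v)$ in the normalized situation $I\cap J=\emptyset$, $K\subseteq I$. The first step is therefore to reduce the general $\eta_L$ to this normalized shape: given arbitrary $L\in\mathcal I_{\neq}$, split $L$ into the part disjoint from $I$ and the part contained in $I$, absorbing the sign of the reordering permutation. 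This is purely bookkeeping but must be done carefully to track the $(-1)$ factors.

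Next I would take the output of Lemma \ref{lemma 0 action} and rewrite each of its terms in the compact form demanded by the statement. The summand $\sum_{L'\subseteq K}(\cdots)\eta_J\partial_{L'}\eta_K\otimes \partial_{L'}\xi_I.v$ must be reorganized according to the \emph{degree in $\xi$} of the surviving derivative $\partial_{L'}\xi_I$: the terms with $|L'|=0$ give the $\Theta\partial_I\eta_L$ contribution (using $\eta_i^2=\Theta$ to recognize how a fully-contracted $\xi_I$ produces a factor of $\Theta$ together with the scalar $(-1)^{|I|}(|I|-2)$ coming from the bracket \eqref{lambdabracket}/\eqref{bracket}); the terms with $|L'|=1$ give the $\sum_i\partial_{(\partial_i\xi_I)}(\xi_i\star\eta_L)$ contribution, i.e. a single contraction leaving one free $\xi_i$ that attaches to $\eta_L$ via $\star$; and the terms with $|L'|=2$ give $\sum_{i<j}\partial_{(\partial_{ij}\xi_I)}\eta_L\otimes\xi_{j,i}.v$, where the two contracted indices act on $v$ through the $\mathfrak g_0$-element $\xi_{ji}$. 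The central term $\bigchi_{|I|=3}\varepsilon_I\,\partial_{I^c}\eta_L\otimes Cv$ is exactly the second line of Lemma \ref{lemma 0 action} after the normalization, since $J=I^c$ there corresponds to the free index $I^c$ being consumed into $\partial_{I^c}\eta_L$.

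The main obstacle I anticipate is the precise matching of signs between the explicit permutation-sign factor $(-1)^{|I|(|J|+|K|)+|L'|(|L'|-1)/2-|L'|(|K|-|L'|)}$ in Lemma \ref{lemma 0 action} and the sign conventions built into the operators $\partial_I$, $\star$, and $\xi_{j,i}.v$ on the right-hand side of the target formula. In particular one must verify that pulling the $|L'|$ annihilated indices out of $\eta_L$ (which produces Koszul signs encoded in $\partial_L$) combines correctly with the parities of $\xi_I$ and of $v$, and that the order $\xi_{j,i}$ (rather than $\xi_{i,j}$) with the $\sum_{i<j}$ is the one forced by the computation. I would therefore carry out this sign reconciliation on the three cases $|L'|=0,1,2$ separately, treating $|I|\in\{1,2,3,4\}$ as needed, rather than attempting a single uniform manipulation; the degree-counting argument already used in Lemma \ref{deg3} guarantees no terms of degree $|L'|\geq 3$ survive, so these three cases are exhaustive. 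Once the signs are checked case by case, the identity assembles termwise and the lemma follows.
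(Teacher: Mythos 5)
Your skeleton is the same as the paper's: the paper also proves this lemma by normalizing $\eta_L=\pm\,\eta_J\eta_K$ with $I\cap J=\emptyset$, $K\subseteq I$, feeding it into Lemma \ref{lemma 0 action}, and regrouping the resulting sum into the four summands of the statement (the regrouping itself is delegated to Lemma A.3 of \cite{kac1}, with the extra $C$-term tracked separately). However, the way you set up the regrouping contains a genuine inversion that would make the computation fail whenever $|I|\geq 3$. The sum $\sum_{L'\subseteq K}\pm\,\eta_J\partial_{L'}\eta_K\otimes \partial_{L'}\xi_I.v$ must be organized by the number of \emph{surviving} indices $|I|-|L'|$ (the degree of $\partial_{L'}\xi_I$), not by the number $|L'|$ of contracted indices. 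What vanishes are the terms with $|I|-|L'|\geq 3$, because then $\partial_{L'}\xi_I\in\g_{>0}$ acts trivially on $F$; the terms with $|I|-|L'|=2$ give $\sum_{i<j}\partial_{(\partial_{ij}\xi_I)}\eta_L\otimes\xi_{j,i}.v$ (it is the two \emph{surviving} indices, not the contracted ones, that form the $\g_0$-element acting on $v$); the terms with $|I|-|L'|=1$ give the $\xi_i\star\eta_L$ summand \emph{and} part of the $\Theta$ summand; and the full contraction $L'=I$ gives the rest of the $\Theta$ summand. Your cases ``$|L'|=0,1,2$'' together with the claim that no terms with $|L'|\geq 3$ survive would, for $|I|=3,4$, discard precisely the deep contractions that are needed (e.g. $L'=I$) while retaining terms (e.g. $|L'|=0$ with $|I|=4$) that actually die; the appeal to Lemma \ref{deg3} does not rescue this, since the correct vanishing criterion is on $|I\setminus L'|$, not on $|L'|$.

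Relatedly, your account of the $\Theta$-term is not right. The factor $(|I|-2)$ does not ``come from the bracket'': for $m=0$ the bracket \eqref{bracket} carries no such coefficient, and the $(|I|-2)$ appearing in \eqref{lambdabracket} multiplies $\partial\xi_{IJ}$ and is not what enters here. Instead, $(|I|-2)$ emerges from adding two distinct sources: the full contraction $L'=I$, where $\partial_I\xi_I=\pm\,\xi_\emptyset$ and $\xi_\emptyset$ acts as multiplication by $-2\Theta$ (recall $\Theta=-\tfrac12\xi_\emptyset$), contributing $-2$; and the $|I|$ contractions with $|L'|=|I|-1$ in which the single surviving $\xi_i$ multiplies an $\eta_i$ already present in $\eta_L$, each contributing $+1$ via $\eta_i^2=\Theta$. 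The case $I=L=(1)$ shows both at once: $\xi_1(\eta_1\otimes v)=2\Theta\otimes v-\eta_1^2\otimes v=\Theta\otimes v$, matching $(-1)^{|I|}(|I|-2)\Theta\,\partial_1\eta_1\otimes v$. So the plan (normalize, apply Lemma \ref{lemma 0 action}, regroup, fix signs) is the correct one, but before the sign bookkeeping can even start, the case split must be redone with respect to $|I\setminus L'|$ and both $\Theta$-sources must be kept; as written, the proposed decomposition does not reproduce the stated formula.
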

\begin{proof}
	The proof is analogous to the proof in \cite{kac1} of Lemma A.3, and it is based on Lemma \ref{lemma 0 action}. The extra term  in $C$ is due to the additional term of Lemma \ref{lemma 0 action}, which is not present in Lemma A.2 of \cite{kac1}. 
\end{proof}
Now we study the term of degree 1 in $\lambda$ of the $\lambda-$action.
\begin{lem}\label{lambdadeg1}
	Let $I,L\in \mathcal I_{\neq}$. We have:
	\begin{align*}
	t\xi_I(\eta_L \otimes v)= &(-1)^{|I|}\partial_{I}\eta_L \otimes { t }. v+(-1)^{|I|+|L|} \sum^{4}_{i=1}(\partial_{Ii}\eta_L \star \xi_{i}) \otimes v \\
	&-\sum_{i \neq j} \partial_{ \partial_{i}\xi_I}(\partial_{j}\eta_L) \otimes \xi_{i,j}. v
	+\bigchi_{|I|=2} \, \varepsilon_{I} \, \partial_{I^{c}}\eta_L\otimes C v.
	\end{align*}
\end{lem}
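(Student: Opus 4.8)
The plan is to follow the template of Lemma \ref{lambdadeg0}: compute $t\xi_I(\eta_L\otimes v)$ by pushing the generator $t\xi_I$ to the right through the word $\eta_L$ until it reaches $1\otimes v$, where it acts through $\g_0$ (producing $t.v$, $\xi_{i,j}.v$ or $Cv$) or through $\g_{>0}$ (annihilating $v$). The only commutation rule needed is the $m=1$ specialization of \eqref{bracket}, namely $[t\xi_I,\xi_r]=-\xi_{Ir}+(-1)^{|I|}t\,\partial_r\xi_I$, where the central term is now absent because $m=1\neq 0$. Two features distinguish this computation from the $m=0$ case treated in Lemma \ref{lambdadeg0}: the bracket carries the extra summand $-\xi_{Ir}$, present even when $r\notin I$, and a factor $t$ survives the contractions.

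First I would establish the one-step identity analogous to \eqref{condizionegrado0}. Writing $\eta_L=\eta_J\eta_K$ with $J=L\setminus I$ (so $I\cap J=\emptyset$) and $K=L\cap I$ (so $K\subseteq I$), repeated use of the rule above decomposes $t\xi_I(\eta_{JK}\otimes v)$ into two kinds of contributions. In the first kind, $t\xi_I$ contracts against the $K$-part through the terms $(-1)^{|I|}t\,\partial_r\xi_I$; since $K\subseteq I$, a full contraction is possible precisely when $I\subseteq L$, and the residual operator $t\in\g_0$ then reaches $F$. Passing the leftover odd variables, this residual $t$ both acts on $v$ and generates the brackets $[t,\xi_s]=-\xi_s$. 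In the second kind, some $\eta_r$ with $r\in J$ is replaced by $-\xi_{Ir}$; collecting these contributions, each is, up to sign and a left factor of odd variables, a degree-zero action of $\xi_{Ir}$ on a sub-word $\eta_{L'}\otimes v$ with $L'=L\setminus\{r\}$, which is evaluated by the already-proven Lemma \ref{lambdadeg0} applied with $M:=Ir$ in place of $I$. By induction on $|K|$, exactly as in Lemma \ref{lemma 0 action}, this assembles into a closed normal-ordering formula.

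It then remains to reorganize the resulting nested sums into the four stated terms. A key simplification is that the $\Theta$-summand of Lemma \ref{lambdadeg0}, of the form $(-1)^{|M|}(|M|-2)\Theta\,\partial_M\eta_{L'}$ with $M=Ir$ and $L'=L\setminus\{r\}$, always vanishes: since $r\in M$ but $r\notin L'$ we have $M\not\subseteq L'$, so $\partial_M\eta_{L'}=0$. This, together with the fact that the first kind of contribution generates only $\g_0$- and $\g_{-1}$-elements, explains why no $\Theta$ appears in the final formula. The residual-$t$ contraction produces the leading term $(-1)^{|I|}\partial_I\eta_L\otimes t.v$; the $-\xi_s$ contributions of $t$ together with the $\xi_i\star$-summands of Lemma \ref{lambdadeg0} recombine into $(-1)^{|I|+|L|}\sum_i(\partial_{Ii}\eta_L\star\xi_i)\otimes v$; the $\xi_{j,i}.v$-summands of Lemma \ref{lambdadeg0} give $-\sum_{i\neq j}\partial_{\partial_i\xi_I}(\partial_j\eta_L)\otimes\xi_{i,j}.v$; and the central summand of Lemma \ref{lambdadeg0}, which requires $|M|=|Ir|=3$ and hence $|I|=2$, yields $\bigchi_{|I|=2}\,\varepsilon_I\,\partial_{I^c}\eta_L\otimes Cv$ after summing over $r$ and matching $\varepsilon_{Ir}$ with $\varepsilon_I$.

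The main obstacle is purely combinatorial: the careful tracking of the signs $(-1)^{|I|}$, $(-1)^{|L|}$, $\varepsilon_I$ and of the orderings introduced by moving odd variables past one another, together with the verification that the central condition correctly shifts from $|M|=3$ in Lemma \ref{lambdadeg0} to $|I|=2$ here. I would guard against sign errors by checking the formula on the small cases $t\xi_1(\eta_1\otimes v)$, $t\xi_1(\eta_2\otimes v)$ and $t\xi_1(\eta_{12}\otimes v)$, in which each of the four terms can be isolated and compared directly with a hand computation via \eqref{bracket}.
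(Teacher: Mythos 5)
Your proposal is correct and takes essentially the same approach as the paper: both compute by pushing $t\xi_I$ through $\eta_L$ using \eqref{bracket} with $m=1$, splitting the result into the residual-$t$ contraction (which produces the $t.v$ term and the $[t,\xi_s]=-\xi_s$ corrections) and the $-\xi_{Ir}$ replacement terms (which carry the $\xi_{j,i}.v$ contributions and the central term, correctly shifted to $\bigchi_{|I|=2}$), assembled by induction over the decomposition $\eta_L=\eta_J\eta_K$. The only cosmetic difference is that you evaluate the $\xi_{Ir}$-pieces by invoking the already-proven Lemma \ref{lambdadeg0} (your observation that its $\Theta$-summand vanishes since $\partial_{Ir}\eta_{L\setminus\{r\}}=0$ is correct), whereas the paper establishes the intermediate identity \eqref{formulagrado1} by induction on $|J|$ and defers the remaining assembly to Lemma A.4 of \cite{kac1}.
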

\begin{proof}
	Without loss of generality we can suppose that $\eta_L=\eta_{J}\eta_{K}$ with $I \cap J =\emptyset$, $K \subseteq I$. Let us prove that:
	\begin{align}
	\label{formulagrado1}
	t\xi_{I}(\eta_{J}\eta_{K} \otimes v)=&(-1)^{|I||J|} \eta_{J}(t\xi_{I})\eta_{K} \otimes v+\sum^{4}_{j=1}(-1)^{|I||J|-|I|+|J|}(\partial_{j}\eta_{J})\xi_{Ij}\eta_{K}\otimes v   \\ \nonumber
	&+ \bigchi_{|I|=2}    \, \varepsilon_{I} \partial_{I^{c}}\eta_{JK} \otimes C v.
	\end{align}
	The formula is the same as the relation proved for $K(1,N)_{+}$ in the proof of Lemma A.4 of \cite{kac1}, except for an additional term in $C$. We point out that a term with $C$ is involved only if $|I|=2$ and $|J|=2$. 
	Let us prove \eqref{formulagrado1} by induction on $|J|$. If $|J|=0$, \eqref{formulagrado1} is straightforward. Let $\tilde J$ be such that $|\tilde J|>0$ and $\widetilde{J}\cap I=\emptyset$. Let $J,s$ be such that $\eta_{\widetilde{J}}=\eta_{J}\eta_{s}$. We have, using \eqref{formulagrado1} for $\eta_{J}$, that:
	\begin{align*}
	t\xi_{I}(\eta_{J}\eta_{s}\eta_{K} \otimes v)=&(-1)^{|I||J|} \eta_{J}(t\xi_{I})\eta_{s}\eta_{K} \otimes v+\sum^{4}_{j=1}(-1)^{|I||J|-|I|+|J|}(\partial_{j}\eta_{J})\xi_{Ij}\eta_{s}\eta_{K}  \otimes v \\ 
	&+ \bigchi_{|I|=2}    \, \varepsilon_{I}( \partial_{I^{c}}\eta_{J}) \eta_{s}\eta_{K} \otimes C v.
	\end{align*}
	Notice that, since we are supposing  $\eta_{\widetilde{J}}=\eta_{J}\eta_{s}$ with $\widetilde{J}\cap I=\emptyset$  and $s \notin J$, the term \\
	$\bigchi_{|I|=2}    \, \varepsilon_{I} (\partial_{I^{c}}\eta_{J}) \eta_{s}\eta_{K} \otimes C v$ is 0 because if $|I|=2 $, then $|J|<2$.
	We have, using \eqref{bracket}, that:
	\begin{align*}
	t\xi_{I}(\eta_{J}\eta_{s}\eta_{K} \otimes v)=&(-1)^{|I|(|J|+1)} \eta_{J}\eta_{s}(t\xi_{I})\eta_{K} \otimes v-(-1)^{|I||J|} \eta_{J}\xi_{Is}\eta_{K} \otimes v \\
	&+\sum^{4}_{j=1}(-1)^{|I||J|-|I|+|J|+|I|+1}(\partial_{j}\eta_{J})\eta_{s}\,\xi_{Ij}\,\eta_{K} \otimes v \\
	&-(-1)^{|J|}\bigchi_{|I|=2}\bigchi_{|J|=1} \varepsilon_{I}\partial_{I^{c}}\eta_{JsK} \otimes C v.
	\end{align*}
	We observe that:
	\begin{align*}
	-(-1)^{|I||J|} \eta_{J}\xi_{Is}\eta_{K} \otimes v&=(-1)^{|I||J|+1+|J|}(\partial_{s}\eta_{\widetilde{J}})\xi_{Is}\eta_{K} \otimes v \\
	&=(-1)^{|I||\widetilde{J}|-|I|+|\widetilde{J}|}(\partial_{s}\eta_{\widetilde{J}})\xi_{Is}\eta_{K} \otimes v.
	\end{align*} 
	Therefore:
	\begin{align*}
	t\xi_{I}(\eta_{J}\eta_{s}\eta_{K} \otimes v)=&(-1)^{|I|(|\widetilde{J}|)} \eta_{\widetilde{J}}(t\xi_{I})\eta_{K} \otimes v
	+\sum^{4}_{j=1}(-1)^{|I||\widetilde{J}|-|I|+|\widetilde{J}|}(\partial_{j}\eta_{\widetilde{J}})\,\xi_{Ij}\,\eta_{K} \otimes v\\
	&+\bigchi_{|I|=2} \varepsilon_{I}\partial_{I^{c}}\eta_{\widetilde{J}K} \otimes C v.
	\end{align*}
	Hence, formula \eqref{formulagrado1} is proved.
	The rest of the proof is analogous to the proof of Lemma A.4 in \cite{kac1} and it is based on \eqref{formulagrado1}.  
\end{proof}
Now we study the term of degree 2 in $\lambda$ of the $\lambda-$action.
\begin{lem}\label{lambdadeg2}
	Let $I,L\in \mathcal I_{\neq}$ and $v\in F$. We have
	\begin{align*}
	\frac{1}{2}t^{2}\xi_I(\eta_L \otimes v)=&(-1)^{|I|}  \sum_{i<j} \partial_{Iij}\eta_L \otimes \xi_{j,i}. v
	-\bigchi_{|I|=1}  \, \varepsilon_{I} \, \partial_{I^{c}}\eta_L \otimes Cv.
	\end{align*}
\end{lem}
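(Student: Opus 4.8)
The plan is to follow the pattern already used for the degree $0$ and degree $1$ terms, reducing the action of $t^{2}\xi_I$ to the commutation rule \eqref{bracket} and to the degree one action computed in Lemma \ref{lambdadeg1}. First I would dispose of the trivial cases by a degree count. Since $t^{2}\xi_I$ is homogeneous of degree $2\cdot 2+|I|-2=|I|+2$ and $\eta_L$ has degree $|L|\le 4$, the vector $t^{2}\xi_I(\eta_L\otimes v)$ is homogeneous of degree $|L|-|I|-2$; as $\Ind(F)$ carries only non-negative degrees, both sides vanish whenever $|I|+2>|L|$. In particular the statement is trivial for $|I|\ge 3$: for $|I|=3$ the complement $I^{c}$ is a single index, so the sum $\sum_{i<j}$ is empty, and for $|I|=4$ the operator has degree $6$. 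Hence it suffices to treat $|I|\in\{0,1,2\}$. Exactly as in Lemma \ref{lambdadeg1} I would then reduce to the normal form $\eta_L=\eta_J\eta_K$ with $I\cap J=\emptyset$ and $K\subseteq I$; the base case $J=\emptyset$ is immediate, since $t^{2}\xi_I(\eta_K\otimes v)$ has negative degree $|K|-|I|-2<0$ and therefore vanishes, leaving no room even for a central contribution.

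The heart of the argument is a straightening identity, proved by induction on $|J|$ and analogous to \eqref{formulagrado1}, obtained by commuting $t^{2}\xi_I$ to the right through $\eta_J$ one letter at a time using \eqref{bracket} with $m=2$. The new feature compared with the degree one case is that commuting $t^{2}\xi_I$ past a single $\xi_s$ produces \emph{two} descendants, $(-1)^{|I|}t^{2}\partial_s\xi_I$, which is again of order $m=2$, and $-2\,t\xi_{Is}$, which is of order $m=1$; no central term is created at this stage because the term $C$ in \eqref{bracket} occurs only for $m=0$. Iterating, the order-two descendants recombine (together with the base case) into terms that die by degree or by $\g_{>0}.F=0$, whereas each order-one descendant $t\xi_{Is}$ must be evaluated on the remaining tensor through Lemma \ref{lambdadeg1}. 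In this way the whole degree two action is expressed in terms of the already known degree one action.

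Feeding the descendants $t\xi_{Is}$ into Lemma \ref{lambdadeg1} yields precisely the two families on the right-hand side. The first-order $\partial$-terms of Lemma \ref{lambdadeg1}, applied with $Is$ in place of $I$ and summed over $s$, assemble after reindexing into $(-1)^{|I|}\sum_{i<j}\partial_{Iij}\eta_L\otimes\xi_{j,i}.v$, while the central term of Lemma \ref{lambdadeg1}, which carries $\bigchi_{|Is|=2}$, survives only when $|I|=1$ and reassembles into $-\bigchi_{|I|=1}\varepsilon_I\partial_{I^{c}}\eta_L\otimes Cv$. The main obstacle is entirely bookkeeping: keeping track of the Koszul signs produced when the odd elements $\xi_s$ and $\eta_s$ are moved past one another, checking that the numerical factor $-2$ from the order-lowering combines correctly with the prefactor $\tfrac12$ in the statement, and verifying that the condition $\bigchi_{|Is|=2}$ together with the sign $\varepsilon_{Is}$ collapses to the stated coefficient $\varepsilon_I$. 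As in the previous lemmas, once this straightening identity is in place the combinatorial assembly is identical to the corresponding computation in \cite{kac1} and need not be repeated.

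Finally, as an independent verification one may avoid the induction altogether by using the identity $t^{2}\xi_I=-[t\xi_r,t\xi_{rI}]$, valid in $\g$ for any $r\notin I$: here $\xi_{r\,rI}=0$, and the cocycle $\psi$ pairs only elements whose degrees sum to $0$, so it vanishes on $t\xi_r,t\xi_{rI}\in\g_{>0}$ and no central correction arises. Evaluating the right-hand side through the module identity $[a,b].w=a.(b.w)-(-1)^{p(a)p(b)}b.(a.w)$ and two applications of Lemma \ref{lambdadeg1} must reproduce the same formula, independently of the auxiliary index $r$.
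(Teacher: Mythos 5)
Your proposal is correct in outline, but it organizes the computation along a genuinely different route from the paper. The paper's proof never invokes Lemma \ref{lambdadeg1}: it proves a self-contained straightening identity in $U(\g)$ (formula \eqref{formulagrado2app}, the degree-two analogue of \eqref{formulagrado1}) in which $\tfrac{1}{2}t^{2}\xi_I$ is pushed all the way past $\eta_J$, producing descendants $t^{2-|S|}\xi_{IS}$ for all $|S|\leq 2$; there the central term is generated by the $m=0$ part of \eqref{bracket} (an order-zero descendant $\xi_{IS}$ with $|S|=2$, $|I|=1$ meeting the remaining letter $(IS)^{c}$), its sign is fixed by a separate explicit computation, the $|S|\leq 1$ descendants are then killed by degree, and a second induction on $|K|$, following Lemma A.5 of \cite{kac1}, finishes. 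You instead stop the commutation at the first-order descendants $-2\,t\xi_{Is}$ and evaluate each of them on the remaining tensor factor by Lemma \ref{lambdadeg1}, so the central term is inherited from the $\bigchi_{|Is|=2}$ term of that lemma rather than re-derived, and no second induction on $|K|$ is needed; this buys a real economy in bookkeeping. Two details should be made explicit when executing your plan: since $s\in J$ is disjoint from $I$, the order-two descendant $(-1)^{|I|}t^{2}\partial_s\xi_I$ is identically zero, so the only surviving order-two term is $\pm\,\eta_J\,t^{2}\xi_I$, which dies on $\eta_K\otimes v$ exactly by your degree count; and when feeding $t\xi_{Is}$ into Lemma \ref{lambdadeg1} one must note that the $t.v$ terms and the $\star\,\xi_i$ terms of that lemma vanish because $s$ does not occur in the remaining factor (so $\partial_{Is}$ of it is zero) --- this is why ``precisely the two families'' survive, as you assert. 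Both points check out, so there is no gap. Your closing verification via $t^{2}\xi_I=-[t\xi_r,t\xi_{rI}]$ is also sound (the cocycle of Proposition \ref{anniliK4} is supported in degree zero, while both arguments lie in $\g_{>0}$, so no central correction appears) and is arguably the cleanest derivation of all, since it produces the formula from two applications of Lemma \ref{lambdadeg1} with no induction; what the paper's route buys in exchange is a reusable identity in $U(\g)$ and the possibility of citing \cite{kac1} essentially verbatim for the sign analysis.
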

\begin{proof}
	As before, without loss of generality, we can suppose that $\eta_L=\eta_{J}\eta_{K}$ with $I \cap J =\emptyset$, $K \subseteq I$.
	Let us prove that:
	\begin{align}
	\label{formulagrado2}
	\frac{1}{2}t^{2}&\xi_{I}(\eta_{J} \eta_{K} \otimes v)\\ \nonumber
	&= - \bigchi_{I=K}\sum_{i<j}(-1) ^{|I||J|+|I|(|I|+1)/2} \partial_{ij}\eta_{J } \,  \partial_{I}\eta_{K } \otimes \xi_{ij} . v 
	- \bigchi_{|I|=1} \, \varepsilon_{I} \,\partial_{I^{c}} \eta_{JK} \otimes Cv.
	\end{align}
	In order to establish \eqref{formulagrado2}, we need to prove the following:
	\begin{equation}
	\label{formulagrado2app}
	\big(\frac{1}{2}t^{2}\xi_{I}\big)\,\eta_{J} =\sum_{S\in \mathcal I_{<}:\, |S|\leq 2} \pm \frac{1}{(2-|S|)!}\de_S \eta_J \,(t^{2-|S|}\xi_{IS})
	 - \bigchi_{|I|=1} \, \varepsilon_{I} \,\partial_{I^{c}} \eta_{J}  C. 
	\end{equation}
	
	We prove \eqref{formulagrado2app} by induction on $|J|$. If $|J|=0$, the result is straightforward.
	
	Let   us consider $\eta_{\widetilde{J}}=\eta_{J}\eta_{r}$ with $\widetilde{J}\cap I=\emptyset$ and $r \notin J$. We have, using \eqref{formulagrado2app} for $\eta_{J}$, that:
	\[
	\big(\frac{1}{2}t^{2}\xi_{I}\big)\,\eta_{J} \eta_{r} =\sum_{S\in \mathcal I_{<}:\, |S|\leq  2} \frac{1}{(2-|S|)!} \partial_{S}\eta_{J} \,(t^{2-|S|}\xi_{IS}) \eta_{r}- \bigchi_{|I|=1} \, \varepsilon_{I}\,(\partial_{I^{c}}\eta_{J}) \eta_{r}  C.
	\]
	Notice that, since we are supposing  $\eta_{\widetilde{J}}=\eta_{J}\eta_{r}$ with $\widetilde{J}\cap I=\emptyset$ and $r \notin J$, the term \\
	$- \bigchi_{|I|=1} \, \varepsilon_{I} \,(\partial_{I^{c}} \eta_{J}) \eta_{r} C$ is 0 because if $|I|=1 $, then $|J|<3$.
	Now, by \eqref{bracket},  we have
	
	\begin{align*}
	\big(\frac{1}{2}t^{2}\xi_{I}\big)\,\eta_{J} \,\eta_{r} =&
	\sum_{S:\,|S|\leq 2}\pm \frac{1}{(2-|S|)!}(\de_S\eta_J)\eta_r(t^{2-|S|}\xi_{IS})
	+\sum_{|S|\leq 2}\pm \frac{2-|S|}{(2-|S|)!}\de_S\eta_J \,(t^{1-|S|}\xi_{ISr})\\
	&\pm\bigchi_{|I|=1}\bigchi_{|J|=2}\varepsilon_{\tilde J} C\\
	=&\sum_{S:\,|S|\leq 2,\, r\notin S}\pm \frac{1}{(2-|S|)!}\de_S\eta_{Jr} \,(t^{2-|S|}\xi_{IS})\pm\sum_{S:\, |S|\leq 1}\de_{Sr}\eta_{\tilde J}(t^{1-|S|}\xi_{ISr})\\
	& \pm\bigchi_{|I|=1}\bigchi_{|J|=2}\varepsilon_{\tilde J} C\\
	=&\sum_{S:\, |S|\leq 2}\pm \frac{1}{(2-|S|)!} \partial_{S}\eta_{\widetilde{J}}(t^{2-|S|} \xi_{IS}) \eta_{K}\otimes v\pm \bigchi_{|I|=1} \bigchi_{|J|=2}\varepsilon_{\tilde J} C
	\end{align*}
Now we compute explicitly the sign of the last summand above.
Hence we consider $I$ with $|I|=1$ and $|\widetilde{J}|=3$. For $I=(i)$ and $\widetilde{J}=(j,k,l)=I^{c}$, by repeated applications of \eqref{bracket}, we have:
\begin{align*}
\big(\frac{1}{2}t^{2}\xi_{i}\big)\eta_{j}\eta_{k}\eta_{l} &=-(t \xi_{ij})\eta_{k}\eta_{l} +\eta_{j}(t\xi_{ik})\eta_{l} -\eta_{j}\eta_{k}(t\xi_{il}) -\eta_{j}\eta_{k}\eta_{l}\big(\frac{1}{2}t^{2}\xi_{i} \big) \\
&= \xi_{ijk}\eta_{l} +\eta_{k}( \xi_{ijl}) -\eta_{k}\eta_{l}(t \xi_{ij}) -\eta_{j} \xi_{ikl}+\eta_{j}\eta_{l} (t\xi_{ik})-\eta_{j}\eta_{k}(t\xi_{il}) -\eta_{j}\eta_{k}\eta_{l}\big(\frac{1}{2}t^{2}\xi_{i} \big) \\
&= -\eta_{l}  \xi_{ijk}+\varepsilon_{I}C+\eta_{k} \xi_{ijl} -\eta_{kl}(t \xi_{ij}) -\eta_{j} \xi_{ikl})+\eta_{jl} (t\xi_{ik})-\eta_{jk}(t\xi_{il}) -\eta_{jkl}\big(\frac{1}{2}t^{2}\xi_{i} \big) \\
&= \sum_{S: |S|\leq 2} \pm \frac{1}{(2-|S|)!}\partial_{S}\eta_{\widetilde{J}}(t^{2-|S|} \xi_{IS})-\varepsilon_{I}\, \partial_{I^{c}}\eta_{\widetilde{J}}   C,
\end{align*}
The result follows with the simple verification that the coefficient of $C$ above agrees with the coefficient of $C$ in \eqref{formulagrado2app}.

Now we observe that if $|S|=0,1$ then $\deg(t^{2} \xi_{I} \xi_{S})>\deg(\eta_{K})$ and $\deg(t \xi_{I} \xi_{S})>\deg(\eta_{K})$ and therefore, by \eqref{formulagrado2app}, we have

	\begin{align*}
	&\frac{1}{2}t^{2}\xi_{I}(\eta_{J} \eta_{K} \otimes v)= \sum_{S\,: |S|=2} \pm (\partial_{S}\eta_{J})  \xi_{IS} \eta_{K}\otimes v - \bigchi_{|I|=1} \, \varepsilon_{I} \,\partial_{I^{c}} \eta_{J} \otimes Cv.
	\end{align*}
	Proceeding as in the proof of Lemma A.5 in \cite{kac1}, one can show that the signs in this sum do not depend on $S$ and are all equal to  $ -  (-1) ^{|I||J|}$. It follows that this relation reduces to:
	\begin{align}
	\label{formulagrado2appnew}
	&\frac{1}{2}t^{2}\xi_{I}(\eta_{J} \eta_{K} \otimes v)=-  (-1) ^{|I||J|} \sum_{i<j} \partial_{ij}\eta_{J}( \xi_{Iij} \big) \eta_{K}\otimes v-\bigchi_{|I|=1} \varepsilon_{I} \partial_{I^{c}}\eta_{J} \eta_{K} \otimes Cv.
	\end{align}
	Formula \eqref{formulagrado2} can be proved using \eqref{formulagrado2appnew}, \eqref{bracket} and induction on $|K|$. The proof is similar to the proof of \eqref{formulagrado2appnew}.
	Finally, the rest of the proof is analogous to the proof of Lemma A.5 in \cite{kac1} and it is based on \eqref{formulagrado2}.  
\end{proof}
It is convenient to summarize the previous lemmas in the following result.
\begin{prop}
	\label{action}
	Let $I,L\in \mathcal I_{\neq}$. The $\lambda-$action has the following expression:
	\begin{align*}
	{\xi_I}_{\lambda}(\eta_L \otimes v)=&(-1)^{|I|}(|I|-2) \Theta \partial_{I} \eta_L \otimes v +\sum_{i=1}^{4} \partial_{(\partial_{i}\xi_I)}(\xi_{i} \star \eta_L)\otimes v\\
	&+ (-1)^{|I|} \sum_{i<j} \partial_{(\partial_{ij}\xi_I)}\eta_L \otimes \xi_{j,i}.v   + \bigchi_{|I|=3} \, \varepsilon_{I} \, \partial_{I^{c}}\eta_L\otimes C v\\
	&+\lambda \bigg( (-1)^{|I|}\partial_{I}\eta_L \otimes { t }. v+(-1)^{|I|+|L|} \sum^{4}_{i=1}(\partial_{Ii}\eta_L \star \xi_{i}) \otimes v \\
	& +\sum_{i \neq j} \partial_{ \partial_{i}\xi_I}(\partial_{j}\eta_L) \otimes \xi_{j,i}. v + \bigchi_{|I|=2}    \, \varepsilon_{I} \,\partial_{I^{c}}\eta_L\otimes C v \bigg)\\
	&+\lambda^{2} \Big( (-1)^{|I|} \sum_{i<j}  \partial_{Iij} \eta_L \otimes \xi_{j,i} v -\bigchi_{|I|=1}  \, \varepsilon_{I} \, \partial_{I^{c}}\eta_L \otimes Cv \Big) \\
	& +\lambda^{3}\left( -\bigchi_{|I|=0} \, \partial_{I^{c}} \eta_L \otimes C v \right).
	\end{align*}
\end{prop}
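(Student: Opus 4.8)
The plan is to obtain Proposition \ref{action} as a direct assembly of the four preceding lemmas. By Proposition \ref{propcorrispmoduli} the $\lambda$-action is, by definition, the generating series
\[
{\xi_I}_{\lambda}(\eta_L\otimes v)=\sum_{j\geq 0}\frac{\lambda^{j}}{j!}\,t^{j}\xi_I(\eta_L\otimes v),
\]
so the whole content of the statement is that the four homogeneous blocks on the right-hand side are exactly the coefficients of $\lambda^0,\lambda^1,\lambda^2,\lambda^3$ in this series. First I would observe that the series is in fact a polynomial in $\lambda$ of degree at most $3$: since $\deg(t^{j}\xi_I)=2j+|I|-2$ and $\deg(\eta_L)=|L|\leq 4$, the operator $t^{j}\xi_I$ annihilates $\eta_L\otimes v$ whenever $2j+|I|-2>|L|$, which for $j>3$ always holds; this is precisely the degree argument already used in Lemma \ref{deg3}.

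Next I would read off the coefficients one at a time and match them with the corresponding blocks of the statement. The coefficient of $\lambda^0$ is $\xi_I(\eta_L\otimes v)$, which is exactly Lemma \ref{lambdadeg0}. The coefficient of $\lambda^1$ is $t\xi_I(\eta_L\otimes v)$, given verbatim by Lemma \ref{lambdadeg1}, and reproduces the bracketed $\lambda$-block. For the coefficient of $\lambda^2$ I must track the factorial carefully: $\frac{\lambda^{2}}{2!}t^{2}\xi_I=\lambda^{2}\bigl(\tfrac12 t^{2}\xi_I\bigr)$, and since Lemma \ref{lambdadeg2} is already stated for the operator $\tfrac12 t^{2}\xi_I$, its right-hand side is precisely the $\lambda^{2}$-block.

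The step I expect to require the most care — and the only genuinely delicate point — is the $\lambda^{3}$ term, where the factorial $\tfrac1{3!}$ and the explicit constant produced by Lemma \ref{deg3} must cancel correctly and the sign must be reconciled with the notation $\partial_{I^c}$. Here $\frac{\lambda^{3}}{3!}t^{3}\xi_I(\eta_L\otimes v)$ is nonzero only when $|I|=0$ and $|L|=4$, in which case Lemma \ref{deg3} gives $t^{3}\xi_I(\eta_L\otimes v)=-6\varepsilon_L\otimes Cv$, so the coefficient becomes $\tfrac16(-6)\varepsilon_L\otimes Cv=-\varepsilon_L\otimes Cv$. To identify this with the stated $-\bigchi_{|I|=0}\,\partial_{I^{c}}\eta_L\otimes Cv$, I would note that when $|I|=0$ we have $I^{c}=1234$, so $\partial_{I^{c}}\eta_L=\partial_{1234}\eta_L$ vanishes unless $|L|=4$; and for any $L$ with $|L|=4$ the identity $\eta_L=\varepsilon_L\,\eta_{1234}$ together with the direct computation $\partial_{1234}\eta_{1234}=1$ gives $\partial_{1234}\eta_L=\varepsilon_L$. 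Thus the two expressions agree in the only relevant case, and collecting the coefficients of $\lambda^0,\dots,\lambda^3$ yields the displayed formula.
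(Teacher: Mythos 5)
Your proposal is correct and coincides with the paper's (implicit) proof: Proposition \ref{action} is introduced there precisely as a summary of Lemmas \ref{deg3}, \ref{lambdadeg0}, \ref{lambdadeg1} and \ref{lambdadeg2}, assembled with the factorials $\frac{1}{j!}$ of the generating series and the degree bound exactly as you describe, including the cancellation $\frac{1}{3!}(-6\varepsilon_L)=-\varepsilon_L=-\partial_{I^c}\eta_L$ in the $\lambda^3$ term. The only imprecision is the word ``verbatim'' for the $\lambda$-block: Lemma \ref{lambdadeg1} carries the term $-\sum_{i\neq j}\partial_{\partial_i\xi_I}(\partial_j\eta_L)\otimes \xi_{i,j}.v$, which agrees with the stated $+\sum_{i\neq j}\partial_{\partial_i\xi_I}(\partial_j\eta_L)\otimes \xi_{j,i}.v$ only after using $\xi_{i,j}=-\xi_{j,i}$, a reconciliation of the same harmless kind you already performed for the $\lambda^3$ term.
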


 For $\eta_{I} \in \inlinewedge(4)$ we indicate with $\overline{\eta_{I}}$ its Hodge dual in $U(\g_{<0})$, i.e. the unique monomial such that $\overline{\eta_{I}}  \star \xi_{I}=\eta_{1234}$. Then we extend by linearity the definition of Hodge dual to elements $\sum_{I}\alpha_{I}\eta_{I} \in U(\g_{<0})$ and we set $\overline{\Theta^{k}\eta_{I}}=\Theta^{k}\overline{\eta_{I}}$. \\
We recall Lemma 4.2 from \cite{kac1}.
\begin{lem}\label{lem4}
	For $f\in \inlinewedge (4)$, $L \in \mathcal I_{\neq}$, $i \in \left\{ 1,2,3,4\right\}$, we have:
	\begin{align}
	\label{lem4.2dualeprima}
	\overline{\partial_{i}\eta_L}& =\overline{\eta_L} \star \xi_{i}=(-1)^{|L|}\xi_{i} \star \overline{\eta_L}, \\
	\label{lem4.2dualeseconda}
	\overline{\partial_{f}\eta_L}& =(-1)^{(|f|(|f|-1)/2)+|f||L|}f\star \overline{\eta_L}, \\
	\label{lem4.2dualeterza}
	\overline{\xi_{i} \star \eta_L}& =-(-1)^{|L|}\partial_{i}\overline{\eta_L} , \\
	\label{lem4.2dualequarta}
	\overline{\eta_L \star \xi_{i}}& =-\partial_{i}\overline{\eta_L} .
	\end{align}
\end{lem}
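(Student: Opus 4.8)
The plan is to reduce to ordered monomials and make the Hodge dual completely explicit. Since $\overline{\,\cdot\,}$, $\partial_i$ and $\star$ are linear (and $\Theta$-linear), it suffices to prove each identity for $\eta_L$ with $L\in\mathcal I_<$. First I would unwind the defining relation $\overline{\eta_L}\star\xi_L=\eta_{1234}$: writing $\overline{\eta_L}=c_L\,\eta_{L^c}$ and using $\eta_{L^c}\star\xi_L=\eta_{L^cL}$, the identity $\eta_{LL^c}=\varepsilon_L\eta_{1234}$, and the block-commutation sign $\eta_{L^cL}=(-1)^{|L|(4-|L|)}\eta_{LL^c}$, one finds $c_L=(-1)^{|L|(4-|L|)}\varepsilon_L$. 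From this, together with $\varepsilon_L\varepsilon_{L^c}=(-1)^{|L|(4-|L|)}$, one immediately gets the involution property $\overline{\overline{\eta_L}}=(-1)^{|L|}\eta_L$ (recall $|L|(4-|L|)\equiv|L|\pmod 2$). Once $\overline{\eta_L}$ is identified, up to an explicit sign, with the complementary monomial $\eta_{L^c}$, every identity in the statement becomes a relation among the signs $\varepsilon_L$, $\varepsilon_{L\cup i}$, $\varepsilon_{L\setminus i}$.

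The single piece of genuine work is the first equality in \eqref{lem4.2dualeprima}, which I would prove by the dichotomy $i\in L$ versus $i\notin L$. If $i\notin L$ then $\partial_i\eta_L=0$, while on the other side $i\in L^c$ forces $\eta_{L^c}\star\xi_i=0$, so both sides vanish. If $i\in L$, say $i$ is the $j$-th entry of $L$, then $\partial_i\eta_L=(-1)^{j-1}\eta_{L\setminus i}$ and $\overline{\eta_L}\star\xi_i=c_L\,\eta_{L^c i}$; both are scalar multiples of the monomial on the complementary set $(L\setminus i)^c=L^c\cup\{i\}$, and the identity reduces to a sign relation between $c_{L\setminus i}$ and $c_L$ whose sign is governed by the position of $i$, verified directly from the explicit formula for $c$. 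The second equality in \eqref{lem4.2dualeprima} needs no case analysis: $\eta_M\star\xi_i=(-1)^{|M|}\xi_i\star\eta_M$ holds for every monomial straight from the two definitions of $\star$ and anticommutativity of the $\eta$'s, so applying it with $M=L^c$ (and $|L^c|\equiv|L|\pmod 2$) gives $\overline{\eta_L}\star\xi_i=(-1)^{|L|}\xi_i\star\overline{\eta_L}$.

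The remaining identities then follow formally. For \eqref{lem4.2dualequarta} I would apply the (already proved) first equality of \eqref{lem4.2dualeprima} to $\overline{\eta_L}$ in place of $\eta_L$, obtaining $\overline{\partial_i\overline{\eta_L}}=\overline{\overline{\eta_L}}\star\xi_i=(-1)^{|L|}\eta_L\star\xi_i$; taking the Hodge dual once more and using the involution property on the degree-$(3-|L|)$ element $\partial_i\overline{\eta_L}$ turns this into $\overline{\eta_L\star\xi_i}=-\partial_i\overline{\eta_L}$. Identity \eqref{lem4.2dualeterza} is then immediate from \eqref{lem4.2dualequarta} and $\xi_i\star\eta_L=(-1)^{|L|}\eta_L\star\xi_i$. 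Finally \eqref{lem4.2dualeseconda} follows from the first equality of \eqref{lem4.2dualeprima} by induction on $|f|$: writing $f=\xi_S$ and $\partial_f=\partial_{s_1}\cdots\partial_{s_{|f|}}$, each step trades a derivative $\partial_{s}$ on the left for a right multiplication $\star\,\xi_{s}$ on the dual side, and reassembling these factors into $f\star\overline{\eta_L}$ produces exactly the reordering sign $(-1)^{|f|(|f|-1)/2}$ together with the parity sign $(-1)^{|f||L|}$.

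The only real obstacle is the sign bookkeeping: controlling how $\varepsilon_L$ changes under insertion or deletion of a single index in the case $i\in L$ above, and tracking the cumulative reordering sign in the induction. Everything else is formal once the explicit formula $\overline{\eta_L}=(-1)^{|L|(4-|L|)}\varepsilon_L\,\eta_{L^c}$ and the involution $\overline{\overline{\eta_L}}=(-1)^{|L|}\eta_L$ are established.
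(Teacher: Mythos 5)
Your proof is correct, but note that the paper itself contains no proof of this lemma to compare against: it is recalled verbatim from Lemma 4.2 of \cite{kac1}, with the citation standing in for an argument. Your route is a complete, self-contained verification from the paper's definitions, and the sign bookkeeping — the only place where such a proof can fail — does close. The explicit formula $\overline{\eta_L}=(-1)^{|L|(4-|L|)}\varepsilon_L\,\eta_{L^c}$ and the involution $\overline{\overline{\eta_L}}=(-1)^{|L|}\eta_L$ are both right (via $\varepsilon_{L^c}=(-1)^{|L|(4-|L|)}\varepsilon_L$), and the one computation with real content, the case $i\in L$ of the first equality in \eqref{lem4.2dualeprima}, works out: if $i$ is the $j$-th entry of $L$, $|L|=k$, and exactly $p$ elements of $L^c$ are smaller than $i$, the left-hand side equals $(-1)^{j+k}\varepsilon_{L\setminus i}\,\eta_{(L\setminus i)^c}$, the right-hand side equals $(-1)^{p}\varepsilon_L\,\eta_{(L\setminus i)^c}$, and moving $i$ across $k+p-j$ positions gives precisely $\varepsilon_L\varepsilon_{L\setminus i}=(-1)^{k+p-j}$, so the two agree. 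The formal deductions also check: dualizing \eqref{lem4.2dualeprima} applied to $\overline{\eta_L}$ produces the factor $(-1)^{(3-|L|)-|L|}=-1$ needed for \eqref{lem4.2dualequarta}; \eqref{lem4.2dualeterza} then follows from $\xi_i\star\eta_L=(-1)^{|L|}\eta_L\star\xi_i$; and in the induction for \eqref{lem4.2dualeseconda} the recursion $(r-1)(r-2)/2+(r-1)|L|+(r-1)+|L|\equiv r(r-1)/2+r|L| \pmod 2$ reassembles into exactly the stated sign. One point worth making explicit in a write-up: the defining relation $\overline{\eta_L}\star\xi_L=\eta_{1234}$ is consistent with linearity when $L$ is unsorted (reordering $L$ changes $\eta_L$ and $\xi_L$ by the same sign), which is what licenses both your reduction to $L\in\mathcal I_<$ and the repeated application of \eqref{lem4.2dualeprima} to signed monomials such as $\overline{\eta_L}$ and $\partial_{S'}\eta_L$.
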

 Next result is a consequence of Proposition \ref{action}.
\begin{prop}
	\label{actiondual}
	Let $I,L\in \mathcal I_{\neq}$. Let $T$ be the vector space isomorphism $T: \Ind (F) \rightarrow \Ind (F)$ defined by $T(g \otimes v)=\overline{g} \otimes v$, for all $g \in U(\g_{<0})$, $v \in F$. Then:
	\begin{align*}
	& (T\circ {\xi_I}_{\lambda} \circ T^{-1}) (\eta_L \otimes v)\\
	&=(-1)^{(|I|(|I|+1)/2)+|I||L|} \bigg\{ (|I|-2) \Theta (\xi_I \star \eta_L) \otimes v -(-1)^{|I|} \sum^{4}_{i=1}(\partial_{i}\xi_I \star \partial_{i}\eta_L) \otimes v \\
	&\quad-\sum_{r<s}  (\partial_{rs}\xi_I \star \eta_L) \otimes \xi_{s,r}.v     +\bigchi_{|I|=3} \, \varepsilon_{I} \,(\xi_{I^{c}} \star \eta_L ) \otimes C v \\
	& \quad +\lambda \bigg[ (\xi_I \star \eta_L ) \otimes { t }.v-(-1)^{|I|}\sum^{4}_{i=1} \partial_{i}(\xi_{Ii} \star \eta_L ) \otimes v+ (-1)^{|I|} \sum _{i \neq j} (\partial_{i}\xi_{Ij} \star \eta_L) \otimes \xi_{j,i}.v  \\
	& \quad  +\bigchi_{|I|=2}    \, \varepsilon_{I} \, (\xi_{I^{c}} \star \eta_L )\otimes C v \bigg]\\
	&\quad + \lambda^{2} \bigg[- \sum _{i < j} (  \xi_{Iij} \star \eta_L ) \otimes   \xi_{j,i}. v -\bigchi_{|I|=1}    \, \varepsilon_{I} \,( \xi_{I^{c}} \star \eta_L)   \otimes Cv \bigg] +\lambda^{3} \Big[-\bigchi_{|I|=0}    ( \xi_{*} \star \eta_L ) \otimes Cv \Big]\bigg\}.
	\end{align*}
\end{prop}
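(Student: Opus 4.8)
The plan is to treat this as a direct consequence of Proposition~\ref{action} together with the Hodge-duality identities of Lemma~\ref{lem4}, the only genuine work being the bookkeeping of signs. Since both sides are $\C[\lambda]$-linear in $v$ and $T$ is a vector space isomorphism, it suffices to verify the identity on a monomial $\eta_L\otimes v$. The structural facts I would exploit are that $T$ acts only on the $U(\g_{<0})$-factor, that it commutes with left multiplication by $\Theta$ (by the very definition $\overline{\Theta^k\eta_I}=\Theta^k\overline{\eta_I}$), and that on the basis $\{\Theta^k\eta_I\}$ it is, up to the sign in $\overline{\overline{\eta_I}}=\pm\eta_I$, an involution. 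Consequently $T^{-1}(\eta_L\otimes v)$ equals, up to a sign, $\overline{\eta_L}\otimes v$, a scalar multiple of the monomial $\eta_{L^c}\otimes v$.

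First I would apply Proposition~\ref{action} to compute ${\xi_I}_\lambda(\overline{\eta_L}\otimes v)$, obtaining the explicit sum of its four $\lambda$-homogeneous layers. Every summand there has the shape $\Theta^\epsilon\,\big(D\,\overline{\eta_L}\big)\otimes(\text{element of }F)$, where $D$ is one of the operations $\partial_{\bullet}$, $\bullet\star$, or a contraction $\partial_{(\partial_{\bullet}\xi_I)}$ occurring in Proposition~\ref{action}. Then I would apply $T$ termwise: since $T$ commutes with multiplication by $\Theta$ and is trivial on the $F$-factor, computing $T$ of each summand reduces to computing the Hodge dual of $D\,\overline{\eta_L}$, and here Lemma~\ref{lem4} does exactly the required job. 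Identities \eqref{lem4.2dualeprima} and \eqref{lem4.2dualequarta} interchange the derivations $\partial_i$ with the right star products $\star\,\xi_i$, \eqref{lem4.2dualeterza} interchanges the left star $\xi_i\star$ with $\partial_i$, and \eqref{lem4.2dualeseconda} handles the general contractions $\partial_{(\partial_{i}\xi_I)}$ and $\partial_{(\partial_{ij}\xi_I)}$ through a form $f$. For instance, the leading term $(-1)^{|I|}(|I|-2)\Theta\,\partial_I\overline{\eta_L}\otimes v$ of Proposition~\ref{action} is sent by $T$, via \eqref{lem4.2dualeseconda} and the relation $\overline{\overline{\eta_L}}=\pm\eta_L$, to a scalar multiple of $(|I|-2)\Theta\,(\xi_I\star\eta_L)\otimes v$, precisely the leading term on the right-hand side.

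Carrying this out for every summand and in every $\lambda$-degree produces the four lines of the claimed formula; the terms carrying $\xi_{j,i}.v$ and those carrying the central factor $Cv$ transform into one another's analogues, with the characteristic functions $\bigchi_{|I|=3},\bigchi_{|I|=2},\bigchi_{|I|=1},\bigchi_{|I|=0}$ and the signs $\varepsilon_I$ transported essentially unchanged (the Hodge dual fixes the $F$-factor, and each $\xi_{I^c}\star\eta_L$ is the dual image of a $\partial_{I^c}\overline{\eta_L}$). I would verify the $C$-free terms first, where the computation runs parallel to the analogous duality statement for $K(1,N)_+$ in \cite{kac1}, and then treat the four central terms separately.

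The main obstacle is purely the sign accounting. One must check that, after applying Lemma~\ref{lem4} to each of the roughly ten terms, the accumulated signs — coming from the parity-dependent factors $(-1)^{|L|}$ and $(-1)^{|f|(|f|-1)/2+|f||L|}$ in \eqref{lem4.2dualeprima}--\eqref{lem4.2dualequarta}, from the permutation signs implicit in reordering $\overline{\eta_L}$, and from $\overline{\overline{\eta_L}}=\pm\eta_L$ — combine in \emph{every} term into the single common prefactor $(-1)^{|I|(|I|+1)/2+|I||L|}$ displayed in the statement. I would organize this by fixing $|I|$ together with the relevant parities of $L$, matching each $\bigchi_{|I|=k}\,\varepsilon_I$ contribution against the corresponding term of Proposition~\ref{action}, and confirming that the residual sign is independent of which summand one is transforming.
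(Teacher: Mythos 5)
Your proposal is correct and takes essentially the same route as the paper: its proof of Proposition~\ref{actiondual} consists precisely of applying Proposition~\ref{action} and then Lemma~\ref{lem4} termwise, with the sign bookkeeping declared a straightforward verification. Your additional detail on how $T$ interacts with $\Theta$, on $T^{-1}(\eta_L\otimes v)$ being a signed multiple of $\eta_{L^c}\otimes v$, and on organizing the sign check simply fills in what the paper leaves implicit.
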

\begin{proof}
	The proof follows by Proposition \ref{action} with a straightforward application of Lemma \ref{lem4}.
\end{proof}
In the following lemma we give a recursive formula in order to compute ${\xi_I} _{\lambda}(\Theta^{k}g \otimes v)$ for $I\in \mathcal I_{\neq}$ and $g\in U(g_{<0})$. 
\begin{lem}
	\label{actiontheta}
	Let $I\in \mathcal I_{\neq}$,  $g\in U(\g_{<0})$ and $k \in \Z_{> 0}$. We have:
	\begin{align*}
	{\xi_I} _{\lambda}(\Theta^{k}g \otimes v)=(\Theta+\lambda)({\xi_I} _{\lambda} \Theta^{k-1} g\otimes v)-\bigchi_{|I|=4} \varepsilon_{I} \Theta^{k-1}g \otimes C v. 
	\end{align*}
\end{lem}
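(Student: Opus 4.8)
The plan is to reduce the action on $\Theta^{k}g\otimes v$ to the action on $\Theta^{k-1}g\otimes v$ by commuting each operator $t^{j}\xi_I$ past a single factor of $\Theta$ inside the universal enveloping algebra. Writing $u=\Theta^{k-1}g\in U(\g_{<0})$ and recalling that $\Theta=-\tfrac12\,\xi_\emptyset\in\g_{-2}$ is \emph{even} and acts by left multiplication, the PBW identity $t^{j}\xi_I\,\Theta=\Theta\,t^{j}\xi_I+[t^{j}\xi_I,\Theta]$ in $U(\g)$ gives, for every $j\ge0$,
\[
t^{j}\xi_I(\Theta u\otimes v)=\Theta\bigl(t^{j}\xi_I(u\otimes v)\bigr)+[t^{j}\xi_I,\Theta](u\otimes v),
\]
the absence of any sign being exactly because $\Theta$ is even. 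Summing against $\lambda^{j}/j!$ then splits ${\xi_I}_{\lambda}(\Theta^{k}g\otimes v)$ into the term $\Theta\bigl({\xi_I}_{\lambda}(u\otimes v)\bigr)$ and a correction coming from the brackets $[t^{j}\xi_I,\Theta]$.

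The key computation is therefore $[t^{j}\xi_I,\Theta]=-\tfrac12[t^{j}\xi_I,\xi_\emptyset]$. Using \eqref{bracketliebis} with $J=\emptyset$ and $n=0$, its non-central part equals $-2j\,t^{j-1}\xi_I$ (the derivation term drops out since $\partial_i\xi_\emptyset=0$), while by Proposition \ref{anniliK4} the $2$-cocycle $\psi$ produces a multiple of $C$ only on the pair $(1,\xi_{1234})$. A degree count shows $\psi(t^{j}\xi_I,\xi_\emptyset)\neq0$ requires $2j+|I|=4$, and among these the only special pair is the one with $j=0$ and $|I|=4$, where $\xi_I=\varepsilon_I\xi_{1234}$ and $\psi(\xi_I,\xi_\emptyset)=2\varepsilon_I$. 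This yields
\[
[t^{j}\xi_I,\Theta]=j\,t^{j-1}\xi_I-\bigchi_{j=0}\,\bigchi_{|I|=4}\,\varepsilon_I\,C.
\]
I would flag precisely this step as the main obstacle: one must track the central contribution carefully and invoke Proposition \ref{anniliK4} to rule out spurious cocycle values for $j\ge1$ (i.e. to see that no $C$ appears in the cases $2j+|I|=4$ with $j>0$, since those pairs are not special).

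It then remains to resum the correction. The term $j\,t^{j-1}\xi_I$ contributes $\sum_{j\ge1}\frac{\lambda^{j}}{(j-1)!}t^{j-1}\xi_I(u\otimes v)=\lambda\,{\xi_I}_{\lambda}(u\otimes v)$ after the shift $j\mapsto j-1$, whereas the central term occurs only at $j=0$ and contributes $-\bigchi_{|I|=4}\varepsilon_I\,C(u\otimes v)$. Since $C$ is central it acts by $C(u\otimes v)=u\otimes Cv$, so this last piece equals $-\bigchi_{|I|=4}\varepsilon_I\,\Theta^{k-1}g\otimes Cv$. Collecting the three contributions gives
\[
{\xi_I}_{\lambda}(\Theta^{k}g\otimes v)=(\Theta+\lambda)\bigl({\xi_I}_{\lambda}\Theta^{k-1}g\otimes v\bigr)-\bigchi_{|I|=4}\varepsilon_I\,\Theta^{k-1}g\otimes Cv,
\]
which is the claimed recursion.
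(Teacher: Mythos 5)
Your proof is correct and follows essentially the same route as the paper's: both commute $t^{j}\xi_I$ past one factor of $\Theta$ via $[t^{j}\xi_I,\Theta]=j\,t^{j-1}\xi_I-\bigchi_{j=0}\bigchi_{|I|=4}\varepsilon_I C$ (computed from \eqref{bracketliebis} and Proposition \ref{anniliK4}) and then resum in $\lambda$. Your write-up merely makes explicit the cocycle bookkeeping that the paper's proof leaves implicit.
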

\begin{proof}
	We have by \eqref{bracketliebis} and Proposition \ref{anniliK4}:
	\begin{align*}
	&{\xi_I} _{\lambda}(\Theta^{k}  g \otimes v)=\sum_{j \geq 0} \frac{\lambda^{j}}{j!} (t^{j}\xi_I)(\Theta ^{k} g \otimes v)\\
	& =\sum_{j \geq 0} \frac{\lambda^{j}}{j!} \Theta (t^{j}\xi_I) (\Theta ^{k-1}  g \otimes v)+  \sum_{j \geq 0} \frac{\lambda^{j}}{j!} (jt^{j-1}\xi_I) ( \Theta ^{k-1}g \otimes v)-\bigchi_{|I|=4}  \varepsilon_{I} \Theta ^{k-1} g \otimes C v\\
	&=(\Theta+\lambda)(f _{\lambda}  \Theta ^{k-1} g\otimes v)-\bigchi_{|I|=4}  \varepsilon_{I} \Theta ^{k-1} g \otimes C v. 
	\end{align*}
\end{proof}
\section{Singular vectors}
In this section we deduce some necessary conditions that singular vectors must satisfy. These conditions are obtained generalizing some ideas developed in \cite{kac1}.

We first give a more explicit description of $\g_0$: we have $\g_0=\left< \left\{ C,t, \xi_{ij} :\, 1 \leq i<j \leq 4 \right\} \right>\cong \mathfrak{so}(4)\oplus \C { t }\oplus \C C$, where $\mathfrak{so}(4)$ is the Lie algebra of $4\times 4$ skew$-$symmetric matrices. In the above homomorphism the element $\xi_{ij}$ corresponds to the skew-symmetric matrix $-E_{i,j}+E_{j,i} \in \mathfrak{so}(4)$. 
We consider the following basis of a Cartan subalgebra $\mathfrak{h}$:
\begin{equation}\label{hx} h_{x}:=-i\xi_{12}+i\xi_{34}, \,\,h_{y}:=-i\xi_{12}-i\xi_{34}.
\end{equation}
Let $\alpha_x,\alpha_y\in \mathfrak{h}^*$ be such that $\alpha_x(h_x)=\alpha_y(h_y)=2$ and $\alpha_x(h_y)=\alpha_y(h_x)=0$. 
The set of roots is $\Delta=\{\alpha_x,-\alpha_x,\alpha_y,-\alpha_y\}$ and we have the following root decomposition:
\begin{align*}
\mathfrak{so}(4)=\mathfrak{h} \oplus \left(\oplus_{\alpha \in \Delta} \g_{\alpha}\right) \,\, \text{with} \,\, \g_{\alpha_x}=\C e_{x},\,\g_{-\alpha_x}=\C f_{x},\,\g_{\alpha_y}=\C e_{y},\,\g_{-\alpha_y}=\C f_{y}
\end{align*}
where
\begin{align*}
&e_{x}=\frac{1}{2}(-\xi_{1,3}-\xi_{2,4}-i\xi_{1,4}+i\xi_{2,3}),\\
&e_{y}=\frac{1}{2}(-\xi_{1,3}+\xi_{2,4}+i\xi_{1,4}+i\xi_{2,3}),\\
&f_{x}=\frac{1}{2}(\xi_{1,3}+\xi_{2,4}-i\xi_{1,4}+i\xi_{2,3}),\\
&f_{y}=\frac{1}{2}(\xi_{1,3}-\xi_{2,4}+i\xi_{1,4}+i\xi_{2,3}).
\end{align*}

It will be convenient to use the following notation:
\begin{align}
\label{notazionealfa}
e_1=e_x+e_y=-\xi_{13}+i\xi_{23},\\
\label{notazionebeta}
e_2=e_x-e_y=-\xi_{24}-i\xi_{14}.
\end{align}
The set $\left\{e_1,e_2\right\}$ is a basis of the nilpotent subalgebra $\g_{\alpha_x} \oplus \g_{\alpha_y}$. \\
We will write the weights $\mu=(m,n,\mu_{t},\mu_{C})$ of weight vectors of $\g_{0}-$modules with respect to action of the vectors $h_{x},h_{y},{ t } $ and $ C$.
\begin{rem}
	Since $C$ is central, by Schur's lemma, $C$ acts as a scalar on $F$.
\end{rem}
\begin{rem}
	\label{notazioinecasellig0}
	The sets $\{e_x,f_x,h_x\}$ and $\{e_y,f_y,h_y\}$ span two copies of $\mathfrak{sl}_2$ and we think of $\g^{ss}_0$ in the standard way as a Lie algebra of derivations
	We have that:
	\begin{align*}
	\g^{ss}_{0}= \langle e_{x},f_{x},h_{x}\rangle \oplus \langle e_{y},f_{y},h_{y} \rangle \cong \langle  x_{1} \partial_{x_{2}}, x_{2} \partial_{x_{1}},x_{1} \partial_{x_{1}}-x_{2} \partial_{x_{2}} \rangle \oplus \langle  y_{1} \partial_{y_{2}}, y_{2} \partial_{y_{1}},y_{1} \partial_{y_{1}}-y_{2} \partial_{y_{2}} \rangle .
	\end{align*}
\end{rem}
Thanks to Remark \ref{notazioinecasellig0} we will identify the irreducible $\g_{0}^{ss}-$module of highest weight $(m,n)$ with respect to $h_{x},h_{y}$ with the space of bihomogeneous polynomials in the four variables $x_1,x_2,y_1,y_2$ of degree $m$ in the variables $x_{1},x_{2}$, and of degree $n$ in the variables $y_{1},y_{2}$.\\
By direct computations, we obtain the following results.
\begin{lem}\label{g1}
The subalgebra $\g_{>0}$ is generated by $\g_{1}$, i.e. $\g_{i}=\g_{1}^{i}$ for all $i\geq 2$ and as $\g_{0}-$modules:
	\begin{align*}
	\g_{1} \cong \langle t\xi_{i} :\, 1\leq i\leq 4\rangle \oplus \langle \xi_{I}:\, I\in \mathcal I_<, \, |I|=3\rangle.
	\end{align*}
	The $\g_{0}-$modules $\langle t\xi_{i}:\, 1\leq i\leq 4\rangle$ and $\langle \xi_{I}:\, I\in \mathcal I_<, \, |I|=3\rangle$ are irreducible and the corresponding lowest weight vectors are $t (\xi_{1}+i \xi_{2})$ and $(\xi_{1}+i\xi_{2})\xi_{3}\xi_{4}$.
	\end{lem}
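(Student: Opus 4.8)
The plan is to treat separately the assertion about generation in degree one and the assertions about the $\g_0$-module structure of $\g_1$. Recall from Section~\ref{Verma modules} that for $i\geq 1$ a basis of $\g_i$ is given by the monomials $t^m\xi_I$ with $I\in\mathcal I_<$ and $2m+|I|=i+2$; the central element $C$ lives in degree $0$ and, by Proposition~\ref{anniliK4}, the cocycle $\psi$ pairs only elements whose degrees sum to $0$, so $C$ never occurs in a bracket of two elements of positive degree. In particular every $\g_i$ with $i\geq1$ is $8$-dimensional, and $\g_1=\langle t\xi_i:1\le i\le 4\rangle\oplus\langle\xi_I:|I|=3\rangle$ as claimed.

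For the first assertion I would prove $\g_i=[\g_1,\g_{i-1}]$ for every $i\geq2$ by induction on $i$. Since $[\g_1,\g_{i-1}]\subseteq\g_i$ always holds, only $\supseteq$ is at stake, and it suffices to realise each basis monomial of $\g_i$ as an explicit bracket $[t\xi_j,b]$ with $b$ a basis monomial of $\g_{i-1}$. The computational engine is the specialisation of \eqref{bracketliebis}
\[
[t\xi_j,t^n\xi_K]=(n+|K|-2)\,t^n\xi_{jK}-t^{n+1}\de_j\xi_K,
\]
which offers two complementary moves: if $j\notin K$ it appends the index $j$ keeping the power of $t$ fixed (with coefficient $n+|K|-2$), while if $j\in K$ it raises the power of $t$ by one and deletes $j$. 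Running through the five values of $|I|$ I would pick, for each monomial, whichever move produces it: for instance $t^m=-[t\xi_1,t^{m-1}\xi_1]$, $t^m\xi_r=-[t\xi_j,t^{m-1}\xi_{jr}]$ with $j\neq r$, $t^m\xi_{rs}=-[t\xi_j,t^{m-1}\xi_{jrs}]$ with $j\notin\{r,s\}$, $t^m\xi_{rst}=\pm[t\xi_j,t^{m-1}\xi_{1234}]$ with $j$ the missing index, and $t^m\xi_{1234}=\pm\tfrac1{m+1}[t\xi_j,t^m\xi_{1234\setminus j}]$. In each case the second factor is a basis monomial of $\g_{i-1}$, hence lies in the subalgebra generated by $\g_1$ by the inductive hypothesis, and one verifies the scalar is nonzero.

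For the module statement I would identify the two summands with familiar $\so(4)$-modules. From \eqref{bracketliebis} one gets $[\xi_{jk},t\xi_i]=\delta_{ij}t\xi_k-\delta_{ik}t\xi_j$, so $\langle t\xi_i\rangle$ is precisely the standard vector representation of $\so(4)$ under $\xi_{jk}\mapsto -E_{j,k}+E_{k,j}$; likewise $\langle\xi_I:|I|=3\rangle\cong\Lambda^3(\C^4)\cong\Lambda^1(\C^4)$ is again the vector representation. Under $\so(4)\cong\mathfrak{sl}_2\oplus\mathfrak{sl}_2$ both are the irreducible module of highest weight $(1,1)$ of Remark~\ref{notazioinecasellig0}. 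I would then compute the $h_x,h_y$-weights via \eqref{hx}, finding the four distinct weights $(\pm1,\pm1)$, each of multiplicity one; since the root vectors $e_x,f_x,e_y,f_y$ connect all four weight spaces, any nonzero submodule is the whole module, giving irreducibility. The same weight computation shows that $t(\xi_1+i\xi_2)$ and $(\xi_1+i\xi_2)\xi_3\xi_4=\xi_{134}+i\xi_{234}$ both carry the extremal weight $(-1,-1)$ and are therefore the lowest weight vectors.

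The main obstacle is the bookkeeping in the generation step: one must check that the chosen bracket is nonzero for every basis monomial at every admissible degree, which forces the case distinction on $|I|$ and, where the index-appending move is used (as for the $4$-form), a verification that the coefficient $n+|K|-2$ does not vanish. By contrast the module part is a routine, if slightly lengthy, weight computation once the two summands are recognised as copies of the vector representation.
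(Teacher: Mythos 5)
Your proof is correct, and it takes the same route the paper intends: the paper offers no written proof for this lemma (it is introduced by ``By direct computations, we obtain the following results''), and your argument is exactly such a direct computation, carried out carefully via the specialization of \eqref{bracketliebis}, the observation that the cocycle $\psi$ vanishes on pairs of positive-degree elements, and the identification of both summands of $\g_1$ with the vector representation of $\so(4)$. All the explicit brackets and weight computations you give check out, so nothing is missing.
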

	\begin{lem}
	As $\g_{0}^{ss}-$modules:
	\begin{align*}
	\g_{-1} \cong \langle x_{1}y_1,x_1y_2,x_{2}y_1,x_2y_2\rangle.
	\end{align*}
	The isomorphism is given by:
	\begin{align*}
	\xi_{2}+i\xi_{1} \leftrightarrow x_{1}y_{1}, \,\,\, \xi_{2}-i\xi_{1} \leftrightarrow x_{2}y_{2}, \,\,\,  -\xi_{4}+i\xi_{3} \leftrightarrow x_{1}y_{2},  \,\,\, \xi_{4}+i\xi_{3} \leftrightarrow x_{2}y_{1}.
	\end{align*}
\end{lem}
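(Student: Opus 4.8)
The plan is to regard $\g_{-1}=\langle \xi_1,\xi_2,\xi_3,\xi_4\rangle$ as a $\g_0^{ss}$-module via the adjoint action $a\cdot v=[a,v]$ for $a\in\g_0^{ss}$ and $v\in\g_{-1}$, and to compute the relevant brackets explicitly. The only computational input needed is the bracket between a degree-zero $\xi_{ij}$ and a degree $-1$ generator $\xi_r$. Specializing \eqref{bracketliebis} to $m=n=0$ makes the first summand vanish, and by Proposition \ref{anniliK4} the cocycle $\psi$ contributes nothing here, since the pair $(\xi_{ij},\xi_r)$ has $\xi$-degrees $(2,1)$, which is neither of the central-producing pairs $(0,4)$ and $(1,3)$. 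Hence $[\xi_{ij},\xi_r]=\sum_{k}\partial_k\xi_{ij}\,\partial_k\xi_r=\partial_r\xi_{ij}$, and from this all the brackets $[\xi_{ij},\xi_r]$ are read off at once.

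First I would compute the Cartan action. Using $h_x=-i\xi_{12}+i\xi_{34}$ and $h_y=-i\xi_{12}-i\xi_{34}$ together with the formula above, a short computation gives $[h_x,\xi_1]=-i\xi_2$, $[h_x,\xi_2]=i\xi_1$, $[h_x,\xi_3]=i\xi_4$, $[h_x,\xi_4]=-i\xi_3$, and the analogous identities for $h_y$. Substituting these into the four combinations shows that $\xi_2+i\xi_1$, $\xi_2-i\xi_1$, $-\xi_4+i\xi_3$ and $\xi_4+i\xi_3$ are weight vectors of weights $(1,1)$, $(-1,-1)$, $(1,-1)$ and $(-1,1)$ respectively, matching exactly the weights of $x_1y_1$, $x_2y_2$, $x_1y_2$ and $x_2y_1$ under the identification of Remark \ref{notazioinecasellig0}.

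Two routes then close the argument. The quick one: $\g_{-1}$ is a $4$-dimensional module over the semisimple algebra $\g_0^{ss}\cong\mathfrak{sl}_2\oplus\mathfrak{sl}_2$, it is multiplicity-free with weight set $\{(\pm1,\pm1)\}$, and the maximal weight $(1,1)$ forces an irreducible summand isomorphic to the highest weight module $\langle x_1,x_2\rangle\otimes\langle y_1,y_2\rangle$; since this summand is already $4$-dimensional it exhausts $\g_{-1}$. As every weight space is one-dimensional, the stated correspondence is the unique weight-vector identification (up to scalars), pinned down by matching the highest weight vectors $\xi_2+i\xi_1\leftrightarrow x_1y_1$. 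The more hands-on route, which I would actually record in the style of the preceding ``direct computations'', is to check that the raising and lowering operators act consistently: for instance, with $e_x=\tfrac12(-\xi_{13}-\xi_{24}-i\xi_{14}+i\xi_{23})$ one finds $[e_x,\xi_4]=\tfrac12(\xi_2+i\xi_1)$ and $[e_x,\xi_3]=\tfrac12(\xi_1-i\xi_2)$, whence $[e_x,\xi_4+i\xi_3]=\xi_2+i\xi_1$, matching $x_1\partial_{x_2}\cdot(x_2y_1)=x_1y_1$; the analogous identities for $f_x,e_y,f_y$ confirm that the linear map determined by the stated correspondence intertwines the $\g_0^{ss}$-action. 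The computations are entirely elementary, so there is no genuine obstacle beyond careful bookkeeping of the factors of $i$ and of the signs produced by $\partial_r\xi_{ij}$; the one point worth settling once and for all is the absence of any central term, which as noted is immediate from Proposition \ref{anniliK4}.
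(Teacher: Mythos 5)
Your proposal is correct and follows essentially the same route as the paper, which simply asserts this lemma "by direct computations": you compute the adjoint action of $\g_0^{ss}$ on $\g_{-1}$ via $[\xi_{ij},\xi_r]=\partial_r\xi_{ij}$ (correctly noting the cocycle cannot contribute here) and verify the weight-vector identification. Both your weight computation and the sample check $[e_x,\xi_4+i\xi_3]=\xi_2+i\xi_1$ are accurate, so nothing is missing.
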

Motivated by the previous lemma, we will use the notation 
\begin{align}
\label{notazionewcaselli}
w_{11}=\eta_{2}+i\eta_{1}, \,\, w_{22}=\eta_{2}-i\eta_{1}, \,\, w_{12}= -\eta_{4}+i\eta_{3}, \,\, w_{21}=\eta_{4}+i\eta_{3}.
\end{align}
We point out that $[w_{11},w_{22}]=4 \Theta$, $[w_{12},w_{21}]=-4 \Theta$ and all other brackets between the $w'$s are 0.\\

By Lemma \ref{g1} to check whether a vector $\vec{m}$ in a $\g$-module is a highest weight singular vector it is enough to show that it is annihilated by $e_1$, $e_2$, $t(\xi_1+i\xi_2)$ and $(\xi_1+i\xi_2)\xi_3\xi_4$. 
Nevertheless in the determination of all possible highest weight singular vectors it is convenient to consider the action of all elements in $\g_{>0}$ and for this it is extremely convenient to use the $\lambda$-action.

\begin{rem}
\label{lemmatecnicos1s2s3}
	From the definition of the $\lambda-$action we deduce that $\vec{m} \in \Ind (F)$ is a highest weight singular vector if and only if the following hold:
	\begin{description}
		\item[S0] $e_1. \vec{m}=e_2.\vec{m}=0$;
		\item [S1] $\frac{d^{2}}{d \lambda^{2}}({\xi_I} _{\lambda} \vec{m})=0$ for all $I \in \mathcal I_{\neq}$;
		\item [S2] $\frac{d}{d \lambda}({\xi_I}\, _{\lambda} \vec{m}) _{| \lambda=0}=0$ for all $I \in \mathcal I_{\neq}$ such that $ |I| \geq 1$; 
		\item [S3] $({\xi_I} _{\lambda} \vec{m})_{| \lambda=0}=0$ for all $I \in \mathcal I_{\neq}$ such that $ |I| \geq 3 $.
	\end{description}
Indeed condition \textbf{S0} implies that $\vec{m}$ is a highest weight vector. Condition \textbf{S1} is equivalent to
	\begin{align*}
	\sum_{j\geq 2}j(j-1)\frac{\lambda^{j-2}}{j!}(t^{j}\xi_I)\vec{m}=0 ,
	\end{align*}
	which implies $(t^{j}\xi_I)\vec{m}=0$ for all $I\in \mathcal I_{\neq}$ and $j\geq 2$.\\
	Condition \textbf{S2} is equivalent to $(t\xi_I)\vec{m}=0 $ for all for all $I\in \mathcal I_{\neq}$ such that $ |I| \geq 1$.\\
	Condition \textbf{S3} is equivalent to $\xi_I \vec{m}=0 $ for all $I\in \mathcal I_{\neq}$ such that $ |I| \geq 3 $. 
	
\end{rem}
The aim of this section is to solve equations \textbf{S0}--\textbf{S3} in order to obtain the following classification of singular vectors. We recall that the highest weight of $F$ is always written with respect to the elements $h_{x}, \, h_{y}, \, { t }$ and $C$. Let us call $M(m,n,\mu_t,\mu_C)$ the Verma module $\Ind (F (m,n,\mu_{t},\mu_{C}))$, where 
$F (m,n,\mu_t,\mu_C)$ is the irreducible $\g_{0}-$module with highest weight $(m,n,\mu_t,\mu_C)$.

\begin{thm}
	\label{sing1}
	Let $F$ be an irreducible finite$-$dimensional $\g_{0}-$module, with highest weight $\mu$. A vector in $  \Ind (F) $ is a non trivial highest weight singular vector of degree 1 if and only if $\vec{m}$ is (up to a scalar) one of the following vectors:
	
	\begin{description}
		\item [a] $\mu=(m,n,-\frac{m+n}{2},\frac{m-n}{2}) $ with $m,n \in \Z_{\geq 0}$,
		$$\vec{m}_{1a}=w_{11}\otimes x^{m}_{1}y^{n}_{1};$$
		\item [b] $\mu=(m,n,1+\frac{m-n}{2},-1-\frac{m+n}{2})$, with $m  \in \Z_{>0}$, $n \in \Z_{\geq 0}$,
		$$\vec{m}_{1b}=w_{21}\otimes x^{m}_{1}y^{n}_{1}-w_{11}\otimes x^{m-1}_{1}x_{2}y^{n}_{1};$$
		\item [c] $\mu=(m,n,2+\frac{m+n}{2},\frac{n-m}{2})$, with $m,n \in \Z_{>0}$,
		$$\vec{m}_{1c}=w_{22}\otimes x^{m}_{1}y^{n}_{1}-w_{12}\otimes x^{m-1}_{1}x_{2}y^{n}_{1}-w_{21}\otimes x^{m}_{1}y^{n-1}_{1}y_{2}+w_{11}\otimes x^{m-1}_{1}x_{2}y^{n-1}_{1}y_{2};$$
		\item [d] $\mu=(m,n,1+\frac{n-m}{2},1+\frac{m+n}{2})$, with $m \in \Z_{\geq 0}$, $n \in \Z_{>0}$,
		$$\vec{m}_{1d}=w_{12}\otimes x^{m}_{1}y^{n}_{1}-w_{11}\otimes x^{m}_{1}y^{n-1}_{1}y_{2}.$$
	\end{description}
\end{thm}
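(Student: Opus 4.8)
The plan is to exploit the fact that a degree-$1$ element of $\Ind(F)$ lies in $\g_{-1}\otimes F$, together with the reduction of the singular-vector conditions in Remark~\ref{lemmatecnicos1s2s3}. First I note that condition \textbf{S1} is automatic in degree $1$: for $j\geq 2$ the element $t^{j}\xi_I$ has degree $2j+|I|-2\geq 2$, hence sends the degree-$1$ space to negative degree, and indeed the $\lambda^{2}$- and $\lambda^{3}$-coefficients in Proposition~\ref{action} already vanish on $\eta_L\otimes v$ with $|L|=1$, since every operator $\partial_{(\cdot)}$ occurring there removes at least two indices from a single $\eta$. Thus a degree-$1$ highest weight singular vector is exactly an element $\vec m\in\g_{-1}\otimes F$ satisfying \textbf{S0}, \textbf{S2}, \textbf{S3}, which by the remark following Lemma~\ref{g1} reduces to annihilation by $e_1,e_2,t(\xi_1+i\xi_2)$ and $(\xi_1+i\xi_2)\xi_3\xi_4$.

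Next I would use \textbf{S0} to list the candidate shapes. Since $\g_{-1}\cong\langle x_1y_1,x_1y_2,x_2y_1,x_2y_2\rangle$ is the bifundamental $\g_0^{ss}$-module of bidegree $(1,1)$, the tensor product $\g_{-1}\otimes F$ is multiplicity-free and splits into the (at most four) dominant summands of highest weights $(m+1,n+1)$, $(m-1,n+1)$, $(m+1,n-1)$, $(m-1,n-1)$. Hence $\g_{-1}\otimes F$ contains at most four highest weight vectors up to scalar, and any degree-$1$ highest weight singular vector is a scalar multiple of one of them. Solving $e_1\vec m=e_2\vec m=0$ in each of these weight spaces, using the action of $e_1,e_2$ on the $w_{ij}$ (fixed by their weights) and on the polynomials, produces precisely the vectors $\vec m_{1a},\dots,\vec m_{1d}$; the domain restrictions ($m>0$ in \textbf{b},\textbf{c}; $n>0$ in \textbf{c},\textbf{d}) are forced by dominance of the corresponding weight and nonvanishing of the combination.

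It then remains to determine for which central character the conditions \textbf{S2}, \textbf{S3} hold. For each shape I would apply the operators $t\xi_r$ ($|I|=1$) and $\xi_{rst}$ ($|I|=3$) via Proposition~\ref{action}. In the $\lambda^{1}$-coefficient the eigenvalue $\mu_t$ of $t$ enters only through $(-1)^{|I|}\partial_I\eta_L\otimes t.v$, the remaining terms being governed by the $\g_0^{ss}$-action; collecting coefficients, \textbf{S2} becomes a single scalar equation pinning down $\mu_t$. Likewise, in the $\lambda^{0}$-coefficient for $|I|=3$ the scalar $\mu_C$ enters only through $\bigchi_{|I|=3}\,\varepsilon_I\,\partial_{I^{c}}\eta_L\otimes Cv$, so \textbf{S3} becomes a scalar equation determining $\mu_C$, and the outputs are exactly the values recorded in \textbf{a}--\textbf{d}. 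Reading this computation in both directions yields simultaneously the existence of these singular vectors and the completeness of the list.

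The main obstacle will be the sign- and coefficient-bookkeeping in this last step: one must carry the two $\mathfrak{sl}_2$-actions through the Hodge-type operators $\partial_{(\cdot)}$ and $\star$ of Proposition~\ref{action} and, crucially, track the contributions of the central element $C$, which is what distinguishes $K'_4$ from the ordinary contact case of \cite{kac1} and is responsible for the appearance of $\mu_C$ in every family. Some care is also needed at the boundaries of the domains, where a summand of $\g_{-1}\otimes F$ degenerates and a candidate shape either vanishes or fails to be a genuine highest weight vector.
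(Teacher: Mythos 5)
Your route is genuinely different from the paper's and, modulo one gap, it works. The paper writes a degree-$1$ vector in the four components $w_1,w_2,\widetilde w_1,\widetilde w_2$ of \eqref{notazionevett1etanonduale}, extracts from \textbf{S0}, \textbf{S2}, \textbf{S3} the explicit equations \eqref{B.44}, \eqref{B.45}, \eqref{alfa1}, \eqref{B.58}, \eqref{B.59}, and then runs a case analysis on which components vanish; one case (case \textbf{3b)} of the proof, where $w_2$ and $\widetilde w_2$ are both nonzero) is ruled out by showing that it forces $C=0$ and invoking the $K_4$ classification of \cite{kac1}. You instead organize the \textbf{S0} analysis representation-theoretically: since $\g_{-1}\otimes F$ is multiplicity free with summands of highest weights $(m\pm 1,n\pm 1)$, the weight-homogeneous candidates are the (at most four) canonical highest weight vectors, which are exactly the shapes $\vec m_{1a},\dots,\vec m_{1d}$ with the stated domain restrictions, and \textbf{S2}/\textbf{S3} then pin down $\mu_t$ and $\mu_C$. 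Your preliminary reductions are correct: in degree $1$ condition \textbf{S1} is vacuous and only $|I|=1$ in \textbf{S2} and $|I|=3$ in \textbf{S3} give nontrivial conditions (compare \eqref{B.44} and \eqref{B.45}, where indeed $t$ appears only in the former and $C$ only in the latter). This organization buys a cleaner enumeration and, once completed, eliminates the appeal to \cite{kac1}.

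The gap is in the sentence ``any degree-$1$ highest weight singular vector is a scalar multiple of one of them.'' Multiplicity-freeness gives that the space of vectors of $\g_{-1}\otimes F$ annihilated by $e_1,e_2$ is at most four-dimensional, spanned by the four canonical highest weight vectors; it does not exclude a nontrivial linear combination of canonical vectors of distinct weights, and the definition of highest weight singular vector via \textbf{S0}--\textbf{S3} does not require $\vec m$ to be a weight vector. This is precisely the configuration the paper must confront in its case \textbf{3b)}, where $w_{12}\otimes w_2$ and $w_{21}\otimes \widetilde w_2$ carry different weights. To close the gap, observe that all operators occurring in \textbf{S0}--\textbf{S3} are weight operators for $\mathfrak{h}\oplus \C t\oplus \C C$, so each weight component of a highest weight singular vector is again one; then the values of $(\mu_t,\mu_C)$ that \textbf{S2}/\textbf{S3} force on the four families are pairwise incompatible for $m,n\geq 0$ (for instance family \textbf{b} requires $\mu_C=-1-\frac{m+n}{2}\leq -1$ while family \textbf{d} requires $\mu_C=1+\frac{m+n}{2}\geq 1$), so at most one component can be nonzero. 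With this addition your argument is complete, up to the final verification that each candidate with its prescribed weight is indeed singular, which you, like the paper, leave as a direct computation.
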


\begin{thm}
	\label{sing2}
	Let $F$ be an irreducible finite$-$dimensional $\g_{0}-$module, with highest weight $\mu$. A vector $\vec{m} \in \Ind (F) $ is a non trivial highest weight singular vector of degree 2 if and only if $\vec{m}$ is (up to a scalar) one of the following vectors:
	
	\begin{description}
		\item [a] $\mu=(0,n,1-\frac{n}{2},-1-\frac{n}{2}) $ with $n \in \Z_{\geq 0}$,
		$$\vec{m}_{2a}=w_{11}w_{21} \otimes y^{n}_{1};$$
		\item [b] $\mu=(m,0,1-\frac{m}{2},1+\frac{m}{2}) $ with $m \in \Z_{\geq 0}$,
		$$\vec{m}_{2b}=w_{11}w_{12}\otimes x^{m}_{1};$$ 
		\item [c] $\mu=(m,0,2+\frac{m}{2},-\frac{m}{2}) $ with $m \in \Z_{> 1}$,
		$$\vec{m}_{2c}=w_{22}w_{21}\otimes x^{m}_{1}+(w_{11}w_{22}+w_{21}w_{12})\otimes x^{m-1}_{1}x_{2}-w_{11}w_{12}\otimes x^{m-2}_{1}x^{2}_{2};$$
		\item [d] $\mu=(0,n,2+\frac{n}{2},\frac{n}{2}) $ with $n \in \Z_{> 1}$,
		$$\vec{m}_{2d}=w_{22}w_{12}\otimes y^{n}_{1}-(w_{22}w_{11}+w_{21}w_{12})\otimes  y^{n-1}_{1}y_{2} -w_{11}w_{21} \otimes y^{n-2}_{1}y^{2}_{2}.$$
	\end{description}
\end{thm}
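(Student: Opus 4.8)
The plan is to solve the system \textbf{S0}--\textbf{S3} directly for a general homogeneous element of degree $2$, following the strategy used for $K(1,N)_+$ in \cite{kac1} but keeping track of the new central element $C$. Since $\Ind(F)\cong\C[\Theta]\otimes\inlinewedge(4)\otimes F$, the degree-$2$ part of $U(\g_{<0})$ is $7$-dimensional, spanned by $\Theta$ and the six monomials $\eta_{ij}$ with $i<j$; equivalently, by $\Theta$ and the six products $w_{ab}w_{cd}$ in the notation \eqref{notazionewcaselli}. I would thus write a general degree-$2$ vector as $\vec m=\Theta\otimes v_0+\sum_{a\le b,\,c\le d}w_{ab}w_{cd}\otimes v_{abcd}$ with all coefficients in $F$. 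A degree count shows that the only elements of $\g_{>0}$ that can act nontrivially on $\vec m$ are $\xi_{ijk},\xi_{1234}$, $t\xi_i,t\xi_{ij}$ and $t^2$; moreover, since $\g_2=[\g_1,\g_1]$ by Lemma \ref{g1}, the conditions whose output has degree $0$ (those coming from $\xi_{1234}$, $t\xi_{ij}$ and $t^2=-[t\xi_i,t\xi_i]$) follow from those whose output has degree $1$. Hence \textbf{S1} is automatically satisfied, and it suffices to impose \textbf{S0} together with $\xi_{ijk}\vec m=0$ and $t\xi_i\vec m=0$.

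I would then impose \textbf{S0}. Using the polynomial model of $\g_0^{ss}$-modules from Remark \ref{notazioinecasellig0} and the fact that $C$ and $t$ act on $F$ by the scalars $\mu_C$ and $\mu_t$, the requirement $e_1\vec m=e_2\vec m=0$ pins down the weight $(m,n)$ of each coefficient and lets me rewrite every $v_{abcd}$ in terms of a single highest weight vector such as $x_1^m y_1^n$, up to scalars. This reduces the seven unknown vectors to a small number of scalar unknowns $c_\bullet$, together with the unknown weight $\mu=(m,n,\mu_t,\mu_C)$. Next I would compute $\xi_{ijk}{}_\lambda\vec m$ and $t\xi_i{}_\lambda\vec m$ from Proposition \ref{action}, handling the $\Theta\otimes v_0$ summand through Lemma \ref{actiontheta} (with $k=1$ and $g=1$), and set to zero the coefficient of each basis monomial. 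This produces a finite homogeneous linear system in the $c_\bullet$ whose entries are polynomial in $m,n,\mu_t,\mu_C$. The system has a nonzero solution only when $\mu$ lies in one of the four families (a)--(d); in each family the solution space is one-dimensional and yields precisely $\vec m_{2a},\dots,\vec m_{2d}$. The converse, that each listed vector is a singular vector, is then a direct verification with Proposition \ref{action}.

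The hard part will be the faithful bookkeeping of this linear system. The signs $\varepsilon_I$ and the $(-1)^{\bullet}$ factors in Proposition \ref{action}, together with the coupling between the $\Theta\otimes v_0$ component and the $\eta_{ij}\otimes v_{ij}$ components through Lemma \ref{actiontheta}, make the equations delicate; and it is exactly the central terms $\bigchi_{|I|=k}\varepsilon_I\,\partial_{I^c}\eta_L\otimes Cv$, absent from the $K(1,N)_+$ computation in \cite{kac1}, that fix the value of $\mu_C$ and, together with the $\Theta$-coefficient equations, the value of $\mu_t$ in each case, thereby separating the four families. Finally, establishing the sharp ranges of the parameters (for instance $m>1$ in (c) and $n>1$ in (d)) requires checking exactly when the candidate vector is nonzero and genuinely of degree $2$.
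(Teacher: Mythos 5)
Your proposal is correct in substance and follows the same underlying strategy as the paper: write the general degree-2 vector with seven unknown coefficients in $F$, use the $\mathfrak{sl}_2\times\mathfrak{sl}_2$ polynomial model and the scalar action of $t$ and $C$ to reduce the coefficients to scalars times weight monomials, and solve the resulting linear system. The organizational differences are real, though. First, you work with the straight action of Proposition \ref{action} and exploit the fact that $\g_{>0}$ is generated by $\g_1$ (Lemma \ref{g1}) to discard all degree-0-output conditions, so that \textbf{S1} is automatic; the paper notes this same fact but deliberately keeps the full \textbf{S1}--\textbf{S3} formalism (via the Hodge dual $T$, Proposition \ref{actiondual} and the coefficient bookkeeping of Proposition \ref{azioneabcdfinal}), because those extra identities — e.g.\ equations \eqref{B.22B}--\eqref{B.28B} of Lemma \ref{s1s2s3grado2vstardiversodazero} — are exactly what make the case analysis tractable by hand, including the preliminary vanishing of the $\Theta$-component (Lemma \ref{lemmavettorigrado2senzav*}), which in your scheme would only emerge as an output of the full system. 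Second, and most significantly, in the three cases where no singular vector exists the paper does not solve the system to the end: it shows that those configurations force $C=0$, whereupon the $\lambda$-action degenerates to that of $K(1,4)_+$ and the absence of degree-2 singular vectors is imported from \cite{kac1}. Your plan replaces this shortcut by direct elimination, which is legitimate and more self-contained but is precisely where the bulk of the computation you defer would fall; conversely, your route avoids any reliance on the $K_4$ classification. Neither difference is a gap — your reduction of the conditions is equivalent to the paper's, and your identification of the central terms as what pins down $\mu_C$ (and with it $\mu_t$) is exactly the mechanism at work in the paper's case analysis.
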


\begin{thm}
	\label{sing3}
	Let $F$ be an irreducible finite$-$dimensional $\g_{0}-$module, with highest weight $\mu$. A vector $\vec{m} \in \Ind (F) $ is a non trivial highest weight singular vector of degree 3 if and only if $\vec{m}$ is (up to a scalar) one of the following vectors:
	
	\begin{description}
		\item [a] $\mu=(1,0,\frac{5}{2},-\frac{1}{2})$,
		$$\vec{m}_{3a}=w_{11}w_{22}w_{21}\otimes x_{1}+w_{21}w_{12}w_{11}\otimes x_{2};$$ 
		\item [b] $\mu=(0,1,\frac{5}{2},\frac{1}{2})$,
		$$\vec{m}_{3b}=w_{11}w_{22}w_{12}\otimes y_{1}+w_{12}w_{21}w_{11}\otimes y_{2}.$$ 
	\end{description}
\end{thm}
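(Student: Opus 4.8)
The plan is to solve the system \textbf{S0}--\textbf{S3} of Remark \ref{lemmatecnicos1s2s3} directly on the degree-$3$ component of $\Ind(F)$. Since $U(\g_{<0})\cong \C[\Theta]\otimes\inlinewedge(4)$ with $\deg\Theta=2$ and $\deg\eta_i=1$, that component is spanned by the eight monomials $\eta_I$ with $|I|=3$ and $\Theta\eta_i$ with $1\leq i\leq 4$. I would therefore write a homogeneous degree-$3$ vector as
\begin{equation*}
\vec m=\Theta\, a+b,\qquad a=\sum_{i=1}^4\eta_i\otimes u_i,\quad b=\sum_{|I|=3}\eta_I\otimes v_I,\quad u_i,v_I\in F.
\end{equation*}
First I would use that a singular vector may be taken $\g_0$-weight-homogeneous and that $C$ and $t$ act by scalars; matching the weight of each $U(\g_{<0})$-monomial against the highest weight $(m,n)$ of $F$ then ties $\mu_t$ and $\mu_C$ to $(m,n)$ and to the set of monomials that actually occur. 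This cuts the search to a short list before any bracket is evaluated.

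By Lemma \ref{g1} it then suffices to impose \textbf{S0}--\textbf{S3}. I would compute ${\xi_I}_\lambda\vec m$ from Proposition \ref{action}, handling the $\Theta a$ summand through the recursion of Lemma \ref{actiontheta}, so that everything is reduced to the explicit action on $\eta_L\otimes v$. Conditions \textbf{S1}--\textbf{S3} then become a finite linear system coupling the vectors $u_i,v_I$ through the $\g_0$-action on $F$, while condition \textbf{S0} ($e_1.\vec m=e_2.\vec m=0$) simultaneously forces the $u_i,v_I$ to be highest-weight-type vectors, i.e. built from $x_1^m y_1^n$ together with its $f_x,f_y$-descendants and the $w$-monomial structure recorded in \eqref{notazionewcaselli}.

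The decisive input is the family of central ($C$-valued) terms, which are exactly the new feature of $\mathcal A(K'_4)$ relative to the $K_n$ treated in \cite{kac1}. By Lemma \ref{deg3} the $\lambda^3$ coefficient is supported only on $\eta_{1234}$ and returns a multiple of $Cv$, and the $\lambda^2$- and $\lambda$-central contributions of Proposition \ref{action} couple $\mu_C$ to $\mu_t$; imposing \textbf{S3} and \textbf{S2} then forces a rigid relation between $a$ and $b$ and, crucially, shows that the surviving degree-$1$ and degree-$2$ equations are consistent only when $m+n=1$, i.e. $(m,n)=(1,0)$ or $(0,1)$. In each case the resulting system has a one-dimensional solution space; reading it off and normalizing (using $[w_{11},w_{22}]=4\Theta$ and $[w_{12},w_{21}]=-4\Theta$ to re-express products of three $w$'s in the $\Theta\eta_i$, $\eta_{|I|=3}$ basis) yields precisely $\vec m_{3a}$ for $\mu=(1,0,\tfrac52,-\tfrac12)$ and $\vec m_{3b}$ for $\mu=(0,1,\tfrac52,\tfrac12)$, and a direct substitution back into Proposition \ref{action} verifies that each is annihilated by all of $\g_{>0}$.

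The main obstacle will be the sign and coefficient bookkeeping: the factors $\varepsilon_I$ and the numerous $(-1)^{\cdots}$ in Proposition \ref{action} make the coupled system for $(u_i,v_I)$ delicate, and a priori $F$ could carry an arbitrarily large highest weight. The resolution is that the central terms, active precisely in degree $3$, force $m+n=1$; once the highest weight is this small only finitely many monomials survive and the verification collapses to the short explicit computation above.
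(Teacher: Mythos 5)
Your plan is correct and follows essentially the same route as the paper: both reduce the problem to solving the finite linear system \textbf{S0}--\textbf{S3} on the eight degree-$3$ monomials $\eta_I$ ($|I|=3$) and $\Theta\eta_i$, and both identify the central $C$-terms (absent for the ordinary $K_n$) as the decisive constraint that pins down $\mu_C=\pm\tfrac12$, $\mu_t=\tfrac52$ and hence $(m,n)\in\{(1,0),(0,1)\}$ — the only cosmetic difference being that the paper computes in Hodge-dual coordinates $T(\vec m)$ via Proposition \ref{actiondual} and Proposition \ref{azioneabcdfinal}, while you use Proposition \ref{action} together with Lemma \ref{actiontheta} directly. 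One small correction: $(h_x,h_y)$-weight matching does constrain which monomials can pair with which weight vectors of $F$, but it cannot by itself ``tie $\mu_t$ and $\mu_C$ to $(m,n)$'', since $C$ acts by the scalar $\mu_C$ on all of $\Ind(F)$ and $t$ acts by $\mu_t-3$ on the entire degree-$3$ component; those relations emerge only from the equations \textbf{S2}--\textbf{S3} with their central contributions, exactly as your subsequent steps correctly describe.
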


\begin{thm}
	\label{greaterthan3}
	There are no singular vectors of degree greater than 3. 
\end{thm}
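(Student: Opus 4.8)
The plan is to show that a nonzero homogeneous singular vector of degree $N$ cannot exist once $N\geq 4$, by controlling its top $\Theta$-component. First I would reduce to highest weight singular vectors: since $[\g_0,\g_{>0}]\subseteq\g_{>0}$, the singular vectors of a fixed degree $N$ form a $\g_0$-submodule of $\Ind(F)_N$, so if this were nonzero it would contain a $\g_0$-highest weight vector, i.e. a highest weight singular vector satisfying conditions \textbf{S0}--\textbf{S3} of Remark~\ref{lemmatecnicos1s2s3}. By Lemma~\ref{g1} these conditions are equivalent to $\g_1.\vec m=0$, that is, to $(t\xi_i).\vec m=0$ for $1\leq i\leq 4$ together with $\xi_J.\vec m=0$ for all $J\in\mathcal I_<$ with $|J|=3$. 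Thus it suffices to rule out such $\vec m$ with $N\geq 4$.

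Using $\Ind(F)\cong\C[\Theta]\otimes\inlinewedge(4)\otimes F$, I would write a homogeneous degree-$N$ vector as $\vec m=\sum_{k\geq 0}\Theta^k m_k$, where each $m_k\in\inlinewedge(4)\otimes F$ is \emph{pure} (carries no factor of $\Theta$) and homogeneous of $\inlinewedge$-degree $N-2k$. Since $\inlinewedge(4)$ is concentrated in degrees $0,\dots,4$, only finitely many $m_k$ are nonzero, and for $N\geq 5$ every summand carries a factor of $\Theta$. Let $k_0$ be maximal with $m_{k_0}\neq 0$; note that $N-2k_0\leq 4$ forces $k_0\geq 1$ whenever $N\geq 5$.

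The heart of the argument is to turn $(t\xi_i).\vec m=0$ and $\xi_J.\vec m=0$ into a system that is triangular in the $\Theta$-degree. Applying Lemma~\ref{actiontheta} repeatedly gives ${\xi_I}_\lambda(\Theta^k m_k)=(\Theta+\lambda)^k({\xi_I}_\lambda m_k)$ whenever $|I|\neq 4$, while Proposition~\ref{action} evaluates ${\xi_I}_\lambda m_k$ on the pure part $m_k$. Reading off the coefficient of each power of $\Theta$ in $(t\xi_i).\vec m=0$ yields, for every $k$ and $i$, a relation of the shape
\begin{equation*}
(t\xi_i).m_k + k\,\partial_i m_k + (k+1)\,\xi_i\star m_{k+1}=0,
\end{equation*}
and $\xi_J.\vec m=0$ produces analogous relations, now carrying the central contributions $\propto C$ coming both from Proposition~\ref{action} and from the $|I|=4$ case of Lemma~\ref{actiontheta}. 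In top $\Theta$-degree $k_0$ these relations close on $m_{k_0}$ alone. I would then show that, for $k_0\geq 1$, this closed top system (possibly supplemented by the relation one level below, in degree $k_0-1$) forces $m_{k_0}=0$, contradicting the maximality of $k_0$. This already dispatches every $N\geq 5$ and every $N=4$ vector that carries a factor of $\Theta$; the only remaining configuration is a pure $\inlinewedge$-degree-$4$ vector ($N=4$, $k_0=0$), which I would eliminate by a direct finite check, using condition \textbf{S3} for $|I|=4$ (i.e. $\xi_{1234}.\vec m=0$) and the finitely many weight-admissible shapes dictated by \textbf{S0} and the weight of $F$.

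I expect the main obstacle to be exactly the top-$\Theta$-degree vanishing for $k_0\geq 1$: one must verify that the combined $(t\xi_i)$- and $\xi_J$-relations, including the central $C$-terms that are absent in the purely contact case of \cite{kac1}, admit no nonzero pure solution $m_{k_0}$ of $\inlinewedge$-degree $\leq 4$. Managing the signs $\varepsilon_I$, the two $\mathfrak{sl}_2$-weights, and the clean separation of the $C$-contributions is the delicate point; it may be streamlined by passing to the Hodge-dual picture of Proposition~\ref{actiondual}, which replaces the lowering operators $\partial_i$ by the simpler $\star$-products. Everything beyond this is a finite, if lengthy, linear-algebra verification.
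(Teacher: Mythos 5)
Your preparatory steps are all sound and in fact reproduce, in the non-dual picture, the framework the paper itself uses via the map $T$ of Proposition \ref{actiondual}: reduction to highest weight vectors, the expansion $\vec{m}=\sum_k\Theta^k m_k$ with $m_k$ pure of $\inlinewedge$-degree $d_k=N-2k$, and relations obtained by comparing powers of $\Theta$. The gap is the one step that carries all the content: the claim that for $k_0\geq 1$ the ``closed top system forces $m_{k_0}=0$.'' You give no argument for this, and as a mechanism it provably cannot work. The top-$\Theta$-coefficient relations do close on $m_{k_0}$, but they only kill tops of $\inlinewedge$-degree $3$ or $4$ (by Lemmas \ref{actiontheta} and \ref{lambdadeg0}, the coefficient of $\Theta^{k_0+1}$ in $\xi_J.\vec{m}$, $|J|=3$, is $-\partial_J m_{k_0}$). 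When $d_{k_0}=0$ — which happens for every even $N\geq 4$, e.g. $N=4$, $k_0=2$, $m_{k_0}=1\otimes v$ — the entire top system is vacuous: every term of Proposition \ref{action} applied to $\eta_\emptyset$ contains a derivative of $1$, so no condition at all is imposed on $v$. And for $d_{k_0}=1,2$ the top system cannot distinguish values of $k_0$: writing $\mu_t$ for the scalar by which the central element $t\in\g_0$ acts on $F$, the operator $t$ enters the top relations only through the $|I|=1$ equation, whose explicit form for $d_{k_0}=1$ is $\sum_{j\neq a}\xi_{a,j}.v_j=(k_0-\mu_t)\,v_a$, while all remaining top relations (those of type \textbf{S1}, \textbf{S3} and \textbf{S0}) involve neither $t$ nor $k_0$. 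Hence the whole top system depends on $(k_0,\mu_t)$ only through $k_0-\mu_t$, and since $\mu_t$ is a free parameter of $F$, a nonzero solution for one $k_0$ gives nonzero solutions for every $k_0$ after shifting $\mu_t$. Such solutions exist: the genuine degree-$3$ singular vectors of Theorem \ref{sing3} have $k_0=1$, $d_{k_0}=1$, $\mu_t=5/2$ (indeed $w_{11}w_{22}w_{21}=(2\Theta+2i\eta_{12})(\eta_4+i\eta_3)$ contains $2\Theta\eta_4$), and their top components satisfy exactly this system. So no argument confined to the top level can rule out $k_0\geq 1$; what separates $N\geq 4$ from $N\leq 3$ is invisible there.

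Your hedge ``possibly supplemented by the relation one level below'' concedes the problem but does not repair it: once you descend, the relations couple $m_{k_0}$ with $m_{k_0-1}$, then with $m_{k_0-2}$, and the system no longer closes on $m_{k_0}$. Ruling out nonzero solutions then requires the full multi-level cascade, which is precisely the paper's proof: Lemma \ref{411} (which bounds the $\Theta$-degree by expanding $1_\lambda\vec{m}$ in the two variables $\lambda$ and $\lambda+\Theta$, i.e. uses the conditions $(t^j).\vec{m}=0$ for $j\geq 2$, not only the raw $\g_1$-relations you invoke), followed by Lemmas \ref{lemmav3}, \ref{lemmav2} and \ref{lemmav1}, which run over many different $I$ simultaneously. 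This cascade is not routine linear algebra: for instance, killing the pure $\inlinewedge$-degree-$4$ component carrying no $\Theta$ — which is easy only when it is the top component, whereas inside a degree-$4$ vector $\Theta^2m_2+\Theta m_1+m_0$ it is the bottom one — requires in the paper a case distinction on the central charge, the case $C=0$ being settled only by reduction to Lemma B.4 of \cite{kac1}. Since none of this is supplied, your proposal establishes the theorem only in the easy cases $d_{k_0}\in\{3,4\}$ and leaves all the substantive cases open.
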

\begin{rem}
	\label{costruzionemorfismi}
	 We call a Verma module \textit{degenerate} if it is not irreducible.
	We point out that, given $M(m,n,\mu_{t},\mu_{C})$ and $M(\widetilde{m},\widetilde{n},\widetilde{\mu}_{t},\widetilde{\mu}_{C})$ Verma modules, we can construct a non trivial morphism of $\g-$modules from the former to the latter if and only if there exists a highest weight singular vector $\vec{m}$ in $M(\widetilde{m},\widetilde{n},\widetilde{\mu}_{t},\widetilde{\mu}_{C})$ of highest weight $(m,n,\mu_{t},\mu_{C})$. The map is uniquely determined by:
	\begin{align*}
	\nabla: \, M(m,n,\mu_{t},\mu_{C})&\longrightarrow M(\widetilde{m},\widetilde{n},\widetilde{\mu}_{t},\widetilde{\mu}_{C})\\
	v_{\mu} &\longmapsto \vec{m},
	\end{align*}
	where $v_{\mu}$ is a highest weight vector of $F(m,n,\mu_{t},\mu_{C})$.
	If $\vec{m}$ is a singular vector of degree $d$, we say that $\nabla$ is a morphism of degree $d$.
\end{rem}	
We use Remark \ref{costruzionemorfismi} to construct the maps in Figure \ref{figura} of all possible morphisms in the case of $K'_{4}$. We also observe that the symmetry of this picture is coherent with conformal duality. Indeed, by the main result in \cite{cantacasellikac} the conformal dual of a Verma module $M(m,n,\mu_{t},\mu_{C})$ is $M(m,n,-\mu_{t}+a,-\mu_{C}+b)$, with
\[
a=\text{str}(\text{ad}(t)_{|\g_{<0}})=2
\] 
and 
\[
b=\text{str}(\text{ad}(C)_{|\g_{<0}})=0,
\]
where $\g=\mathcal A(K'_4)$, "str" denotes supertrace, and "ad" denotes the adjoint representation. In particular the duality is obtained with the rotation by 180 degrees of the whole picture. Note also that all compositions of two morphisms in Figure \ref{figura} must vanish by the classification of singular vectors, and hence we obtain an infinite number of bilateral complexes of morphisms. 

	\begin{figure}[b!]
	\centering
	\caption{}
	\label{figura}
	\begin{tikzpicture}
	
	\draw[->,black] (0,0) -- (6,0) node[anchor=north west]{};
	\draw[->,black] (0,0) -- (0,6) node[anchor=south east]{};
	\node[black] at (3,6.5) {$(m,n,-\frac{m+n}{2},\frac{m-n}{2})$};
	\node[black] at (6,6.5) {\textbf{A}};
	\node[black] at (6.5,0) {$m$};
	\node[black] at (0,6.5) {$n$};
	\draw[black] (0,0) -- (5.5,5.5);
	\draw[black] (0,1) -- (4.5,5.5);
	\draw[black] (0,2) -- (3.5,5.5);
	\draw[black] (0,3) -- (2.5,5.5);
	\draw[black] (0,4) -- (1.5,5.5);
	\draw[black] (0,5) -- (0.5,5.5);
	\draw[black] (1,0) -- (5.5,4.5);
	\draw[black] (2,0) -- (5.5,3.5);
	\draw[black] (3,0) -- (5.5,2.5);
	\draw[black] (4,0) -- (5.5,1.5);
	\draw[black] (5,0) -- (5.5,0.5);

	\draw[-latex] (1,0) to[out=200,in=70] (-0.9,-1.9);
	\draw[-latex] (0,1) to[out=-110,in=20] (-1.9,-0.9);

	\draw[-latex,black] (0,1.9) -- (-0.9,0.1);
	\draw[-latex,black] (0,2.9) -- (-0.9,1.1);
	\draw[-latex,black] (0,3.9) -- (-0.9,2.1);
	\draw[-latex,black] (0,4.9) -- (-0.9,3.1);
	\draw[-latex,black] (0,5.9) -- (-0.9,4.1);
	\draw[-latex,black] (-0.5,5.9) -- (-0.9,5.1);
	
	\draw[-latex,black] (2,0) -- (0.1,-0.9);
	\draw[-latex,black] (3,0) -- (1.1,-0.9);
	\draw[-latex,black] (4,0) -- (2.1,-0.9);
	\draw[-latex,black] (5,0) -- (3.1,-0.9);
	\draw[-latex,black] (6,0) -- (4.1,-0.9);
	\draw[-latex,black] (6,-0.5) -- (5.1,-0.9);

	\draw[-latex,black] (0,-1) -- (-0.9,-2.9);
	\draw[-latex,black] (0,-2) -- (-0.9,-3.9);
	\draw[-latex,black] (0,-3) -- (-0.9,-4.9);
	\draw[-latex,black] (0,-4) -- (-0.9,-5.9);
	\draw[-latex,black] (0,-5) -- (-0.9,-6.9);
	\draw[-latex,black] (0,-6) -- (-0.5,-7.1);
	
	\draw[-latex,black] (-1,0) -- (-2.9,-0.9);
	\draw[-latex,black] (-2,0) -- (-3.9,-0.9);
	\draw[-latex,black] (-3,0) -- (-4.9,-0.9);
	\draw[-latex,black] (-4,0) -- (-5.9,-0.9);
	\draw[-latex,black] (-5,0) -- (-6.9,-0.9);
	\draw[-latex,black] (-6,0) -- (-7.1,-0.5);

	\draw[-latex,black] (5.5,5.5) -- (5.1,5.1);
	\draw[-latex,black] (5,5) -- (4.1,4.1);
	\draw[-latex,black] (4,4) -- (3.1,3.1);
	\draw[-latex,black] (3,3) -- (2.1,2.1);
	\draw[-latex,black] (2,2) -- (1.1,1.1);
	\draw[-latex,black] (1,1) -- (0.1,0.1);
	
	\draw[-latex,black] (5.5,4.5) -- (5.1,4.1);
	\draw[-latex,black] (5,4) -- (4.1,3.1);
	\draw[-latex,black] (4,3) -- (3.1,2.1);
	\draw[-latex,black] (3,2) -- (2.1,1.1);
	\draw[-latex,black] (2,1) -- (1.1,0.1);

	\draw[-latex,black] (5.5,3.5) -- (5.1,3.1);
	\draw[-latex,black] (5,3) -- (4.1,2.1);
	\draw[-latex,black] (4,2) -- (3.1,1.1);
	\draw[-latex,black] (3,1) -- (2.1,0.1);

	\draw[-latex,black] (5.5,2.5) -- (5.1,2.1);
	\draw[-latex,black] (5,2) -- (4.1,1.1);
	\draw[-latex,black] (4,1) -- (3.1,0.1);

	\draw[-latex,black] (5.5,1.5) -- (5.1,1.1);
	\draw[-latex,black] (5,1) -- (4.1,0.1);

	\draw[-latex,black] (5.5,0.5) -- (5.1,0.1);

	\draw[-latex,black] (4.5,5.5) -- (4.1,5.1);
	\draw[-latex,black] (4,5) -- (3.1,4.1);
	\draw[-latex,black] (3,4) -- (2.1,3.1);
	\draw[-latex,black] (2,3) -- (1.1,2.1);
	\draw[-latex,black] (1,2) -- (0.1,1.1);

	\draw[-latex,black] (3.5,5.5) -- (3.1,5.1);
	\draw[-latex,black] (3,5) -- (2.1,4.1);
	\draw[-latex,black] (2,4) -- (1.1,3.1);
	\draw[-latex,black] (1,3) -- (0.1,2.1);

	\draw[-latex,black] (2.5,5.5) -- (2.1,5.1);
	\draw[-latex,black] (2,5) -- (1.1,4.1);
	\draw[-latex,black] (1,4) -- (0.1,3.1);

	\draw[-latex,black] (1.5,5.5) -- (1.1,5.1);
	\draw[-latex,black] (1,5) -- (0.1,4.1);

	\draw[-latex,black] (0.5,5.5) -- (0.1,5.1);

	\coordinate (center) at (0,0);
	\fill[white] (center) + (0, 0.1) arc (90:270:0.1);
	\fill[white] (center)+ (0, -0.1) arc (270:450:0.1);
	\draw[black] (center)+ (0, -0.1) arc (270:450:0.1);
	\draw[black] (center)+ (0, 0.1) arc (90:270:0.1);
	\coordinate (center) at (0,1);
	\fill[white] (center) + (0, 0.1) arc (90:270:0.1);
	\fill[white] (center)+ (0, -0.1) arc (270:450:0.1);
	\draw[black] (center)+ (0, -0.1) arc (270:450:0.1);
	\draw[black] (center)+ (0, 0.1) arc (90:270:0.1);
	\coordinate (center) at (0,2);
	\fill[white] (center) + (0, 0.1) arc (90:270:0.1);
	\fill[white] (center)+ (0, -0.1) arc (270:450:0.1);
	\draw[black] (center)+ (0, -0.1) arc (270:450:0.1);
	\draw[black] (center)+ (0, 0.1) arc (90:270:0.1);
	\coordinate (center) at (0,3);
	\fill[white] (center) + (0, 0.1) arc (90:270:0.1);
	\fill[white] (center)+ (0, -0.1) arc (270:450:0.1);
	\draw[black] (center)+ (0, -0.1) arc (270:450:0.1);
	\draw[black] (center)+ (0, 0.1) arc (90:270:0.1);
	\coordinate (center) at (0,4);
	\fill[white] (center) + (0, 0.1) arc (90:270:0.1);
	\fill[white] (center)+ (0, -0.1) arc (270:450:0.1);
	\draw[black] (center)+ (0, -0.1) arc (270:450:0.1);
	\draw[black] (center)+ (0, 0.1) arc (90:270:0.1);
	\coordinate (center) at (0,5);
	\fill[white] (center) + (0, 0.1) arc (90:270:0.1);
	\fill[white] (center)+ (0, -0.1) arc (270:450:0.1);
	\draw[black] (center)+ (0, -0.1) arc (270:450:0.1);
	\draw[black] (center)+ (0, 0.1) arc (90:270:0.1);
	\coordinate (center) at (1,0);
	\fill[white] (center) + (0, 0.1) arc (90:270:0.1);
	\fill[white] (center)+ (0, -0.1) arc (270:450:0.1);
	\draw[black] (center)+ (0, -0.1) arc (270:450:0.1);
	\draw[black] (center)+ (0, 0.1) arc (90:270:0.1);
	\coordinate (center) at (2,0);
	\fill[white] (center) + (0, 0.1) arc (90:270:0.1);
	\fill[white] (center)+ (0, -0.1) arc (270:450:0.1);
	\draw[black] (center)+ (0, -0.1) arc (270:450:0.1);
	\draw[black] (center)+ (0, 0.1) arc (90:270:0.1);
	\coordinate (center) at (3,0);
	\fill[white] (center) + (0, 0.1) arc (90:270:0.1);
	\fill[white] (center)+ (0, -0.1) arc (270:450:0.1);
	\draw[black] (center)+ (0, -0.1) arc (270:450:0.1);
	\draw[black] (center)+ (0, 0.1) arc (90:270:0.1);
	\coordinate (center) at (4,0);
	\fill[white] (center) + (0, 0.1) arc (90:270:0.1);
	\fill[white] (center)+ (0, -0.1) arc (270:450:0.1);
	\draw[black] (center)+ (0, -0.1) arc (270:450:0.1);
	\draw[black] (center)+ (0, 0.1) arc (90:270:0.1);
	\coordinate (center) at (5,0);
	\fill[white] (center) + (0, 0.1) arc (90:270:0.1);
	\fill[white] (center)+ (0, -0.1) arc (270:450:0.1);
	\draw[black] (center)+ (0, -0.1) arc (270:450:0.1);
	\draw[black] (center)+ (0, 0.1) arc (90:270:0.1);

	\coordinate (center) at (1,1);
	\fill[white] (center) + (0, 0.1) arc (90:270:0.1);
	\fill[white] (center)+ (0, -0.1) arc (270:450:0.1);
	\draw[black] (center)+ (0, -0.1) arc (270:450:0.1);
	\draw[black] (center)+ (0, 0.1) arc (90:270:0.1);
	\coordinate (center) at (1,2);
	\fill[white] (center) + (0, 0.1) arc (90:270:0.1);
	\fill[white] (center)+ (0, -0.1) arc (270:450:0.1);
	\draw[black] (center)+ (0, -0.1) arc (270:450:0.1);
	\draw[black] (center)+ (0, 0.1) arc (90:270:0.1);
	\coordinate (center) at (1,3);
	\fill[white] (center) + (0, 0.1) arc (90:270:0.1);
	\fill[white] (center)+ (0, -0.1) arc (270:450:0.1);
	\draw[black] (center)+ (0, -0.1) arc (270:450:0.1);
	\draw[black] (center)+ (0, 0.1) arc (90:270:0.1);
	\coordinate (center) at (1,4);
	\fill[white] (center) + (0, 0.1) arc (90:270:0.1);
	\fill[white] (center)+ (0, -0.1) arc (270:450:0.1);
	\draw[black] (center)+ (0, -0.1) arc (270:450:0.1);
	\draw[black] (center)+ (0, 0.1) arc (90:270:0.1);
	\coordinate (center) at (1,5);
	\fill[white] (center) + (0, 0.1) arc (90:270:0.1);
	\fill[white] (center)+ (0, -0.1) arc (270:450:0.1);
	\draw[black] (center)+ (0, -0.1) arc (270:450:0.1);
	\draw[black] (center)+ (0, 0.1) arc (90:270:0.1);
	\coordinate (center) at (2,1);
	\fill[white] (center) + (0, 0.1) arc (90:270:0.1);
	\fill[white] (center)+ (0, -0.1) arc (270:450:0.1);
	\draw[black] (center)+ (0, -0.1) arc (270:450:0.1);
	\draw[black] (center)+ (0, 0.1) arc (90:270:0.1);
	\coordinate (center) at (2,2);
	\fill[white] (center) + (0, 0.1) arc (90:270:0.1);
	\fill[white] (center)+ (0, -0.1) arc (270:450:0.1);
	\draw[black] (center)+ (0, -0.1) arc (270:450:0.1);
	\draw[black] (center)+ (0, 0.1) arc (90:270:0.1);
	\coordinate (center) at (2,3);
	\fill[white] (center) + (0, 0.1) arc (90:270:0.1);
	\fill[white] (center)+ (0, -0.1) arc (270:450:0.1);
	\draw[black] (center)+ (0, -0.1) arc (270:450:0.1);
	\draw[black] (center)+ (0, 0.1) arc (90:270:0.1);
	\coordinate (center) at (2,4);
	\fill[white] (center) + (0, 0.1) arc (90:270:0.1);
	\fill[white] (center)+ (0, -0.1) arc (270:450:0.1);
	\draw[black] (center)+ (0, -0.1) arc (270:450:0.1);
	\draw[black] (center)+ (0, 0.1) arc (90:270:0.1);
	\coordinate (center) at (2,5);
	\fill[white] (center) + (0, 0.1) arc (90:270:0.1);
	\fill[white] (center)+ (0, -0.1) arc (270:450:0.1);
	\draw[black] (center)+ (0, -0.1) arc (270:450:0.1);
	\draw[black] (center)+ (0, 0.1) arc (90:270:0.1);
	\coordinate (center) at (3,1);
	\fill[white] (center) + (0, 0.1) arc (90:270:0.1);
	\fill[white] (center)+ (0, -0.1) arc (270:450:0.1);
	\draw[black] (center)+ (0, -0.1) arc (270:450:0.1);
	\draw[black] (center)+ (0, 0.1) arc (90:270:0.1);
	\coordinate (center) at (3,2);
	\fill[white] (center) + (0, 0.1) arc (90:270:0.1);
	\fill[white] (center)+ (0, -0.1) arc (270:450:0.1);
	\draw[black] (center)+ (0, -0.1) arc (270:450:0.1);
	\draw[black] (center)+ (0, 0.1) arc (90:270:0.1);
	\coordinate (center) at (3,3);
	\fill[white] (center) + (0, 0.1) arc (90:270:0.1);
	\fill[white] (center)+ (0, -0.1) arc (270:450:0.1);
	\draw[black] (center)+ (0, -0.1) arc (270:450:0.1);
	\draw[black] (center)+ (0, 0.1) arc (90:270:0.1);
	\coordinate (center) at (3,4);
	\fill[white] (center) + (0, 0.1) arc (90:270:0.1);
	\fill[white] (center)+ (0, -0.1) arc (270:450:0.1);
	\draw[black] (center)+ (0, -0.1) arc (270:450:0.1);
	\draw[black] (center)+ (0, 0.1) arc (90:270:0.1);
	\coordinate (center) at (3,5);
	\fill[white] (center) + (0, 0.1) arc (90:270:0.1);
	\fill[white] (center)+ (0, -0.1) arc (270:450:0.1);
	\draw[black] (center)+ (0, -0.1) arc (270:450:0.1);
	\draw[black] (center)+ (0, 0.1) arc (90:270:0.1);
	\coordinate (center) at (4,1);
	\fill[white] (center) + (0, 0.1) arc (90:270:0.1);
	\fill[white] (center)+ (0, -0.1) arc (270:450:0.1);
	\draw[black] (center)+ (0, -0.1) arc (270:450:0.1);
	\draw[black] (center)+ (0, 0.1) arc (90:270:0.1);
	\coordinate (center) at (4,2);
	\fill[white] (center) + (0, 0.1) arc (90:270:0.1);
	\fill[white] (center)+ (0, -0.1) arc (270:450:0.1);
	\draw[black] (center)+ (0, -0.1) arc (270:450:0.1);
	\draw[black] (center)+ (0, 0.1) arc (90:270:0.1);
	\coordinate (center) at (4,3);
	\fill[white] (center) + (0, 0.1) arc (90:270:0.1);
	\fill[white] (center)+ (0, -0.1) arc (270:450:0.1);
	\draw[black] (center)+ (0, -0.1) arc (270:450:0.1);
	\draw[black] (center)+ (0, 0.1) arc (90:270:0.1);
	\coordinate (center) at (4,4);
	\fill[white] (center) + (0, 0.1) arc (90:270:0.1);
	\fill[white] (center)+ (0, -0.1) arc (270:450:0.1);
	\draw[black] (center)+ (0, -0.1) arc (270:450:0.1);
	\draw[black] (center)+ (0, 0.1) arc (90:270:0.1);
	\coordinate (center) at (4,5);
	\fill[white] (center) + (0, 0.1) arc (90:270:0.1);
	\fill[white] (center)+ (0, -0.1) arc (270:450:0.1);
	\draw[black] (center)+ (0, -0.1) arc (270:450:0.1);
	\draw[black] (center)+ (0, 0.1) arc (90:270:0.1);
	\coordinate (center) at (5,1);
	\fill[white] (center) + (0, 0.1) arc (90:270:0.1);
	\fill[white] (center)+ (0, -0.1) arc (270:450:0.1);
	\draw[black] (center)+ (0, -0.1) arc (270:450:0.1);
	\draw[black] (center)+ (0, 0.1) arc (90:270:0.1);
	\coordinate (center) at (5,2);
	\fill[white] (center) + (0, 0.1) arc (90:270:0.1);
	\fill[white] (center)+ (0, -0.1) arc (270:450:0.1);
	\draw[black] (center)+ (0, -0.1) arc (270:450:0.1);
	\draw[black] (center)+ (0, 0.1) arc (90:270:0.1);
	\coordinate (center) at (5,3);
	\fill[white] (center) + (0, 0.1) arc (90:270:0.1);
	\fill[white] (center)+ (0, -0.1) arc (270:450:0.1);
	\draw[black] (center)+ (0, -0.1) arc (270:450:0.1);
	\draw[black] (center)+ (0, 0.1) arc (90:270:0.1);
	\coordinate (center) at (5,4);
	\fill[white] (center) + (0, 0.1) arc (90:270:0.1);
	\fill[white] (center)+ (0, -0.1) arc (270:450:0.1);
	\draw[black] (center)+ (0, -0.1) arc (270:450:0.1);
	\draw[black] (center)+ (0, 0.1) arc (90:270:0.1);
	\coordinate (center) at (5,5);
	\fill[white] (center) + (0, 0.1) arc (90:270:0.1);
	\fill[white] (center)+ (0, -0.1) arc (270:450:0.1);
	\draw[black] (center)+ (0, -0.1) arc (270:450:0.1);
	\draw[black] (center)+ (0, 0.1) arc (90:270:0.1);

	\draw[->,black] (-1,-1) -- (-7,-1) node[anchor=north west]{};
	\draw[->,black] (-1,-1) -- (-1,-7) node[anchor=south east]{};
	\node[black] at (-1,-7.5) {$n$};
	\node[black] at (-7.5,-1) {$m$};
	\node[black] at (-4,-7) {($m,n,\frac{m+n}{2}+2,\frac{n-m}{2}$)};
	\node[black] at (-7,-7) {\textbf{C}};
	\draw[black] (-1,-1) -- (-6.5,-6.5);
	\draw[black] (-2,-1) -- (-6.5,-5.5);
	\draw[black] (-3,-1) -- (-6.5,-4.5);
	\draw[black] (-4,-1) -- (-6.5,-3.5);
	\draw[black] (-5,-1) -- (-6.5,-2.5);
	\draw[black] (-6,-1) -- (-6.5,-1.5);
	\draw[black] (-1,-2) -- (-5.5,-6.5);
	\draw[black] (-1,-3) -- (-4.5,-6.5);
	\draw[black] (-1,-4) -- (-3.5,-6.5);
	\draw[black] (-1,-5) -- (-2.5,-6.5);
	\draw[black] (-1,-6) -- (-1.5,-6.5);

	\draw[-latex,black] (-1,-1) -- (-1.9,-1.9);
	\draw[-latex,black] (-2,-2) -- (-2.9,-2.9);
	\draw[-latex,black] (-3,-3) -- (-3.9,-3.9);
	\draw[-latex,black] (-4,-4) -- (-4.9,-4.9);
	\draw[-latex,black] (-5,-5) -- (-5.9,-5.9);
	\draw[-latex,black] (-6,-6) -- (-6.5,-6.5);
	
	\draw[-latex,black] (-1,-2) -- (-1.9,-2.9);
	\draw[-latex,black] (-2,-3) -- (-2.9,-3.9);
	\draw[-latex,black] (-3,-4) -- (-3.9,-4.9);
	\draw[-latex,black] (-4,-5) -- (-4.9,-5.9);
	\draw[-latex,black] (-5,-6) -- (-5.5,-6.5);
	
	\draw[-latex,black] (-1,-3) -- (-1.9,-3.9);
	\draw[-latex,black] (-2,-4) -- (-2.9,-4.9);
	\draw[-latex,black] (-3,-5) -- (-3.9,-5.9);
	\draw[-latex,black] (-4,-6) -- (-4.5,-6.5);
	
	\draw[-latex,black] (-1,-4) -- (-1.9,-4.9);
	\draw[-latex,black] (-2,-5) -- (-2.9,-5.9);
	\draw[-latex,black] (-3,-6) -- (-3.5,-6.5);
	
	\draw[-latex,black] (-1,-5) -- (-1.9,-5.9);
	\draw[-latex,black] (-2,-6) -- (-2.5,-6.5);
	
	\draw[-latex,black] (-1,-6) -- (-1.5,-6.5);

	\draw[-latex,black] (-2,-1) -- (-2.9,-1.9);
	\draw[-latex,black] (-3,-2) -- (-3.9,-2.9);
	\draw[-latex,black] (-4,-3) -- (-4.9,-3.9);
	\draw[-latex,black] (-5,-4) -- (-5.9,-4.9);
	\draw[-latex,black] (-6,-5) -- (-6.5,-5.5);
	
	\draw[-latex,black] (-3,-1) -- (-3.9,-1.9);
	\draw[-latex,black] (-4,-2) -- (-4.9,-2.9);
	\draw[-latex,black] (-5,-3) -- (-5.9,-3.9);
	\draw[-latex,black] (-6,-4) -- (-6.5,-4.5);
	
	\draw[-latex,black] (-4,-1) -- (-4.9,-1.9);
	\draw[-latex,black] (-5,-2) -- (-5.9,-2.9);
	\draw[-latex,black] (-6,-3) -- (-6.5,-3.5);
	
	\draw[-latex,black] (-5,-1) -- (-5.9,-1.9);
	\draw[-latex,black] (-6,-2) -- (-6.5,-2.5);
	
	\draw[-latex,black] (-6,-1) -- (-6.5,-1.5);

	%
	%
	%
	%
	%
	%
	%
	%
	%
	%
	%
	%
	%
	%
	
	\coordinate (center) at (-1,-1);
	\fill[white] (center) + (0, 0.1) arc (90:270:0.1);
	\fill[white] (center)+ (0, -0.1) arc (270:450:0.1);
	\draw[black] (center)+ (0, -0.1) arc (270:450:0.1);
	\draw[black] (center)+ (0, 0.1) arc (90:270:0.1);
	\coordinate (center) at (-1,-2);
	\fill[white] (center) + (0, 0.1) arc (90:270:0.1);
	\fill[white] (center)+ (0, -0.1) arc (270:450:0.1);
	\draw[black] (center)+ (0, -0.1) arc (270:450:0.1);
	\draw[black] (center)+ (0, 0.1) arc (90:270:0.1);
	\coordinate (center) at (-1,-3);
	\fill[white] (center) + (0, 0.1) arc (90:270:0.1);
	\fill[white] (center)+ (0, -0.1) arc (270:450:0.1);
	\draw[black] (center)+ (0, -0.1) arc (270:450:0.1);
	\draw[black] (center)+ (0, 0.1) arc (90:270:0.1);
	\coordinate (center) at (-1,-4);
	\fill[white] (center) + (0, 0.1) arc (90:270:0.1);
	\fill[white] (center)+ (0, -0.1) arc (270:450:0.1);
	\draw[black] (center)+ (0, -0.1) arc (270:450:0.1);
	\draw[black] (center)+ (0, 0.1) arc (90:270:0.1);
	\coordinate (center) at (-1,-5);
	\fill[white] (center) + (0, 0.1) arc (90:270:0.1);
	\fill[white] (center)+ (0, -0.1) arc (270:450:0.1);
	\draw[black] (center)+ (0, -0.1) arc (270:450:0.1);
	\draw[black] (center)+ (0, 0.1) arc (90:270:0.1);
	\coordinate (center) at (-1,-6);
	\fill[white] (center) + (0, 0.1) arc (90:270:0.1);
	\fill[white] (center)+ (0, -0.1) arc (270:450:0.1);
	\draw[black] (center)+ (0, -0.1) arc (270:450:0.1);
	\draw[black] (center)+ (0, 0.1) arc (90:270:0.1);
	\coordinate (center) at (-2,-1);
	\fill[white] (center) + (0, 0.1) arc (90:270:0.1);
	\fill[white] (center)+ (0, -0.1) arc (270:450:0.1);
	\draw[black] (center)+ (0, -0.1) arc (270:450:0.1);
	\draw[black] (center)+ (0, 0.1) arc (90:270:0.1);
	\coordinate (center) at (-3,-1);
	\fill[white] (center) + (0, 0.1) arc (90:270:0.1);
	\fill[white] (center)+ (0, -0.1) arc (270:450:0.1);
	\draw[black] (center)+ (0, -0.1) arc (270:450:0.1);
	\draw[black] (center)+ (0, 0.1) arc (90:270:0.1);
	\coordinate (center) at (-4,-1);
	\fill[white] (center) + (0, 0.1) arc (90:270:0.1);
	\fill[white] (center)+ (0, -0.1) arc (270:450:0.1);
	\draw[black] (center)+ (0, -0.1) arc (270:450:0.1);
	\draw[black] (center)+ (0, 0.1) arc (90:270:0.1);
	\coordinate (center) at (-5,-1);
	\fill[white] (center) + (0, 0.1) arc (90:270:0.1);
	\fill[white] (center)+ (0, -0.1) arc (270:450:0.1);
	\draw[black] (center)+ (0, -0.1) arc (270:450:0.1);
	\draw[black] (center)+ (0, 0.1) arc (90:270:0.1);
	\coordinate (center) at (-6,-1);
	\fill[white] (center) + (0, 0.1) arc (90:270:0.1);
	\fill[white] (center)+ (0, -0.1) arc (270:450:0.1);
	\draw[black] (center)+ (0, -0.1) arc (270:450:0.1);
	\draw[black] (center)+ (0, 0.1) arc (90:270:0.1);

	\coordinate (center) at (-2,-2);
	\fill[white] (center) + (0, 0.1) arc (90:270:0.1);
	\fill[white] (center)+ (0, -0.1) arc (270:450:0.1);
	\draw[black] (center)+ (0, -0.1) arc (270:450:0.1);
	\draw[black] (center)+ (0, 0.1) arc (90:270:0.1);
	\coordinate (center) at (-2,-3);
	\fill[white] (center) + (0, 0.1) arc (90:270:0.1);
	\fill[white] (center)+ (0, -0.1) arc (270:450:0.1);
	\draw[black] (center)+ (0, -0.1) arc (270:450:0.1);
	\draw[black] (center)+ (0, 0.1) arc (90:270:0.1);
	
	\coordinate (center) at (-2,-4);
	\fill[white] (center) + (0, 0.1) arc (90:270:0.1);
	\fill[white] (center)+ (0, -0.1) arc (270:450:0.1);
	\draw[black] (center)+ (0, -0.1) arc (270:450:0.1);
	\draw[black] (center)+ (0, 0.1) arc (90:270:0.1);
	\coordinate (center) at (-2,-5);
	\fill[white] (center) + (0, 0.1) arc (90:270:0.1);
	\fill[white] (center)+ (0, -0.1) arc (270:450:0.1);
	\draw[black] (center)+ (0, -0.1) arc (270:450:0.1);
	\draw[black] (center)+ (0, 0.1) arc (90:270:0.1);
	\coordinate (center) at (-2,-6);
	\fill[white] (center) + (0, 0.1) arc (90:270:0.1);
	\fill[white] (center)+ (0, -0.1) arc (270:450:0.1);
	\draw[black] (center)+ (0, -0.1) arc (270:450:0.1);
	\draw[black] (center)+ (0, 0.1) arc (90:270:0.1);
	\coordinate (center) at (-3,-2);
	\fill[white] (center) + (0, 0.1) arc (90:270:0.1);
	\fill[white] (center)+ (0, -0.1) arc (270:450:0.1);
	\draw[black] (center)+ (0, -0.1) arc (270:450:0.1);
	\draw[black] (center)+ (0, 0.1) arc (90:270:0.1);
	\coordinate (center) at (-3,-3);
	\fill[white] (center) + (0, 0.1) arc (90:270:0.1);
	\fill[white] (center)+ (0, -0.1) arc (270:450:0.1);
	\draw[black] (center)+ (0, -0.1) arc (270:450:0.1);
	\draw[black] (center)+ (0, 0.1) arc (90:270:0.1);
	\coordinate (center) at (-3,-4);
	\fill[white] (center) + (0, 0.1) arc (90:270:0.1);
	\fill[white] (center)+ (0, -0.1) arc (270:450:0.1);
	\draw[black] (center)+ (0, -0.1) arc (270:450:0.1);
	\draw[black] (center)+ (0, 0.1) arc (90:270:0.1);
	\coordinate (center) at (-3,-5);
	\fill[white] (center) + (0, 0.1) arc (90:270:0.1);
	\fill[white] (center)+ (0, -0.1) arc (270:450:0.1);
	\draw[black] (center)+ (0, -0.1) arc (270:450:0.1);
	\draw[black] (center)+ (0, 0.1) arc (90:270:0.1);
	\coordinate (center) at (-3,-6);
	\fill[white] (center) + (0, 0.1) arc (90:270:0.1);
	\fill[white] (center)+ (0, -0.1) arc (270:450:0.1);
	\draw[black] (center)+ (0, -0.1) arc (270:450:0.1);
	\draw[black] (center)+ (0, 0.1) arc (90:270:0.1);
	\coordinate (center) at (-4,-2);
	\fill[white] (center) + (0, 0.1) arc (90:270:0.1);
	\fill[white] (center)+ (0, -0.1) arc (270:450:0.1);
	\draw[black] (center)+ (0, -0.1) arc (270:450:0.1);
	\draw[black] (center)+ (0, 0.1) arc (90:270:0.1);
	\coordinate (center) at (-4,-3);
	\fill[white] (center) + (0, 0.1) arc (90:270:0.1);
	\fill[white] (center)+ (0, -0.1) arc (270:450:0.1);
	\draw[black] (center)+ (0, -0.1) arc (270:450:0.1);
	\draw[black] (center)+ (0, 0.1) arc (90:270:0.1);
	\coordinate (center) at (-4,-4);
	\fill[white] (center) + (0, 0.1) arc (90:270:0.1);
	\fill[white] (center)+ (0, -0.1) arc (270:450:0.1);
	\draw[black] (center)+ (0, -0.1) arc (270:450:0.1);
	\draw[black] (center)+ (0, 0.1) arc (90:270:0.1);
	\coordinate (center) at (-4,-5);
	\fill[white] (center) + (0, 0.1) arc (90:270:0.1);
	\fill[white] (center)+ (0, -0.1) arc (270:450:0.1);
	\draw[black] (center)+ (0, -0.1) arc (270:450:0.1);
	\draw[black] (center)+ (0, 0.1) arc (90:270:0.1);
	\coordinate (center) at (-4,-6);
	\fill[white] (center) + (0, 0.1) arc (90:270:0.1);
	\fill[white] (center)+ (0, -0.1) arc (270:450:0.1);
	\draw[black] (center)+ (0, -0.1) arc (270:450:0.1);
	\draw[black] (center)+ (0, 0.1) arc (90:270:0.1);
	\coordinate (center) at (-5,-2);
	\fill[white] (center) + (0, 0.1) arc (90:270:0.1);
	\fill[white] (center)+ (0, -0.1) arc (270:450:0.1);
	\draw[black] (center)+ (0, -0.1) arc (270:450:0.1);
	\draw[black] (center)+ (0, 0.1) arc (90:270:0.1);
	\coordinate (center) at (-5,-3);
	\fill[white] (center) + (0, 0.1) arc (90:270:0.1);
	\fill[white] (center)+ (0, -0.1) arc (270:450:0.1);
	\draw[black] (center)+ (0, -0.1) arc (270:450:0.1);
	\draw[black] (center)+ (0, 0.1) arc (90:270:0.1);
	\coordinate (center) at (-5,-4);
	\fill[white] (center) + (0, 0.1) arc (90:270:0.1);
	\fill[white] (center)+ (0, -0.1) arc (270:450:0.1);
	\draw[black] (center)+ (0, -0.1) arc (270:450:0.1);
	\draw[black] (center)+ (0, 0.1) arc (90:270:0.1);
	\coordinate (center) at (-5,-5);
	\fill[white] (center) + (0, 0.1) arc (90:270:0.1);
	\fill[white] (center)+ (0, -0.1) arc (270:450:0.1);
	\draw[black] (center)+ (0, -0.1) arc (270:450:0.1);
	\draw[black] (center)+ (0, 0.1) arc (90:270:0.1);
	
	\coordinate (center) at (-5,-6);
	\fill[white] (center) + (0, 0.1) arc (90:270:0.1);
	\fill[white] (center)+ (0, -0.1) arc (270:450:0.1);
	\draw[black] (center)+ (0, -0.1) arc (270:450:0.1);
	\draw[black] (center)+ (0, 0.1) arc (90:270:0.1);
	\coordinate (center) at (-6,-2);
	\fill[white] (center) + (0, 0.1) arc (90:270:0.1);
	\fill[white] (center)+ (0, -0.1) arc (270:450:0.1);
	\draw[black] (center)+ (0, -0.1) arc (270:450:0.1);
	\draw[black] (center)+ (0, 0.1) arc (90:270:0.1);
	\coordinate (center) at (-6,-3);
	\fill[white] (center) + (0, 0.1) arc (90:270:0.1);
	\fill[white] (center)+ (0, -0.1) arc (270:450:0.1);
	\draw[black] (center)+ (0, -0.1) arc (270:450:0.1);
	\draw[black] (center)+ (0, 0.1) arc (90:270:0.1);
	\coordinate (center) at (-6,-4);
	\fill[white] (center) + (0, 0.1) arc (90:270:0.1);
	\fill[white] (center)+ (0, -0.1) arc (270:450:0.1);
	\draw[black] (center)+ (0, -0.1) arc (270:450:0.1);
	\draw[black] (center)+ (0, 0.1) arc (90:270:0.1);
	\coordinate (center) at (-6,-5);
	\fill[white] (center) + (0, 0.1) arc (90:270:0.1);
	\fill[white] (center)+ (0, -0.1) arc (270:450:0.1);
	\draw[black] (center)+ (0, -0.1) arc (270:450:0.1);
	\draw[black] (center)+ (0, 0.1) arc (90:270:0.1);
	
	\coordinate (center) at (-6,-6);
	\fill[white] (center) + (0, 0.1) arc (90:270:0.1);
	\fill[white] (center)+ (0, -0.1) arc (270:450:0.1);
	\draw[black] (center)+ (0, -0.1) arc (270:450:0.1);
	\draw[black] (center)+ (0, 0.1) arc (90:270:0.1);

	\draw[->,black] (0,-1) -- (6,-1) node[anchor=north west]{};
	\draw[->,black] (0,-1) -- (0,-7) node[anchor=south east]{};
	\node[black] at (0,-7.5) {$n$};
	\node[black] at (6.5,-1) {$m$};
	\node[black] at (3,-7) {$(m,n,1+\frac{n-m}{2},1+\frac{n+m}{2})$};
	\node[black] at (6,-7) {\textbf{D}};
	\draw[black] (1,-1) -- (0,-2);
	\draw[black] (2,-1) -- (0,-3);
	\draw[black] (3,-1) -- (0,-4);
	\draw[black] (4,-1) -- (0,-5);
	\draw[black] (5,-1) -- (0,-6);
	\draw[black] (5.5,-1.5) -- (0.5,-6.5);
	\draw[black] (5.5,-2.5) -- (1.5,-6.5);
	\draw[black] (5.5,-3.5) -- (2.5,-6.5);
	\draw[black] (5.5,-4.5) -- (3.5,-6.5);
	\draw[black] (5.5,-5.5) -- (4.5,-6.5);

	\draw[-latex,black] (1,-1) -- (0.1,-1.9);
	
	\draw[-latex,black] (2,-1) -- (1.1,-1.9);
	\draw[-latex,black] (1,-2) -- (0.1,-2.9);
	
	\draw[-latex,black] (3,-1) -- (2.1,-1.9);
	\draw[-latex,black] (2,-2) -- (1.1,-2.9);
	\draw[-latex,black] (1,-3) -- (0.1,-3.9);
	
	\draw[-latex,black] (4,-1) -- (3.1,-1.9);
	\draw[-latex,black] (3,-2) -- (2.1,-2.9);
	\draw[-latex,black] (2,-3) -- (1.1,-3.9);
	\draw[-latex,black] (1,-4) -- (0.1,-4.9);
	
	\draw[-latex,black] (5,-1) -- (4.1,-1.9);
	\draw[-latex,black] (4,-2) -- (3.1,-2.9);
	\draw[-latex,black] (3,-3) -- (2.1,-3.9);
	\draw[-latex,black] (2,-4) -- (1.1,-4.9);
	\draw[-latex,black] (1,-5) -- (0.1,-5.9);

	\draw[-latex,black] (5.5,-1.5) -- (5.1,-1.9);
	\draw[-latex,black] (5,-2) -- (4.1,-2.9);
	\draw[-latex,black] (4,-3) -- (3.1,-3.9);
	\draw[-latex,black] (3,-4) -- (2.1,-4.9);
	\draw[-latex,black] (2,-5) -- (1.1,-5.9);
	\draw[-latex,black] (1,-6) -- (0.5,-6.5);

	\draw[-latex,black] (5.5,-2.5) -- (5.1,-2.9);
	\draw[-latex,black] (5,-3) -- (4.1,-3.9);
	\draw[-latex,black] (4,-4) -- (3.1,-4.9);
	\draw[-latex,black] (3,-5) -- (2.1,-5.9);
	\draw[-latex,black] (2,-6) -- (1.5,-6.5);
	
	\draw[-latex,black] (5.5,-3.5) -- (5.1,-3.9);
	\draw[-latex,black] (5,-4) -- (4.1,-4.9);
	\draw[-latex,black] (4,-5) -- (3.1,-5.9);
	\draw[-latex,black] (3,-6) -- (2.5,-6.5);
	
	\draw[-latex,black] (5.5,-4.5) -- (5.1,-4.9);
	\draw[-latex,black] (5,-5) -- (4.1,-5.9);
	\draw[-latex,black] (4,-6) -- (3.5,-6.5);

	\draw[-latex,black] (5.5,-5.5) -- (5.1,-5.9);
	\draw[-latex,black] (5,-6) -- (4.5,-6.5);

	\coordinate (center) at (0,-2);
	\fill[white] (center) + (0, 0.1) arc (90:270:0.1);
	\fill[white] (center)+ (0, -0.1) arc (270:450:0.1);
	\draw[black] (center)+ (0, -0.1) arc (270:450:0.1);
	\draw[black] (center)+ (0, 0.1) arc (90:270:0.1);
	\coordinate (center) at (0,-3);
	\fill[white] (center) + (0, 0.1) arc (90:270:0.1);
	\fill[white] (center)+ (0, -0.1) arc (270:450:0.1);
	\draw[black] (center)+ (0, -0.1) arc (270:450:0.1);
	\draw[black] (center)+ (0, 0.1) arc (90:270:0.1);
	\coordinate (center) at (0,-4);
	\fill[white] (center) + (0, 0.1) arc (90:270:0.1);
	\fill[white] (center)+ (0, -0.1) arc (270:450:0.1);
	\draw[black] (center)+ (0, -0.1) arc (270:450:0.1);
	\draw[black] (center)+ (0, 0.1) arc (90:270:0.1);
	\coordinate (center) at (0,-5);
	\fill[white] (center) + (0, 0.1) arc (90:270:0.1);
	\fill[white] (center)+ (0, -0.1) arc (270:450:0.1);
	\draw[black] (center)+ (0, -0.1) arc (270:450:0.1);
	\draw[black] (center)+ (0, 0.1) arc (90:270:0.1);
	\coordinate (center) at (0,-6);
	\fill[white] (center) + (0, 0.1) arc (90:270:0.1);
	\fill[white] (center)+ (0, -0.1) arc (270:450:0.1);
	\draw[black] (center)+ (0, -0.1) arc (270:450:0.1);
	\draw[black] (center)+ (0, 0.1) arc (90:270:0.1);
	\coordinate (center) at (1,-1);
	\fill[white] (center) + (0, 0.1) arc (90:270:0.1);
	\fill[white] (center)+ (0, -0.1) arc (270:450:0.1);
	\draw[black] (center)+ (0, -0.1) arc (270:450:0.1);
	\draw[black] (center)+ (0, 0.1) arc (90:270:0.1);
	\coordinate (center) at (2,-1);
	\fill[white] (center) + (0, 0.1) arc (90:270:0.1);
	\fill[white] (center)+ (0, -0.1) arc (270:450:0.1);
	\draw[black] (center)+ (0, -0.1) arc (270:450:0.1);
	\draw[black] (center)+ (0, 0.1) arc (90:270:0.1);
	\coordinate (center) at (3,-1);
	\fill[white] (center) + (0, 0.1) arc (90:270:0.1);
	\fill[white] (center)+ (0, -0.1) arc (270:450:0.1);
	\draw[black] (center)+ (0, -0.1) arc (270:450:0.1);
	\draw[black] (center)+ (0, 0.1) arc (90:270:0.1);
	\coordinate (center) at (4,-1);
	\fill[white] (center) + (0, 0.1) arc (90:270:0.1);
	\fill[white] (center)+ (0, -0.1) arc (270:450:0.1);
	\draw[black] (center)+ (0, -0.1) arc (270:450:0.1);
	\draw[black] (center)+ (0, 0.1) arc (90:270:0.1);
	\coordinate (center) at (5,-1);
	\fill[white] (center) + (0, 0.1) arc (90:270:0.1);
	\fill[white] (center)+ (0, -0.1) arc (270:450:0.1);
	\draw[black] (center)+ (0, -0.1) arc (270:450:0.1);
	\draw[black] (center)+ (0, 0.1) arc (90:270:0.1);

	\coordinate (center) at (1,-2);
	\fill[white] (center) + (0, 0.1) arc (90:270:0.1);
	\fill[white] (center)+ (0, -0.1) arc (270:450:0.1);
	\draw[black] (center)+ (0, -0.1) arc (270:450:0.1);
	\draw[black] (center)+ (0, 0.1) arc (90:270:0.1);
	\coordinate (center) at (1,-3);
	\fill[white] (center) + (0, 0.1) arc (90:270:0.1);
	\fill[white] (center)+ (0, -0.1) arc (270:450:0.1);
	\draw[black] (center)+ (0, -0.1) arc (270:450:0.1);
	\draw[black] (center)+ (0, 0.1) arc (90:270:0.1);
	
	\coordinate (center) at (1,-4);
	\fill[white] (center) + (0, 0.1) arc (90:270:0.1);
	\fill[white] (center)+ (0, -0.1) arc (270:450:0.1);
	\draw[black] (center)+ (0, -0.1) arc (270:450:0.1);
	\draw[black] (center)+ (0, 0.1) arc (90:270:0.1);
	\coordinate (center) at (1,-5);
	\fill[white] (center) + (0, 0.1) arc (90:270:0.1);
	\fill[white] (center)+ (0, -0.1) arc (270:450:0.1);
	\draw[black] (center)+ (0, -0.1) arc (270:450:0.1);
	\draw[black] (center)+ (0, 0.1) arc (90:270:0.1);
	\coordinate (center) at (1,-6);
	\fill[white] (center) + (0, 0.1) arc (90:270:0.1);
	\fill[white] (center)+ (0, -0.1) arc (270:450:0.1);
	\draw[black] (center)+ (0, -0.1) arc (270:450:0.1);
	\draw[black] (center)+ (0, 0.1) arc (90:270:0.1);
	\coordinate (center) at (2,-2);
	\fill[white] (center) + (0, 0.1) arc (90:270:0.1);
	\fill[white] (center)+ (0, -0.1) arc (270:450:0.1);
	\draw[black] (center)+ (0, -0.1) arc (270:450:0.1);
	\draw[black] (center)+ (0, 0.1) arc (90:270:0.1);
	\coordinate (center) at (2,-3);
	\fill[white] (center) + (0, 0.1) arc (90:270:0.1);
	\fill[white] (center)+ (0, -0.1) arc (270:450:0.1);
	\draw[black] (center)+ (0, -0.1) arc (270:450:0.1);
	\draw[black] (center)+ (0, 0.1) arc (90:270:0.1);
	\coordinate (center) at (2,-4);
	\fill[white] (center) + (0, 0.1) arc (90:270:0.1);
	\fill[white] (center)+ (0, -0.1) arc (270:450:0.1);
	\draw[black] (center)+ (0, -0.1) arc (270:450:0.1);
	\draw[black] (center)+ (0, 0.1) arc (90:270:0.1);
	\coordinate (center) at (2,-5);
	\fill[white] (center) + (0, 0.1) arc (90:270:0.1);
	\fill[white] (center)+ (0, -0.1) arc (270:450:0.1);
	\draw[black] (center)+ (0, -0.1) arc (270:450:0.1);
	\draw[black] (center)+ (0, 0.1) arc (90:270:0.1);
	\coordinate (center) at (2,-6);
	\fill[white] (center) + (0, 0.1) arc (90:270:0.1);
	\fill[white] (center)+ (0, -0.1) arc (270:450:0.1);
	\draw[black] (center)+ (0, -0.1) arc (270:450:0.1);
	\draw[black] (center)+ (0, 0.1) arc (90:270:0.1);
	\coordinate (center) at (3,-2);
	\fill[white] (center) + (0, 0.1) arc (90:270:0.1);
	\fill[white] (center)+ (0, -0.1) arc (270:450:0.1);
	\draw[black] (center)+ (0, -0.1) arc (270:450:0.1);
	\draw[black] (center)+ (0, 0.1) arc (90:270:0.1);
	\coordinate (center) at (3,-3);
	\fill[white] (center) + (0, 0.1) arc (90:270:0.1);
	\fill[white] (center)+ (0, -0.1) arc (270:450:0.1);
	\draw[black] (center)+ (0, -0.1) arc (270:450:0.1);
	\draw[black] (center)+ (0, 0.1) arc (90:270:0.1);
	\coordinate (center) at (3,-4);
	\fill[white] (center) + (0, 0.1) arc (90:270:0.1);
	\fill[white] (center)+ (0, -0.1) arc (270:450:0.1);
	\draw[black] (center)+ (0, -0.1) arc (270:450:0.1);
	\draw[black] (center)+ (0, 0.1) arc (90:270:0.1);
	\coordinate (center) at (3,-5);
	\fill[white] (center) + (0, 0.1) arc (90:270:0.1);
	\fill[white] (center)+ (0, -0.1) arc (270:450:0.1);
	\draw[black] (center)+ (0, -0.1) arc (270:450:0.1);
	\draw[black] (center)+ (0, 0.1) arc (90:270:0.1);
	\coordinate (center) at (3,-6);
	\fill[white] (center) + (0, 0.1) arc (90:270:0.1);
	\fill[white] (center)+ (0, -0.1) arc (270:450:0.1);
	\draw[black] (center)+ (0, -0.1) arc (270:450:0.1);
	\draw[black] (center)+ (0, 0.1) arc (90:270:0.1);
	\coordinate (center) at (4,-2);
	\fill[white] (center) + (0, 0.1) arc (90:270:0.1);
	\fill[white] (center)+ (0, -0.1) arc (270:450:0.1);
	\draw[black] (center)+ (0, -0.1) arc (270:450:0.1);
	\draw[black] (center)+ (0, 0.1) arc (90:270:0.1);
	\coordinate (center) at (4,-3);
	\fill[white] (center) + (0, 0.1) arc (90:270:0.1);
	\fill[white] (center)+ (0, -0.1) arc (270:450:0.1);
	\draw[black] (center)+ (0, -0.1) arc (270:450:0.1);
	\draw[black] (center)+ (0, 0.1) arc (90:270:0.1);
	\coordinate (center) at (4,-4);
	\fill[white] (center) + (0, 0.1) arc (90:270:0.1);
	\fill[white] (center)+ (0, -0.1) arc (270:450:0.1);
	\draw[black] (center)+ (0, -0.1) arc (270:450:0.1);
	\draw[black] (center)+ (0, 0.1) arc (90:270:0.1);
	\coordinate (center) at (4,-5);
	\fill[white] (center) + (0, 0.1) arc (90:270:0.1);
	\fill[white] (center)+ (0, -0.1) arc (270:450:0.1);
	\draw[black] (center)+ (0, -0.1) arc (270:450:0.1);
	\draw[black] (center)+ (0, 0.1) arc (90:270:0.1);
	
	\coordinate (center) at (4,-6);
	\fill[white] (center) + (0, 0.1) arc (90:270:0.1);
	\fill[white] (center)+ (0, -0.1) arc (270:450:0.1);
	\draw[black] (center)+ (0, -0.1) arc (270:450:0.1);
	\draw[black] (center)+ (0, 0.1) arc (90:270:0.1);
	\coordinate (center) at (5,-2);
	\fill[white] (center) + (0, 0.1) arc (90:270:0.1);
	\fill[white] (center)+ (0, -0.1) arc (270:450:0.1);
	\draw[black] (center)+ (0, -0.1) arc (270:450:0.1);
	\draw[black] (center)+ (0, 0.1) arc (90:270:0.1);
	\coordinate (center) at (5,-3);
	\fill[white] (center) + (0, 0.1) arc (90:270:0.1);
	\fill[white] (center)+ (0, -0.1) arc (270:450:0.1);
	\draw[black] (center)+ (0, -0.1) arc (270:450:0.1);
	\draw[black] (center)+ (0, 0.1) arc (90:270:0.1);
	\coordinate (center) at (5,-4);
	\fill[white] (center) + (0, 0.1) arc (90:270:0.1);
	\fill[white] (center)+ (0, -0.1) arc (270:450:0.1);
	\draw[black] (center)+ (0, -0.1) arc (270:450:0.1);
	\draw[black] (center)+ (0, 0.1) arc (90:270:0.1);
	\coordinate (center) at (5,-5);
	\fill[white] (center) + (0, 0.1) arc (90:270:0.1);
	\fill[white] (center)+ (0, -0.1) arc (270:450:0.1);
	\draw[black] (center)+ (0, -0.1) arc (270:450:0.1);
	\draw[black] (center)+ (0, 0.1) arc (90:270:0.1);
	
	\coordinate (center) at (5,-6);
	\fill[white] (center) + (0, 0.1) arc (90:270:0.1);
	\fill[white] (center)+ (0, -0.1) arc (270:450:0.1);
	\draw[black] (center)+ (0, -0.1) arc (270:450:0.1);
	\draw[black] (center)+ (0, 0.1) arc (90:270:0.1);

	\draw[->,black] (-1,0) -- (-1,6) node[anchor=north west]{};
	\draw[->,black] (-1,0) -- (-7,0) node[anchor=south east]{};
	\node[black] at (-7.5,0) {$m$};
	\node[black] at (-1,6.5) {$n$};
	\node[black] at (-4,6.5) {$(m,n,1+\frac{m-n}{2},-1-\frac{m+n}{2})$};
	\node[black] at (-7,6.5) {\textbf{B}};
	\draw[black] (-2,0) -- (-1,1);
	\draw[black] (-3,0) -- (-1,2);
	\draw[black] (-4,0) -- (-1,3);
	\draw[black] (-5,0) -- (-1,4);
	\draw[black] (-6,0) -- (-1,5);
	\draw[black] (-6.5,0.5) -- (-1.5,5.5);
	\draw[black] (-6.5,1.5) -- (-2.5,5.5);
	\draw[black] (-6.5,2.5) -- (-3.5,5.5);
	\draw[black] (-6.5,3.5) -- (-4.5,5.5);
	\draw[black] (-6.5,4.5) -- (-5.5,5.5);

	\draw[-latex,black] (-1,1) -- (-1.9,0.1);
	
	\draw[-latex,black] (-1,2) -- (-1.9,1.1);
	\draw[-latex,black] (-2,1) -- (-2.9,0.1);
	
	\draw[-latex,black] (-1,3) -- (-1.9,2.1);
	\draw[-latex,black] (-2,2) -- (-2.9,1.1);
	\draw[-latex,black] (-3,1) -- (-3.9,0.1);
	
	\draw[-latex,black] (-1,4) -- (-1.9,3.1);
	\draw[-latex,black] (-2,3) -- (-2.9,2.1);
	\draw[-latex,black] (-3,2) -- (-3.9,1.1);
	\draw[-latex,black] (-4,1) -- (-4.9,0.1);
	
	\draw[-latex,black] (-1,5) -- (-1.9,4.1);
	\draw[-latex,black] (-2,4) -- (-2.9,3.1);
	\draw[-latex,black] (-3,3) -- (-3.9,2.1);
	\draw[-latex,black] (-4,2) -- (-4.9,1.1);
	\draw[-latex,black] (-5,1) -- (-5.9,0.1);
	
	\draw[-latex,black] (-1.5,5.5) -- (-1.9,5.1);
	\draw[-latex,black] (-2,5) -- (-2.9,4.1);
	\draw[-latex,black] (-3,4) -- (-3.9,3.1);
	\draw[-latex,black] (-4,3) -- (-4.9,2.1);
	\draw[-latex,black] (-5,2) -- (-5.9,1.1);
	\draw[-latex,black] (-6,1) -- (-6.5,0.5);
	
	\draw[-latex,black] (-2.5,5.5) -- (-2.9,5.1);
	\draw[-latex,black] (-3,5) -- (-3.9,4.1);
	\draw[-latex,black] (-4,4) -- (-4.9,3.1);
	\draw[-latex,black] (-5,3) -- (-5.9,2.1);
	\draw[-latex,black] (-6,2) -- (-6.5,1.5);
	
	\draw[-latex,black] (-3.5,5.5) -- (-3.9,5.1);
	\draw[-latex,black] (-4,5) -- (-4.9,4.1);
	\draw[-latex,black] (-5,4) -- (-5.9,3.1);
	\draw[-latex,black] (-6,3) -- (-6.5,2.5);
	
	\draw[-latex,black] (-4.5,5.5) -- (-4.9,5.1);
	\draw[-latex,black] (-5,5) -- (-5.9,4.1);
	\draw[-latex,black] (-6,4) -- (-6.5,3.5);
	
	\draw[-latex,black] (-5.5,5.5) -- (-5.9,5.1);
	\draw[-latex,black] (-6,5) -- (-6.5,4.5);
	%
	
	\coordinate (center) at (-1,1);
	\fill[white] (center) + (0, 0.1) arc (90:270:0.1);
	\fill[white] (center)+ (0, -0.1) arc (270:450:0.1);
	\draw[black] (center)+ (0, -0.1) arc (270:450:0.1);
	\draw[black] (center)+ (0, 0.1) arc (90:270:0.1);
	\coordinate (center) at (-1,2);
	\fill[white] (center) + (0, 0.1) arc (90:270:0.1);
	\fill[white] (center)+ (0, -0.1) arc (270:450:0.1);
	\draw[black] (center)+ (0, -0.1) arc (270:450:0.1);
	\draw[black] (center)+ (0, 0.1) arc (90:270:0.1);
	\coordinate (center) at (-1,3);
	\fill[white] (center) + (0, 0.1) arc (90:270:0.1);
	\fill[white] (center)+ (0, -0.1) arc (270:450:0.1);
	\draw[black] (center)+ (0, -0.1) arc (270:450:0.1);
	\draw[black] (center)+ (0, 0.1) arc (90:270:0.1);
	\coordinate (center) at (-1,4);
	\fill[white] (center) + (0, 0.1) arc (90:270:0.1);
	\fill[white] (center)+ (0, -0.1) arc (270:450:0.1);
	\draw[black] (center)+ (0, -0.1) arc (270:450:0.1);
	\draw[black] (center)+ (0, 0.1) arc (90:270:0.1);
	\coordinate (center) at (-1,5);
	\fill[white] (center) + (0, 0.1) arc (90:270:0.1);
	\fill[white] (center)+ (0, -0.1) arc (270:450:0.1);
	\draw[black] (center)+ (0, -0.1) arc (270:450:0.1);
	\draw[black] (center)+ (0, 0.1) arc (90:270:0.1);
	\coordinate (center) at (-2,0);
	\fill[white] (center) + (0, 0.1) arc (90:270:0.1);
	\fill[white] (center)+ (0, -0.1) arc (270:450:0.1);
	\draw[black] (center)+ (0, -0.1) arc (270:450:0.1);
	\draw[black] (center)+ (0, 0.1) arc (90:270:0.1);
	\coordinate (center) at (-3,0);
	\fill[white] (center) + (0, 0.1) arc (90:270:0.1);
	\fill[white] (center)+ (0, -0.1) arc (270:450:0.1);
	\draw[black] (center)+ (0, -0.1) arc (270:450:0.1);
	\draw[black] (center)+ (0, 0.1) arc (90:270:0.1);
	\coordinate (center) at (-4,0);
	\fill[white] (center) + (0, 0.1) arc (90:270:0.1);
	\fill[white] (center)+ (0, -0.1) arc (270:450:0.1);
	\draw[black] (center)+ (0, -0.1) arc (270:450:0.1);
	\draw[black] (center)+ (0, 0.1) arc (90:270:0.1);
	\coordinate (center) at (-5,0);
	\fill[white] (center) + (0, 0.1) arc (90:270:0.1);
	\fill[white] (center)+ (0, -0.1) arc (270:450:0.1);
	\draw[black] (center)+ (0, -0.1) arc (270:450:0.1);
	\draw[black] (center)+ (0, 0.1) arc (90:270:0.1);
	\coordinate (center) at (-6,0);
	\fill[white] (center) + (0, 0.1) arc (90:270:0.1);
	\fill[white] (center)+ (0, -0.1) arc (270:450:0.1);
	\draw[black] (center)+ (0, -0.1) arc (270:450:0.1);
	\draw[black] (center)+ (0, 0.1) arc (90:270:0.1);

	\coordinate (center) at (-2,1);
	\fill[white] (center) + (0, 0.1) arc (90:270:0.1);
	\fill[white] (center)+ (0, -0.1) arc (270:450:0.1);
	\draw[black] (center)+ (0, -0.1) arc (270:450:0.1);
	\draw[black] (center)+ (0, 0.1) arc (90:270:0.1);
	\coordinate (center) at (-2,2);
	\fill[white] (center) + (0, 0.1) arc (90:270:0.1);
	\fill[white] (center)+ (0, -0.1) arc (270:450:0.1);
	\draw[black] (center)+ (0, -0.1) arc (270:450:0.1);
	\draw[black] (center)+ (0, 0.1) arc (90:270:0.1);
	\coordinate (center) at (-2,3);
	\fill[white] (center) + (0, 0.1) arc (90:270:0.1);
	\fill[white] (center)+ (0, -0.1) arc (270:450:0.1);
	\draw[black] (center)+ (0, -0.1) arc (270:450:0.1);
	\draw[black] (center)+ (0, 0.1) arc (90:270:0.1);
	\coordinate (center) at (-2,4);
	\fill[white] (center) + (0, 0.1) arc (90:270:0.1);
	\fill[white] (center)+ (0, -0.1) arc (270:450:0.1);
	\draw[black] (center)+ (0, -0.1) arc (270:450:0.1);
	\draw[black] (center)+ (0, 0.1) arc (90:270:0.1);
	\coordinate (center) at (-2,5);
	\fill[white] (center) + (0, 0.1) arc (90:270:0.1);
	\fill[white] (center)+ (0, -0.1) arc (270:450:0.1);
	\draw[black] (center)+ (0, -0.1) arc (270:450:0.1);
	\draw[black] (center)+ (0, 0.1) arc (90:270:0.1);
	\coordinate (center) at (-3,1);
	\fill[white] (center) + (0, 0.1) arc (90:270:0.1);
	\fill[white] (center)+ (0, -0.1) arc (270:450:0.1);
	\draw[black] (center)+ (0, -0.1) arc (270:450:0.1);
	\draw[black] (center)+ (0, 0.1) arc (90:270:0.1);
	\coordinate (center) at (-3,2);
	\fill[white] (center) + (0, 0.1) arc (90:270:0.1);
	\fill[white] (center)+ (0, -0.1) arc (270:450:0.1);
	\draw[black] (center)+ (0, -0.1) arc (270:450:0.1);
	\draw[black] (center)+ (0, 0.1) arc (90:270:0.1);
	\coordinate (center) at (-3,3);
	\fill[white] (center) + (0, 0.1) arc (90:270:0.1);
	\fill[white] (center)+ (0, -0.1) arc (270:450:0.1);
	\draw[black] (center)+ (0, -0.1) arc (270:450:0.1);
	\draw[black] (center)+ (0, 0.1) arc (90:270:0.1);
	\coordinate (center) at (-3,4);
	\fill[white] (center) + (0, 0.1) arc (90:270:0.1);
	\fill[white] (center)+ (0, -0.1) arc (270:450:0.1);
	\draw[black] (center)+ (0, -0.1) arc (270:450:0.1);
	\draw[black] (center)+ (0, 0.1) arc (90:270:0.1);
	\coordinate (center) at (-3,5);
	\fill[white] (center) + (0, 0.1) arc (90:270:0.1);
	\fill[white] (center)+ (0, -0.1) arc (270:450:0.1);
	\draw[black] (center)+ (0, -0.1) arc (270:450:0.1);
	\draw[black] (center)+ (0, 0.1) arc (90:270:0.1);
	\coordinate (center) at (-4,1);
	\fill[white] (center) + (0, 0.1) arc (90:270:0.1);
	\fill[white] (center)+ (0, -0.1) arc (270:450:0.1);
	\draw[black] (center)+ (0, -0.1) arc (270:450:0.1);
	\draw[black] (center)+ (0, 0.1) arc (90:270:0.1);
	\coordinate (center) at (-4,2);
	\fill[white] (center) + (0, 0.1) arc (90:270:0.1);
	\fill[white] (center)+ (0, -0.1) arc (270:450:0.1);
	\draw[black] (center)+ (0, -0.1) arc (270:450:0.1);
	\draw[black] (center)+ (0, 0.1) arc (90:270:0.1);
	\coordinate (center) at (-4,3);
	\fill[white] (center) + (0, 0.1) arc (90:270:0.1);
	\fill[white] (center)+ (0, -0.1) arc (270:450:0.1);
	\draw[black] (center)+ (0, -0.1) arc (270:450:0.1);
	\draw[black] (center)+ (0, 0.1) arc (90:270:0.1);
	\coordinate (center) at (-4,4);
	\fill[white] (center) + (0, 0.1) arc (90:270:0.1);
	\fill[white] (center)+ (0, -0.1) arc (270:450:0.1);
	\draw[black] (center)+ (0, -0.1) arc (270:450:0.1);
	\draw[black] (center)+ (0, 0.1) arc (90:270:0.1);
	\coordinate (center) at (-4,5);
	\fill[white] (center) + (0, 0.1) arc (90:270:0.1);
	\fill[white] (center)+ (0, -0.1) arc (270:450:0.1);
	\draw[black] (center)+ (0, -0.1) arc (270:450:0.1);
	\draw[black] (center)+ (0, 0.1) arc (90:270:0.1);
	\coordinate (center) at (-5,1);
	\fill[white] (center) + (0, 0.1) arc (90:270:0.1);
	\fill[white] (center)+ (0, -0.1) arc (270:450:0.1);
	\draw[black] (center)+ (0, -0.1) arc (270:450:0.1);
	\draw[black] (center)+ (0, 0.1) arc (90:270:0.1);
	\coordinate (center) at (-5,2);
	\fill[white] (center) + (0, 0.1) arc (90:270:0.1);
	\fill[white] (center)+ (0, -0.1) arc (270:450:0.1);
	\draw[black] (center)+ (0, -0.1) arc (270:450:0.1);
	\draw[black] (center)+ (0, 0.1) arc (90:270:0.1);
	\coordinate (center) at (-5,3);
	\fill[white] (center) + (0, 0.1) arc (90:270:0.1);
	\fill[white] (center)+ (0, -0.1) arc (270:450:0.1);
	\draw[black] (center)+ (0, -0.1) arc (270:450:0.1);
	\draw[black] (center)+ (0, 0.1) arc (90:270:0.1);
	\coordinate (center) at (-5,4);
	\fill[white] (center) + (0, 0.1) arc (90:270:0.1);
	\fill[white] (center)+ (0, -0.1) arc (270:450:0.1);
	\draw[black] (center)+ (0, -0.1) arc (270:450:0.1);
	\draw[black] (center)+ (0, 0.1) arc (90:270:0.1);
	\coordinate (center) at (-5,5);
	\fill[white] (center) + (0, 0.1) arc (90:270:0.1);
	\fill[white] (center)+ (0, -0.1) arc (270:450:0.1);
	\draw[black] (center)+ (0, -0.1) arc (270:450:0.1);
	\draw[black] (center)+ (0, 0.1) arc (90:270:0.1);
	\coordinate (center) at (-6,1);
	\fill[white] (center) + (0, 0.1) arc (90:270:0.1);
	\fill[white] (center)+ (0, -0.1) arc (270:450:0.1);
	\draw[black] (center)+ (0, -0.1) arc (270:450:0.1);
	\draw[black] (center)+ (0, 0.1) arc (90:270:0.1);
	\coordinate (center) at (-6,2);
	\fill[white] (center) + (0, 0.1) arc (90:270:0.1);
	\fill[white] (center)+ (0, -0.1) arc (270:450:0.1);
	\draw[black] (center)+ (0, -0.1) arc (270:450:0.1);
	\draw[black] (center)+ (0, 0.1) arc (90:270:0.1);
	\coordinate (center) at (-6,3);
	\fill[white] (center) + (0, 0.1) arc (90:270:0.1);
	\fill[white] (center)+ (0, -0.1) arc (270:450:0.1);
	\draw[black] (center)+ (0, -0.1) arc (270:450:0.1);
	\draw[black] (center)+ (0, 0.1) arc (90:270:0.1);
	\coordinate (center) at (-6,4);
	\fill[white] (center) + (0, 0.1) arc (90:270:0.1);
	\fill[white] (center)+ (0, -0.1) arc (270:450:0.1);
	\draw[black] (center)+ (0, -0.1) arc (270:450:0.1);
	\draw[black] (center)+ (0, 0.1) arc (90:270:0.1);
	\coordinate (center) at (-6,5);
	\fill[white] (center) + (0, 0.1) arc (90:270:0.1);
	\fill[white] (center)+ (0, -0.1) arc (270:450:0.1);
	\draw[black] (center)+ (0, -0.1) arc (270:450:0.1);
	\draw[black] (center)+ (0, 0.1) arc (90:270:0.1);

	\coordinate (center) at (0,-1);
	\fill[white] (center) + (0, 0.1) arc (90:270:0.1);
	\fill[white] (center)+ (0, -0.1) arc (270:450:0.1);
	\draw[black] (center)+ (0, -0.1) arc (270:450:0.1);
	\draw[black] (center)+ (0, 0.1) arc (90:270:0.1);
	\coordinate (center) at (-1,0);
	\fill[white] (center) + (0, 0.1) arc (90:270:0.1);
	\fill[white] (center)+ (0, -0.1) arc (270:450:0.1);
	\draw[black] (center)+ (0, -0.1) arc (270:450:0.1);
	\draw[black] (center)+ (0, 0.1) arc (90:270:0.1);
	
	\end{tikzpicture}
\end{figure}
From Theorems \ref{sing1}, \ref{sing2} and \ref{sing3} it follows that the module $M(0,0,2,0)$ does not contain non trivial singular vectors, hence it is irreducible due to Theorem \ref{keythmsingular}. This is also confirmed by the following result which can be skipped by the reader who is interested in the classification of singular vectors only.
\begin{prop}
     The module $M(0,0,2,0)$ is irreducible and it is isomorphic to the
coadjoint representation of $K(1,4)_{+}$ on the restricted dual, i.e.
$K(1,4)_{+}^{*}=\bigoplus_{j \in \Z}({K(1,4)_{+}}_{j})^{*}$.
\end{prop}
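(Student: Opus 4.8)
The plan is to prove the two assertions in turn: first that $M(0,0,2,0)$ is irreducible, and then to identify it explicitly with the coadjoint module via Frobenius reciprocity.

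For irreducibility I would appeal to Theorem \ref{keythmsingular}(3). The module $F(0,0,2,0)$ is one-dimensional, since $m=n=0$ forces the $\mathfrak{so}(4)$-action to be trivial; it is therefore irreducible, and it remains only to check that $M(0,0,2,0)$ has no nontrivial singular vectors. The space $\Sing(M(0,0,2,0))$ is a $\g_0$-submodule, because $\g_0$ normalizes $\g_{>0}$, and each of its homogeneous components is finite-dimensional. A nonzero component of positive degree would then contain an $\mathfrak{so}(4)$-highest weight vector, which we may take to be a $t$- and $C$-weight vector; this would be a highest weight singular vector in the sense of \textbf{S0}--\textbf{S3}. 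Thus it suffices to rule out highest weight singular vectors, and this is immediate from the classification: scanning the weight lists of Theorems \ref{sing1}, \ref{sing2}, \ref{sing3} (and using Theorem \ref{greaterthan3} to exclude higher degrees), the weight $(0,0,2,0)$ never occurs, since in each family one has $\mu_C\neq 0$, or $\mu_t>2$, or a constraint on $m,n$ incompatible with $m=n=0$. Hence $M(0,0,2,0)$ is irreducible.

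For the identification, I would first endow $K(1,4)_+^*=\bigoplus_j({K(1,4)_{+}}_{j})^{*}$ with the coadjoint action of $K(1,4)_+$ and turn it into a $\g$-module by letting $C$ act as $0$; this is legitimate since $C$ is central in $\g$ and the $C$-terms of the central extension act trivially. It is a continuous module because, for $\phi\in({K(1,4)_{+}}_{j})^{*}$ and $a\in\g_n$, the element $a.\phi$ lies in $({K(1,4)_{+}}_{j-n})^{*}$, which vanishes for $n\gg 0$. Since $\g_{>0}$ lowers the index $j$ and ${K(1,4)_{+}}_{j}=0$ for $j<-2$, the bottom component $({K(1,4)_{+}}_{-2})^{*}$ is annihilated by $\g_{>0}$. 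Using \eqref{bracketliebis} one verifies $[\xi_{ij},1]=0$ and $[t,1]=-2\cdot 1$, so on this one-dimensional space $\mathfrak{so}(4)$ acts trivially, $t$ acts as $2$ and $C$ as $0$; that is, $({K(1,4)_{+}}_{-2})^{*}\cong F(0,0,2,0)$ as $\g_{\geq 0}$-modules.

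By the universal property of the generalized Verma module, this inclusion of $\g_{\geq 0}$-modules extends uniquely to a $\g$-module homomorphism $\Phi\colon M(0,0,2,0)\to K(1,4)_+^*$ carrying the generator to a generator of $({K(1,4)_{+}}_{-2})^{*}$; in particular $\Phi\neq 0$, so by the irreducibility just proved $\Phi$ is injective. Since $\Phi$ is $\g$-equivariant it preserves $t$-eigenspaces, and $t$ acts as the scalar $2-k$ on the degree-$k$ component $U(\g_{<0})_k\otimes F$ and as $-j$ on $({K(1,4)_{+}}_{j})^{*}$; thus $\Phi$ maps the degree-$k$ component into $({K(1,4)_{+}}_{k-2})^{*}$. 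Because $U(\g_{<0})\cong\C[\Theta]\otimes\inlinewedge(4)$ and $K(1,4)_+\cong\C[t]\otimes\inlinewedge(4)$ have, up to the degree shift by $2$, identical graded dimensions (Poincar\'e series $(1+q)^4/(1-q^2)$), each component of $\Phi$ is an injection of finite-dimensional spaces of equal dimension, hence an isomorphism. Therefore $\Phi$ is an isomorphism of $\g$-modules onto $K(1,4)_+^*$.

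The step I expect to be most delicate is the identification of the $\g_{\geq 0}$-module structure of the bottom piece $({K(1,4)_{+}}_{-2})^{*}$: fixing the sign conventions of the super-coadjoint action, checking that $\g_{>0}$ annihilates it, and aligning the two $t$-gradings so that the dimension count applies component by component. Once this is in place, everything else is formal, the injectivity coming from irreducibility and the surjectivity from the matching of graded dimensions.
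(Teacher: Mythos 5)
Your proof is correct, but it inverts the logical structure of the paper's argument, and the two differ genuinely in what they take as input. You prove irreducibility of $M(0,0,2,0)$ first, by combining Theorem \ref{keythmsingular}(3) with the classification of singular vectors (Theorems \ref{sing1}, \ref{sing2}, \ref{sing3} and \ref{greaterthan3}) together with the standard reduction from arbitrary singular vectors to highest weight ones; this is exactly the observation the paper makes in the paragraph preceding the proposition. You then get injectivity of your map $\Phi$ from irreducibility of the \emph{source}, and surjectivity from the graded dimension count. The paper runs the argument in the opposite direction: it proves irreducibility of the coadjoint module $K(1,4)_+^*$ directly and self-containedly, by showing that every dual basis vector $(t^s\xi_{i_1\cdots i_p})^*$ is obtained from $\Theta^*$ by iterated action of $\Theta$ and the $\xi_i$ (their \eqref{irre}), and that any nonzero $f\in K(1,4)_+^*$ is sent to a nonzero multiple of $\Theta^*$ by a suitable $t^{m_0+1}\xi_{I_0}$ (their \eqref{irredue}); then the morphism $\varphi$ sending the generator to $\Theta^*$ is surjective because the \emph{target} is irreducible, and injective by the same degree-by-degree dimension count you use, so irreducibility of $M(0,0,2,0)$ comes out as a corollary of the isomorphism. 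The trade-off is clear: the paper's route is independent of the singular-vector classification, which is precisely why the proposition can serve as a confirmation of those theorems, whereas your route is shorter given the classification but cannot play that role, since it presupposes Theorems \ref{sing1}--\ref{greaterthan3}. Both constructions of the morphism agree (your Frobenius-reciprocity map is the paper's $\varphi$), and the points you flag as delicate --- that $\g_{>0}$ annihilates $({K(1,4)_{+}}_{-2})^{*}$, that its $\g_0$-weight is $(0,0,2,0)$ with $C$ acting as $0$ through the quotient $\g\to K(1,4)_+$, and that the $t$-eigenvalues align the two gradings so the count $\dim U(\g_{<0})_k=\dim ({K(1,4)_{+}}_{k-2})^{*}$ applies componentwise --- are indeed the facts that make the dimension count legitimate in either direction.
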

\begin{proof}
Observe that we consider $M(0,0,2,0)$ as a $K(1,4)_+$-module since the
action of $C$ is trivial.

     We first  show that $K(1,4)_{+}^{*}$ is an irreducible
$K(1,4)_{+}-$module.\\
     We recall that the action on the restricted dual is given, for
every $x,y \in K(1,4)_{+}$ and $f \in K(1,4)_{+}^{*}$, by:
     \begin{align}\label{resdu}
     (x.f)(y)=-(-1)^{p(x)p(f)}f([x,y]),
     \end{align}
     where $p(x)$ (resp. $p(f)$) denotes the parity of $x$ (resp. $f$)
and the bracket is given by \eqref{bracketlie}.\\
     Since we are considering the restricted dual, a basis of
$K(1,4)_{+}^{*}$ is given by the dual basis elements $(t^{n}
\xi_{I})^{*}$ with $n\geq 0$ and $I\in \mathcal I_<$. We will also
denote $\Theta^*=-2\xi_{\emptyset}^*$, so that $\Theta^*(\Theta)=1$.

     We first show by induction on $s+p$ that
     \begin{equation} \label{irre}
     (\underbrace{\Theta.\cdots(\Theta}_{s-times}.(\xi_{i_{1}}.( \cdots
(\xi_{i_{p}}.\Theta^{*}))))\cdots)=\gamma_{s,p} (t^s\xi_{i_1\cdots i_p})^*,
     \end{equation}
     for some $\gamma_{s,p}\in \C\setminus \{0\}$. If $s+p=0$ the result
is trivial, so we assume $s+p>0$.\\
     If $s>0$, by induction hypothesis, for every $n\geq 0$ and $J\in
\mathcal I_<$ we have
     \begin{align*}
     (\underbrace{\Theta.\cdots(\Theta}_{s-times}.(\xi_{i_{1}}.( \cdots
(\xi_{i_{p}}.\Theta^{*}))))(t^n\xi_J)&=-\gamma_{s-1,p}
(t^{s-1}\xi_{i_{1}\cdots i_{p}})^{*}([\Theta,t^n \xi_J])\\
     &=
     \begin{cases}\gamma_{s-1,p} s&\text{if $n=s$ and $J=i_1\cdots i_p$},\\
     0 & \text{otherwise.}
     \end{cases}
     \end{align*}
     and the claim follows in this case.\\
     If $s=0$ (and $p>0$)  for every $n\geq 0$ and $J\in \mathcal I_<$
we have

     \begin{align*}
     (\xi_{i_{1}}. \cdots
(\xi_{i_{p}}.(\Theta^{*})))(t^n\xi_J)&=\gamma_{0,p-1}(\xi_{i_{1}}.
(\xi_{i_{2} \cdots i_{p}})^{*})(t^n\xi_J)=(-1)^{p}\gamma_{0,p-1}
(\xi_{i_{2}\cdots i_{p}})^{*}([\xi_{i_{1}},t^n\xi_J])\\
     &=
     \begin{cases}
     (-1)^{p+1}\gamma_{0,p-1} &\text{if $n=0$ and $J=i_{1}\cdots i_{p}$},\\
     0 &\text{otherwise.}\\
     \end{cases}
     \end{align*}
     and the proof of \eqref{irre} is complete.

     Now we need the following observation: let $m,s\geq 0$ and $I,K\in
\mathcal I_<$ be such that $\deg(t^m\xi_I)\geq \deg(t^s \xi_K)$, i.e.
$2m+|I|-2\geq 2s+|K|-2$. Then
     \begin{equation}\label{irredue}
     t^{m+1}\xi_I.(t^s\xi_K)^*=\begin{cases} \beta_{m,I}\Theta^*
&\text{if $K=I$ and $s=m$}\\0&\text{otherwise.}\end{cases},
     \end{equation}
     for suitable $\beta_{m,I}\in \C\setminus \{0\}$. By \eqref{irredue} we deduce that if $f\in K(1,4)_+$ with $f=\sum
\alpha_{I,m}(t^m\xi_I)^*\neq 0$ and we choose the pair $I_0, m_0$ among
all pairs $(I,m)$ with $\alpha_{I,m}\neq 0$ such that $2m+|I|-2$ is
maximum, then
     $t^{m_0+1}\xi_{I_0}.f$ is a nonzero scalar multiple of $\Theta^*$.
 From this observation and \eqref{irre} we deduce that $K(1,4)_+^*$ is
irreducible.

     Now consider $M(0,0,2,0)=\Ind(F)$, where $F=\langle v\rangle$ is
the 1-dimensional $\g_0$-module of highest weight $(0,0,2,0)$. Since
$\Theta^*$ is a highest weight singular vector in $K(1,4)^*_+$ of weight
$(0,0,2,0)$ we deduce that there exists a (unique) morphism
     \[
     \varphi:M(0,0,2,0)\rightarrow K(1,4)^*_+
     \]
     such that $\varphi(v)=\Theta^*$.
     The morphism $\varphi$ is surjective by the irreducibility of
$K(1,4)^*_+$. The morphism $\varphi$ is also injective since it
preserves the degree, and homogeneous components of the same degree of
$M(0,0,2,0)$ and $K(1,4)^*_+$ have also the same dimension.
\end{proof}

In order to prove Theorems \ref{sing1}, \ref{sing2} and \ref{sing3}, we need some lemmas.		
\begin{rem}	
\label{appoggiotecnicos1s2s3}
We point out that, by Remark \ref{lemmatecnicos1s2s3}, a vector $\vec{m} \in \Ind (F)$ is a highest weight singular vector if and only if it satisfies \textbf{S0}--\textbf{S3}. Since $T$, defined as in Proposition \ref{actiondual}, is an isomorphism and $\vec{m}=T^{-1}T(\vec{m})$, the fact that $\vec{m} \in \Ind (F)$ satisfies \textbf{S0}--\textbf{S3} is equivalent to impose conditions \textbf{S0}--\textbf{S3} for $(T \circ f_{\lambda} \circ T^{-1})T(\vec{m})$, using the expression given by Proposition \ref{actiondual}.\\
Therefore in the following lemmas we will consider a vector $T(\vec{m}) \in \Ind (F)$ and we will impose that the expression for $(T \circ f_{\lambda} \circ T^{-1})T(\vec{m})=T( f_{\lambda} \vec{m})$, given by Proposition \ref{actiondual}, satisfies conditions \textbf{S0}--\textbf{S3}. We will have that $\vec{m}$ is a highest weight singular vector.
\end{rem}	
Motivated by Remark \ref{appoggiotecnicos1s2s3}, we look for a singular vector $\vec{m}$ such that
\begin{align}
\label{vettsingeta}
T(\vec{m})= \sum ^{N}_{k=0} \sum_{L\in \mathcal I_<} \Theta^{k} \eta_{L} \otimes v_{L,k},
\end{align}
with $v_{L,k} \in F$ for all $k$.
For all $k$, we will denote $v_{*,k}=v_{1234,k}$.  

In order to make clearer how the $\lambda-$action of Proposition \ref{actiondual} works for a vector as in \eqref{vettsingeta}, let us see the following example.
\begin{example}
	Let $T(\vec{m} )=  \Theta^{2} \eta_{13} \otimes v_{13,2}+  \eta_{2}  \otimes v_{2,0}$. Using Proposition \ref{actiondual} and Lemma \ref{actiontheta}, we have:
	\begin{align*}
	&T({\xi_2}_{\lambda}\vec{m})=\\
	&=-(\lambda+\Theta)^{2} \big\{-\Theta (\xi_{2} \star \eta_{13}) \otimes  v_{13,2} + \sum^{4}_{i=1}(\partial_{i}\xi_{2}\star \partial_{i}\eta_{13}) \otimes v_{13,2} \\
	& \quad +\lambda \big[ (\xi_{2} \star \eta_{13})   \otimes t.v_{13,2}+\sum^{4}_{i=1} \partial_{i}(\xi_{2\,i} \star \eta_{13} ) \otimes v_{13,2}- \sum _{i \neq j} (\partial_{i} \xi_{2j} \star \eta_{13})  \otimes \xi_{j,i}. v_{13,2}  \big] \\
	&\quad + \lambda^{2} \big[- \sum _{i < j} (\xi_{2\,ij} \star  \eta_{13} ) \otimes   \xi_{j,i}. v_{13,2}- \varepsilon_{2}  \, (\xi_{(2)^{c}} \star \eta_{13} )  \otimes Cv_{13,2} \big]\big\}\\
	&\quad- \Theta (\xi_{2} \star \eta_{2}) \otimes  v_{2,0}+ \sum^{4}_{i=1}(\partial_{i}\xi_{2}\star \partial_{i}\eta_{2}) \otimes  v_{2,0}+\lambda \big[ (\xi_{2} \star \eta_{2} ) \otimes t. v_{2,0}+\sum^{4}_{i=1} \partial_{i}(\xi_{2\,i} \star \eta_{2} ) \otimes  v_{2,0}\\
	&\quad - \sum _{i \neq j} (\partial_{i}\xi_{2j} \star \eta_{2}) \otimes \xi_{j,i}.  v_{2,0} \big] + \lambda^{2} \big[- \sum _{i < j} (\xi_{2ij} \star \eta_{2} ) \otimes   \xi_{j,i} . v_{2,0} -  \varepsilon_{(2)}  \, (\xi_{(2)^{c}} \star \eta_{2})  \otimes C v_{2,0} \big] \\
	=&-(\lambda+\Theta)^{2} \big\{\Theta  \eta_{123} \otimes  v_{13,2} +  \lambda \big[ - \eta_{123}   \otimes t.v_{13,2}+\partial_{4}(\xi_{24} \star \eta_{13} ) \otimes v_{13,2}\\
	&-  (\partial_{2} \xi_{24} \star \eta_{13})  \otimes \xi_{4,2} .v_{13,2} \big]\big\} + 1\otimes  v_{2,0}+\lambda \big[- \sum _{j\neq 2}( \xi_{j}\star \eta_{2} )\otimes \xi_{j,2} . v_{2,0} \big] + \lambda^{2} \eta_{1234}  \otimes C v_{2,0}\\
	=&-(\lambda+\Theta)^{2} \big\{\Theta  \eta_{123} \otimes  v_{13,2} +  \lambda \big[ - \eta_{123}   \otimes t.v_{13,2}+    \eta_{123}  \otimes v_{13,2}+   \eta_{134} \otimes \xi_{2,4} .v_{13,2} \big]\big\}\\
	&\quad + 1\otimes  v_{2,0}+\lambda \big[ -\eta_{12} \otimes \xi_{1,2} . v_{2,0}+\eta_{23} \otimes \xi_{2,3} . v_{2,0}+\eta_{24} \otimes \xi_{24}.  v_{2,0}\big] + \lambda^{2} \eta_{1234}  \otimes C v_{2,0}.
	\end{align*}
\end{example}
\begin{lem}\label{411}
	Let $\vec{m} \in \Ind (F)$ be a singular vector, such that $T(\vec{m}) $ is written as in \eqref{vettsingeta}. The degree of $T(\vec{m})$ in $\Theta$ is at most 3.
\end{lem}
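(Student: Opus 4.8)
The plan is to work directly with $\vec m$ rather than with $T(\vec m)$. Since $\overline{\Theta^k\eta_I}=\Theta^k\overline{\eta_I}$, the map $T$ preserves the degree in $\Theta$, so it suffices to bound the $\Theta$-degree of a highest weight singular vector $\vec m$. Being a weight vector, $\vec m$ is homogeneous of some degree $d$, and I would write $\vec m=\sum_{k\ge n_0}\Theta^k u_k$ with $u_k\in\inlinewedge^{\,d-2k}(4)\otimes F$ and $u_{n_0}\neq 0$; thus each $u_k$ sits in the single exterior degree $d-2k\in\{0,\dots,4\}$. The goal becomes to show that the top index $N$ is small.

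The engine is the $\mathfrak{sl}_2$-triple $e=\tfrac12 t^2\in\g_2$, $h=t\in\g_0$, $f=\Theta\in\g_{-2}$: from \eqref{bracketliebis} and Proposition \ref{anniliK4} one checks $[h,e]=2e$, $[h,f]=-2f$, $[e,f]=h$. As $e=\tfrac12 t^2\in\g_{>0}$, condition \textbf{S1} gives $e\,\vec m=0$. Using $e\,(\Theta^k u_k)=\Theta^k E_0u_k+k\,\Theta^{k-1}(\tilde\mu_t+k+1)u_k$, where $E_0=\tfrac12 t^2$ acts fibrewise on $\inlinewedge(4)\otimes F$ lowering exterior degree by $2$ and $\tilde\mu_t$ is the $t$-weight of $\vec m$, and collecting the coefficient of $\Theta^m$ in $e\,\vec m=0$, I obtain the recursion $E_0u_m=-(m+1)(\tilde\mu_t+m+2)\,u_{m+1}$. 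Since $E_0$ lowers exterior degree by $2$ on $\inlinewedge(4)$ we have $E_0^{3}=0$, so iterating shows that the nonzero $u_k$ occupy at most three consecutive values of $k$; in particular $N\le n_0+2$.

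It remains to bound the lowest occurring degree $n_0$, which is the main obstacle, and I would do this by extracting the \emph{lowest} power of $\Theta$ from the remaining singular equations (where there is nothing below $\Theta^{n_0}$ to produce cancellations). Assume $n_0\ge 1$. Reading off the coefficient of $\Theta^{n_0-1}$ in \textbf{S2} applied to $t\xi_r$ gives $\xi_r\star u_{n_0}=0$ for all $r$, which forces $u_{n_0}=\eta_{1234}\otimes w_0\in\inlinewedge^4(4)\otimes F$. Doing the same with $t\xi_{ij}$ yields $\xi_{j,i}.w_0=0$ for all $i<j$, i.e. $w_0$ is $\mathfrak{so}(4)$-invariant; since $F$ is irreducible this is impossible unless $F$ is the trivial $\mathfrak{so}(4)$-module. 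In that trivial case $\vec m$ has $\mathfrak{so}(4)$-weight $(0,0)$, so by \textbf{S0} every $u_k$ is an $\mathfrak{so}(4)$-highest weight vector of weight $(0,0)$; as $\inlinewedge^2(4)$ contains no such vector we get $u_{n_0+1}=0$, and then \textbf{S3} for $|I|=3$ (coefficient of $\Theta^{n_0+1}$) collapses to $\partial_I u_{n_0}=0$, contradicting $u_{n_0}=\eta_{1234}\otimes w_0\neq 0$.

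Hence $n_0=0$, so $u_0\in\inlinewedge^{\,d}(4)\otimes F$ forces $d\le 4$, and therefore the $\Theta$-degree satisfies $N\le d/2\le 2$, which in particular proves the claim $N\le 3$. The crux of the whole argument is the second-to-last step: recognizing that the extreme (lowest-$\Theta$, highest-exterior-degree) component is pinned into $\inlinewedge^4\otimes F$ and then eliminated using irreducibility of $F$ together with the highest weight conditions \textbf{S0} and \textbf{S3}.
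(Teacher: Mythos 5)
Your argument is genuinely different from the paper's, and most of its steps check out, but as written it does not prove the stated lemma: it proves it only for \emph{highest weight} singular vectors, while Lemma \ref{411} concerns arbitrary singular vectors (and is applied later, e.g. in Lemmas \ref{lemmav3}, \ref{lemmav2} and \ref{lemmav1}, in that generality). Your opening sentence asserts that ``it suffices to bound the $\Theta$-degree of a highest weight singular vector'', but the reason you give (that $T$ preserves the $\Theta$-degree) only justifies replacing $T(\vec m)$ by $\vec m$; it says nothing about reducing to highest weight vectors. This is not cosmetic, because your proof makes essential use of \textbf{S0}: in the sub-case where $F$ is the trivial $\mathfrak{so}(4)$-module you need every $u_k$ to be killed by $e_1,e_2$ and to have weight $(0,0)$ in order to conclude $u_{n_0+1}=0$; without that, the coefficient of $\Theta^{n_0+1}$ in \textbf{S3} is $-\partial_I u_{n_0}+A_I u_{n_0+1}$ (where $A_I$ denotes the non-$\Theta$ part of the fibrewise action of $\xi_I$), and no contradiction follows. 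The reduction can be repaired: the singular vectors of fixed degree $d$ form a finite-dimensional $\g_0$-submodule of the degree-$d$ component of $\Ind(F)$, this module is completely reducible ($t$ and $C$ act as scalars there), and $\g_0$ preserves the $\Theta$-grading since $[\g_0,\Theta]\subseteq\C\Theta$, so a $\Theta$-degree bound valid for all highest weight singular vectors propagates to all singular vectors of that degree. But this argument must be supplied — it is exactly what entitles you to invoke \textbf{S0}. A second, smaller flaw: iterating your recursion gives $E_0^3u_m=\pm(m+1)(m+2)(m+3)(\tilde\mu_t+m+2)(\tilde\mu_t+m+3)(\tilde\mu_t+m+4)\,u_{m+3}$, and the factors involving $\tilde\mu_t$ may vanish, so $E_0^3=0$ does not by itself force $u_{m+3}=0$; fortunately this step is redundant, since your own observation that $u_k\in\inlinewedge^{d-2k}(4)\otimes F$ with $0\le d-2k\le 4$ already confines the nonzero $u_k$'s to at most three consecutive values of $k$.

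For comparison, the paper's proof is hypothesis-minimal: it applies only condition \textbf{S1} with $I=\emptyset$ (vanishing of $t^j.\vec m$ for $j\geq 2$) to $T(\vec m)$, expands via Proposition \ref{actiondual} and Lemma \ref{actiontheta}, and reads off the coefficients of $(\lambda+\Theta)^s\lambda^j$, obtaining $v_{L,k}=0$ for all $k\geq 4$ for an arbitrary singular vector, with no appeal to \textbf{S0}, \textbf{S2}, \textbf{S3}, or the irreducibility of $F$. Your route — the $\mathfrak{sl}_2$-triple $(\tfrac12 t^2,\,t,\,\Theta)$ together with the bigrading, then pinning the lowest $\Theta$-component into $\inlinewedge^4(4)\otimes F$ via \textbf{S2} and eliminating it by irreducibility — is more structural, and once the missing reduction is added it even yields the sharper bound $2$ in place of $3$ (consistent with the paper, which reaches $\Theta$-degree $\leq 1$ only after Lemmas \ref{lemmav3}--\ref{lemmav1}); the price is that it consumes the whole system \textbf{S0}--\textbf{S3} and the highest weight hypothesis, which is precisely where the gap sits.
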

\begin{proof}
	Using Proposition \ref{actiondual}, Lemma \ref{actiontheta} and Remark \ref{appoggiotecnicos1s2s3}, condition \textbf{S1}  for $I=\emptyset$ reduces to:
	\begin{align*}
	0=& \frac{d^{2}}{d \lambda^{2}}(T( 1 _{\lambda} \vec{m}))=\sum^{N}_{k=2} \sum_{L} k(k-1) (\lambda +\Theta)^{k-2} \Big[ -2 \Theta \eta_{L} \otimes v_{L,k}+ \lambda \left( \eta_{L} \otimes t. v_{L,k}-(4-|L|) \eta_{L} \otimes v_{L,k}\right) \\
	& + \lambda^{2}\sum_{i<j}(\xi_{ij} \star \eta_{L} ) \otimes \xi_{ij}. v_{L,k} - \lambda^{3}\bigchi_{|L|=0 } \eta_{1234} \otimes Cv_{L,k}\Big]\\
	&+2 \sum^{N}_{k=1} \sum_{L} k(\lambda +\Theta)^{k-1} \Big[ \eta_{L} \otimes t. v_{L,k} -(4-|L|) \eta_{L} \otimes v_{L,k}  \\
	&+2 \lambda  \sum_{i<j}(\xi_{ij} \star \eta_{L} ) \otimes \xi_{ij}. v_{L,k}  -3 \lambda^{2}\bigchi_{|L|=0 }   \eta_{1234} \otimes C v_{L,k} \Big]\\
	&+\sum^{N}_{k=0} \sum_{L}(\lambda +\Theta)^{k}\Big[2 \sum_{i<j}(\xi_{ij} \star \eta_{L} ) \otimes \xi_{ij}. v_{I,k}-6 \lambda \bigchi_{|L|=0 } \eta_{1234} \otimes Cv_{L,k} \Big].
	\end{align*}
	If we expand this expression with respect to the variables $\lambda$ and $\lambda+\Theta$, the coefficients of $(\lambda +\Theta)^{s} \lambda^{3}$, with $s \geq 0$, are:
	\[
	(s+2)(s+1)   \eta_{1234} \otimes Cv_{\emptyset,s+2} =0.
	\]
	and therefore 
	\begin{equation}\label{primopezzo}
	v_{\emptyset,k}=0
	\end{equation} for all $k\geq 2$. If we consider the coefficients of $(\lambda +\Theta)^{s} \lambda^{2}$ with $s \geq 1$ we obtain:
	\begin{align*}
	\sum _{L}  \sum_{i<j}(s+2)(s+1)(\xi_{ij} \star \eta_{L}  )\otimes \xi_{ij}. v_{L,s+2}-6 (s+1)  \eta_{1234} \otimes Cv_{\emptyset,s+1}=0.
	\end{align*}
	Therefore we obtain that for $s \geq 1$:
	\begin{align}
	\label{secpezzo}
	\sum _{L}  \sum_{i<j}(\xi_{ij} \star \eta_{L} ) \otimes \xi_{ij}. v_{L,s+2}=0.
	\end{align}
	Now we look at the coefficients of $(\lambda +\Theta)^{s} \lambda$ with $s \geq 2$ and obtain:
	\begin{align*}
	\sum _{L} (s+2)(s+1) (2 \eta_{L} &\otimes v_{L,s+2}+ \eta_{L} \otimes t. v_{L,s+2}-(4-|L|) \eta_{L} \otimes v_{L,s+2})\\
	&+4\sum _{L}  \sum_{i<j}(s+1)(\xi_{ij} \star \eta_{L}  )\otimes \xi_{ij}. v_{L,s+1}-6  \eta_{1234} \otimes Cv_{\emptyset,s}=0.
	\end{align*}
	Therefore, using \eqref{primopezzo} and \eqref{secpezzo}, we obtain that for $s \geq 2$:
	\begin{align}
	\label{terpezzo} 
	\sum _{L}  ((|L|-2) \eta_{L} \otimes v_{L,s+2}+ \eta_{L} \otimes t. v_{L,s+2})=0.
	\end{align}
	Finally we look at the coefficients of $(\lambda +\Theta)^{s}$ with $s \geq 3$ and obtain:
	\begin{align*}
	&\sum _{L}  (s+1)s(-2 \eta_{L} \otimes v_{L,s+1})+2(s+1)( \eta_{L} \otimes t. v_{L,s+1}-(4-|L|) \eta_{L} \otimes v_{L,s+1}) \\
	&+2\sum _{L}  \sum_{i<j}(\xi_{ij} \star \eta_{L})  \otimes \xi_{ij}. v_{L,s}=0.
	\end{align*}
	This equation together with \eqref{secpezzo} and \eqref{terpezzo} immediately implies $v_{L,k}=0$ for all $k \geq 4$.
\end{proof}
By Lemma \ref{411}, for a singular vector $\vec{m} \in \Ind(F)$, $T(\vec{m}) $ has the following form:
\begin{align}
\label{vettsindeta2}
T(\vec{m}) =  \Theta^{3} \sum_{L}\eta_{L} \otimes v_{L,3}+\Theta^{2} \sum_{L}\eta_{L} \otimes v_{L,2}+\Theta \sum_{L}\eta_{L} \otimes v_{L,1}+ \sum_{L}\eta_{L} \otimes v_{L,0}.
\end{align}
We write the $\lambda-$action in the following way, using Proposition \ref{actiondual} and Lemma \ref{actiontheta}
\begin{align}
\label{lambdaactionabcd}
&T ({\xi_{I}}_{\lambda}\vec{m})\\ \nonumber
&=b_{0}(I)+G_{1}(I)+\lambda \big[B_{0}(I)-a_{0}(I)-G_{2}(I)\big]+\lambda^{2}\big[C_{0}(I)+G_{3}(I) \big]+\lambda^{3}D_{0}(I)\\ \nonumber
&+(\lambda+\Theta)\big( \big[a_{0}(I)+b_{1}(I)+2G_{2}(I)\big]+\lambda[ B_{1}(I)-a_{1}(I)-3G_{3}(I)\big]+\lambda^2 C_1(I)+\lambda^3D_1(I)\big)\\ \nonumber
&+(\lambda+\Theta)^{2}\big(\big[a_{1}(I)+b_{2}(I)+3G_{3}(I)\big]+\lambda \big[ B_{2}(I)-a_{2}(I)\big]+\lambda^2 C_2(I)+\lambda^3 D_2(C)\big)\\ \nonumber
&+(\lambda+\Theta)^{3}\big(\big[a_{2}(I)+b_{3}(I)\big]+ \lambda \big[B_{3}(I)-a_{3}(I)\big]+\lambda^{2} C_{3}(I) +\lambda^{3} D_{3}(I) \big) \\
& \nonumber  +   (\lambda+\Theta)^{4} a_{3}(I)
\end{align}
where the coefficients $a_{p}(I), \, b_{p}(I), \, B_{p}(I),\,C_{p}(I),\,D_{p}(I), \, G_{p}(I)$ depend on $I$ for every $0 \leq p \leq 3$. Here is their explicit expression:
\begin{align*}
a_{p}(I)&=\sum_{L} (-1)^{(|I|(|I|+1)/2)+|I||L|}  (|I|-2)  (\xi_I \star \eta_{L}) \otimes v_{L,p};\\
b_{p}(I)&=\sum_{L} (-1)^{(|I|(|I|+1)/2)+|I||L|} \bigg[ -(-1)^{|I|} \sum^{4}_{i=1}(\partial_{i}\xi_I\star \partial_{i}\eta_{L}) \otimes  v_{L,p} +\sum_{r<s}  (\partial_{rs}\xi_I\star \eta_{L}) \otimes \xi_{r,s}. v_{L,p})   \\
&\quad  +\bigchi_{|I|=3} \, \varepsilon_{I} (\xi_{I^{c}} \star \eta_{L} ) \otimes C  v_{L,p}  \bigg];\\
B_{p}(I)&=\sum_{L} (-1)^{(|I|(|I|+1)/2)+|I||L|} \bigg[(\xi_I \star \eta_{L} ) \otimes t. v_{L,p}-(-1)^{|I|}\sum^{4}_{i=1} \partial_{i}(\xi_{Ii} \star \eta_{L} ) \otimes  v_{L,p}\\
&\quad + (-1)^{|I|} \sum _{i \neq j} (\partial_{i}\xi_{Ij} \star \eta_{L}) \otimes \xi_{ji}.  v_{L,p}   +\bigchi_{|I|=2} \varepsilon_{I}   (\xi_{I^{c}} \star \eta_{L} )\otimes C  v_{L,p}) \bigg];\\
C_{p}(I)&=\sum_{L} (-1)^{(|I|(|I|+1)/2)+|I||L|} \bigg[ \sum _{i < j} (\xi_{Iij} \star \eta_{L} ) \otimes   \xi_{ij}.  v_{L,p}  -\bigchi_{|I|=1}  \varepsilon_{I}  (\xi_{I^{c}} \star \eta_{L} ) \otimes C v_{L,p}) \bigg];\\
D_{p}(I)&= \sum_{L} (-1)^{(|I|(|I|+1)/2)+|I||L|} \bigg[-\bigchi_{|I|=0}     \,( \xi_{1234} \star \eta_{L} )\otimes C v_{L,p}  \bigg];\\
G_{p}(I)&=-\sum_{L}\bigchi_{|I|=4} \varepsilon_{I} \eta_{L} \otimes C v_{L,p}.
\end{align*}
We will write $a_{p}$ instead of $a_{p}(I)$ if there is no risk of confusion, and similarly for the others.

\begin{prop}
	\label{azioneabcd}
	Let $\vec{m} \in \Ind(F)$ be such that $T(\vec{m}) $ is written  as in formula \eqref{vettsindeta2}. Using notation \eqref{lambdaactionabcd}, we have that:
	\begin{enumerate}
		\item condition \textbf{S1} implies that for all $I\in \mathcal I_{<}$ we have
		\begin{align*}
		D_{3}&=D_{2}=C_{3}=D_{1}+a_{3}=C_{2}-3a_{3}=B_{3} +2a_{3}\\&=C_{1}+2B_{2}+a_{2}+3b_{3}=D_{0} +C_{1}+B_{2}+b_{3} =C_{0}+B_{1}+b_{2}+G_{3}=0;
		\end{align*}
		\item condition \textbf{S2} implies that for all $I\in \mathcal I_<$ such that $|I|\geq 1$ we have
		\begin{align*}
		B_{0}+b_{1}+G_{2}=B_{1}+a_{1}+2b_{2}+3 G_{3}=2a_{2}+B_{2}+3b_{3}=3a_{3}+B_{3}=0;
		\end{align*}
		\item condition \textbf{S3} implies that for all $I \in \mathcal I_<$ such that $|I|\geq 3$ we have
		\begin{align*}
		b_{0} +G_{1}=a_{0}+b_{1}+2G_{2}= a_{1}+b_{2}+3G_{3}= a_{2}+b_{3}=a_{3}=0.
		\end{align*}		
	\end{enumerate}
\end{prop}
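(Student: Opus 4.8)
The plan is to exploit the fact that $\Ind(F)\cong \C[\Theta]\otimes \inlinewedge(4)\otimes F$ is a free $\C[\Theta]$-module, together with the observation that every coefficient function $a_p(I),b_p(I),B_p(I),C_p(I),D_p(I),G_p(I)$ appearing in \eqref{lambdaactionabcd} lies in $\inlinewedge(4)\otimes F$ and carries no factor of $\Theta$. Consequently $T({\xi_I}_{\lambda}\vec m)$ is a polynomial in the two commuting symbols $\lambda$ and $\lambda+\Theta$ with coefficients in $\inlinewedge(4)\otimes F$, and since $\{\lambda^t(\lambda+\Theta)^s\}$ is a basis of $\C[\lambda,\Theta]$, such a polynomial vanishes if and only if each of these coefficients vanishes. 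By Remark \ref{appoggiotecnicos1s2s3} it is enough to impose \textbf{S1}--\textbf{S3} on $T({\xi_I}_{\lambda}\vec m)$; I will translate each into the vanishing of certain coefficients, keeping in mind that $\tfrac{d}{d\lambda}$ acts with $\Theta$ held fixed, so that $\tfrac{d}{d\lambda}(\lambda+\Theta)=1$.

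Conditions \textbf{S3} and \textbf{S2} are the easy ones and I would dispatch them first. For \textbf{S3} I set $\lambda=0$ in \eqref{lambdaactionabcd}: only the $\lambda^0$ terms survive and $(\lambda+\Theta)^j$ becomes $\Theta^j$, so the condition reads $\sum_{j=0}^4\Theta^j\alpha_j=0$, where $\alpha_0=b_0+G_1$, $\alpha_1=a_0+b_1+2G_2$, $\alpha_2=a_1+b_2+3G_3$, $\alpha_3=a_2+b_3$, $\alpha_4=a_3$ are the constant coefficients of the five brackets. Freeness over $\C[\Theta]$ forces $\alpha_j=0$ for every $j$, which is exactly part (3). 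For \textbf{S2} I differentiate once and evaluate at $\lambda=0$, obtaining $\sum_j\big(j\,\Theta^{j-1}\alpha_j+\Theta^j\beta_j\big)=0$, with $\beta_j$ the coefficient of $\lambda^1$ in the $j$-th bracket. Collecting the coefficient of $\Theta^s$ gives $(s+1)\alpha_{s+1}+\beta_s=0$ for all $s$; writing these out for $s=0,1,2,3$ and simplifying yields precisely the four identities of part (2).

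Condition \textbf{S1} is the substantive one and I expect it to be the main obstacle, both because it requires the full second derivative and because the resulting identities are not yet in the stated form. Using $\tfrac{d}{d\lambda}(\lambda+\Theta)=1$, I would expand $\tfrac{d^2}{d\lambda^2}T({\xi_I}_{\lambda}\vec m)=\sum_j\big[j(j-1)(\lambda+\Theta)^{j-2}\Phi_j+2j(\lambda+\Theta)^{j-1}\Phi_j'+(\lambda+\Theta)^j\Phi_j''\big]$, where $\Phi_j=\alpha_j+\lambda\beta_j+\lambda^2\gamma_j+\lambda^3\delta_j$ is the $j$-th bracket of \eqref{lambdaactionabcd}, with $\gamma_0=C_0+G_3$, $\gamma_1=C_1$, $\gamma_2=C_2$, $\gamma_3=C_3$ and $\delta_j=D_j$. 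Extracting the coefficient of each monomial $\lambda^t(\lambda+\Theta)^s$ and setting it to zero produces, for $t=0,1,2,3$ and $s\ge0$, a finite system of linear relations among the $a_p,b_p,B_p,C_p,D_p,G_p$.

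The last step, where the real work lies, is to recombine this raw system into the nine identities of part (1). The $t=3$ coefficients give immediately $D_2=D_3=0$; feeding these into the $t=2$ coefficients yields $C_3=0$ and $C_2+3D_1=0$. The heart of the matter is the three relations coming from the monomials $\lambda^2$, $\lambda(\lambda+\Theta)$ and $(\lambda+\Theta)^2$, namely $C_2+3D_1=0$, $\;3(B_3-a_3)+4C_2+3D_1=0$ and $3a_3+3B_3+C_2=0$; this is a $3\times3$ linear system in $C_2,D_1,B_3$ with $a_3$ as parameter, whose unique solution is $D_1+a_3=C_2-3a_3=B_3+2a_3=0$. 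Finally the coefficients of $\lambda$ and of $(\lambda+\Theta)$ give $(B_2-a_2)+2C_1+3D_0=0$ and $C_1+2B_2+a_2+3b_3=0$; the second is already on the list, and their sum (after dividing by $3$) produces $D_0+C_1+B_2+b_3=0$, while the constant monomial gives $C_0+B_1+b_2+G_3=0$. Assembling these completes part (1). The only conceptual point that needs care throughout is the legitimacy of treating $\lambda$ and $\lambda+\Theta$ as independent coordinates, which is exactly what the freeness of $\Ind(F)$ over $\C[\Theta]$ guarantees.
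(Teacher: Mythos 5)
Your proposal is correct and follows essentially the same route as the paper's proof: differentiate \eqref{lambdaactionabcd} once and twice with respect to $\lambda$ (with $\tfrac{d}{d\lambda}(\lambda+\Theta)=1$), expand in the monomials $\lambda^t(\lambda+\Theta)^s$, which are independent by the $\C[\Theta]$-freeness of $\Ind(F)$, and set each coefficient to zero. The paper simply displays the two derivatives in grouped form and states that the result follows, whereas you make explicit the coefficient extraction and the small linear systems (e.g.\ solving for $D_1,C_2,B_3$ in terms of $a_3$); the content is identical.
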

\begin{proof}
	We compute $\frac{d^{2}}{d \lambda^{2}}(T( {\xi_I}_\lambda \vec{m}))$ and $\frac{d}{d \lambda}(T ({\xi_I}_\lambda \vec{m}))$ using \eqref{lambdaactionabcd}. We have
	\begin{align*}
	&\frac{d}{d \lambda}(T({\xi_I}_\lambda \vec{m}))=B_{0}+b_{1}+G_{2}+\lambda \big[2 C_{0}+B_{1}-a_{1}-G_{3}\big]+\lambda^{2} \big[ 3D_{0}+C_{1}\big]+ \lambda^{3}D_{1}\\
	&+(\lambda+\Theta)\big(\big[B_{1}+a_{1}+2b_{2}+3G_{3}\big]+ \lambda\big[ 2C_{1}+2B_{2}-2a_{2}\big]+\lambda^{2}\big[3D_{1}+2C_{2}\big]+2\lambda^{3}D_{2} \big)\\
	&+(\lambda+\Theta)^{2}\big(\big[2a_{2}+B_{2}+3b_{3}\big] +\lambda\big[3 B_{3}-3a_{3} +2C_{2}\big]+\lambda^{2}\big[3D_{2}+3C_{3}\big]+3\lambda^{3}D_{3}\big)\\
	&+(\lambda+\Theta)^{3}\big(\big[3a_{3}+B_{3}\big]+2\lambda C_{3}+3\lambda^{2} D_{3}\big),
	\end{align*}
	and
	\begin{align*}
	&\frac{d^{2}}{d \lambda^{2}}(T({\xi_I}_\lambda \vec{m}))\\
	&=2  C_{0}+2B_{1}+2b_{2}+2G_{3}+ \lambda \big[6D_{0}+4C_{1}+2B_{2}-2a_{2} \big] +\lambda^{2} \big[6D_{1}+2C_{2}\big] +2\lambda^{3} D_{2}\\
	&\quad +(\lambda+\Theta)\big(\big[2C_{1}+4B_{2}+2a_{2}+6b_{3}\big]+\lambda\big[6D_{1}+8C_{2}+6B_{3}-6a_{3}\big]+\lambda^{2}\big[12D_{2}+6C_{3}\big]+6\lambda^{3}D_{3}\big)\\
	&\quad +(\lambda+\Theta)^{2}\big(\big[2C_{2}+6a_{3}+6B_{3}\big]+\lambda\big[12C_{3}+6D_{2}\big]+18\lambda^{2}D_{3}\big)\\
	&\quad +(\lambda+\Theta)^{3}\big(2C_{3}+6\lambda D_{3}\big).
	\end{align*}
	
The result follows.	
\end{proof}
Let us show some other reductions on singular vectors.

\begin{lem}
	\label{lemmav3}
	Let $\vec{m} \in \Ind(F)$ be a singular vector, such that $T(\vec{m}) $ is written  as in formula \eqref{vettsindeta2}.
	For all $I $ we have that $v_{I,3}=0$.
\end{lem}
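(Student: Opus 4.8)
The plan is to read off everything from the relations collected in Proposition \ref{azioneabcd}, working with the top $\Theta$-degree coefficients $v_{L,3}$ in the expansion \eqref{vettsindeta2} of $T(\vec m)$ (legitimate after Lemma \ref{411}), and to separate the indices $L$ with $|L|\leq 3$ from the single remaining top index $L=1234$. For the former, the key observation is that condition \textbf{S1} yields $B_{3}+2a_{3}=0$ for every $I$, while condition \textbf{S2} yields $3a_{3}+B_{3}=0$ for every $I$ with $|I|\geq 1$; subtracting gives $a_{3}(I)=0$ whenever $|I|\geq 1$. Evaluating this at $|I|=1$, say $I=(i)$, where $a_{3}(i)=\sum_{L:\,i\notin L}(-1)^{|L|}\eta_{iL}\otimes v_{L,3}$, the linear independence of the monomials $\eta_{iL}$ in $U(\g_{<0})$ forces $v_{L,3}=0$ for every $L$ not containing $i$. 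Letting $i$ run over $\{1,2,3,4\}$ then kills $v_{L,3}$ for all $L$ with $|L|\leq 3$, so only $v_{*,3}=v_{1234,3}$ survives.

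The harder part is this top coefficient $v_{*,3}$, and I expect the central element $C$ to play the decisive role. I would extract two independent relations. Taking $I=\emptyset$ in the \textbf{S1} identity $D_{1}+a_{3}=0$, and using $D_{1}(\emptyset)=-\mu_{C}\,\eta_{1234}\otimes v_{\emptyset,1}$ together with $a_{3}(\emptyset)=-2\,\eta_{1234}\otimes v_{*,3}$ (valid after the previous step), gives $v_{*,3}=-\tfrac{\mu_{C}}{2}\,v_{\emptyset,1}$, where $\mu_{C}$ is the scalar by which $C$ acts on $F$. Separately, taking $I=1234$ and combining the three identities $a_{1}+b_{2}+3G_{3}=0$ (from \textbf{S3}), $B_{1}+a_{1}+2b_{2}+3G_{3}=0$ (from \textbf{S2}) and $C_{0}+B_{1}+b_{2}+G_{3}=0$ (from \textbf{S1}) to eliminate $a_{1},b_{2},B_{1}$, one is left with $C_{0}(1234)+G_{3}(1234)=0$; since $\xi_{1234ij}=0$ forces $C_{0}(1234)=0$, this gives $G_{3}(1234)=0$, that is $C v_{*,3}=0$ and in particular $\mu_{C}v_{*,3}=0$.

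These two relations close the argument by a case split on $\mu_{C}$: if $\mu_{C}=0$ then $v_{*,3}=-\tfrac{\mu_{C}}{2}v_{\emptyset,1}=0$ directly, while if $\mu_{C}\neq 0$ then $\mu_{C}v_{*,3}=0$ forces $v_{*,3}=0$; in either case $v_{*,3}=0$, which together with the first step proves $v_{I,3}=0$ for all $I$. The main obstacle, and the genuinely new feature compared with the $K_{n}$ case treated in \cite{kac1}, is exactly this top coefficient $v_{1234,3}$: it is invisible to the operators $\xi_{I}$ with $|I|\geq 1$ (since $\xi_{I}\star\eta_{1234}=0$) and is detected only through the terms carrying the central charge, so its cancellation must be engineered by playing the $I=\emptyset$ and $I=1234$ relations against each other through $\mu_{C}$.
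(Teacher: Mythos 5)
Your proof is correct, and its first half coincides with the paper's: from $B_{3}+2a_{3}=0$ (condition \textbf{S1}) and $3a_{3}+B_{3}=0$ (condition \textbf{S2}) one gets $a_{3}(i)=0$ for $|I|=1$, and linear independence of the monomials $\eta_{iL}$ kills $v_{L,3}$ for all $|L|\leq 3$. Where you genuinely diverge is in the treatment of $v_{1234,3}$. The paper never leaves $I=(1)$: it combines $D_{0}(1)+C_{1}(1)+B_{2}(1)+b_{3}(1)=0$ and $C_{1}(1)+2B_{2}(1)+a_{2}(1)+3b_{3}(1)=0$ (both from \textbf{S1}) with $2a_{2}(1)+B_{2}(1)+3b_{3}(1)=0$ (from \textbf{S2}) to get $D_{0}(1)+a_{2}(1)+b_{3}(1)=0$; since $D_{0}(1)=0$ and every term of $a_{2}(1)$ contains $\eta_{1}$, the coefficient of $\eta_{234}$ in this identity is exactly $-v_{1234,3}$, so $v_{1234,3}=0$ follows from linear independence alone, with no reference to the central element. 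You instead play $I=\emptyset$ against $I=1234$: the \textbf{S1} relation $D_{1}+a_{3}=0$ at $I=\emptyset$ gives $v_{*,3}=-\tfrac{\mu_{C}}{2}v_{\emptyset,1}$ (using Schur's lemma for $C$), while eliminating $a_{1},b_{2},B_{1}$ from the three relations at $I=1234$ gives $C_{0}(1234)+G_{3}(1234)=0$, hence $Cv_{*,3}=0$ because $C_{0}(1234)=0$; the case split on $\mu_{C}$ then finishes. I checked that all the relations you invoke are available for those values of $I$ (in particular \textbf{S2} and \textbf{S3} apply to $I=1234$ since $|I|=4$), and both of your computations of $D_{1}(\emptyset)$ and $a_{3}(\emptyset)$ are right, so the argument closes. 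Your route buys a conceptual interpretation — the top coefficient is controlled by the central charge — at the price of Schur's lemma and a case analysis; the paper's route is shorter and purely linear-algebraic inside a single equation.

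One caveat on your closing remark: it is not accurate that $v_{1234,3}$ ``is detected only through the terms carrying the central charge.'' It is indeed invisible to $a_{p}(I)$ for $|I|\geq 1$, since $\xi_{I}\star\eta_{1234}=0$, but it does enter $b_{3}(I)$ through the derivative terms $\partial_{i}\xi_{I}\star\partial_{i}\eta_{1234}$ — this is precisely how the paper's proof detects it at $I=(1)$, with no central term involved. This imprecision is confined to your motivational discussion and does not affect the validity of the proof.
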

\begin{proof}
	By Proposition \ref{azioneabcd}, we have $2a_{3}(i)+B_{3}(i)=0$ and $3a_{3}(i)+B_{3}(i)=0$ for all $i\in \{1,2,3,4\}$. Therefore $a_{3}(i)=0$ which immediately implies $v_{L,3}=0$ for every $L$ such that $|L|<4$. 
	
	Proposition \ref{azioneabcd} also provides  $D_{0}(1) +C_{1}(1)+B_{2}(1)+b_{3}(1) =0 $, $C_{1}(1)+2B_{2}(1)+  a_{2}(1)+3b_{3}(1)=0$ and $2a_{2}(1)+B_{2}(1)+3b_{3}(1)=0 $. 
	A linear combination of these equations gives us $D_{0}(1) +a_{2}(1)+b_{3}(1)=0$. Since $D_{0}(1)=0$, we have
	\begin{align*}
	0=a_{2}(1)+b_{3}(1)=-\sum_{L}(-1)^{1+|L|}(\xi_{1} \star \eta_{L} )\otimes v_{L,2}- \eta_{234} \otimes v_{1234,3}=0,
	\end{align*}
	which implies $v_{1234,3}=0$.
\end{proof}
Lemma \ref{lemmav3} implies $a_3=b_3=B_3=C_3=D_3=G_3=0$ and so all equations in Proposition \ref{azioneabcd} can be significantly simplified. Next result provides a further semplification.
\begin{lem}
	\label{lemmav2}
	Let $\vec{m} \in \Ind(F)$ be a singular vector such that $T(\vec{m}) $ is written as in formula \eqref{vettsindeta2}. For all $I $ we have that $v_{I,2}=0$.
\end{lem}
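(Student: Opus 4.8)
The plan is to follow the strategy of Lemma \ref{lemmav3}, treating separately the coefficients $v_{L,2}$ with $|L|\le 3$ and the single coefficient $v_{*,2}=v_{1234,2}$. By Lemma \ref{lemmav3} we already know $v_{L,3}=0$ for every $L$, so $a_3=b_3=B_3=C_3=D_3=G_3=0$ and all the relations of Proposition \ref{azioneabcd} collapse to their lower-order part. Throughout I will use that, $C$ and $t$ being central in $\g_0$, they act on the irreducible module $F$ by scalars $\mu_C$ and $\mu_t$.

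First I would dispose of the coefficients $v_{L,2}$ with $|L|\le 3$, exactly as in Lemma \ref{lemmav3}. Evaluating Proposition \ref{azioneabcd} at a singleton $I=(i)$ gives, from \textbf{S1}, $C_1(i)+2B_2(i)+a_2(i)=0$ and $D_0(i)+C_1(i)+B_2(i)=0$, and from \textbf{S2}, $2a_2(i)+B_2(i)=0$. Since $D_p(I)$ carries the factor $\bigchi_{|I|=0}$ we have $D_0(i)=0$, so a linear combination of these three identities yields $a_2(i)=0$. As $a_2(i)=-\sum_{L\,:\,i\notin L}(-1)^{1+|L|}(\xi_i\star\eta_L)\otimes v_{L,2}$ and the monomials $\xi_i\star\eta_L=\eta_{iL}$ are linearly independent, this forces $v_{L,2}=0$ for every $L$ not containing $i$; letting $i$ run over $\{1,2,3,4\}$ kills $v_{L,2}$ for all $L\neq 1234$.

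The heart of the proof is to show $v_{*,2}=0$, and here the mechanism of Lemma \ref{lemmav3} breaks down: since $\xi_I\star\eta_{1234}=0$ for every $I\neq\emptyset$, the coefficient $v_{*,2}$ enters only through the identity terms $\xi_\emptyset\star\eta_{1234}=\eta_{1234}$ and through the central terms, and it is entangled with the top coefficient $v_{\emptyset,0}$. After inserting the vanishing just obtained, three relations survive. Subtracting the two \textbf{S1} identities at $I=\emptyset$ gives $a_2(\emptyset)+B_2(\emptyset)=D_0(\emptyset)$, that is $(\mu_t-2)\,v_{*,2}=-\mu_C\,v_{\emptyset,0}$. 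Subtracting the \textbf{S2} and \textbf{S3} identities at $I=1234$ (which eliminates $b_1$) gives $G_2(1234)=B_0(1234)-a_0(1234)$, that is $-\mu_C\,v_{*,2}=(\mu_t-2)\,v_{\emptyset,0}$. Finally, combining the \textbf{S1} and \textbf{S2} identities at a singleton $I=(i)$ produces $a_1(i)+b_2(i)=C_0(i)$, whose $\eta_{\widehat i}$-component (with $\widehat i=\{1,2,3,4\}\setminus\{i\}$) isolates on the left the surviving term of $b_2(i)$ and on the right only the central term of $C_0(i)$, giving $v_{*,2}=-\mu_C\,v_{\emptyset,0}$.

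The first two relations form a linear system in $(v_{*,2},v_{\emptyset,0})$ with determinant $(\mu_t-2)^2-\mu_C^2$, so together with the third they force $v_{*,2}=v_{\emptyset,0}=0$ whenever this determinant is nonzero. The main obstacle is the degenerate locus $(\mu_t-2)^2=\mu_C^2$, where one must argue separately. I would exploit condition \textbf{S0}: since $e_1,e_2$ commute with $\Theta$ and annihilate $\eta_{1234}$, isolating the $\Theta^2\otimes(\cdot)$- and $\eta_{1234}\otimes(\cdot)$-components of $e_1.\vec m=e_2.\vec m=0$ shows that $v_{*,2}$ and $v_{\emptyset,0}$ are highest weight vectors of $F$, hence proportional to its unique highest weight vector. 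Feeding this proportionality, together with the remaining relations at $I=1234$ that involve the degree-one coefficients $v_{L,1}$ ($|L|=2$), into the degenerate case yields the final contradiction and completes the vanishing $v_{*,2}=0$.
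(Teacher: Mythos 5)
Your first step is correct and coincides with the paper's: from the \textbf{S1}--\textbf{S2} identities of Proposition \ref{azioneabcd} at singletons $I=(i)$ (with $a_3=b_3=\dots=G_3=0$ after Lemma \ref{lemmav3} and $D_0(i)=0$) one gets $a_2(i)=0$, hence $v_{L,2}=0$ for all $L\neq 1234$. Your three relations for the remaining pair are also correctly derived: $(\mu_t-2)v_{*,2}=-\mu_C v_{\emptyset,0}$ from \textbf{S1} at $I=\emptyset$, $-\mu_C v_{*,2}=(\mu_t-2)v_{\emptyset,0}$ from \textbf{S2}/\textbf{S3} at $I=1234$, and $v_{*,2}=-\mu_C v_{\emptyset,0}$ from the $\eta_{234}$-component of $C_0(1)-a_1(1)-b_2(1)=0$ (this last one is exactly the paper's second computation). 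The gap is the degenerate locus. Combining your first and third relations gives $(\mu_t-3)v_{*,2}=0$, so a nonzero $v_{*,2}$ forces $\mu_t=3$ and then $\mu_C=\pm1$, and on this locus your linear system is perfectly consistent with $v_{*,2}=-\mu_C v_{\emptyset,0}\neq 0$. Your proposed way out is only asserted: the observation from \textbf{S0} that $v_{*,2}$ and $v_{\emptyset,0}$ are highest weight vectors is correct (both sit in the degree-$4$ component of $\vec{m}$, on the monomials $\Theta^2$ and $\eta_{1234}$ which $e_1,e_2$ kill), but the "remaining relations at $I=1234$" do not close the case: $B_1+a_1+2b_2=0$ and $a_1+b_2=0$ there only yield $v_{\emptyset,1}=0$, while $b_0+G_1=0$ at $I=1234$ couples $v_{\emptyset,0}$ and the $v_{L,0}$'s to the still-unknown coefficients $Cv_{L,1}$ (Lemma \ref{lemmav1} is proved \emph{after} this lemma and uses it), so it introduces new unknowns rather than constraining the pair $(v_{*,2},v_{\emptyset,0})$. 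As written, the proof does not establish the statement.

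The missing idea is to use condition \textbf{S3} at $|I|=3$, which you never invoke. Taking $I=123$, since $G_1(123)=0$ by definition, \textbf{S3} gives $b_0(123)=0$; the unique term proportional to $\eta_4$ in $b_0(123)$ is $\varepsilon_{(1,2,3)}\,\eta_4\otimes Cv_{\emptyset,0}$ (the other summands $\partial_i\xi_{123}\star\partial_i\eta_L$ and $\partial_{rs}\xi_{123}\star\eta_L$ always carry an index from $\{1,2,3\}$), whence $\mu_C v_{\emptyset,0}=0$. Substituted into your own third relation $v_{*,2}=-\mu_C v_{\emptyset,0}$, this gives $v_{*,2}=0$ outright — no determinant, no case analysis, and it is exactly how the paper concludes. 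So your non-degenerate argument, though correct, is superseded by this single relation, and without it (or something equivalent) the case $\mu_t=3$, $\mu_C=\pm1$ remains open.
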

\begin{proof}
	By Proposition \ref{azioneabcd} and Lemma \ref{lemmav3} we know that $D_{0}(i) +C_{1}(i)+B_{2}(i) =0$, $C_{1}(i)+2B_{2}(i)+  a_{2}(i)=0$ and $2a_{2}(i)+B_{2}(i)=0 $  for all $i\in \{1,2,3,4\}$. Moreover $D_{0}(i)=0$ by definition and from these equations we can deduce $a_2(i)=0$ for all $i\in \{1,2,3,4\}$ and, as in the proof of Lemma \ref{lemmav3} we can immediately conclude that $v_{L,2}=0$ for every $L$ such that $|L|<4$.
	
	We now show that $v_{1234,2}=0$. By Proposition \ref{azioneabcd} we know that $b_{0}(123)+G_1(123)=0$ and since $G_1(123)=0$ by definition we have 
	\begin{align*}
	0&=b_{0}(123)\\
	&= \sum_{L}(-1)^{|L|}\Big( \sum^{4}_{i=1}(\partial_{i}\xi_{123}\star \partial_{i}\eta_{L} )\otimes v_{L,0} + \sum_{r<s} (\partial_{rs}\xi_{123} \star \eta_{L}) \otimes \xi_{rs}.v_{L,0}+ ( \xi_{4} \star \eta_{L})\otimes C v _{L,0}\Big).
	\end{align*}
	In this equation the unique term in $\eta_4$ is
	\begin{align*}
	\eta_{4} \otimes C v _{\emptyset,0} 
	\end{align*}
	and so $Cv _{\emptyset,0} =0$.
	
	By Proposition \ref{azioneabcd} we have that $C_{0}(1)+B_{1}(1)+b_{2}(1)=0$, $B_{1}(1)+a_{1}(1)+2b_{2}(1)=0$  and so $C_{0}(1)-a_{1}(1)-b_{2}(1)=0$. We have:
	\begin{align*}
	0&=C_{0}(1)-a_{1}(1)-b_{2}(1)\\
	&=\sum_{L}\sum _{i < j} (-1)^{1+|L|}(\xi_{1ij}   \star \eta_{L}) \otimes   \xi_{ij}. v_{L,0}  - \sum_{L} (-1)^{1+|L|}(\xi_{234} \star \eta_{L} )\otimes Cv_{L,0} \\
	&\quad+\sum_{L} (-1)^{1+|L|} (\xi_{1} \star \eta_{L} )\otimes v_{L,1} +\eta_{234}\otimes v_{1234,2}.
	\end{align*}
	The terms in $ \eta_{234}$ in this expression are
	\begin{align*}
	\eta_{234} \otimes Cv_{\emptyset,0}+ \eta_{234}\otimes v_{1234,2} =0.
	\end{align*}
	Since $Cv_{\emptyset,0}=0$, we conclude that $v_{1234,2}=0$.
\end{proof}
By Lemma \ref{lemmav2} we can deduce that $a_2=b_2=B_2=C_2=D_2=G_2=0$ for all $I$. 
\begin{lem}
	\label{lemmav1}
	Let $\vec{m} \in \Ind(F)$ be a singular vector such that $T(\vec{m}) $ is written as in formula \eqref{vettsindeta2}. For all $L$ such that $|L| \leq 2$, we have that $v_{L,1}=0$.
\end{lem}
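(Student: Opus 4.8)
The plan is to exploit the drastic simplification of Proposition \ref{azioneabcd} produced by Lemmas \ref{lemmav3} and \ref{lemmav2}: since $v_{L,2}=v_{L,3}=0$ for all $L$, every coefficient $a_p,b_p,B_p,C_p,D_p,G_p$ with $p\in\{2,3\}$ vanishes, and the only relations surviving from \textbf{S1}--\textbf{S3} that involve the remaining unknowns $v_{L,1}$ are the ones carried by $a_1,B_1,C_1,D_1$, namely $a_1(I)=0$ for $|I|\geq 3$, $B_1(I)+a_1(I)=0$ for $|I|\geq 1$, and $C_1(I)=D_1(I)=0$. A key preliminary observation is that $t$ and $C$ are central in $\g_0$ (indeed $[t,\xi_{ij}]=\deg(\xi_{ij})\xi_{ij}=0$ and $[t,C]=0$), so by Schur's lemma they act on the irreducible module $F$ as scalars $\mu_t$ and $\mu_C$; this turns the operators $t.\,$ and $C.\,$ appearing in these coefficients into scalar multiplication.

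First I would dispose of the cases $|L|\leq 1$. Evaluating $a_1(I)=0$ at $|I|=3$, the factor $(|I|-2)=1$ is nonzero and $\xi_I\star\eta_L=\bigchi_{L\subseteq I^c}\eta_{IL}$ is supported only on $L=\emptyset$ and $L=I^c$ (a singleton), which contribute to the linearly independent monomials $\eta_I$ and $\eta_{1234}$. Reading off these two coefficients and letting $I$ range over all $3$-element sequences gives $v_{\emptyset,1}=0$ and $v_{(j),1}=0$ for every $j$.

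The heart of the proof is the case $|L|=2$, where the only surviving unknowns are the six vectors $v_{(jk),1}$. For $|I|=1$ the relation $a_1(i)+B_1(i)=0$ is especially useful: the coefficient of a degree-$3$ monomial $\eta_{ijk}$ in $a_1(i)$ is a plain scalar multiple of $v_{(jk),1}$, while the same coefficient in $B_1(i)$ contributes $\mu_t\,v_{(jk),1}$, another scalar multiple of $v_{(jk),1}$, and $\so(4)$-operators applied to the two adjacent pairs $v_{(ij),1},v_{(ik),1}$; once the already-vanishing $v_{\emptyset,1},v_{(j),1}$ are discarded, this yields, for each $i$ and each pair $\{j,k\}$ avoiding $i$, a relation of the form $(\mathrm{const}+\mu_t)\,v_{(jk),1}+\sum\pm\,\xi_{ab}.v_{(ij),1}+\sum\pm\,\xi_{ab}.v_{(ik),1}=0$. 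Complementing these with $B_1(I)=0$ for $|I|=2$ (legitimate since $a_1(I)=0$ automatically when $|I|=2$), whose only nontrivial monomial is $\eta_{1234}$ and which gives
\begin{equation*}
\mu_t\,v_{I^c,1}+\mu_C\,v_{I,1}+\sum\pm\,\xi_{ab}.v_{(jk),1}=0,
\end{equation*}
the sum running over the four pairs meeting both $I$ and $I^c$, produces a closed linear system for the $v_{(jk),1}$.

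The main obstacle is solving this coupled system, since the off-diagonal terms are genuine $\so(4)\cong\mathfrak{sl}_2\oplus\mathfrak{sl}_2$ raising and lowering operators rather than scalars. I would break the coupling by expanding $T(\vec m)$ in the basis $\{w_{ab}\}$ of \eqref{notazionewcaselli}, in which each component is a genuine $\mathfrak{h}$-weight vector (the relations above are equalities of vectors and hence basis-independent), and then run a highest-weight elimination: in the relation attached to a maximal surviving weight the $\so(4)$-corrections either annihilate the top component (raising operators applied to a highest weight) or feed in strictly higher, already-eliminated components, leaving a nonzero scalar multiple of that component, which must therefore vanish; iterating over decreasing weights, and invoking the highest-weight hypothesis \textbf{S0} to control the extreme cases, forces $v_{(jk),1}=0$ for every pair. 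This completes the proof that $v_{L,1}=0$ for all $|L|\leq 2$; the delicate point throughout is the careful bookkeeping of the signs $\varepsilon_I$ and of the $\bigchi$-terms so that the diagonal scalars $\mu_t,\mu_C$ cannot conspire to degenerate the system.
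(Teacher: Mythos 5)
Your handling of $|L|\leq 1$ is correct and is exactly the paper's argument: $a_1(I)=0$ for $|I|=3$ is supported only on the monomials $\eta_I$ and $\eta_{1234}$, giving $v_{\emptyset,1}=0$ and $v_{(j),1}=0$. The gap is in the case $|L|=2$, where you try to close the argument using only the relations among the $v_{L,1}$ themselves ($B_1(I)+a_1(I)=0$ for $|I|\geq 1$, $B_1(I)=0$ for $|I|=2$, $C_1=D_1=0$) and claim that a weight-by-weight elimination always leaves a \emph{nonzero} scalar multiple of each component. That scalar can vanish. Concretely: the coefficient of $\eta_{ijk}$ in $a_1(i)+B_1(i)=0$ is $(2-\mu_t)v_{(jk),1}$ plus $\mathfrak{so}(4)$-terms, and the coefficient of $\eta_{1234}$ in $B_1(I)=0$ ($|I|=2$) is $\mu_t v_{I^c,1}+\mu_C v_{I,1}$ plus $\mathfrak{so}(4)$-terms. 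If $F$ is one-dimensional with trivial $\mathfrak{so}(4)$-action and $\mu_t=2$, the first family of equations is identically zero and the second reduces, on each complementary pair, to a $2\times 2$ system with determinant $\mu_t^2-\mu_C^2$ (up to the precise signs), which vanishes for suitable nonzero $\mu_C$; at such weights your system admits nonzero solutions $v_{(jk),1}$, so no elimination scheme can extract $v_{(jk),1}=0$ from it. Your appeal to \textbf{S0} does not repair this: \textbf{S0} couples the $\Theta$-component $\sum_{|L|=2}\Theta\eta_L\otimes v_{L,1}$ to the component $\eta_\emptyset\otimes v_{\emptyset,0}$, so it is not a relation internal to the $v_{L,1}$, and you never compute it.

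What the paper actually uses is precisely a coupled relation: $B_0(a)+b_1(a)=0$ (condition \textbf{S2} with $|I|=1$, after $G_2=0$). The coefficient of the degree-one monomial $\eta_d$ in that identity is $\pm\,\xi_{a,d}.v_{\emptyset,0}\pm v_{L_0,1}$ with $L_0=\{a,d\}$: here $v_{L_0,1}$ enters with coefficient $\pm 1$, \emph{independent of the weight}, so no degeneration can occur. The lemma is thereby reduced to proving $v_{\emptyset,0}=0$, and even this requires a case analysis your proposal never touches: if $C\neq 0$ it follows from $Cv_{\emptyset,0}=0$, a byproduct of the proof of Lemma \ref{lemmav2}; if $C=0$ the $\lambda$-action degenerates to that of \cite{kac1} and one must argue as in Lemma B.4 there. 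These three ingredients — the coupled equation, the vanishing of $v_{\emptyset,0}$, and the $C=0$ fallback — constitute the real content of the proof and are all missing from your proposal.
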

\begin{proof}
	By Proposition \ref{azioneabcd} and Lemmas \ref{lemmav3} and \ref{lemmav2}, we have $a_{1}(I)=0$ for all $|I| \geq 3$ which immediately implies $v_{L,1}=0$ for all $L$ such that $|L| \leq 1$.
	
	Let's show the result for $|L| =2$.
	By Proposition \ref{azioneabcd} and Lemmas \ref{lemmav3} and \ref{lemmav2}, we know that $B_{0}(a)+b_{1}(a)=0$ for all $a\in \{1,2,3,4\}$. Letting $(a)^{c}=(b,c,d)$ we have
	\begin{align*}
	0&= \sum_{L }(-1)^{1+|L|}\Big((\xi_{a} \star \eta_{L} ) \otimes t.v_{L,0}+\sum_{i\neq a} \partial_{i}(\xi_{ai} \star \eta_{L} ) \otimes v_{L,0} - \sum _{j\neq a}(\xi_{j} \star \eta_{L}  )\otimes \xi_{j,a}. v_{L,0}\Big)\\
	&\quad + \sum _{|L| \geq 2 }(-1)^{1+|L|}\partial_{a}\eta_{L} \otimes v_{L,1}.
	\end{align*}
	
	The terms in $\eta_{d}$ of $B_{0}(a)$ are:
	\begin{align*}
	\eta_{d}  \otimes \xi_{a,d}. v_{\emptyset,0}. 
	\end{align*}
	We have shown in Lemma \ref{lemmav2} that $Cv_{\emptyset,0}=0$ and so $ v _{\emptyset,0} =0$ if $C \neq 0$.
	 Nevertheless, if $C=0$ the $\lambda-$action in Proposition \ref{actiondual} reduces to the $\lambda-$action found in Theorem 4.3 of \cite{kac1} and one can obtain even in this case $v_{\emptyset,0}=0$ proceeding as in Lemma B.4 of \cite{kac1}. 
	 
	 Therefore, the unique term in $\eta_d$ in the equation above is 
	\begin{align*}
	\partial_{a}\eta_{L_0} \otimes v_{L_0,1} =0,
	\end{align*}
	where $L_0=ad$ if $a<d$ and $L_0=da$ if $a>d$, and this implies $v_{L_0,1}=0$.
\end{proof}
By Lemmas \ref{lemmav3}, \ref{lemmav2} and \ref{lemmav1} and the fact $v_{\emptyset,0}=0$, for a singular vector $\vec{m}$ the expression in \eqref{vettsingeta} can be simplified as
\begin{align}
\label{vettsingeta3}
T(\vec{m}) =  \Theta\sum_{|L|\geq 3}\eta_{L} \otimes v_{L,1}+ \sum_{|L| \geq 1}\eta_{L} \otimes v_{L,0}.
\end{align}

Therefore, from \eqref{vettsingeta3}, we have that there can only be singular vectors of degree $3, \, 2 $ and $1$. Hence we have showed Theorem \ref{greaterthan3}. Following the notation used in \cite{kac1}, we rewrite \eqref{vettsingeta3} in the following way: for $|L|=3$, $\eta_{L}$ will be written as $\eta_{(i)^{c}}$, where $(i)^{c}=L$, $v_{L,1}$ will be renamed as $v_{i,1}$ and $v_{L,0}$ will be renamed as $v_{i}$, so that they depend on one index; for $|L|=2$, $\eta_{L}$ will be written as $\eta_{(i,j)^{c}}$, where $(i,j)^{c}=L$, and $v_{L,0}$ will be renamed as $v_{i,j}$.
In particular, by \eqref{vettsingeta3}, the singular vectors of degree $3, \, 2 $ and $1$ are such that
\begin{description}
	\item[degree 3] $T(\vec{m})=\Theta \sum_{i}\eta_{(i)^{c}} \otimes v_{i,1}+ \sum_{i}\eta_{i} \otimes v_{i,0}$,
	\item[degree 2] $T(\vec{m})=\Theta \eta_{*} \otimes v_{*}+ \sum_{i<j}\eta_{(i,j)^{c}} \otimes v_{i,j} $,
	\item[degree 1] $T(\vec{m})=\sum_{i}\eta_{(i)^{c}} \otimes v_{i} $.
\end{description}
By Proposition \ref{azioneabcd} and Lemmas \ref{lemmav3}, \ref{lemmav2}, \ref{lemmav1} we obtain the following result.
\begin{prop}
	\label{azioneabcdfinal}
	Let $\vec{m} \in \Ind(F)$ be such that $T(\vec{m})$ is as in formula \eqref{vettsingeta3}. Using notation \eqref{lambdaactionabcd}, we have that:
	\begin{enumerate}
		\item condition \textbf{S1} implies that for all $ {I}\in \mathcal I_{\neq} $
		\begin{align*} 
		C_{1}=D_{1}=D_{0} =C_{0}+B_{1}=0;
		\end{align*} 
		\item condition \textbf{S2} implies that for all ${I} \in \mathcal I_{\neq} $ with $ |I| \geq 1$
		\begin{align*}
		B_{0}+b_{1}=B_{1}+a_{1}=0;
		\end{align*}
		\item condition \textbf{S3} implies that for all ${I} \in \mathcal I_{\neq} $ with $ |I| \geq 3$
		\begin{align*}
		 b_{0}+G_{1}= a_{0}+b_{1}=a_{1}=0.
		\end{align*}
	\end{enumerate}
\end{prop}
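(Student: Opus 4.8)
The plan is to derive Proposition \ref{azioneabcdfinal} from Proposition \ref{azioneabcd} by a purely formal substitution of the vanishing relations established in the preceding lemmas; no new computation with the $\lambda$-action is required. The first and only delicate point is to record, directly from the explicit formulas defining $a_p(I)$, $b_p(I)$, $B_p(I)$, $C_p(I)$, $D_p(I)$ and $G_p(I)$, that each of these coefficients depends on the module $F$ solely through the vectors $v_{L,p}$ sharing the same second index $p$. In other words, the subscript $p$ isolates exactly one layer $\{v_{L,p}\}_L$ of the expansion \eqref{vettsindeta2}, so that the vanishing of an entire layer forces the simultaneous vanishing of all six coefficients carrying that subscript.

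Granting this, I would invoke Lemma \ref{lemmav3}, which gives $v_{L,3}=0$, and Lemma \ref{lemmav2}, which gives $v_{L,2}=0$, for every $L$. Consequently
\[
a_3=b_3=B_3=C_3=D_3=G_3=0 \quad\text{and}\quad a_2=b_2=B_2=C_2=D_2=G_2=0
\]
identically in $I$. I would also note that the form \eqref{vettsingeta3} assumed for $T(\vec m)$ already incorporates Lemma \ref{lemmav1} together with the relation $v_{\emptyset,0}=0$, so no further input is needed on the layers $p=0,1$.

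It then remains to feed these vanishings into the three chains of equalities of Proposition \ref{azioneabcd}. For condition \textbf{S1}, the equalities $D_3=D_2=C_3=0$, $C_2-3a_3=0$ and $B_3+2a_3=0$ become vacuous, $D_1+a_3=0$ collapses to $D_1=0$, $C_1+2B_2+a_2+3b_3=0$ collapses to $C_1=0$, whence $D_0+C_1+B_2+b_3=0$ yields $D_0=0$ and $C_0+B_1+b_2+G_3=0$ yields $C_0+B_1=0$; this is precisely part (1). The same bookkeeping disposes of \textbf{S2} and \textbf{S3}: in \textbf{S2} the relations $2a_2+B_2+3b_3=0$ and $3a_3+B_3=0$ drop out, leaving $B_0+b_1=0$ and $B_1+a_1=0$; in \textbf{S3} the relations $a_2+b_3=0$ and $a_3=0$ drop out, $a_1+b_2+3G_3=0$ reduces to $a_1=0$, and $a_0+b_1+2G_2=0$ reduces to $a_0+b_1=0$, leaving also $b_0+G_1=0$.

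Since the argument is entirely mechanical once the layer-by-layer dependence of the coefficients is observed, I do not expect any genuine obstacle. The only step demanding care is the verification of that dependence from the definitions of $a_p(I),\dots,G_p(I)$, in particular checking that no cross-terms mixing different values of $p$ arise, so that killing the layers $p=2,3$ really annihilates all twelve coefficients $a_2,\dots,G_2,a_3,\dots,G_3$ at once.
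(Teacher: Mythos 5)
Your proposal is correct and coincides with the paper's own argument: the paper proves this proposition exactly by noting (after Lemmas \ref{lemmav3} and \ref{lemmav2}) that $a_3=b_3=B_3=C_3=D_3=G_3=0$ and $a_2=b_2=B_2=C_2=D_2=G_2=0$, which holds because each coefficient with subscript $p$ involves only the layer $\{v_{L,p}\}_L$, and then substituting these vanishings into the three chains of relations of Proposition \ref{azioneabcd}. Your explicit bookkeeping of which equations become vacuous and which collapse reproduces precisely the simplification the paper leaves implicit.
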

\section{Singular vectors of degree $2$}
The aim of this section is to classify all singular vectors of degree $2$.
We have that a singular vector of degree $2$ is such that:
\begin{align}
\label{vettsingetagrado2}
T(\vec{m}) =\Theta \eta_{*} \otimes v_{*}+ \sum_{i<j}\eta_{(i,j)^{c}} \otimes v_{i,j}.  
\end{align} 
We will assume for our convenience that $v_{i,i}=0$ and $ v_{i,j}=- v_{j,i}$ for all $i,j$. We write the vector $\vec{m}$ also in the following way:
\begin{align} 
\label{vettsingetagrado2originale}
\vec{m}=&(\eta_{2}+i\eta_{1})(\eta_{4}+i\eta_{3}) \otimes w_{1}+(\eta_{2}+i\eta_{1})(\eta_{4}-i\eta_{3}) \otimes w_{2}+(\eta_{2}-i\eta_{1})(\eta_{4}+i\eta_{3}) \otimes w_{3}\\ \nonumber
&+(\eta_{2}-i\eta_{1})(\eta_{4}-i\eta_{3})\otimes w_{4}+(\eta_{2}+i\eta_{1})(\eta_{2}-i\eta_{1}) \otimes w_{5}+(\eta_{4}+i\eta_{3})(\eta_{4}-i\eta_{3}) \otimes w_{6}+\Theta \otimes w_{7}\\ \nonumber
=&(-\eta_{13}+i\eta_{14}+i\eta_{23}+\eta_{24})\otimes w_{1}+(\eta_{13}+i\eta_{14}-i\eta_{23}+\eta_{24})\otimes w_{2}+(\eta_{13}-i\eta_{14}+i\eta_{23}+\eta_{24})\otimes w_{3}\\ \nonumber
&+(-\eta_{13}-i\eta_{14}-i\eta_{23}+\eta_{24})\otimes w_{4}+(2 \Theta +2i\eta_{12})\otimes w_{5}+(2 \Theta +2i\eta_{34})\otimes w_{6}+\Theta \otimes w_{7}.
\end{align}
From these two expressions it follows that
\begin{align}
\label{notazionevijw}
&v_{1,2}=2i w_{5},\\ \nonumber
&v_{1,3}= w_{1}- w_{2}- w_{3}+ w_{4},\\ \nonumber
&v_{1,4}=iw_{1}+iw_{2}-iw_{3}-iw_{4},\\ \nonumber
&v_{2,3}=iw_{1}-iw_{2}+iw_{3}-iw_{4},\\ \nonumber
&v_{2,4}=-w_{1}-w_{2}-w_{3}-w_{4},\\ \nonumber
&v_{3,4}=2i w_{6},\\ \nonumber
&v_{*}=2w_{5}+2w_{6}+w_{7}.
\end{align}
Indeed, let us show for example one of the previous equations.
In \eqref{vettsingetagrado2}, let us consider $\eta_{(1,3)^{c}}\otimes  v_{1,3}=\eta_{24} \otimes v_{1,3}$ . We have that $T(\eta_{13})=-\eta_{24}$ . In \eqref{vettsingetagrado2originale}, the terms in $\eta_{13}$ are:
\begin{align*}
-\eta_{13}\otimes w_{1}+\eta_{13}\otimes w_{2}+\eta_{13}\otimes w_{3}-\eta_{13}\otimes w_{4},
\end{align*}
therefore $v_{1,3}=w_{1}- w_{2}- w_{3}+ w_{4}$.\\
In the following lemma we write explicitly the relations of Proposition \ref{azioneabcdfinal} for a vector as in formula \eqref{vettsingetagrado2}.
\begin{lem}
	\label{s1s2s3grado2}
	Let $\vec{m} \in \Ind(F)$ be such that $T(\vec{m})$ is as in formula \eqref{vettsingetagrado2}. We have that:\\
	\textbf{1)} condition \textbf{S1} implies (for $I=\emptyset$)
	\begin{align}
	\label{C0+B1} 
	 \sum _{i < j} (  \xi_{ij} \star \eta_{(i,j)^{c}} ) \otimes   \xi_{ij}. v_{i,j}+\eta_{*}  \otimes t. v_{*}=0;
	\end{align}
	\textbf{2)} condition \textbf{S2} implies that for all $I$ such that $|I| = 1,2$
	\begin{align}
	\label{B0+b1}
	&\sum_{i<j}  \bigg[(\xi_I \star \eta_{(i,j)^{c}} ) \otimes t.v_{i,j}-(-1)^{|I|}\sum^{4}_{l=1} \partial_{l}(\xi_{Il}  \star \eta_{(i,j)^{c}} ) \otimes  v_{i,j}+ (-1)^{|I|} \sum _{k \neq l} (\partial_{k}\xi_{Il} \star \eta_{(i,j)^{c}}) \otimes \xi_{lk} . v_{i,j} \\ \nonumber 
	&  +\bigchi_{|I|=2}    \, \varepsilon_{I} ( \xi_{I^{c}} \star \eta_{(i,j)^{c}}) \otimes C  v_{i,j} \bigg] -(-1)^{|I|} \sum^{4}_{i=1}(\partial_{i}\xi_I\star \partial_{i}\eta_{*}) \otimes  v_{*}+\sum_{r<s}  (\partial_{rs}f \star \eta_{*} )\otimes \xi_{rs} .v_{*} =0;
	\end{align}
	\textbf{3)} condition \textbf{S3} implies that for all $I$ such that $|I|\geq 3$ 
	\begin{align}
	\label{b0+G1=0}
	&\sum_{i<j} (-1)^{(|I|(|I|+1)/2)} \bigg[ -(-1)^{|I|} \sum^{4}_{l=1}(\partial_{l}\xi_I\star \partial_{l}\eta_{(i,j)^{c}}) \otimes  v_{i,j} +\sum_{r<s}  (\partial_{rs}\xi_I \star \eta_{(i,j)^{c}}) \otimes \xi_{rs}. v_{i,j}   \\ \nonumber
	&  +\bigchi_{|I|=3} \, \varepsilon_{I} (\xi_{I^{c}} \star \eta_{(i,j)^{c}})  \otimes C  v_{i,j}  \bigg]-\bigchi_{|I|=4} \varepsilon_{I} \eta_{*} \otimes C v_{*}=0.
	\end{align}
\end{lem}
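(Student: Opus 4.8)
The three assertions are the specializations of the relations in Proposition \ref{azioneabcdfinal} to a vector whose $T$-image has the shape \eqref{vettsingetagrado2}. For such a vector the only coefficient vectors appearing in the notation of \eqref{lambdaactionabcd} are $v_{L,0}=v_{i,j}$ with $L=(i,j)^c$ (so $|L|=2$) and $v_{L,1}=v_*$ with $L=*=1234$ (so $|L|=4$). The plan is to substitute these into the explicit formulas for $a_p(I),b_p(I),B_p(I),C_p(I),D_p(I),G_p(I)$ preceding Proposition \ref{azioneabcd} and read off each surviving relation. The computation is driven by two elementary vanishing facts about the product $\star$: a monomial $\xi_M$ vanishes as soon as $M$ has a repeated index, and $\xi_M\star\eta_N=\bigchi_{M\cap N=\emptyset}\eta_{MN}$ is zero whenever the index sets of $M$ and $N$ overlap, in particular whenever $|M|+|N|>4$ and, since $*=1234$, whenever $N=*$ and $M\neq\emptyset$.

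For condition \textbf{S1} I take $I=\emptyset$, so that the only nontrivial relation is $C_0(\emptyset)+B_1(\emptyset)=0$; the conditions $C_1=D_1=D_0=0$ hold automatically, each of their summands being of the form $\xi_M\star\eta_*$ or $\xi_M\star\eta_{(i,j)^c}$ with $M$ nonempty. In $C_0(\emptyset)$ the constraint $\{i,j\}\cap(i',j')^c=\emptyset$ forces $\{i,j\}=\{i',j'\}$, leaving exactly $\sum_{i<j}(\xi_{ij}\star\eta_{(i,j)^c})\otimes\xi_{ij}.v_{i,j}$; in $B_1(\emptyset)$ the derivative contribution $\sum_i\partial_i(\xi_i\star\eta_*)$ vanishes because $\xi_i\star\eta_*=0$, leaving only $\eta_*\otimes t.v_*$. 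Together these give \eqref{C0+B1}.

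For condition \textbf{S2} with $|I|=1,2$ I first note that $B_1(I)+a_1(I)=0$ is automatic: $a_1(I)$ is a multiple of $\xi_I\star\eta_*=0$, and every summand of $B_1(I)$ carries a factor $\xi_M\star\eta_*$ with $M$ nonempty (or $\xi_{I^c}\star\eta_*$ with $|I|=2$), hence vanishes. The remaining relation $B_0(I)+b_1(I)=0$ is the content of \eqref{B0+b1}: here $|L|=2$ for the $v_{i,j}$ terms and $|L|=4$ for the $v_*$ terms, so the factor $(-1)^{|I||L|}$ equals $+1$ and the common sign $(-1)^{|I|(|I|+1)/2}$ may be cancelled, yielding the displayed identity. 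In contrast with condition \textbf{S3} below, the term $\partial_i\xi_I\star\partial_i\eta_*$ in $b_1(I)$ need not vanish when $|I|=1$, which is why it appears explicitly in \eqref{B0+b1}.

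Finally, for condition \textbf{S3} with $|I|\ge 3$ the relations $a_1=0$ and $a_0+b_1=0$ are automatic: $a_1(I)$ and $a_0(I)$ are multiples of $\xi_I\star\eta_*$ and $\xi_I\star\eta_{(i,j)^c}$, which vanish (the latter by the overlap $|I|+2>4$), and every summand of $b_1(I)$ contains one of $\partial_i\xi_I\star\partial_i\eta_*$, $\partial_{rs}\xi_I\star\eta_*$ or $\xi_{I^c}\star\eta_*$, each zero once $|I|\ge 3$. The surviving relation $b_0(I)+G_1(I)=0$ is \eqref{b0+G1=0}, the last summand $-\bigchi_{|I|=4}\varepsilon_I\eta_*\otimes Cv_*$ coming from $G_1(I)$. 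I do not anticipate any conceptual obstacle: the argument is a systematic bookkeeping of which $\star$-products vanish, and the only genuine care lies in matching the signs $\varepsilon_I$ and $(-1)^{|I|(|I|+1)/2}$ to those displayed in \eqref{C0+B1}--\eqref{b0+G1=0}.
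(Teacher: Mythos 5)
Your proposal is correct and follows exactly the route the paper intends: the paper states this lemma as a direct specialization of Proposition \ref{azioneabcdfinal} to a vector of the form \eqref{vettsingetagrado2} (with $v_{L,0}=v_{i,j}$ for $|L|=2$ and $v_{L,1}=v_*$ for $L=1234$) and leaves the bookkeeping implicit, which is precisely the bookkeeping you carry out. Your identification of which relations ($C_1=D_1=D_0=0$, $B_1+a_1=0$, $a_1=a_0+b_1=0$) become vacuous through vanishing $\star$-products, and which survive as \eqref{C0+B1}, \eqref{B0+b1} and \eqref{b0+G1=0}, matches the paper's content, including the sign cancellation of $(-1)^{|I|(|I|+1)/2}$.
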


The following result collects the crucial equations that we will use in the classification of singular vectors of degree 2.
\begin{lem}
	\label{s1s2s3grado2vstardiversodazero}
	Let $\vec{m} \in \Ind(F)$ be as in \eqref{vettsingetagrado2}. Then for any permutation $(a,b,c,d)$ of $\{1,2,3,4\}$ we have

	\begin{align}
	\label{B.22B}  &\sum_{j \neq a}(-1)^{a+j}\xi_{ja}.v_{j,a}=v_*;\\
	\label{B.23B}  &t.v_{a,b}-v_{a,b}+\sum_{j\neq a,b}(-1)^{a+j}\xi_{aj}.v_{j,b}=0;\\
	\label{B.25B}  &\xi_{ab}.v_{*}+(-1)^{a+b}t.v_{a,b}+\sum_{ j\neq a,b}(-1)^{b+j}\xi_{aj}.v_{j,b}-\sum_{j \neq a,b}(-1)^{a+j}\xi_{bj}.v_{j,a}-\varepsilon_{(a,b)}\varepsilon_{(c,d)}Cv_{c,d}=0;\\
	\label{B.26B}  &\sum_{i<j}(-1)^{i+j}\xi_{ij}.v_{i,j}=t.v_*;\\
	\label{B.27B}  &(-1)^{b+c}v_{b,c}-(-1)^{a+c}\xi_{ab}.v_{a,c}+(-1)^{a+b}\xi_{ac}.v_{a,b}
	                 +\varepsilon_{(a,b,c)}(-1)^{a+d}Cv_{a,d}=0;\\
	\label{B.28B}  &\sum_{i<j} \xi_{(ij)^{c}} . v_{i,j}+  C v_{*}=0.
	\end{align}
	
	Finally:
	\begin{align}
	\label{s1s2s3grado2vstarzeroalfabetaconv*}
	&e_1.v_*=0&& e_2.v_*=0\\ \nonumber
	&e_1.v_{1,2}=-iv_{1,3}+v_{2,3},    &&e_2.v_{1,2}=-v_{1,4}-iv_{2,4},\\ \nonumber
	&e_1.v_{1,3}=iv_{1,2},    &&e_2.v_{1,3}=-iv_{3,4},\\ \nonumber
	&e_1.v_{1,4}= -v_{3,4},   &&e_2.v_{1,4}=v_{1,2},\\ \nonumber
	&e_1.v_{2,3}=-v_{1,2},    &&e_2.v_{2,3}=v_{3,4},\\ \nonumber
	&e_1.v_{2,4}=-iv_{3,4},   &&e_2.v_{2,4}=iv_{1,2},\\ \nonumber
	&e_1.v_{3,4}= v_{1,4} +iv_{2,4},  &&e_2.v_{3,4}=iv_{1,3}-v_{2,3},
	\end{align}
	where $e_1$ and $e_2$ are defined by \eqref{notazionealfa} and \eqref{notazionebeta}.

\end{lem}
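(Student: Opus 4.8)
The plan is to obtain every displayed identity as a single homogeneous component of the three relations packaged in Lemma \ref{s1s2s3grado2}, each specialized to one sequence $I$. Recall that by PBW the monomials $\Theta^{k}\eta_{L}$ with $L\in\mathcal I_<$ form a basis of $\Ind(F)$; for a degree-$2$ vector of the shape \eqref{vettsingetagrado2} the only surviving coefficients are $v_{*,1}=v_*$ and the $v_{i,j}$ (those with $|L|=2$). Substituting this into the explicit formulas for $a_p,b_p,B_p,C_p,D_p,G_p$ preceding Proposition \ref{azioneabcd} turns each of \eqref{C0+B1}, \eqref{B0+b1}, \eqref{b0+G1=0} into a finite combination of terms $\eta_{L}\otimes(\cdots)$, and reading off the coefficient of a chosen $\eta_{L}$ yields the asserted equation in $F$. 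Throughout, the operators $\xi_{ij}.v$ and $t.v$ are the $\g_0$-actions on $F$ computed from \eqref{bracket}.

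Carrying this out, I would first take \eqref{C0+B1} (condition \textbf{S1}, $I=\emptyset$): since $\xi_{ij}\star\eta_{(i,j)^c}=\pm\eta_*$, its only monomial is $\eta_*$, whose coefficient is \eqref{B.26B}. Next I would use \eqref{B0+b1} with $I=(a)$: collecting the coefficient of $\eta_{(a)^c}$ gives \eqref{B.22B} (the term $v_*$ coming from $\partial_a\eta_*$), while collecting the coefficient of $\eta_{(b)^c}$ with $b\neq a$ gives \eqref{B.23B} (the $t.v_{a,b}$ arising from $(\xi_a\star\eta_{(i,j)^c})\otimes t.v_{i,j}$). Then \eqref{B0+b1} with $I=(a,b)$, read at $\eta_*$, gives \eqref{B.25B}, with the central summand $-\varepsilon_{(a,b)}\varepsilon_{(c,d)}Cv_{c,d}$ coming exactly from $\bigchi_{|I|=2}\varepsilon_I(\xi_{I^c}\star\eta_{(i,j)^c})\otimes Cv_{i,j}$. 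Finally \eqref{B.27B} is a component of \eqref{b0+G1=0} with $I=(a,b,c)$, its central part coming from the $\bigchi_{|I|=3}$ term with $I^c=(d)$; and \eqref{B.28B} is the coefficient of $\eta_*$ in \eqref{b0+G1=0} with $I=1234$, where $Cv_*$ is precisely the $G_1$ contribution $\bigchi_{|I|=4}\varepsilon_I\,\eta_*\otimes Cv_*$.

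For the last block \eqref{s1s2s3grado2vstarzeroalfabetaconv*} I would instead invoke condition \textbf{S0} directly. Since $e_1,e_2\in\g_0$ act as derivations on $U(\g_{<0})\otimes F$, I would write $\vec m$ as in \eqref{vettsingetagrado2originale}, impose $e_1.\vec m=e_2.\vec m=0$, and collect the coefficient of each monomial. The coefficient of $\Theta$ forces $e_1.v_*=e_2.v_*=0$ (because $e_i\cdot\Theta=0$ and $v_*$ sits in the $\Theta$-part), and the coefficients of the six degree-$2$ monomials $\eta_{ij}$, rewritten through the dictionary \eqref{notazionevijw} between the $w_k$ and the $v_{i,j}$, give the remaining relations.

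The hard part will be the sign bookkeeping rather than any conceptual difficulty: I must fix consistent conventions for $\partial_i\eta_L$, for the products $\star$, for the Hodge dual $T$ and the permutation signs $\varepsilon_I$, and for the reordering of wedge monomials into $\eta_*$, so that the coefficients $(-1)^{a+j}$, $(-1)^{a+b}$ and the $\varepsilon$'s come out exactly as stated. The delicate points are precisely the central terms $Cv$, which appear only for the borderline degrees $|I|=1,2,3,4$ and are the new feature distinguishing $\mathcal A(K'_4)$ from the classical contact case; keeping track of when each $\bigchi_{|I|=k}$ switches on, and verifying that the associated $\varepsilon_I$-signs align, is where the computation must be done with care.
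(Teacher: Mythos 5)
Your proposal is correct and follows the paper's proof essentially step for step: every one of \eqref{B.22B}--\eqref{B.28B} is obtained from exactly the same specialization the paper uses (condition \textbf{S2} with $I=(a)$ for \eqref{B.22B}--\eqref{B.23B} and with $I=(a,b)$ for \eqref{B.25B}, condition \textbf{S1} with $I=\emptyset$ for \eqref{B.26B}, condition \textbf{S3} with $I=(a,b,c)$ and $I=1234$ for \eqref{B.27B}--\eqref{B.28B}), with the central $C$-terms traced to the same indicator functions, and the relations \eqref{s1s2s3grado2vstarzeroalfabetaconv*} from \textbf{S0} by coefficient extraction. The only cosmetic difference is that the paper evaluates \textbf{S0} in the Hodge-dual picture, computing $T(e_1.\vec m)$ and $T(e_2.\vec m)$ via Proposition \ref{actiondual}, whereas you act on $\vec m$ directly; if you do it your way, note that your parenthetical justification of $e_1.v_*=e_2.v_*=0$ is incomplete as stated, since the adjoint action of $e_1,e_2$ on the $\eta_{ij}\otimes w$ terms can a priori also produce $\Theta$-terms through $\eta_k^2=\Theta$ --- one must check (and it is true for elements of $\mathfrak{so}(4)$) that these contributions cancel in pairs, so that the $\Theta$-coefficient really is just $e_i.v_*$.
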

\begin{proof}
	We will repeatedly use Lemma \ref{s1s2s3grado2}.
	
	$\bullet$ Equation \eqref{B.22B}.
	We consider Equation \eqref{B0+b1} with $I=a$:
	\begin{align}
	\label{B0+b1appoggio}
	&\sum_{i<j}  \big[(\xi_{a} \star \eta_{(i,j)^{c}} ) \otimes t.v_{i,j}+\sum^{4}_{l=1} \partial_{l}(\xi_{al}\star \eta_{(i,j)^{c}} ) \otimes  v_{i,j}+ \sum _{l\neq a} (\xi_{l} \star \eta_{(i,j)^{c}}) \otimes \xi_{al}.  v_{i,j}\big]  +  \partial_{a}\eta_{*} \otimes  v_{*} =0,
	\end{align}
	and, considering the terms in $\eta_{(a)^{c}}$, we obtain:
	\begin{align*}
	0=& \sum _{l<a}( \xi_{l} \star \eta_{(l,a)^{c}} )\otimes \xi_{al}.  v_{l,a}+ \sum _{l>a} (\xi_{l} \star \eta_{(a,l)^{c}}) \otimes \xi_{al} . v_{a,l}  +  \partial_{a}\eta_{*} \otimes  v_{*} \\
	=&\sum _{l \neq a} (-1)^{l} \eta_{(a)^{c}} \otimes \xi_{l,a} .v_{l,a}+   (-1)^{a-1}\eta_{(a)^{c}} \otimes  v_{*}.
	\end{align*}
	and Equation \eqref{B.22B} follows.
	
	$\bullet$ Equation \eqref{B.23B}. Consider the terms in $\eta_{(b)^{c}}$ in Equation \eqref{B0+b1appoggio}:
	\begin{align*}
	0&=\eta_{a} \eta_{(a,b)^{c}}  \otimes t. v_{a,b}-\eta_{a} \eta_{(a,b)^{c}}  \otimes  v_{a,b} - \sum _{a \neq l,\, l<b}( \xi_{l} \star \eta_{(l,b)^{c}} )\otimes \xi_{la} . v_{l,b}- \sum _{a \neq l,\, l>b} (\xi_{l} \star \eta_{(b,l)^{c}}) \otimes \xi_{la} . v_{b,l}\\
	&=(-1)^{a-1}\eta_{(b)^{c}}  \otimes t. v_{a,b}-(-1)^{a-1} \eta_{(b)^{c}}  \otimes  v_{a,b} - \sum _{a \neq l,\, l<b} (-1)^{l-1} \eta_{(b)^{c}} \otimes \xi_{la}. v_{l,b}\\
	&\quad - \sum _{a \neq l,\, l>b} (-1)^{l} \eta_{(b)^{c}} \otimes \xi_{la}.  v_{b,l}\\
	&=(-1)^{a-1}\eta_{(b)^{c}}  \otimes t.v_{a,b}-(-1)^{a-1} \eta_{(b)^{c}}  \otimes  v_{a,b} + \sum _{l \neq a,b} (-1)^{l} \eta_{(b)^{c}} \otimes \xi_{la} . v_{l,b},
	\end{align*}
	and Equation \eqref{B.23B} follows.
	
	$\bullet$ Equation \eqref{B.25B}.
	We consider Equation \eqref{B0+b1} with $I=ab$ and we assume $c<d$ with no loss of generality. We have
	\begin{align*}
	0=&-\eta_{*} \otimes \xi_{ab} .v_{*}-(-1)^{a+b}\eta_{*} \otimes t.v_{a,b}\\
	&+\sum_{i<j} \sum_{l \neq k}(\partial_{l} \xi_{abk} \star \eta_{(i,j)^{c}})\otimes \xi_{kl}.v_{i,j}+\varepsilon_{(a,b)}(\xi_{cd}\star \eta_{(c,d)^{c}}) \otimes Cv_{c,d}.
	\end{align*}
	The coefficient of $-\eta_{*}$ in this expression is:
	\begin{align*}
	0=\xi_{ab}. v_{*}+(-1)^{a+b}t.v_{a,b}-\sum_{j\neq a,b}(-1)^{b+j}\xi_{ja}.v_{j,b}+\sum_{j \neq a,b}(-1)^{a+j}\xi_{jb}.v_{j,a}-\varepsilon_{(a,b)}\varepsilon_{(c,d)}Cv_{c,d}.
	\end{align*}
	
	$\bullet$ Equation \eqref{B.26B}. This follows immediately by Equation \eqref{C0+B1}.
	
	$\bullet$ Equation \eqref{B.27B}. 
	Equation \eqref{b0+G1=0} for $I=abc$ provides
	\begin{align*}
	0=&\sum_{i<j}\big(\sum^{4}_{l=1}(\partial_{l}\xi_{abc}\star \partial_{l}\eta_{(i,j)^{c}}) \otimes v_{i,j}+\sum_{r<s}(\partial_{rs}\xi_{abc} \star \eta_{(i,j)^{c}} )\otimes \xi_{rs}.v_{i,j}+\varepsilon_{(a,b,c)}(\xi_{d}\star \eta_{(i,j)^{c}})\otimes Cv_{i,j}\big).
	\end{align*}
	Considering the coefficients of $(-1)^{a}\eta_{(a)^{c}}$ we have:
	\begin{align*}
	&(-1)^{b+c}v_{b,c}-(-1)^{a+c}\xi_{ab}.v_{a,c}+(-1)^{a+b}\xi_{ac}.v_{a,b}+\varepsilon_{(a,b,c)}(-1)^{a+d}Cv_{a,d}=0.
	\end{align*}
	
	$\bullet$ Equation \eqref{B.28B}.
	Equation \eqref{b0+G1=0} for $I=1234$ is:
	\begin{align*}
	0=&-\sum_{i<j}\sum_{l=1}^{4}(\partial_{l} \xi_{1234} \star \partial_{l}\eta_{(i,j)^{c}} )\otimes v_{i,j}+\sum_{i<j}\sum_{r<s} (\partial_{rs} \xi_{1234}\star \eta_{(i,j)^{c}})\otimes \xi_{rs}. v_{i,j}- \eta_{*} \otimes C v_{*}\\
	=&-\eta_{*} \otimes \sum_{i<j} \xi_{(i,j)^{c}} \otimes v_{i,j}-  \eta_{*} \otimes C v_{*}.
	\end{align*}
	
	$\bullet$ Equations \eqref{s1s2s3grado2vstarzeroalfabetaconv*}. These equations are a consequence of {\bf S0}, i.e. $e_1. \vec{m}=e_2.\vec{m}=0$. Recall that $T(e_1.\vec {m})=T((-\xi_{13}+i\xi_{23}).\vec{m})=-(T({\xi_{13}}_\lambda \vec{m}))_{|\lambda=0}+i(T({\xi_{23}}_\lambda \vec{m}))_{|\lambda=0}$ and so it can be easily computed by means of Proposition \ref{actiondual}. We obtain
	
	
	\begin{align*}
	0&=T(e_1.\vec{m})\\
	&=-\eta_*\otimes e_1.v_*-(\eta_{1}-i\eta_{2})\eta_{4} \otimes v_{1,2}-\eta_{3}\eta_{4} \otimes e_1. v_{1,2}-i\eta_{3}\eta_{4}\otimes v_{1,3}-\eta_{2}\eta_{4} \otimes e_1. v_{1,3}\\
	&\quad+\eta_{1}\eta_{2}\otimes v_{1,4}-\eta_{2}\eta_{3} \otimes e_1. v_{1,4}+\eta_{3}\eta_{4} \otimes v_{2,3}-\eta_{1}\eta_{4} \otimes e_1. v_{2,3}\\
	&\quad+i\eta_{1}\eta_{2}\otimes v_{2,4}-\eta_{1}\eta_{3} \otimes e_1. v_{2,4}+(-\eta_{2}\eta_{3}-i\eta_{1}\eta_{3})\otimes v_{3,4}-\eta_{1}\eta_{2} \otimes e_1. v_{3,4}\\
	&=-\eta_*\otimes e_1.v_*+\eta_{1}\eta_{2}\otimes (v_{1,4}+iv_{2,4}-e_1.v_{3,4})+ \eta_{1}\eta_{3}\otimes (-e_1.v_{2,4}-iv_{3,4})\\
	&\quad+\eta_{1}\eta_{4}\otimes (-v_{1,2}-e_1.v_{2,3}) + \eta_{2}\eta_{3} \otimes (-e_1.v_{1,4}-v_{3,4})\\
	&\quad+\eta_{2}\eta_{4}\otimes (-e_1.v_{1,3}+iv_{1,2})+ \eta_{3}\eta_{4}\otimes (-e_1.v_{1,2}-iv_{1,3}+v_{2,3}).
	\end{align*}
	From the previous equation we obtain relations \eqref{s1s2s3grado2vstarzeroalfabetaconv*} for $e_1$.\\
	Equations \eqref{s1s2s3grado2vstarzeroalfabetaconv*} for $e_2$ are obtained similarly.
	
\end{proof}

\begin{lem}
	\label{lemmavettorigrado2senzav*}
	If $\vec{m}$ is a singular vector of degree 2 such that $T(\vec{m})$ is as in \eqref{vettsingetagrado2} then  with $v_{*}= 0$.
\end{lem}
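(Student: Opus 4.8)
The plan is to argue by contradiction, assuming $v_*\neq 0$. The input is condition \textbf{S0}. By the last block of relations in Lemma \ref{s1s2s3grado2vstardiversodazero}, namely \eqref{s1s2s3grado2vstarzeroalfabetaconv*}, we have $e_1.v_*=e_2.v_*=0$; since $e_x=\frac12(e_1+e_2)$ and $e_y=\frac12(e_1-e_2)$ this says $v_*$ is killed by both raising operators of $\g_0^{ss}\cong\mathfrak{sl}_2\oplus\mathfrak{sl}_2$. As $F$ is finite$-$dimensional and irreducible, the joint kernel of $e_x$ and $e_y$ is the one$-$dimensional highest weight space, so $v_*$ is a nonzero multiple of the highest weight vector $v_\mu$, and we may normalize $v_*=v_\mu$. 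Because $\Theta$ and $\eta_*$ carry $(h_x,h_y)$-weight $(0,0)$ and $\overline{\eta_{1234}}=1$, the vector $\vec{m}$ itself has $\Theta\otimes v_*$ as a summand; hence $v_*$ has the same $(h_x,h_y)$-weight as $\vec{m}$, namely $(m,n)$.

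Next I would exploit weights to prune the remaining components. Reading off $\vec{m}$ (rather than $T(\vec{m})$), the coefficient of each $\eta_{ij}$ is, up to sign, one of the $v_{k,l}$, so that $v_{k,l}$ has $(h_x,h_y)$-weight $(m,n)-w(\eta_{ij})$. Under $\g_0^{ss}$ the six quadratic monomials $\eta_{ij}$ span the representation with weights $(\pm 2,0),(0,0)$ and $(0,\pm 2),(0,0)$; consequently two of the $v_{k,l}$ would have weights $(m+2,n)$ and $(m,n+2)$, which strictly exceed the highest weight of $F$ and therefore vanish. Feeding this back into the relations \eqref{s1s2s3grado2vstarzeroalfabetaconv*} for $e_1$ and $e_2$ reduces all the surviving $v_{i,j}$ to explicit multiples of $v_\mu$ and of its lowerings $f_x.v_\mu,f_y.v_\mu$ by the lowering operators $f_x,f_y$ of $\g_0^{ss}$.

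Finally I would substitute these expressions into the scalar equations of Lemma \ref{s1s2s3grado2vstardiversodazero}. Rewriting every $\xi_{ab}$ through $e_x,e_y,f_x,f_y,h_x,h_y$ by means of \eqref{hx}, \eqref{notazionealfa} and \eqref{notazionebeta}, and using $e_x.v_\mu=e_y.v_\mu=0$ together with the eigenvalues of $h_x,h_y,t,C$ on $v_\mu$, equations \eqref{B.22B} and \eqref{B.26B} rewrite $v_*$ and $t.v_*=\mu_t v_*$ in terms of the surviving $v_{i,j}$, while \eqref{B.25B}, \eqref{B.27B} and \eqref{B.28B} contribute the central terms $C v_{i,j}$ and $Cv_*=\mu_C v_*$. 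Eliminating the $v_{i,j}$ among these identities should collapse the whole system to a single relation $\kappa\,v_*=0$ with a nonzero scalar $\kappa$, yielding $v_*=0$ and contradicting the assumption.

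The hardest part will be this last elimination. The difficulty is that the operators $\xi_{ab}$ act on the $v_{i,j}$, which are \emph{not} highest weight vectors, so the equations of Lemma \ref{s1s2s3grado2vstardiversodazero} are genuinely coupled rather than triangular; one must solve for the $v_{i,j}$ while simultaneously tracking every sign, every factor of $i$, and every central contribution before the cancellations emerge. The truly delicate point is to verify that the surviving coefficient $\kappa$ never vanishes on the weights allowed by the earlier constraints, since a spurious vanishing of $\kappa$ would leave $v_*$ undetermined and break the argument.
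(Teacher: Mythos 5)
Your opening moves are sound: condition \textbf{S0} via \eqref{s1s2s3grado2vstarzeroalfabetaconv*} does force $e_1.v_*=e_2.v_*=0$, so $v_*$ (if nonzero) spans the highest weight line of $F$, and the weight-pruning that kills the components paired with the $\eta$-monomials of weight $(-2,0)$ and $(0,-2)$ is legitimate. But the proof stops exactly where the lemma's content lies. Everything after that is deferred to an unexecuted elimination which you only assert "should collapse the whole system to a single relation $\kappa\,v_*=0$ with a nonzero scalar $\kappa$," and you yourself flag that the non-vanishing of $\kappa$ is unverified. This is a genuine gap, not a technicality: the remaining unknowns $v_{i,j}$ live in several weight spaces ($v_\mu$, $f_x.v_\mu$, $f_y.v_\mu$, $f_xf_y.v_\mu$), the equations \eqref{B.22B}--\eqref{B.28B} couple them through $\xi_{ab}$, $t$ and $C$, and it is entirely conceivable a priori that the resulting linear system is consistent for some special values of $(m,n,\mu_t,\mu_C)$ — indeed degenerate weights do admit degree-2 singular vectors, just with $v_*=0$. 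Ruling this out \emph{is} the proof, and it is missing.

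For comparison, the paper's argument avoids any weight analysis and any non-vanishing check. Taking \eqref{B.23B} together with the same equation with $a,b$ interchanged, and comparing with \eqref{B.25B}, one gets
\begin{equation*}
\xi_{ab}.v_{*}=(-1)^{a+b}t.v_{a,b}+2(-1)^{a+b+1}v_{a,b}+Cv_{c,d};
\end{equation*}
substituting \eqref{B.27B} into \eqref{B.25B} gives a second expression for $\xi_{ab}.v_*$ with the opposite signs on the $t.v_{a,b}$ and $v_{a,b}$ terms, and adding the two makes those terms cancel while the central terms cancel among themselves, yielding $\xi_{ab}.v_*=0$ for \emph{all} $a<b$. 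Irreducibility of $F$ then forces $F=\langle v_*\rangle$ with $\mathfrak{so}(4)$ acting trivially, after which \eqref{B.22B}, whose right-hand side is a sum of elements $\xi_{ja}.v_{j,a}$ with $v_{j,a}\in F$, reads $v_*=0$ — a contradiction. If you want to salvage your route, you must actually carry out the elimination and exhibit $\kappa$ as a function of the weight, checking it is nonzero for all admissible $(m,n,\mu_t,\mu_C)$; otherwise the argument proves nothing.
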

\begin{proof}
	Let $T(\vec{m}) \in \Ind(F)$ be as in formula \eqref{vettsingetagrado2}. We show that relations of Lemma \ref{s1s2s3grado2vstardiversodazero} lead to $v_{*}=0$.
	Let $a,b \in \left\{1,2,3,4\right\}$ with $a<b$ and $(a,b)^{c}=(c,d)$.\\
	Considering Equation \eqref{B.23B} and the same equation for reversed role of $a$ and $b$ we can deduce
	\begin{align*}
	0=&-(-1)^{a+b}2t.v_{a,b}+2(-1)^{a+b}v_{a,b}-\sum_{j\neq a,b}(-1)^{b+j}\xi_{aj}.v_{j,b}+\sum_{j\neq a,b}(-1)^{a+j}\xi_{bj}.v_{j,a}. 
	\end{align*}
	We compare this with Equation \eqref{B.25B} and obtain
	\begin{align}
	\label{B.30B}
	\xi_{ab}.v_{*}=(-1)^{a+b}t.v_{a,b}+2(-1)^{a+b+1}v_{a,b}+Cv_{c,d},
	\end{align}
	since for $a<b$ we have that $\varepsilon_{(a,b)}\varepsilon_{(c,d)}=1$.\\
	Now consider Equation  \eqref{B.25B}:
	\begin{align}
	\label{B.31B}
	0=\xi_{ab}. v_{*}+(-1)^{a+b}t.v_{a,b}+\sum_{ j\neq a,b}[(-1)^{b+j}\xi_{aj}.v_{jb}-(-1)^{a+j}\xi_{bj}.v_{j,a}]-Cv_{c,d}.
	\end{align}
	We also consider \eqref{B.27B} with $a=j$, $b=a$, $c=b$ and $d=h$ and we substitute it into \eqref{B.31B}; we obtain 
	\begin{align}
	\label{B.32B}
	\xi_{ab}. v_{*}=&(-1)^{a+b+1}t.v_{a,b}+2(-1)^{a+b}v_{a,b}+\sum_{j \neq a,b}\varepsilon_{(j,a,b)}(-1)^{h+j}Cv_{j,h}+Cv_{c,d}\\ \nonumber  
	=&(-1)^{a+b+1}t.v_{a,b}+2(-1)^{a+b}v_{a,b}+\sum_{j<a \ or \ j>b}(-1)^{j}Cv_{j,h}+\sum_{a<j<b}(-1)^{j+1}Cv_{j,h}+Cv_{c,d}.
	\end{align}
	Combining \eqref{B.32B} and \eqref{B.30B}, we get:
	\begin{align*}
	(-1)^{a+b}2t.v_{a,b}=&4(-1)^{a+b}v_{a,b}+\sum_{j<a\ or \ j>b}(-1)^{j}Cv_{j,h}+\sum_{a<j<b}(-1)^{j+1}Cv_{j,h}.
	\end{align*}
	Comparing this equation with \eqref{B.32B} we obtain
	\begin{align}
	2\xi_{ab}. v_{*}=&\sum_{j<a \ or \ j>b}(-1)^{j}Cv_{j,h}+\sum_{a<j<b}(-1)^{j+1}Cv_{j,h}+2Cv_{c,d}, \nonumber
	\end{align}
	which simplifies to
	\begin{align}
	\xi_{a,b}. v_{*}=0
	\end{align}
	for every $a<b$.
	This implies that, if $v_*\neq 0$, then $F=\langle v_{*} \rangle$ has dimension 1 and $\frak{so}(4)$ acts trivially on it. Moreover all the $v_{a,b}$'s are scalar multiple of $v_{*}$ since $F=\langle v_{*} \rangle$.\\
	By \eqref{B.22B} we also have $v_{*}=\sum_{j \neq a}(-1)^{a+j}\xi_{ja}.v_{j,a}$ for every $1 \leq a \leq 4$; then, since all the $v_{a,b}$'s are multiple of $v_{*}$, we have a contradiction.
\end{proof}
By Lemma \ref{lemmavettorigrado2senzav*} we know that if $\vec(m)$ is a singular vector of degree 2 $T(\vec m)$ has the following form
\begin{align}
\label{vettsingetagrado2senzav*}
T(\vec{m}) = \sum_{i<j}\eta_{(i,j)^{c}} \otimes v_{i,j} .
\end{align} 
\begin{rem}
	\label{alfabetadisegni}
	Relations \eqref{s1s2s3grado2vstarzeroalfabetaconv*}, by  Lemma \ref{lemmavettorigrado2senzav*} and notation \eqref{notazionevijw} are equivalent to the following:
	\begin{align}
	\label{s1s2s3grado2vstarzeroalfabetaw}
	&e_1.w_{1}=-w_{5}-w_{6},&&e_2.w_{1}=w_{5}+w_{6},\\ \nonumber
	&e_1.w_{2}=w_{5}-w_{6}, &&e_2.w_{2}=w_{5}-w_{6},\\  \nonumber
	&e_1.w_{3}=0, &&e_2.w_{3}=0,\\  \nonumber
	&e_1.w_{4}=0, &&e_2.w_{4}=0,\\  \nonumber
	&e_1.w_{5}=w_{3}-w_{4}, &&e_2.w_{5}=w_{3}+w_{4},\\  \nonumber
	&e_1.w_{6}=-w_{3}-w_{4}, &&e_2.w_{6}=-w_{3}+w_{4}.
	\end{align}
	We represent these relations in the following picture
		\begin{center}
		\begin{tikzpicture}
		\node at (0,0) (w1) {$\langle w_{1} \rangle$};
		\node at (3,0) (w5+w6) {$\langle w_{5}+w_{6} \rangle $};
		\node at (6,0) (w4) {$\langle w_{4} \rangle $,};
		\draw (w1) edge[out=40,in=140,->] (w5+w6);
		\node at (1.5,1) {$e_1$};
		\node at (1.5,-1) {$e_2$};
		\draw (w1) edge[out=-40,in=-140,->] (w5+w6);
		\draw (w5+w6) edge[out=40,in=140,->] (w4);
		\node at (4.5,1) {$e_1$};
		\node at (4.5,-1) {$e_2$};
		\draw (w5+w6) edge[out=-40,in=-140,->] (w4);
		\node at (8,0) (w2) {$\langle w_{2} \rangle$};
		\node at (11,0) (w5-w6) {$\langle w_{5}-w_{6} \rangle $};
		\node at (14,0) (w3) {$\langle w_{3} \rangle $.};
		\draw (w2) edge[out=40,in=140,->] (w5-w6);
		\node at (9.5,1) {$e_1$};
		\node at (9.5,-1) {$e_2$};
		\draw (w2) edge[out=-40,in=-140,->] (w5-w6);
		\draw (w5-w6) edge[out=40,in=140,->] (w3);
		\node at (12.5,1) {$e_1$};
		\node at (12.5,-1) {$e_2$};
		\draw (w5-w6) edge[out=-40,in=-140,->] (w3);
		\end{tikzpicture}
	\end{center}
	
\end{rem}
\begin{proof}[Proof of Theorem \ref{sing2}]
	Throughout this proof we let $\mu=(m,n,\mu_0,\mu_1)$ where $m,n,\mu_0,\mu_1$  denote the highest weights of $F$ with respect to $h_x,h_y,t,C$ respectively.
	We split the proof in four cases that we number by 1), 2), 3), 4).
	\begin{itemize}
		\item[\textbf{1)}] Let $w_{5}=w_{6}=0$. 
		
		We immediately have also $w_{3}=w_{4}=0$ by \eqref{s1s2s3grado2vstarzeroalfabetaw}.		
		\begin{itemize}
			\item[\textbf{1a)}] Let $w_{1} \neq 0$ and $w_{2} =0$. 
			
			By \eqref{s1s2s3grado2vstarzeroalfabetaw}, we have that $w_{1}$ is a highest weight vector and, by \eqref{notazionevijw},
			\begin{align*}
			&v_{1,2}=v_{3,4}=0,\,v_{1,3}=w_{1},\,v_{1,4}=iw_{1},\,v_{2,3}=iw_{1},\,v_{2,4}=-w_{1}.
			\end{align*}
			Equation \eqref{B.23B} for $a=1,b=3$ gives $(t-i\xi_{12}-1).w_1=0$,
			Equation \eqref{B.27B} for $a=1,b=2,c=3$ gives $(C-i\xi_{12}+1).w_1=0$, and
			Equation \eqref{B.27B} for $a=3,b=1,c=4$ gives $(C-i\xi_{34}+1).w_1=0$.

			Recalling that $h_x=-i\xi_{12}+i\xi_{34} $ and $h_y=-i\xi_{12}-i\xi_{34}  $ we deduce that $\mu=(0,n,1-\frac{n}{2},-1-\frac{n}{2})$ for some $n \in \Z_{\geq 0}$.
			
			A simple verification shows that these conditions lead to the vector
						\begin{align*}
			\vec{m}_{2a}=w_{11}w_{21} \otimes y^{n}_{1},
			\end{align*}
			in $M(0,n,1-\frac{n}{2},-1-\frac{n}{2})$ for $n \in \Z_{\geq 0}$ which is indeed a singular vector.
			\item[\textbf{1b)}] Let $w_{1} = 0$ and $w_{2}\neq 0$. 
			
			By \eqref{s1s2s3grado2vstarzeroalfabetaw} we have that $w_{2}$ is a highest weight vector and, by \eqref{notazionevijw},
			\begin{align*}
			&v_{1,2}=v_{3,4}=0, \,v_{1,3}=-w_{2} ,\,v_{1,4}=iw_{2},\,v_{2,3}=-iw_{2},\,v_{2,4}=-w_{2},\,v_{3,4}=0.
			\end{align*}	
			Equation \eqref{B.23B} for $a=1,b=3$ gives $(t-i\xi_{12}-1).w_{2}=0$,
			Equation \eqref{B.27B} for $a=1,b=2,c=3$ gives $(C+i\xi_{12}-1).w_2=0$, and
			Equation \eqref{B.27B} for $a=3,b=1,c=4$ gives $(-C+i\xi_{34}+1).w_2=0$.
			
			From these conditions, recalling that $h_x=-i\xi_{12}+i\xi_{34} $ and $h_y=-i\xi_{12}-i\xi_{34}  $, we deduce that $\mu=(m,0,1-\frac{m}{2},1+\frac{m}{2})$ with $m \in \Z_{\geq 0}$ and we obtain the singular vector
			\begin{align*}
			\vec{m}_{2b}=w_{11}w_{12} \otimes x^{m}_{1},
			\end{align*}
			in $M(m,0,1-\frac{m}{2},1+\frac{m}{2})$ with $m \in \Z_{\geq 0}$.
			\item[\textbf{1c)}] Let $w_{1} \neq 0$ and $w_{2}\neq 0$. 
			
			By \eqref{s1s2s3grado2vstarzeroalfabetaw}, we have that both $w_1$ and $w_2$ are highest weight vectors of $F$, so that $w_1=\alpha w_2$ for some $\alpha\neq 0$.
			By Equation \eqref{B.27B} for $a=3,b=2,c=4$  and for $a=4, b=1, c=3$ we obtain respectively
			\[(-\alpha -1-i\xi_{34}(-\alpha +1)+C(-\alpha +1)).w_2=0\]
			and
			\[(\alpha -1-i\xi_{34}(\alpha+1)+C(\alpha +1)).w_2=0.\] The sum and the difference of these two equations show an evident contradiction.
		\end{itemize}
		\item[\textbf{2)}] Let $w_5\neq 0$ and $w_5+w_6=0$. 
		
		  Since $e_1.w_{5}=w_{3}-w_{4}$ and $e_1.w_{6}=-w_{3}-w_{4}$ we deduce that $w_{4}=0$; we also know that $w_{2}\neq 0$ since $e_1.w_{2}=2w_{5}$. 		   		\begin{itemize}
			\item [\textbf{2a)}] Let $w_{1}=0$ and $w_{3} \neq 0$. By Remark \ref{alfabetadisegni} $w_3$ is a highest weight vector
			 and Equations \eqref{notazionevijw} provide
			\begin{align*}
			&v_{1,2}=2iw_{5}, \,v_{1,3}=-w_{2}-w_{3}, v_{1,4}=iw_{2}-iw_{3},\\
			&v_{2,3}=-iw_{2}+iw_{3},\,v_{2,4}=-w_{2}-w_{3},\,v_{3,4}=-2iw_{5}.
			\end{align*}
			Let us compute the weight of $w_2$ and $w_{3}$.\\
			Equation \eqref{B.23B} for $a=1,b=3$ gives $t.(-w_{2}-w_{3})+w_{2}+w_{3}-\xi_{1,2}.(-iw_{2}+iw_{3})+\xi_{14}.(-2iw_{5})=0$, and for $a=2, b=3$ gives $t.(-iw_{2}+iw_{3})+iw_{2}-iw_{3}+\xi_{12}.(-w_{2}-w_{3})-\xi_{24}.(-2iw_{5})=0$.\\
			Recalling the definition of $e_2$ in \eqref{notazionebeta}, we deduce from these equations that $2it.w_{3}-2iw_{3}-2\xi_{12}.w_{3}-2ie_2.w_{5}=0$
			that is equivalent to
			\[(t+i\xi_{12}-2).w_{3}=0.\]
			Equation	\eqref{B.27B} for $a=1,b=2,c=4$ gives $-w_{2}-w_{3}+\xi_{12}.(iw_{2}-iw_{3})-\xi_{14}.2iw_{5}-C(-w_{2}-w_{3})=0$, and for $a=2,b=1,c=4$ gives $-iw_{2}+iw_{3}+\xi_{12}.(-w_{2}-w_{3})+\xi_{24}.(2iw_{5})-C(-iw_{2}+iw_{3})=0$.\\
			By these equations we obtain $-2w_{3}-2i\xi_{12}.w_{3}+2e_2.w_{5}+2Cw_{3}=0$ that is equivalent to
			\[(-i\xi_{1,2}+C).w_{3}=0.\]		
			Equation \eqref{B.27B} for $a=3,b=1,c=4$ gives $-i(w_{2}-w_{3})+2i\xi_{13}.w_{5}+\xi_{34}.(w_{2}+w_{3})+iC(w_{2}-w_{3})=0$, and for $a=3,b=2,c=4$ gives $-w_{2}-w_{3}-\xi_{2,3}.(-2iw_{5})+\xi_{3,4}.(-iw_{2}+iw_{3})-C(-w_{2}-w_{3})=0$.\\
			By these equations we obtain
			\[(i\xi_{3,4}+C).w_{3}=0.\]
			
			Hence, we conclude that $\mu=(m,0,\frac{m}{2}+2,-\frac{m}{2})$ for some $m\geq 0$ and since $\dim  F\geq 3$ (since, e.g., $w_2$, $w_3$ and $w_5$ are linearly independent) we also have $m\geq 2$.
			All these conditions lead to
						\begin{align*}
			\vec{m}_{2c}=w_{22}w_{21} \otimes x^{m}_{1}+(w_{11}w_{22}+w_{21}w_{12}) \otimes x^{m-1}_{1}x_{2}-w_{11}w_{12}\otimes x^{m-2}_{1}x^{2}_{2},
			\end{align*}
			in $M(m,0,\frac{m}{2}+2,-\frac{m}{2})$ with $m \geq 2$, which is indeed a singular vector.

			\item [\textbf{2b)}] Let $w_1\neq 0$ or $w_3=0$. 
			We show that in this case necessarily $C=0$ so the $\lambda-$action of Proposition \ref{actiondual} reduces to the action found in Theorem 4.3 of \cite{kac1}; in that case it was shown that there are no singular vectors of degree 2. 
						
			Equation \eqref{B.27B} for $a=1,b=3,c=4$ gives $2iw_{5}+\xi_{13}.(iw_{1}+iw_{2}-iw_{3})+\xi_{14}.(w_{1}-w_{2}-w_{3})-2iCw_{5}=0$.\\
			Equation \eqref{B.27B} for $a=2,b=3,c=4$ gives $2iw_{5}+\xi_{23}.(w_{1}+w_{2}+w_{3})-\xi_{24}.(iw_{1}-iw_{2}+iw_{3})-2iCw_{5}=0$.\\
			We take the sum of these equations and get:
			\begin{align*}
			0&=4iw_5-ie_1.w_1-ie_1.w_2+ie_2.w_1-ie_2.w_2-2if_x.w_3-4iCw_5\\
			&=-2if_x.w_3-4iCw_5.
			\end{align*}
			If $w_3=0$ we can conclude $C=0$ so we can assume $w_3\neq 0$ and $w_1\neq 0$. We observe that $w_3$ and $w_1$ are highest weight vectors so that they are scalar multiples of each other. If we take the difference of the two equations above we get $f_x.w_1=0$ and so  we have $C=0$ also in this case.
		\end{itemize}
		\item[\textbf{3)}] Let $w_5=w_6\neq 0$.\\
		This condition implies $w_1\neq 0$ since $e_1.w_1=-w_5-w_6$, and $w_3=0$ since $e_1(w_5-w_6)=2w_3$. 
				\begin{itemize}
			\item[\textbf{3a)}] Let $w_{2}=0$ and $w_4\neq 0$. By Remark \ref{alfabetadisegni} 
			$w_{4}$ is a highest weight vector and Equations \eqref{notazionevijw} reduce to:
			\begin{align*}
			& v_{1,2}=2iw_{5},\,v_{1,3}=w_{1}+w_{4},\,v_{1,4}=iw_{1}-iw_{4},\,\\
			&v_{2,3}=iw_{1}-iw_{4},\,v_{2,4}=-w_{1}-w_{4},\,v_{3,4}=2iw_{5}.
			\end{align*}
			Let us compute the weight of $w_{4}$.\\
			Equation \eqref{B.23B} for $a=1$, $b=3$ gives $t.(w_1+w_4)-w_1-w_4-i\xi_{12}.(w_1-w_4)+2i\xi_{14}.w_5=0$, and for $a=2,b=3$ gives $it.(w_1-w_4)-i(w_1-w_4)+\xi_{12}.(w_1+w_4)-2i\xi_{24}.w_5=0$.\\
			These two equations provide
			\[
			(t+i\xi_{12}-2).w_4=0.
			\]
			Equation\eqref{B.27B} for $a=1,b=2,c=4$ gives $-w_1-w_4+i\xi_{12}.(w_1-w_4)-2i\xi_{14}.w_5-C(w_1+w_4)=0$, and for $a=2,b=1,c=4$ gives $i(-w_1+w_4)-\xi_{12}.(w_1+w_4)+2i\xi_{24}.w_5-iC(w_1-w_4)=0$.\\
			These two equations provide
			\[
			(i\xi_{12}+C).w_4=0.
			\]						
			Equation \eqref{B.27B} for $a=3,b=1,c=4$ gives $-i(w_1-w_4)-2i\xi_{13}.w_5-\xi_{34}.(w_1+w_4)-iC(w_1-w_4)=0$ and for for $a=3,b=2,c=4$ gives $-w_1-w_4-2i\xi_{23}.w_5+i\xi_{34}.(w_1-w_4)-C(w_1+w_4)=0$.\\
			These two equations provide
			\[
			(i\xi_{34}+C).w_4=0.
			\]
			We conclude that $\mu=(0,n,\frac{n}{2}+2,\frac{n}{2})$ for some $n\geq 0$. Moreover, since $w_1$, $w_5$ and $w_4$ are linearly independent we have $\dim F\geq 3$ and so $n\geq 2$. All this conditions lead to the vector
			\begin{align*}
			\vec{m}_{2d}=w_{22}w_{12} \otimes y^{n}_{1}-(w_{22}w_{11}+w_{21}w_{12}) \otimes y^{n-1}_{1}y_{2}-w_{11}w_{21}\otimes y^{n-2}_{1}x^{2}_{2},
			\end{align*}
			in $M(0,n,\frac{n}{2}+2,\frac{n}{2})$ with $n \geq 2$ which is indeed a singular vector.
			
			\item[\textbf{3b)}] Let $w_2\neq 0$ or $w_4=0$.\\
			We show that in this case necessarily $C=0$, so the $\lambda-$action of Proposition \ref{actiondual} reduces to the action found in Theorem 4.3 of \cite{kac1} and we already know that there are no singular vectors of degree 2.
			
			Equation \eqref{B.27B} for $a=1,b=3,c=4$ gives $-2iw_5+i\xi_{13}.(w_1+w_2-w_4)+\xi_{14}.(w_1-w_2+w_4)-2iCw_5=0$, and for $a=2,b=3,c=4$ gives $-2iw_5+\xi_{23}.(w_1+w_2+w_4)-i\xi_{24}.(w_1-w_2-w_4)-2iCw_5=0$.\\
			Taking the sum of these equations we obtain
			\begin{align*}
			0&=-4iw_{5}-i e_1.w_{1}+ie_2.w_{1}-ie_1.w_{2}\\
			&\quad-ie_2.w_{2}-i(\xi_{13}-\xi_{24}+i\xi_{14}+i\xi_{23}).w_{4}-4iCw_{5}\\
			&=-2if_y.w_4-4iCw_5.
			\end{align*}
			Therefore if $w_4=0$ we can conclude $C=0$. If $w_4\neq 0$ and so $w_2\neq 0$ and both $w_2$ and $w_4$ are highest weight vectors.
			We take the difference of the previous equations and we obtain:
			\begin{align*}
			0&=i(\xi_{13}+i\xi_{23}+\xi_{24}-i\xi_{14}).w_{1}+i(\xi_{13}+i\xi_{23}-\xi_{24}+i\xi_{14}).w_{2}\\
			&\quad-i(\xi_{13}+\xi_{24}+i\xi_{14}-i\xi_{2,3}).w_{4}\\
			&=2if_x.w_1+2if_y.w_2+i(e_1+e_2).w_4\\
			&=2if_x.w_1+2if_y.w_2.	\end{align*}
			Since $w_{2}$ is a highest weight vector and $w_{1}$ is not, these two terms are both 0. In particular, since $w_4$ is a scalar multiple of $w_2$ we have that $f_y.w_4=0$ and we conclude $C=0$ by a previous equation. 
		\end{itemize}
		\item[\textbf{4)}] Let $w_5\neq \pm w_6$.\\
		We show that in this case necessarily $C=0$, so the $\lambda-$action of Proposition \ref{actiondual} reduces to the action found in Theorem 4.3 of \cite{kac1} and we already know that there are no singular vectors of degree 2.\\
		Equation \eqref{B.27B} for $a=1,b=3,c=4$ gives
			\[-2iw_{6}+i\xi_{13}.(w_{1}+w_{2}-w_{3}-w_{4})+\xi_{14}.(w_{1}-w_{2}-w_{3}+w_{4})-2iCw_{5}=0,\]
		and for $a=2,b=3,c=4$ gives
			\[-2iw_{6}+\xi_{23}.(w_{1}+w_{2}+w_{3}+w_{4})-i\xi_{24}.(w_{1}-w_{2}+w_{3}-w_{4})-2iCw_{5}=0.\]
		These equations provide
		\begin{equation}\label{somm1}
		2Cw_5+f_x.w_3+f_y.w_4=0.
		\end{equation}
		Equation \eqref{B.27B} for $a=4,b=1,c=2$ gives
			\[-2iw_{5}+\xi_{14}.(w_{1}+w_{2}+w_{3}+w_4)-i\xi_{24}.(w_{1}+w_{2}-w_{3}-w_4)-2iCw_{6}=0,\]
		and for $a=3,b=1,c=2$ gives
			\[-2iw_{5}+i\xi_{13}.(w_{1}-w_{2}+w_{3}-w_4)+\xi_{23}.(w_{1}-w_{2}-w_{3}+w_4)-2iCw_{6}=0.\]
		These equations provide
		\begin{equation}\label{somm2}
		2Cw_6+f_x.w_3-f_y.w_4=0
		\end{equation}
		If $w_3=0$ or $w_4=0$ we immediate deduce by \eqref{somm1} and \eqref{somm2} that $C=0$, since $w_5\neq \pm w_6$. If $w_3\neq 0$ and $w_4\neq 0$ they are both highest weight vectors. Applying $e_1=e_x+e_y$ to \eqref{somm1} we obtain
		\[
		2C(w_3-w_4)+h_x.w_3+h_y.w_4=0
		\]
and applying $e_2=e_x-e_y$ to \eqref{somm1} we obtain
\[
2C(w_3+w_4)+h_x.w_3-h_y.w_4=0.
\]
From these equations we deduce $2C+m=0$ and so $C\leq 0$, and $-2C	+n=0$ and so $C\geq 0$.	
\end{itemize}
\end{proof}

\section{Singular vectors of degree $3$}
The aim of this section is to classify all singular vectors of degree $3$.\\
We have that a singular vector $\vec{m}$ of degree $3$ is such that:
\begin{align}
\label{notazionevett3eta}
T(\vec{m})=\Theta \sum_{i}\eta_{(i)^{c}} \otimes v_{i,1}+ \sum_{i}\eta_{i} \otimes v_{i,0}.
\end{align}

We write the vector $\vec{m}$ also in the following way
\begin{align}
\label{notazionevett3etanonduale}
\vec{m}=&(\eta_{2}+i\eta_{1})(\eta_{2}-i\eta_{1})(\eta_{4}+i\eta_{3}) \otimes w_{1}+(\eta_{2}+i\eta_{1})(\eta_{2}-i\eta_{1})(\eta_{4}-i\eta_{3}) \otimes w_{2}+\\ \nonumber
&(\eta_{4}+i\eta_{3})(\eta_{4}-i\eta_{3})(\eta_{2}+i\eta_{1}) \otimes w_{3}+(\eta_{4}+i\eta_{3})(\eta_{4}-i\eta_{3})(\eta_{2}-i\eta_{1}) \otimes w_{4}+\\ \nonumber
&\Theta (\eta_{2}+i\eta_{1})\otimes w_{5}+\Theta (\eta_{2}-i\eta_{1})\otimes w_{6}+\Theta (\eta_{4}+i\eta_{3})\otimes w_{7}+\Theta (\eta_{4}-i\eta_{3})\otimes w_{8}\\ \nonumber
=&(2 \Theta i\eta_{3}+2 \Theta  \eta_{4}- 2\eta_{1}\eta_{2}\eta_{3}+2 i\eta_{1}\eta_{2}\eta_{4})\otimes w_{1}+(-2i \Theta \eta_{3}+2 \Theta  \eta_{4}+2 \eta_{1}\eta_{2}\eta_{3}+2i \eta_{1}\eta_{2}\eta_{4})\otimes w_{2}+\\ \nonumber
&(2i \Theta \eta_{1}+2 \Theta  \eta_{2}-2 \eta_{1}\eta_{3}\eta_{4}+2 i\eta_{2}\eta_{3}\eta_{4})\otimes w_{3}+(-2i \Theta \eta_{1}+2 \Theta \eta_{2}+2 \eta_{1}\eta_{3}\eta_{4}+2i \eta_{2}\eta_{3}\eta_{4})\otimes w_{4}+\\ \nonumber
&\Theta (\eta_{2}+i\eta_{1})\otimes w_{5}+\Theta (\eta_{2}-i\eta_{1})\otimes w_{6}+\Theta (\eta_{4}+i\eta_{3})\otimes w_{7}+\Theta (\eta_{4}-i\eta_{3})\otimes w_{8}.
\end{align}
From these two expressions it follows that
\begin{align}
\label{notazionevijwgrado3}
&v_{1,0}=2iw_{3}+2iw_{4},\\ \nonumber
&v_{2,0}=2w_{3}-2w_{4},\\  \nonumber
&v_{3,0}=2iw_{1}+2iw_{2},\\  \nonumber
&v_{4,0}=2w_{1}-2w_{2},\\  \nonumber
&v_{1,1}=-2iw_{3}+2iw_{4}-i w_{5}+iw_{6},\\  \nonumber
&v_{2,1}=2w_{3}+2w_{4}+w_{5}+w_{6},\\  \nonumber
&v_{3,1}=-2iw_{1}+2iw_{2}-iw_{7}+iw_{8},\\  \nonumber
&v_{4,1}=2w_{1}+2w_{2}+w_{7}+w_{8}.  
\end{align}
Indeed, let us show for example one of the previous equations. In \eqref{notazionevett3eta}, let us consider $\eta_{2} \otimes v_{2,0}$. We have that $\eta_{2} $ is the Hodge dual of $-\eta_{134} $. In \eqref{notazionevett3etanonduale}, the terms in $\eta_{134} $ are:
\begin{align*}
-2 \eta_{134}\otimes w_{3}+2 \eta_{134}\otimes w_{4},
\end{align*}
hence $v_{2,0}=2w_{3}-2w_{4}$. Analogously for $v_{1,0},v_{3,0}$ and $v_{4,0}$. Moreover in \eqref{notazionevett3eta}, let us consider, for example, $\Theta\eta_{(1)^{c}} \otimes v_{1,1}=\Theta \eta_{234} \otimes v_{1,1}$. We have that $\Theta \eta_{234} $ is the Hodge dual of $-\Theta \eta_{1}  $. In \eqref{notazionevett3etanonduale}, the terms in $\Theta \eta_{1} $ are:
\begin{align*}
2i \Theta \eta_{1}\otimes w_{3}-2i \Theta \eta_{1}\otimes w_{4}+i\Theta \eta_{1}\otimes w_{5}-i\Theta \eta_{1}\otimes w_{6},
\end{align*}
hence $v_{1,1}=-2iw_{3}+2iw_{4}-i w_{5}+i w_{6}$. Analogously for $v_{2,1},v_{3,1}$ and $v_{4,1}$.\\
In the following lemma we write explicitly some of the relations of Proposition \ref{azioneabcdfinal} for a vector as in formula \eqref{notazionevett3eta}.
\begin{lem}
	\label{azioneabcdfinalgrado3}
	Let $\vec{m} \in \Ind F$ be a singular vector such that $T(\vec m)$ is as in formula \eqref{notazionevett3eta}. Then\\
	\textbf{1)} For all $b\in \{1,2,3,4\}$ we have
	\begin{align}
	\label{C0+B1grado3} 
	0=&\sum_{i}  \bigg[ \sum _{l < k} (  \xi_{blk} \star \eta_{i} ) \otimes   \xi_{lk}.  v_{i,0}  - \varepsilon_{b}( \xi_{(b)^{c}} \star \eta_{i})  \otimes C v_{i,0} +(\xi_b \star \eta_{(i)^{c}})  \otimes t.v_{i,1}\\ \nonumber
	&+ \sum _{l \neq k} (\partial_{l}\xi_{bk} \star \eta_{(i)^{c}} )\otimes \xi_{lk}.  v_{i,1} \bigg].
	\end{align}
	\textbf{2)} For all $s\in \{1,2,3,4\}$ we have

	\begin{align}
	\label{B0+b1grado3}
	0=&\sum_{i}\bigg[(\xi_s \star \eta_{i} ) \otimes t.v_{i,0}+\sum^{4}_{l=1} \partial_{l}(  \xi_{sl} \star \eta_{i} ) \otimes  v_{i,0} + \sum _{l \neq k} (\partial_{l}\xi_{sk} \star \eta_{i}) \otimes \xi_{lk}.  v_{i,0} \\ \nonumber
	&+ \sum^{4}_{l=1}(\partial_{l}\xi_s\star \partial_{l}\eta_{(i)^{c}}) \otimes  v_{i,1}  \bigg],
	\end{align}
	For all $b\in \{1,2,3,4\}$ we have
	\begin{align}
	\label{B1+a1grado3}
	0=&\sum_{i} \bigg[(\xi_b \star \eta_{(i)^{c}} ) \otimes t.v_{i,1} + \sum _{l \neq k}( \partial_{l}\xi_{bk} \star \eta_{(i)^{c}} )\otimes \xi_{lk}.  v_{i,1}- (\xi_b \star \eta_{(i)^{c}}) \otimes v_{i,1}\bigg].
	\end{align}
	\textbf{3)} For all $I$ such that $|I|=3$ we have
	
	\begin{align}
	\label{b0+G1grado3}
	0=&\sum_{i} \bigg[ \partial_{i}\eta_I \otimes  v_{i,0} +\sum_{r<s}  (\partial_{rs}\xi_I \star \eta_{i}) \otimes \xi_{rs}. v_{i,0} + \varepsilon_{I}( \xi_{I^{c}} \star \eta_{i})  \otimes C  v_{i,0}  \bigg].
	\end{align}
	
	For all $I$ such that $|I|=4$ we have
	\begin{align}
	\label{b0+G1grado3bis}
	0=&\sum_{i} \bigg[ - \partial_{i}\eta_I \otimes  v_{i,0} +\sum_{r<s}  (\partial_{rs}f \star \eta_{i}) \otimes \xi_{rs} .v_{i,0}  - \varepsilon_{I} \eta_{(i)^{c}} \otimes C v_{i,1}\bigg].
	\end{align}

\end{lem}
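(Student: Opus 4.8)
The plan is to derive Lemma \ref{azioneabcdfinalgrado3} as a direct specialization of Proposition \ref{azioneabcdfinal} to the degree $3$ ansatz \eqref{notazionevett3eta}, whose only nonzero components are $v_{i,0}$ (carried by $\eta_i$, hence $|L|=1$) and $v_{i,1}$ (carried by $\eta_{(i)^c}$, hence $|L|=3$). First I would match each displayed identity with the combination of coefficients of \eqref{lambdaactionabcd} that it encodes: \eqref{C0+B1grado3} is $C_0(b)+B_1(b)=0$ with $I=(b)$; \eqref{B0+b1grado3} and \eqref{B1+a1grado3} are $B_0(s)+b_1(s)=0$ and $B_1(b)+a_1(b)=0$ with $|I|=1$; and \eqref{b0+G1grado3}, \eqref{b0+G1grado3bis} are the instances $|I|=3$ and $|I|=4$ of $b_0(I)+G_1(I)=0$. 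All of these vanishings are provided by Proposition \ref{azioneabcdfinal}.

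Next I would substitute \eqref{notazionevett3eta} into the explicit expressions for $a_p,b_p,B_p,C_p,D_p,G_p$ following \eqref{lambdaactionabcd}. In every $\sum_L$ only the terms with $(|L|,p)\in\{(1,0),(3,1)\}$ survive, so each coefficient collapses to a single sum over $i$, and the prefactor $(-1)^{|I|(|I|+1)/2+|I||L|}$ must be evaluated on $|L|\in\{1,3\}$: it is $1$ for $|I|=1$, it is $(-1)^{|L|}$ for $|I|=3$ (which, after multiplying $b_0(I)=0$ by $-1$, accounts for the overall sign of \eqref{b0+G1grado3}), and it is $1$ for $|I|=4$. Several summands disappear for structural reasons, and I would record these explicitly: $\xi_{bl}\star\eta_{(i)^c}=0$ since $\{b,l\}\not\subseteq\{i\}$, so the middle term of $B_1$ drops; $\partial_{rs}\xi_I=0$ when $|I|=1$ since two derivatives kill a single $\xi$, so the middle term of $b_1$ drops; and $G_p(I)=0$ unless $|I|=4$, so $G_1$ contributes only to \eqref{b0+G1grado3bis}.

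The last step is to rewrite the surviving terms into the exact shape of the statement. Here the two identities I would use are $\sum_l(\partial_l\xi_I\star\partial_l\eta_i)=\partial_i\eta_I$, which follows from $\partial_l\eta_i=\bigchi_{l=i}$ together with $\xi_{I\setminus i}\star\eta_\emptyset=\eta_{I\setminus i}$, and which produces the leading term $\partial_i\eta_I\otimes v_{i,0}$ of \eqref{b0+G1grado3} and \eqref{b0+G1grado3bis}, together with the antisymmetry $\xi_{kl}=-\xi_{lk}$, which converts the defining $(-1)^{|I|}\sum_{l\neq k}(\partial_l\xi_{Ik}\star\eta_L)\otimes\xi_{kl}$ summand into the $+\sum_{l\neq k}(\partial_l\xi_{bk}\star\eta_{(i)^c})\otimes\xi_{lk}$ appearing in \eqref{C0+B1grado3}, \eqref{B0+b1grado3} and \eqref{B1+a1grado3}. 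I expect the only genuine obstacle to be the sign bookkeeping: keeping the Koszul signs hidden in $\star$, the orientation signs $\varepsilon_I$, and the ordering signs $\xi_{kl}=-\xi_{lk}$ mutually consistent across the four coefficients being added. Once these are fixed on the two relevant values of $|L|$, each equation of the lemma can be read off term by term, with no new input beyond Proposition \ref{azioneabcdfinal} and the structural vanishings above.
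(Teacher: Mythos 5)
Your proposal is correct and follows exactly the paper's own route: the paper's proof consists precisely of the identification you make in your first paragraph (\eqref{C0+B1grado3} as $C_0(b)+B_1(b)=0$, \eqref{B0+b1grado3} as $B_0(s)+b_1(s)=0$, \eqref{B1+a1grado3} as $B_1(b)+a_1(b)=0$, and \eqref{b0+G1grado3}, \eqref{b0+G1grado3bis} as $b_0(I)+G_1(I)=0$ for $|I|=3$ and $|I|=4$ respectively), with the substitution into the ansatz left implicit. The details you spell out --- the sign prefactors $(-1)^{|I|(|I|+1)/2+|I||L|}$ on $|L|\in\{1,3\}$, the vanishings $\xi_{bl}\star\eta_{(i)^c}=0$, $\partial_{rs}\xi_I=0$ for $|I|=1$, $G_1(I)=0$ for $|I|\neq 4$, and the rewritings via $\sum_l(\partial_l\xi_I\star\partial_l\eta_i)=\partial_i\eta_I$ and $\xi_{kl}=-\xi_{lk}$ --- are all correct and simply make explicit the verification the paper leaves to the reader.
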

\begin{proof}
These are particular cases of Proposition \ref{azioneabcdfinal}. In particular we have \eqref{C0+B1grado3} is $C_0(b)+B_1(b)=0$, \eqref{B0+b1grado3} is $B_0(s)+b_1(s)=0$, \eqref{B1+a1grado3} is $B_1(b)+a_1(b)=0$, \eqref{b0+G1grado3} is $b_0(I)+G_1(I)=0$ for $|I|=3$ and \eqref{b0+G1grado3bis} is $b_0(I)+G_1(I)=0$ for $|I|=4$.
\end{proof}
\begin{lem}
	\label{azioneabcdfinalgrado3finali}
	Let $\vec{m} \in \Ind F$ be a highest weight singular vector such that $T(\vec{m})$ is as in formula \eqref{notazionevett3eta}. Then
	for every $(a,b,c,d)$ permutation of $\{1,2,3,4\}$ we have
	\begin{align}
	\label{eqC} &v_{a,1}=(-1)^{a+1}2 Cv_{a,0}.\\
	\label{E00} &t.v_{a,0}-2 v_{a,0}+\xi_{ab}.v_{b,0}=0\\
	\label{57}  &v_{c,0}+\xi_{ca}.v_{a,0}+\xi_{cb}.v_{b,0}=0,\\
	\label{60}  &\xi_{bc}.v_{d,0}+\varepsilon_{(a,b,c)}Cv_{a,0}=0.
	\end{align}
	Moreover $C$ (resp. $t$) acts as multiplication by $\pm \frac{1}{2}$ (resp. $\frac{5}{2}$) on $F$.\\

	Finally:
	\begin{align}
	\label{alfabetagrado3}
	&e_1.v_{1,0}=-v_{3,0},            &&e_2.v_{1,0}=- iv_{4,0},\\ \nonumber
	&e_1. v_{2,0}=iv_{3,0},           &&e_2. v_{2,0}=- v_{4,0},\\ \nonumber
	&e_1. v_{3,0}=v_{1,0}-iv_{2,0},   &&e_2. v_{3,0}=0,\\ \nonumber
	&e_1. v_{4,0}=0                   &&e_2.v_{4,0}=iv_{1,0}+v_{2,0},
	\end{align}
	where $e_1$ and $e_2$ are defined by \eqref{notazionealfa} and \eqref{notazionebeta}.
\end{lem}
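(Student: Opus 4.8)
The plan is to read each displayed identity off the relations already recorded in Lemma \ref{azioneabcdfinalgrado3}, by isolating the $F$-component attached to a single basis monomial $\eta_L$, exactly as was done for degree $2$ in Lemma \ref{s1s2s3grado2vstardiversodazero}. Since in \eqref{notazionevett3eta} the vector $T(\vec m)$ is supported only on $\eta_i$, $\eta_{(i)^c}$ and $\Theta\eta_{(i)^c}$, and the $\lambda$-action of Proposition \ref{actiondual} operates through the $\star$-product together with the $\g_0$-action, every relation of Lemma \ref{azioneabcdfinalgrado3} is a finite sum of terms $\eta_L\otimes(\text{element of }F)$; setting each such $F$-coefficient to zero yields the scalar identities among the $v_{i,0}$ and $v_{i,1}$.

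Concretely, I would first extract \eqref{57} and \eqref{60} from the length-three relation \eqref{b0+G1grado3}, which involves the $v_{i,0}$ only: collecting the coefficient of a length-two monomial $\eta_L$ with both indices inside $I$ produces \eqref{57}, while collecting the coefficient of a monomial involving the complementary index carries the central term $\varepsilon_{(a,b,c)}Cv_{a,0}$ and produces \eqref{60}. Next I would obtain \eqref{eqC} from the length-four relation \eqref{b0+G1grado3bis} (with $I=1234$): reading off the coefficient of $\eta_{(a)^c}$ gives $-(-1)^{a+1}v_{a,0}-Cv_{a,1}$ together with a sum $\sum_{i\neq a}\pm\,\xi_{ai}.v_{i,0}$, and \eqref{57} collapses this sum to a multiple of $v_{a,0}$, leaving $v_{a,1}=(-1)^{a+1}2Cv_{a,0}$. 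Finally \eqref{E00} follows from \eqref{B0+b1grado3} by reading off the coefficient of a suitable length-two monomial and using \eqref{eqC} to eliminate every $v_{i,1}$ in favour of $v_{i,0}$; the remaining relation \eqref{B1+a1grado3} is then automatically satisfied and serves as a consistency check.

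The determination of the scalars is the delicate point, and I expect it to be the main obstacle. By Schur's lemma $C$ and $t$ act on the irreducible module $F$ as scalars $\mu_C$ and $\mu_t$, so the content is to pin down the two values. For $C$ I would compose two instances of \eqref{60} chosen so that $\xi_{bc}$ is applied twice, once to $v_{d,0}$ and once to $v_{a,0}$; since $C$ is central this rewrites $\xi_{bc}.(\xi_{bc}.v_{d,0})$ as $\varepsilon_{(a,b,c)}\varepsilon_{(d,b,c)}\mu_C^2\,v_{d,0}$, and evaluating the left-hand operator on $v_{d,0}$ via the explicit $\so(4)$-action \eqref{alfabetagrado3} forces $\mu_C^2=\tfrac14$, i.e. $\mu_C=\pm\tfrac12$. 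For $t$ I would substitute $t.v_{a,0}=\mu_t v_{a,0}$ into \eqref{E00}, turning it into $\xi_{ab}.v_{b,0}=(2-\mu_t)v_{a,0}$, and then close the loop by combining this with \eqref{57} and the root-vector relations \eqref{alfabetagrado3}; the resulting scalar equation fixes $\mu_t=\tfrac52$. Keeping track of the signs $\varepsilon_{(a,b,c)}$ and choosing the right permutations to combine is where the argument is most error-prone.

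The relations \eqref{alfabetagrado3} are nothing but condition \textbf{S0}, namely $e_1.\vec m=e_2.\vec m=0$, and I would establish them by the same direct computation used for \eqref{s1s2s3grado2vstarzeroalfabetaconv*}: writing $T(e_1.\vec m)=-\big(T({\xi_{13}}_\lambda\vec m)\big)_{|\lambda=0}+i\big(T({\xi_{23}}_\lambda\vec m)\big)_{|\lambda=0}$, expanding by Proposition \ref{actiondual}, and setting to zero the coefficient of each length-three monomial $\eta_{(i)^c}$; the computation for $e_2$ is entirely analogous. This last step is purely mechanical once Proposition \ref{actiondual} is available.
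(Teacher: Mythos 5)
Your plan for \eqref{57} and \eqref{60} (from \eqref{b0+G1grado3}), for \eqref{alfabetagrado3} (from \textbf{S0}), and your determination of $t=\tfrac52$ (substituting \eqref{E00} into \eqref{57} gives $(5-2\mu_t)v_{c,0}=0$, which is in fact more direct than the paper's route) are all sound. But there are two connected gaps, both traceable to your decision to treat \eqref{C0+B1grado3} and \eqref{B1+a1grado3} as redundant consistency checks.

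First, your derivation of \eqref{eqC} does not produce \eqref{eqC}. The coefficient of $\eta_{(a)^{c}}$ in \eqref{b0+G1grado3bis} is
\[
(-1)^a\Bigl(v_{a,0}+\sum_{i\neq a}\xi_{ai}.v_{i,0}\Bigr)-Cv_{a,1},
\]
and summing the three instances of \eqref{57} having $c=a$ gives $\sum_{i\neq a}\xi_{ai}.v_{i,0}=-\tfrac32 v_{a,0}$; what comes out is therefore $Cv_{a,1}=\tfrac{(-1)^{a+1}}{2}v_{a,0}$, with $C$ acting on $v_{a,1}$, not $v_{a,1}=(-1)^{a+1}2Cv_{a,0}$. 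The two statements coincide only if one already knows $C^2=\tfrac14$, which is not available at this stage (and if $C$ acts by zero, your relation says nothing about $v_{a,1}$ at all). The genuine source of \eqref{eqC} is the pair of \textbf{S1}/\textbf{S2} relations: the difference of \eqref{C0+B1grado3} and \eqref{B1+a1grado3} expresses $\eta_{(b)^{c}}\otimes v_{b,1}$ through the $\xi_{lk}.v_{i,0}$ and $Cv_{b,0}$, and this is then inserted into the $\eta_{(s,b)^{c}}$-components of \eqref{B0+b1grado3} summed over $s\neq b$. That input cannot be dispensed with.

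Second, your argument for $C=\pm\tfrac12$ is circular. Composing \eqref{60} with itself gives $\xi_{bc}.(\xi_{bc}.v_{d,0})=\varepsilon_{(a,b,c)}\varepsilon_{(d,b,c)}C^2v_{d,0}=-C^2v_{d,0}$, but the only means your relations provide to evaluate the left-hand side is \eqref{60} itself, so this is a tautology: \eqref{alfabetagrado3} records the action of $e_1,e_2$ on the $v_{i,0}$, not that of an individual $\xi_{bc}$, and $\xi_{bc}^2$ is not a Lie algebra element, so its action on $F$ is not otherwise constrained. Pinning down $C$ genuinely requires \eqref{eqC}: either insert the (correctly established) \eqref{eqC} into \eqref{b0+G1grado3bis} to get $(1+2C^2)v_{a,0}+\sum_{i\neq a}\xi_{ai}.v_{i,0}=0$ and combine it with \eqref{E00} and $t=\tfrac52$ (the paper's route), or combine \eqref{eqC} with the relation $Cv_{a,1}=\tfrac{(-1)^{a+1}}{2}v_{a,0}$ that your computation actually yields, which forces $2C^2=\tfrac12$. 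Finally, a small repair to your third step: \eqref{E00} should be extracted from the coefficient of a monomial $\eta_{sr}$ containing the index $s$, where the terms $\partial_s\eta_{(i)^{c}}\otimes v_{i,1}$ cannot contribute; then \eqref{E00} follows without invoking \eqref{eqC}, which is important since your version of \eqref{eqC} is not yet available at that point.
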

\begin{proof}
	$\bullet$ Equation \eqref{eqC}.
	We consider the difference between \eqref{C0+B1grado3} and \eqref{B1+a1grado3}. We assume $(a,c,d)=(b)^{c}$. We have that:
	\begin{align*}
	&-(\xi_{b}\star \eta_{(b)^{c}}) \otimes v_{b,1}=\sum^{4}_{i=1}\sum_{l<k}(\xi_{blk} \star\eta_{i})\otimes \xi_{lk}. v_{i,0} -\varepsilon_{b} (\xi_{acd}\star \eta_{b}) \otimes  Cv_{b,0}.
	\end{align*}
	It is equivalent to:
	\begin{align}
	\label{a1=C0}
	&(\xi_{b}\star \eta_{(b)^{c}} )\otimes v_{b,1}=-\sum_{l<k}(\xi_{blk}\star \eta_{(b,l,k)^{c}})\otimes \xi_{lk}. v_{(b,l,k)^{c},0} -\varepsilon_{b}\eta_{bacd}\otimes  Cv_{b,0}.
	\end{align}
	Let us focus on Equation \eqref{B0+b1grado3} for $s \neq b$. We have:
	\begin{align}
	\label{B0+b1grado3appoggio}
	0=\sum^{4}_{i=1} \partial_{s}\eta_{(i)^{c}}\otimes v_{i,1}+ \sum^{4}_{i=1} (\xi_{s}\star \eta_{i})\otimes t.v_{i,0}+  \sum^{4}_{i=1} \sum^{4}_{l=1}\partial_{l}(\xi_{sl}\star\eta_{i})  \otimes v_{i,0}+\sum^{4}_{i=1}\sum_{l \neq s}(\xi_{l}\star \eta_{i})\otimes \xi_{sl}.v_{i,0} .
	\end{align}
	The terms in $\eta_{(s,b)^{c}} $ of this equation are:
	\begin{align*}
	\partial_{s}\eta_{(b)^{c}}\otimes v_{b,1}+\sum_{l \neq s,b}(\xi_{l}\star \eta_{(s,b,l)^{c}})\otimes \xi_{sl}.v_{(s,b,l)^{c},0}=0.
	\end{align*}
	We take the sum over $s \neq b$ and, as in \cite{kac1}, using \eqref{a1=C0} we obtain:
	\begin{align*}
	0=&\sum_{s \neq b} (\xi_{s} \star  \partial_{s}\eta_{(b)^{c}} )\otimes v_{b,1}+\sum_{s \neq b}\sum_{l \neq s,b}(\xi_{sl}\star \eta_{(s,b,l)^{c}})\otimes \xi_{sl}.v_{(s,b,l)^{c},0}\\
	=&3\eta_{(b)^{c}}\otimes v_{b,1}+2\sum_{s<l}(\xi_{sl}\star \eta_{(s,b,l)^{c}})\otimes \xi_{sl}.v_{(s,b,l)^{c},0}\\
	=&\eta_{(b)^{c}}\otimes (v_{b,1}-2\varepsilon_{(b)} Cv_{b,0}).
	\end{align*}
	Equation \eqref{eqC} follows.
	
	$\bullet$ Equation \eqref{E00}.
	Given $r\neq s \in \left\{1,2,3,4\right\}$, the terms in $\eta_{sr} $ of \eqref{B0+b1grado3appoggio} are:
	\begin{align*}
	\eta_{sr} \otimes t.v_{r,0}+\sum_{l \neq s,r} \partial_{l}(\xi_{sl}\star\eta_{r}) \otimes v_{r,0}-\eta_{sr} \otimes \xi_{sr}.v_{s,0}=0.
	\end{align*}
	This condition is equivalent to:
	\begin{align*}
	t.v_{r,0}-2 v_{r,0}-\xi _{sr}.v_{s,0}=0
	\end{align*}
	which is Equation \eqref{E00}.
	
	$\bullet$ Equations \eqref{57} and \eqref{60}.

	Let us analyze Equation \eqref{b0+G1grado3} for $I=abc$. We obtain:
	\begin{align*}
	\sum^{4}_{i=1}\sum^{4}_{l=1} (\partial_{l}\xi_{abc}\star \partial_{l}\eta_{i}) \otimes v_{i,0}+\sum_{r<s}(\partial_{rs}\xi_{abc}	\star \eta_{i} )\otimes \xi_{rs}.v_{i,0}+\sum_{i}\varepsilon_{(a,b,c)}(\xi_{d}\star \eta_{i}) \otimes C v_{i,0}=0.
	\end{align*}
	Looking at the coefficients of $\eta_{ab}$ and $\eta_{ad}$  we obtain Equations \eqref{57} and \eqref{60}: 
	\begin{align*}
	v_{c,0}+\xi_{ca}.v_{a,0}+\xi_{cb}.v_{b,0}=0,\\
	\xi_{bc}.v_{d,0}+\varepsilon_{(a,b,c)}Cv_{a,0}=0.
	\end{align*}
	$\bullet$ $C=\pm 1/2$ and $t=5/2$.\\
	Using \eqref{eqC}, Equation \eqref{b0+G1grado3bis} for $I=1234$ is:
	\begin{align*}
	0=&- \sum^{4}_{i=1}\sum^{4}_{l=1} (\partial_{l}\xi_{1234}\star \partial_{l}\eta_{i}) \otimes  v_{i,0}+\sum_{r<s}\sum^{4}_{i=1}  (\partial_{rs}\xi_{1234}\star \eta_{i} )\otimes \xi_{rs}. v_{i,0} -C \sum_{i} \eta_{(i)^{c}} \otimes v_{i,1}\\
	=&\eta_{123}\otimes ((1+2C^2)v_{4,0}+\xi_{41}.v_{1,0}+\xi_{42}.v_{2,0}+\xi_{43}.v_{3,0})\\
	&-\eta_{124}\otimes ((1+2C^2)v_{3,0}+\xi_{31}.v_{1,0}+\xi_{32}.v_{2,0}+\xi_{34}.v_{4,0})\\
	&+\eta_{134}\otimes ((1+2C^2)v_{2,0}+\xi_{21}.v_{1,0}+\xi_{23}.v_{3,0}+\xi_{24}.v_{4,0})\\
	&-\eta_{234}\otimes ((1+2C^2)v_{1,0}+\xi_{12}.v_{2,0}+\xi_{13}.v_{3,0}+\xi_{14}.v_{4,0}).
	\end{align*}
	Therefore for every $a=1,2,3,4$ we have $(1+2C^2)v_{a,0}+\sum_{b\neq a}\xi_{ab}.v_{b,0}=0$ and by Equation \eqref{E00} we deduce $(7+2C^2-3t).v_{a,0}=0$. This implies that $t$ acts as $\frac{1}{3}(7+2C^2)$ on $F$.

	Equation \eqref{57}, for $a=2,b=3,c=1$, is:
	\begin{align*}
	v_{1,0}+\xi_{13}.v_{3,0}+\xi_{12}.v_{2,0}=0.
	\end{align*}
	Using \eqref{E00} and the fact that $t$ acts as $\frac{1}{3}(7+2C^2)$, we get
	\begin{align*}
	0=&v_{1,0}+\xi_{13}.v_{3,0}+\xi_{12}.v_{2,0}\\
	=&v_{1,0}-2\frac{1+2C^{2}}{3}v_{1,0}=\frac{1-4C^{2}}{3}v_{1,0}.
	\end{align*}
	From this we deduce that $C=\pm \frac{1}{2}$ and so $t$ acts as $\frac{5}{2}$.
	
	$\bullet$ Equations \eqref{alfabetagrado3}.	The fact that $\vec{m}$ is annihilated by $e_1=-\xi_{13}+i\xi_{23}$ provides:
	\begin{align*}
	0=&-\sum_{i} \sum^{4}_{l=1}(\partial_{l}(-\xi_{13}+i\xi_{23})\star \partial_{l}\eta_{i}) \otimes v_{i,0}+\sum_{i}\sum_{r<s}(\partial_{rs}(-\xi_{13}+i\xi_{23})\star \eta_{i}) \otimes \xi_{rs}. v_{i,0}\\
	=&\eta_{3} \otimes v_{1,0}-\eta_{1} \otimes e_1.v_{1,0}-i\eta_{3} \otimes v_{2,0}-\eta_{2} \otimes e_1. v_{2,0}+(-\eta_{1} +i\eta_{2} )\otimes v_{3,0}-\eta_{3}\otimes  e_1.v_{3,0}\\
	&-\eta_{4} \otimes e_1. v_{4,0}\\
	=&-\eta_{1}\otimes (e_1. v_{1,0}+ v_{3,0})+\eta_{2}\otimes(i v_{3,0}-e_1.v_{2,0})+\eta_{3}\otimes ( v_{1,0}-i v_{2,0}-e_1. v_{3,0})-\eta_{4} \otimes  e_1. v_{4,0}.
	\end{align*}
	
	Equations \eqref{alfabetagrado3} for $e_1$ follow. Equations for $e_2$ are obtained similarly.
\end{proof}
\begin{rem}
	Let us point out that relations \eqref{eqC} are equivalent to the following, using notation \eqref{notazionevijwgrado3}:
	\begin{align*}
	-2iw_{3}+2iw_{4}-iw_{5}+iw_{6}&=2C(2iw_{3}+2iw_{4}), \\ 
	2w_{3}+2w_{4}+w_{5}+w_{6}&=-2C(2w_{3}-2w_{4}),\\ 
	-2iw_{1}+2iw_{2}-iw_{7}+iw_{8}&=2C(2iw_{1}+2iw_{2}),\\ 
	2w_{1}+2w_{2}+w_{7}+w_{8}&=-2C(2w_{1}-2w_{2}).
	\end{align*}
	Thus, we obtain:
	\begin{align}
	\label{eqCconw}
	w_{5}=-(2+4C)w_{3},\\ \nonumber
	w_{6}=-(2-4C)w_{4},\\ \nonumber
	w_{7}=-(2+4C)w_{1},\\ \nonumber
	w_{8}=-(2-4C)w_{2}.
	\end{align}
	Equations \eqref{alfabetagrado3} are therefore equivalent to the following, using notation \eqref{notazionevijwgrado3}:
	\begin{align}
	\label{alfabetagrado3conw}
	&e_1. w_{1}=w_{4}, &&e_2.w_{1}=-w_{4},\\ \nonumber
	&e_1. w_{2}=w_{4}, &&e_2. w_{2}=w_{4},\\ \nonumber
	&e_1. w_{3}=-w_{1}-w_{2}, &&e_2. w_{3}=-w_{1}+w_{2},\\ \nonumber
	&e_1.w_{4}=0, &&e_2.w_{4}=0.
	\end{align}
	We represent these relations in the following picture
	\begin{center}
	\begin{tikzpicture}
		\node at (0,0) (w3) {$\langle w_{3} \rangle$};
	\node at (3,1) (w2+w1) {$\langle -w_{1}-w_{2} \rangle $};
		\node at (3,-1) (w2-w1) {$\langle w_{2}-w_{1} \rangle $};
		\node at (6,0) (w4) {$\langle w_{4} \rangle $.};
		\draw (w3) edge[->] (w2+w1);
		\draw (w3) edge[->] (w2-w1);
	\node at (1,0.7) {$e_1$};
		\node at (1,-0.7) {$e_2$};
		\draw (w2-w1) edge[->] (w4);
		\draw (w2+w1) edge[->] (w4);
		\node at (5,0.7) {$e_1$};
		\node at (5,-0.7) {$e_2$};
		\end{tikzpicture}
	\end{center}
\end{rem}
We are now ready to prove the stated classification of singular vectors of degree 3.
\begin{proof}[Proof of Theorem \ref{sing3}]
	Let $\mu=(m,n,5/2,C)$, with $C=\pm 1/2$, be the highest weight of $F$ with respect with $(h_x,h_y,t,C)$. We observe that $w_3\neq 0$ otherwise $\vec{m}=0$.
	\begin{itemize}
		 
		\item[\textbf{1)}] Let $w_{4}=0$.
		\begin{itemize}
			\item [\textbf{1a)}] Let $w_{2}=0$ and $w_{1} \neq 0$.  By Equations \eqref{alfabetagrado3conw}, we have that $w_{1} $ is a highest weight vector.
			By \eqref{notazionevijwgrado3} we have:
			\begin{align*}
			&v_{1,0}=2iw_{3},\,v_{2,0}=2w_{3},\,v_{3,0}=2iw_{1},\,v_{4,0}=2w_{1},\\
			&v_{1,1}=4iCw_{3},\,v_{2,1}=-4 Cw_{3},\,v_{3,1}=4i Cw_{1},\,v_{4,1}=-4 Cw_{1}. 
			\end{align*}
			Equation \eqref{E00} for $a=3,b=4$ gives $-\xi_{34}.v_{3,0}=(2-t).v_{4,0}$ which is equivalent to
			\[
			(i\xi_{34}-1/2 ).w_{1}=0.
			\]
			Equation \eqref{60} for $a=3,b=1,c=2$ gives $-\xi_{12}.v_{4,0}-Cv_{3,0}=0$ which is equivalent to
			\begin{align*}
			&(-i\xi_{12}+C).w_{1}=0.
			\end{align*}
			These equations imply $(h_y+C+1/2).w_1=0$ and so $n+C+1/2=0$ which implies $C=-1/2$ (since $n\geq 0$) and $n=0$.
			Similarly the same equations imply $m+C-1/2=0$ and so $m=1$ and hence $\mu=(1,0,5/2,-1/2)$.
			 
			By Equations \eqref{alfabetagrado3conw} we know that $2e_x.w_3=e_1.w_{3}+e_2. w_{3}=-2w_{1}$. Hence $w_{3}=-f_x. w_{1}$. All these conditions lead to the vector
			\begin{align*}
			\vec{m}_{3a}=w_{11}w_{22}w_{21}\otimes x_{1}+w_{21}w_{12}w_{11}\otimes x_{2}\in M(1,0,5/2,-1/2),
			\end{align*}
			which is indeed a singular vector.
			\item [\textbf{1b)}] Let $w_{1}=0$ and $w_{2} \neq 0$.
			By Equations \eqref{alfabetagrado3conw}, we have that $w_{2} $ is a highest weight vector.
			From \eqref{notazionevijwgrado3} we have:
			\begin{align*}
			&v_{1,0}=2iw_{3},\,v_{2,0}=2w_{3},\,v_{3,0}=2iw_{2},\,v_{4,0}=-2w_{2},\\
			&v_{1,1}=4i Cw_{3},\,v_{2,1}=-4 Cw_{3},\,v_{3,1}=4iCw_{2},\,v_{4,1}=4 Cw_{2}.
			\end{align*}
			Equation \eqref{E00} for $a=3,b=4$ gives $-\xi_{34}.v_{3,0}=2v_{4,0}-t.v_{4,0}$ which is equivalent to
			\begin{align*}
			(i\xi_{34}+1/2).w_2=0.
			\end{align*}
			Equation \eqref{60} for $a=3,b=1,c=2$ gives $-\xi_{12}.v_{4,0}-Cv_{3,0}=0$ which is equivalent to
			\begin{align*}
			(i\xi_{12}+C).w_2=0.
			\end{align*}
These equations imply $(h_x-C+1/2).w_2=0$ and so $(m-C+1/2)=0$. Since $m\geq 0$ and $C=\pm 1/2$ we necessarily have $C=1/2$ and $m=0$. The same equations also imply $n-C-1/2=0$ and hence we have $n=1$.
			
			By Equations \eqref{alfabetagrado3conw} we know that $2e_y.w_3=e_1.w_3-e_2. w_{3}=-2w_{2}$. Hence $w_{3}=-f_y. w_{2}$. 
			
			These conditions lead to the vector
			\begin{align*}
			\vec{m}_{3b}=w_{11}w_{22}w_{12}\otimes y_{1}+w_{12}w_{21}w_{11}\otimes y_{2}\in M(0,1,5/2,1/2),
			\end{align*}
			which is indeed a singular vector.
			\item [\textbf{1c)}] Let $w_{1} \neq 0$, $w_{2} \neq 0$. By Equations \eqref{alfabetagrado3conw}, we know that $w_{1}$ and $w_{2}$ are highest weight vectors.\\
			Equations \eqref{60}  for $a=3,b=1,c=2$, using \eqref{notazionevijwgrado3} gives
			\begin{align*}
			-i\xi_{12}.(w_1-w_2)+C(w_1+w_2)=0
			\end{align*}
			Equation \eqref{60} for $a=4,b=1,c=2$, using \eqref{notazionevijwgrado3}, gives
			\begin{align*}
			-i\xi_{12}.(w_1+w_2)+C(w_1-w_2)=0
			\end{align*}
			These two equations lead to $C=0$ which is a contradiction since $C=\pm 1/2$.
			
			\item [\textbf{1d)}] We suppose $w_{1}=w_{2} =0$. By Equations \eqref{alfabetagrado3conw}, we know that $w_{3}$ is a highest weight vector.\\
			Equation \eqref{57} for $a=2,b=3,c=1$, using \eqref{notazionevijwgrado3}, gives 
			\[0=(-2i\xi_{12}+2).w_3=(h_x+h_y+2).w_3\]
			which implies $m+n+2=0$, a contradiction.
		\end{itemize}
		\item[\textbf{2)}] Let $w_{4} \neq 0$. By Equations \eqref{alfabetagrado3conw}, we have that $w_{1} \neq 0$, $w_{2} \neq 0$, $w_{3} \neq 0$ and that $w_{4}$ is a highest weight vector. \\
		Equation \eqref{57} for $a=1,b=3,c=2$ gives
		\begin{align*}
		-w_{3}+w_{4}-i\xi_{23}.(w_{1}+w_{2})+i\xi_{12}.(w_{3}+w_{4})=0,
		\end{align*}
		and for $a=2,b=3,c=1$ gives 
		\begin{align*}
		-w_{3}-w_{4}+i\xi_{12}.(w_{3}-w_{4})-\xi_{13}.(w_{1}+w_{2})=0.
		\end{align*}
		These equations imply
		\begin{align*}
		0=&2w_4-e_1.w_1-e_1.w_2+2i\xi_{12}.w_4=2i\xi_{12}.w_4.
		\end{align*}
		Equation \eqref{60} for $a=1,b=3,c=4$ gives
		\begin{align*}
		-i\xi_{34}.(w_{3}-w_{4})+C(w_{3}+w_{4})=0,
		\end{align*}
		and for $a=2,b=3,c=4$ gives
		\begin{align*}
		-i\xi_{3,4}.(w_{3}+w_{4})+C(w_{3}-w_{4})=0.
		\end{align*}
		These equations imply
		\[
		(i\xi_{34}+C).w_4=0
		\]
		We deduce that $m+C=0$ and $-n+C=0$, hence $C=0$, a contradiction.
	\end{itemize}
\end{proof}
\section{Singular vectors of degree $1$}
The aim of this section is to classify singular vectors of degree $1$.
Let us consider a vector $\vec{m} \in \Ind(F)$ of degree $1$ such that $T(\vec{m})$ is of the form:
\begin{align}
\label{notazionevett1eta}
T(\vec{m})=\sum_{i}\eta_{(i)^{c}} \otimes v_{i} .
\end{align}
We write $\vec{m}$ as:
\begin{align}
\label{notazionevett1etanonduale}
\vec{m}=(\eta_2-i\eta_1)\otimes w_1+(\eta_4-i\eta_3)\otimes w_2+ (\eta_2+i\eta_1)\otimes \widetilde w_1+(\eta_4+i\eta_3)\otimes \widetilde w_2
\end{align}
Hence :
\begin{align}
\label{notazionevwgrado1}
&v_{1}=i(w_{1}-\widetilde{w}_{1}),\,\,v_{2}=w_{1}+\widetilde{w}_{1},\,\, v_{3}=i(w_{2}-\widetilde{w}_{2}),\,\, v_{4}=w_{2}+\widetilde{w}_{2}.
\end{align}
Indeed, let us show one of these relations. In \eqref{notazionevett1eta}, let us consider $\eta_{(1)^{c}} \otimes v_{1}$. We have that $\eta_{(1)^{c}}=\eta_{234} $ is the Hodge dual of $-\eta_{1}  $. In \eqref{notazionevett1etanonduale}, the terms in $\eta_{1}  $ are $-i\eta_{1}\otimes w_{1}+i\eta_{1}\otimes \widetilde{w}_{1}$. Hence $v_{1}=i(w_{1}-\widetilde{w}_{1})$. The other relations in \eqref{notazionevwgrado1} are obtained analogously. \\
In the following lemma we write explicitly some of the relations of Proposition \ref{azioneabcdfinal} for a vector as in formula \eqref{notazionevett1eta} that we need.
\begin{lem}
	\label{azioneabcdfinalgrado1}
	Let $\vec{m} \in \Ind (F)$ be a highest weight singular vector such that $T(\vec{m})$ is as in \eqref{notazionevett1eta}. Then for all $a\in \{1,2,3,4\}$ we have
	\begin{align}
	\label{B0grado1} 
	0=&\sum_{i} \Big[(\xi_a \star \eta_{(i)^{c}})  \otimes t.v_{i}+\sum^{4}_{l=1} (\partial_{l}\xi_{al} \star \eta_{(i)^{c}} ) \otimes  v_{i} + \sum _{l \neq k} (\partial_{l}\xi_{1k} \star \eta_{(i)^{c}} )\otimes \xi_{lk}, v_{i} \Big].
	\end{align}
	and for every permutation $(a,b,c,d)$ of $\{1,2,3,4\}$ we have
	
	\begin{align}
	\label{b0grado1} 
	0=&\sum_{i} \Big[ \sum_{r<s}  (\partial_{rs}\xi_{abc} \star \eta_{(i)^{c}} )\otimes \xi_{rs}. v_{i}+ \varepsilon_{abc}  \,(\xi_{d} \star \eta_{(i)^{c}} ) \otimes C  v_{i}  \Big].
	\end{align}
\end{lem}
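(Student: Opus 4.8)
The plan is to obtain both identities as specializations of Proposition \ref{azioneabcdfinal} to a vector of the shape \eqref{notazionevett1eta}, exactly in the spirit of the one-line derivation used for the degree-$3$ analogue in Lemma \ref{azioneabcdfinalgrado3}. For a degree-$1$ singular vector the general expression \eqref{vettsingeta3} collapses: there is no $\Theta$-component, so $v_{L,1}=0$ for every $L$, and the only surviving coefficients $v_{L,0}$ are those with $|L|=3$, renamed $v_i=v_{(i)^c,0}$. The first thing I would record is that, by their definitions in \eqref{lambdaactionabcd}, all the coefficients carrying the second index $p=1$, namely $a_1(I),b_1(I),B_1(I),C_1(I),D_1(I),G_1(I)$, vanish identically, each being a sum of terms containing a factor $v_{L,1}$.

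With this observation the two displayed relations are read off directly from the blocks of Proposition \ref{azioneabcdfinal}. For \eqref{B0grado1} I would invoke condition \textbf{S2} with $|I|=1$, namely $B_0(a)+b_1(a)=0$; since $b_1(a)=0$ this is simply $B_0(a)=0$, and substituting the definition of $B_p(I)$ with $I=(a)$ and $p=0$, restricting the sum over $L$ to $L=(i)^c$ and evaluating the prefactor $(-1)^{|I|(|I|+1)/2+|I||L|}=(-1)^{1+3}=1$, produces the first identity. For \eqref{b0grado1} I would invoke condition \textbf{S3} with $|I|=3$, namely $b_0(abc)+G_1(abc)=0$; here $G_p$ is supported on $|I|=4$, so $G_1(abc)=0$ and the relation reduces to $b_0(abc)=0$. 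Inserting the definition of $b_p(I)$ with $I=abc$, $I^c=(d)$ and $p=0$ then yields the second identity, the overall sign being irrelevant since the right-hand side is zero.

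The only genuinely computational point, and the step I expect to demand the most attention, is the simplification of the various $\star$-products once the specific vector is inserted. Because $v_{L,0}$ is now carried by the three-index monomials $\eta_{(i)^c}$, a product $\xi_J\star\eta_{(i)^c}$ is nonzero only when $J$ is disjoint from $(i)^c$, hence only when $|J|\le 1$; this forces several summands of the abstract coefficients $B_0(a)$ and $b_0(abc)$ to collapse, and the vanishing must be tracked carefully together with the derivation operators $\partial_l$ and $\partial_{rs}$ acting on the $\xi$-factors. After this pruning what remains is routine bookkeeping: evaluating the $\bigchi$ and $\varepsilon$ symbols, fixing the signs coming from the prefactors in \eqref{lambdaactionabcd}, and applying the relabelling $v_{(i)^c,0}\mapsto v_i$, precisely as in the proof of Lemma \ref{azioneabcdfinalgrado3}.
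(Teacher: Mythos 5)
Your proposal is correct and follows essentially the same route as the paper: both identities are read off from Proposition \ref{azioneabcdfinal} as $B_0(a)=0$ (condition \textbf{S2} with $b_1(a)=0$) and $b_0(abc)=0$ (condition \textbf{S3} with $G_1(abc)=0$, which vanishes both by the $\bigchi_{|I|=4}$ support and by the absence of $v_{L,1}$ components in a degree-$1$ vector). The extra pruning of $\star$-products you anticipate is not needed for the lemma itself — the stated equations retain those products — and is deferred in the paper to the subsequent Lemma \ref{azioneabcdfinalgrado1finali}.
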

\begin{proof}
	Equation \eqref{B0grado1} is obtained by $B_0(a)+b_1(a)=0$ and Equation \eqref{b0grado1} is obtained by $b_0(abc)+G_1(abc)=0$ in Proposition \ref{azioneabcdfinal}. Note that $b_1(a)=0$ and $G_1(abc)=0$ since $\vec{m}$ has degree 1 and so the previous equations reduce to $B_0(a)=0$ and $b_0(abc)=0$. 
\end{proof}
\begin{lem}
	\label{azioneabcdfinalgrado1finali}
	Let $\vec{m} \in \Ind (F)$ be a highest weight sungular vector such that $T(\vec{m})$ is as in  \eqref{notazionevett1eta}. Then for every permutation $(a,b,c,d)$ of $\left\{1,2,3,4\right\}$ we have
	\begin{align}
	\label{B.44} &(-1)^{a}t.v_{a}+\sum_{k \neq a}(-1)^{k}\xi_{ak}.v_{k}=0;\\
	\label{B.45} &(-1)^{c} \xi_{ab}.v_{c}+(-1)^{b}\xi_{ca}.v_{b}+(-1)^{a} \xi_{bc}.v_{a} -\varepsilon_{(a,b,c)}  (-1)^{d}C v_{d}=0.
	\end{align}
	Moreover
	\begin{align}
	\label{alfabetagrado1}
	&e_1.v_{1}=-v_{3},       &&e_2.v_{1}=iv_{4},\\ \nonumber
	&e_1.v_{2}=-iv_{3},      &&e_2.v_{2}=-v_{4},\\ \nonumber
	&e_1.v_{3}=v_{1}+iv_{2}, &&e_2.v_{3}=0,\\ \nonumber
	&e_1.v_{4}=0,            &&e_2.v_{4}=-iv_{1}+v_{2},
	\end{align}
	where $e_1$ and $e_2$ are defined in \eqref{notazionealfa} and \eqref{notazionebeta}.
\end{lem}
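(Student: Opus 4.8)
The plan is to imitate the proofs of the analogous statements for singular vectors of degree $2$ and $3$, namely Lemmas \ref{s1s2s3grado2vstardiversodazero} and \ref{azioneabcdfinalgrado3finali}: every relation asserted here is a scalar identity obtained by writing out a single equation of Proposition \ref{azioneabcdfinal} in the Poincar\'e--Birkhoff--Witt basis $\{\eta_L\otimes v\}$ of $\Ind(F)$ and then extracting the coefficient of one fixed monomial $\eta_L$. Because $\vec m$ has degree $1$, its dual $T(\vec m)$ in \eqref{notazionevett1eta} carries no $\Theta$ and only the length-three monomials $\eta_{(i)^c}$ occur; hence every coefficient function of Proposition \ref{azioneabcdfinal} attached to a $\Theta$-component or to a monomial of length other than three is automatically zero, and the only surviving conditions are $B_0(a)=0$ and $b_0(abc)=0$. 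These are precisely \eqref{B0grado1} and \eqref{b0grado1} recorded in Lemma \ref{azioneabcdfinalgrado1}, which I take as the starting point.

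To produce \eqref{B.44} I would expand \eqref{B0grado1} for $I=(a)$. Each summand is a $\star$-product of a one- or two-index $\xi$ against a length-three $\eta_{(i)^c}$, so it lands in top degree and is a multiple of $\eta_*=\eta_{1234}$; moreover the support condition of $\star$ forces the free index $i$ to be the one missing from the three-index factor. Collecting the coefficient of $\eta_*$, and converting each reordering $\eta_{i\,(i)^c}\rightsquigarrow\eta_*$ into the sign $\varepsilon_{(i)}=(-1)^{i-1}$, collapses \eqref{B0grado1} to \eqref{B.44}: the term $\xi_a\star\eta_{(a)^c}\otimes t.v_a$ supplies the $t.v_a$ summand while the double sum supplies the $\xi_{ak}.v_k$ summands. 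Relation \eqref{B.45} is obtained in exactly the same fashion from \eqref{b0grado1} with $I=abc$; again all terms are multiples of $\eta_*$, the three pieces $\partial_{rs}\xi_{abc}\star\eta_{(i)^c}$ producing the $\xi_{ab}.v_c$ type summands and the piece $\varepsilon_{abc}(\xi_d\star\eta_{(i)^c})\otimes Cv_i$ producing the $Cv_d$ summand after the same sign bookkeeping.

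Finally, the weight relations \eqref{alfabetagrado1} encode condition \textbf{S0}, i.e.\ $e_1.\vec m=e_2.\vec m=0$, where $e_1=-\xi_{13}+i\xi_{23}$ and $e_2=-\xi_{24}-i\xi_{14}$ by \eqref{notazionealfa} and \eqref{notazionebeta}. Since $e_1,e_2\in\g_0$, I would compute $T(e_1.\vec m)$ and $T(e_2.\vec m)$ as the $\lambda=0$ values of the actions ${\xi_{13}}_\lambda$, ${\xi_{23}}_\lambda$, ${\xi_{14}}_\lambda$, ${\xi_{24}}_\lambda$ given by Proposition \ref{actiondual}, and then read off the coefficient of each $\eta_j$ for $j\in\{1,2,3,4\}$; this yields the eight identities, just as in the corresponding computations closing the proofs of Lemmas \ref{s1s2s3grado2vstardiversodazero} and \ref{azioneabcdfinalgrado3finali}. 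I expect no conceptual difficulty, the whole argument being coefficient extraction in a fixed basis. The one genuinely delicate point, and the main source of possible error, is the sign bookkeeping: the interaction of the Hodge-dual map $T$, the permutation signs $\varepsilon_I$, the parity signs $(-1)^{|I|(|I|+1)/2+|I||L|}$ occurring in Proposition \ref{actiondual}, and the support constraints of the $\star$-product. Organising the computation so that every surviving term is first reduced to a multiple of a single monomial (here $\eta_*$ for \eqref{B.44} and \eqref{B.45}, and $\eta_j$ for \eqref{alfabetagrado1}) is what keeps it manageable.
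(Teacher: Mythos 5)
Your proposal is correct and follows essentially the same route as the paper: \eqref{B.44} and \eqref{B.45} are obtained exactly as you describe, by extracting the coefficient of $\eta_{1234}$ from \eqref{B0grado1} and from \eqref{b0grado1} with $I=abc$ (every surviving term being $\eta_i\eta_{(i)^c}=(-1)^{i-1}\eta_*$ times a vector of $F$), and \eqref{alfabetagrado1} is obtained by computing $T(e_1.\vec{m})$ and $T(e_2.\vec{m})$ as $\lambda=0$ values of the $\lambda$-actions in Proposition \ref{actiondual} and reading off the coefficients of the length-three monomials (the Hodge duals of the $\eta_j$). The only imprecision is harmless: the two-index $\star$-products appearing in \eqref{B0grado1} vanish identically (five distinct indices cannot fit in $\{1,2,3,4\}$) rather than merely "landing in top degree", which only simplifies the bookkeeping you outline.
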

\begin{proof}
	Equation \eqref{B.44} follows by considering the terms in $\eta_{1234}$ in \eqref{B0grado1}.\\
	For equation \eqref{B.45} we can assume $a<b<c$ with no loss of generality. Equation \eqref{B0grado1} becomes
	\begin{align*}
	0=\eta_{c}\eta_{(c)^{c}} \otimes \xi_{ab}.v_{c}+\eta_{b}\eta_{(b)^{c}} \otimes \xi_{ca}.v_{b}+\eta_{a}\eta_{(a)^{c}} \otimes \xi_{bc}.v_{a}-\varepsilon_{(a,b,c)} \eta_{d} \eta_{(d)^{c}}\otimes C v_{d},
	\end{align*}
	which is equivalent to \eqref{B.45}.
	
	The fact that $\vec{m}$ is annihilated by $e_1$ implies
	\begin{align*}
	0=&-\sum_{i} \sum^{4}_{l=1}(\partial_{l}(-\xi_{13}+i\xi_{23})\star \partial_{l}\eta_{(i)^{c}} )\otimes v_{i}+\sum_{i}\sum_{r<s}(\partial_{rs}(-\xi_{13}+i\xi_{23})\star \eta_{(i)^{c}}) \otimes \xi_{rs}. v_{i}\\
	=&\eta_{124}\otimes (v_{1}+iv_{2}- e_1. v_{3})+\eta_{234}\otimes (-v_{3}- e_1. v_{1})+\eta_{134}\otimes (-iv_{3}- e_1. v_{2})+\eta_{123}\otimes (- e_1.v_{4}).
	\end{align*}
	Therefore:
	\begin{align*}
	&e_1.v_{1}=-v_{3},\\
	&e_1.v_{2}=-iv_{3},\\
	&e_1.v_{3}=v_{1}+iv_{2},\\
	&e_1.v_{4}=0.
	\end{align*}
	Equations \eqref{alfabetagrado1} for $e_2$ follow similarly.
\end{proof}	
\begin{rem}		
	By \eqref{notazionevwgrado1}, Equations \eqref{alfabetagrado1} are equivalent to:
	
	\begin{align}
	&e_1.w_{1}=0                          &&e_2.w_1=0\label{alfa1},\\ 
	&e_1.w_2=w_1                          &&e_2.w_2=w_1 \nonumber\\
	&e_1.\widetilde w_1=w_2-\widetilde w_2&&e_2.\widetilde w_1=-w_2-\widetilde w_2 \nonumber\\
	&e_1.\widetilde w_2=-w_1              &&e_2.\widetilde w_2=w_1 \nonumber
	\end{align}
	We represent these relations in the following picture
	\begin{center}
		\begin{tikzpicture}
		\node at (0,0) (tildew_{1}) {$\langle \widetilde{w}_{1} \rangle$};
		\node at (3,1) (w2-tildew_{2}) {$\langle w_{2}-\widetilde{w}_{2} \rangle $};
		\node at (3,-1) (w2+tildew_{2}) {$\langle w_{2}+\widetilde{w}_{2} \rangle $};
		\node at (6,0) (w1) {$\langle w_{1} \rangle $.};
		\draw (tildew_{1}) edge[->] (w2-tildew_{2});
		\draw (tildew_{1}) edge[->] (w2+tildew_{2});
		\node at (1,0.7) {$e_1$};
		\node at (1,-0.7) {$e_2$};
		\draw (w2-tildew_{2}) edge[->] (w1);
		\draw (w2+tildew_{2}) edge[->] (w1);
		\node at (5,0.7) {$e_1$};
		\node at (5,-0.7) {$e_2$};
		\end{tikzpicture}
	\end{center}
	
	By \eqref{notazionevwgrado1}, Equations \eqref{B.44} can be rewritten in the following equivalent way 
	\begin{align}
	\label{B.58}
	&(t-i\xi_{12}).\widetilde{w}_{1}=-f_x.\widetilde{w}_{2}+f_y.w_{2},\\ \nonumber
	&(t-i\xi_{34}).\widetilde{w}_{2}=-f_y.w_{1}-e_x.\widetilde{w}_{1},\\ 
	\label{B.59}
	&(t+i\xi_{12}).{w_{1}}=e_x.w_{2}-e_y.\widetilde{w}_{2},\\ \nonumber
	&(t+i\xi_{34}).{w_{2}}=e_y.\widetilde{w}_{1}+f_x.w_{1}. 
	\end{align}
\end{rem}	
\begin{proof}[Proof of Theorem \ref{sing1}]
	As usual we denote by $\mu=(m,n,\mu_0,C)$ the highest weight of the Verma module containing the singular vector $\vec{m}$. 
	Let us first observe that, by Equations \eqref{alfa1}, we have that if $w_{1} \neq 0$ then $w_{2} \neq 0$. 
	
	\textbf{1)} Let $w_{1}=w_{2}=0$.\\
	By Equations \eqref{alfa1}, we obtain that if $\widetilde{w}_{2} \neq 0$, then $\widetilde{w}_{1} \neq 0$. Hence, there are two subcases.
	\begin{itemize}
		\item[\textbf{1a)}] Let $\widetilde{w}_{1} \neq 0$ and $\widetilde{w}_{2} =0$.\\
		By Equations \eqref{alfa1} we know that $\widetilde{w}_{1}$ is a highest weight vector. Let us compute its weight.
		By \eqref{notazionevwgrado1} we know that $v_{1}=-i\widetilde{w}_{1},v_{2}=\widetilde{w}_{1}, v_{3}=0, v_{4}=0$.\\
		Equation \eqref{B.45} for $a=1,b=3,c=4$ gives
		\begin{align*}
		(-i\xi_{3,4}+C).\widetilde{w}_{1}=0. 
		\end{align*}
		Equation \eqref{B.58} gives $(t-i\xi_{12}).\widetilde{w}_{1}=0$. These two conditions imply $\mu=(m,n,-\frac{m+n}{2},\frac{m-n}{2}) $ with $m,n \in \Z_{\geq 0}$.\\
		These conditions lead to the vector 
		\begin{align*}
		\vec{m}_{1a}=w_{11}\otimes x^{m}_{1}y^{n}_{1}\in M(m,n,-\frac{m+n}{2},\frac{m-n}{2}).
		\end{align*}
		which is indeed a singular vector.
		\item[\textbf{1b)}] Let $\widetilde{w}_{1} \neq 0$ and $\widetilde{w} _{2}\neq 0$.\\
		Equations \eqref{alfa1} imply that $\widetilde{w}_{2} $ is a highest weight vector, let us compute its weight.\\
		By \eqref{notazionevwgrado1}, we know that $v_{1}=-i\widetilde{w}_{1},v_{2}=\widetilde{w}_{1}, v_{3}=-i\widetilde{w}_{2}, v_{4}=\widetilde{w}_{2}$.\\
		Equation \eqref{B.45} for $a=1,b=2,c=3$ gives
		 \[ -i\xi_{12}.\widetilde{w}_{2} +\xi_{13}.\widetilde{w}_{1}-i \xi_{23}. \widetilde w_{1}+C\widetilde{w}_{2}=0 ,\]
	and for $a=1,b=2,c=4$ gives
		\[-\xi_{12}.\widetilde{w}_{2}+\xi_{14}. \widetilde{w}_{1}-i\xi_{24}.\widetilde{w}_{1}-iC\widetilde{w}_{2}=0.
		\]
		These two equations imply
		\[
		0=-2i\xi_{12}.\widetilde{w}_{2}+2C\widetilde{w}_{2} -(e_1+e_2).\widetilde{w}_{1}=2(-i\xi_{12}+C+1).\widetilde w_2.
		\]
		that is equivalent to:
		\begin{align*}
		&(-i\xi_{12}+C+1).\widetilde w_2=0.
		\end{align*}
		Equation \eqref{B.58} provides
		\begin{align*}
		(t-i\xi_{34}).\widetilde{w}_{2}=-e_x.\widetilde{w}_{1}=\widetilde{w}_{2}
		\end{align*}
		and so these equations imply  $\mu=(m,n,1+\frac{m-n}{2},-\frac{m+n}{2}-1)$, for $m,n\geq 0$. We point out that $m \in \Z_{>0}$ since $e_x.\widetilde{w}_{1}=-\widetilde{w}_{2}\neq 0$. 
		
		These conditions lead to the vector 
		\begin{align*}
		\vec{m}_{1b}=w_{21}\otimes x^{m}_{1}y^{n}_{1}-w_{11}\otimes x^{m-1}_{1}x_{2}y^{n}_{1}\in M(m,n,-\frac{m+n}{2},\frac{m-n}{2}),
		\end{align*}
		which is indeed a singular vector. 
	\end{itemize}
	\textbf{2)} Let $w_{1}\neq 0$ and $w_{2}\neq 0$.\\ By \eqref{alfa1} we have  $\widetilde{w}_{2} \neq 0$  $\widetilde{w}_{1} \neq 0$ and that $w_{1}$ is a highest weight vector.\\
	By \eqref{notazionevwgrado1}, Equation \eqref{B.45} for $a=1,b=3,c=4$ gives
	\[(-\xi_{13}-i\xi_{14}).w_{2}+(-\xi_{13}+i\xi_{14}).\widetilde{w}_{2}+(i\xi_{34}+C).w_{1}+(-i\xi_{34}+C).\widetilde{w}_{1}=0,
	\]
	and for $a=2,b=3,c=4$ gives
	\[(-\xi_{23}-i\xi_{24}).w_{2}+(-\xi_{23}+i\xi_{24}).\widetilde{w}_{2}+(-\xi_{34}+iC).w_{1}+(-\xi_{34}-iC).\widetilde{w}_{1}=0.
	\]
	By these equations we deduce
	\[(e_1+e_2).w_2+(e_1-e_2).\widetilde w_2+2(i\xi_{34}+C).w_1=2(i\xi_{34}+C).w_1=0.\]
	
	Moreover, Equation \eqref{B.59} provides
	\[
	(t+i\xi_{12}).w_1=2w_1.
	\]
	These conditions imply that $\mu=(m,n,\frac{m+n}{2}+2,\frac{n-m}{2})$. Note that $m,n>0$ since $e_x.w_2=w_1\neq 0$ and $e_y.\widetilde w_2=-w_1\neq 0$.
	
	
	All these conditions lead to the vector 
	\begin{align*}
	\vec{m}_{1c}&=w_{22}\otimes x^{m}_{1}y^{n}_{1}-w_{12}\otimes x^{m-1}_{1}x_{2}y^{n}_{1}-w_{21}\otimes x^{m}_{1}y^{n-1}_{1}y_{2}+w_{11}\otimes x^{m-1}_{1}x_{2}y^{n-1}_{1}y_{2}\\&\in M(m,n,\frac{m+n}{2}+2,\frac{n-m}{2}),
	\end{align*}
	which is indeed a singular vector.\\
	\textbf{3)} Let $w_{1}= 0$ and $w_{2}\neq 0$. Note that $\widetilde{w}_{1} \neq 0$, since $(e_1-e_2).\widetilde{w}_{1}=2w_{2}\neq 0$ by \eqref{alfa1}. 
	
	\begin{itemize}
		\item[\textbf{3a)}] Let $\widetilde{w}_{2} =0$.\\
		Note that $w_{2}$ is a highest weight vector by \eqref{alfa1}. Let us compute its weight.\\
		Using \eqref{notazionevwgrado1}, Equation \eqref{B.45} for $a=1,b=2,c=3$ gives
		\[i\xi_{12}.w_{2}+\xi_{13}.\widetilde{w}_{1}-i\xi_{23}.\widetilde{w}_{1}+Cw_{2}=0
		\]
		and Equation \eqref{B.45} for $a=1,b=2,c=4$ gives
		\[-\xi_{12}.w_{2}+\xi_{14}.\widetilde{w}_{1}-i\xi_{24}.\widetilde{w}_{1}+iCw_{2}=0. \label{B.45 124}
		\]
		These two equations imply
		\[0=2i\xi_{12}.w_{2}-e_1.\widetilde{w}_{1}+e_2.\widetilde{w}_{1}+2Cw_{2}=2(i\xi_{12}-1+C).w_{2}\]
		and Equation \eqref{B.59} provides
		\begin{align*}
		&(t+i\xi_{34}).w_{2}=w_{2}.
		\end{align*}
		These equations imply $\mu=(m,n,\frac{n-m}{2}+1,\frac{m+n}{2}+1)$, with $m,n\geq 0$. Moreover we have  $n>0$ since, by \eqref{alfa1} $e_y.\widetilde{w}_{1}=w_{2} \neq 0$.
		
		All these conditions lead to the vector 
		\begin{align*}
		\vec{m}_{1d}=w_{12}\otimes x^{m}_{1}y^{n}_{1}-w_{11}\otimes x^{m}_{1}y^{n-1}_{1}y_{2}\in M(m,n,\frac{n-m}{2}+1,\frac{m+n}{2}+1),
		\end{align*}
		which is indeed a singular vector.
		\item[\textbf{3b)}]Let $\widetilde{w}_{2} \neq 0$.\\
		By Equations \eqref{alfa1}, $w_{2}$ and $\widetilde{w}_{2}$ are highest weight vectors. Let us compute their  weight. \\
		By \eqref{notazionevwgrado1}, Equation \eqref{B.45} for $a=1,b=2,c=3$ gives
		\[i\xi_{12}.w_{2}-i\xi_{12}.\widetilde{w}_{2}+\xi_{13}.\widetilde{w}_{1}-i\xi_{23}.\widetilde{w}_{1}+Cw_{2}+C\widetilde{w}_{2}=0,
		\]
		and Equation \eqref{B.45} for $a=1,b=2,c=4$ gives
		\[-\xi_{12}.w_{2}-\xi_{12}.\widetilde{w}_{2}+\xi_{14}.\widetilde{w}_{1}-i\xi_{24}.\widetilde{w}_{1}+iCw_{2}-iC \widetilde{w}_{2}=0 \]
		These two equations imply
		\[(i\xi_{12}-1+C).w_{2}=0\]
		and
		\[
		(-i\xi_{12} +1+C).\widetilde{w}_{2}=0
		\]
		and in particular $C=0$. But, for $C=0$, the $\lambda-$action of Proposition \ref{actiondual} reduces to the action found in Theorem 4.3 of \cite{kac1} where the vectors of degree 1 were classified and this case was ruled out.
	\end{itemize}
	\begin{ack}
	The authors would like to thank Nicoletta Cantarini and Victor Kac for useful comments and suggestions. L.B. was partially supported by the EU Horizon 2020 project GHAIA, MCSA RISE project GA No 777822.
	\end{ack}
	
\end{proof}

\end{document}